\date{}
\patchcmd{\@makechapterhead}{50\p@}{\chapheadtopskip}{}{}
\patchcmd{\@makeschapterhead}{50\p@}{\chapheadtopskip}{}{}
\newlength{\chapheadtopskip}\setlength{\chapheadtopskip}{-2pt}
\DeclareMathAlphabet{\mathpzc}{OT1}{pzc}{m}{it}
\newcommand{\mylabel}[2]{#2\def\@currentlabel{#2}\label{#1}}
\newtheorem{theorem}{Theorem}[section]
\newtheorem{lemma}[theorem]{Lemma}
\newtheorem{obs}[theorem]{Observation}
\newtheorem{defn}[theorem]{Definition}
\newtheorem{prop}[theorem]{Proposition}
\newtheorem{cor}[theorem]{Corollary}
\newtheorem{fact}[theorem]{Fact}
\newtheorem{subclaim}{Subclaim}[theorem]
\newcounter{claimlevel}[theorem]
\NewDocumentEnvironment{claim}{O{=}}
 {
  \str_case:nn { #1 }
   {
    {=}  { }
    {+}  { \stepcounter{claimlevel} }
    {-}  { \addtocounter{claimlevel}{-1} }
   }
  \begin{ Claim \int_to_Roman:n { \value{claimlevel} } }
 }
 {
  \end{ Claim \int_to_Roman:n { \value{claimlevel} } }
 }
\newenvironment{claimproof}[1]{\par\noindent\underline{Proof:}\space#1}{\hspace{1mm}$\blacksquare$}
\newlength\FHoffset
\newlength\FHright
 \newtheoremstyle{TheoremNum}
        {\topsep}{\topsep}              
        {\itshape}                      
        {}                              
        {\bfseries}                     
        {.}                             
        { }                             
        {\thmname{#1}\thmnote{ \bfseries #3}}
    \theoremstyle{TheoremNum}
    \newtheorem{thmn}{Theorem}
\newtheoremstyle{PropNum}
        {\topsep}{\topsep}              
        {\itshape}                      
        {}                              
        {\bfseries}                     
        {.}                             
        { }                             
        {\thmname{#1}\thmnote{ \bfseries #3}}
    \theoremstyle{PropNum}
\newtheoremstyle{LemmaNum}
        {\topsep}{\topsep}              
        {\itshape}                      
        {}                              
        {\bfseries}                     
        {.}                             
        { }                             
        {\thmname{#1}\thmnote{ \bfseries #3}}
    \theoremstyle{LemmaNum}
\renewcommand\subitem{\@idxitem\nobreak\hspace*{20\p@}}
\renewcommand\subsubitem{\@idxitem\nobreak\hspace*{20\p@}}
\title{Distant 2-Colored Components on Embeddings: Part II\\ The Short-Inseparable Case}
\author{Joshua Nevin}
\begin{document}
\maketitle

\begin{center}\textbf{Abstract}\end{center} This is the second in a sequence of three papers in which we prove the following generalization of Thomassen's 5-choosability theorem: Let $G$ be a graph embedded on a surface of genus $g$. Then $G$ can be $L$-colored, where $L$ is a list-assignment for $G$ in which every vertex has a 5-list except for a collection of pairwise far-apart components, each precolored with an ordinary 2-coloring, as long as the face-width of $G$ is at least $2^{\Omega(g)}$ and the precolored components are of distance at least $2^{\Omega(g)}$ apart. This provides an affirmative answer to a generalized version of a conjecture of Thomassen and also generalizes a result from 2017 of Dvo\v{r}\'ak, Lidick\'y, Mohar, and Postle about distant precolored vertices. In this paper we prove that the above result holds for a restricted class of embeddings, i.e those embeddings which satisfy certain triangulation conditions and do not have separating cycles of length at most four.

\section{Background}

All graphs in this paper have a finite number of vertices. Given a graph $G$, a \emph{list-assignment} for $G$ is a family of sets $\{L(v): v\in V(G)\}$, where each $L(v)$ is a finite subset of $\mathbb{N}$. The elements of $L(v)$ are called \emph{colors}. A function $\phi:V(G)\rightarrow\bigcup_{v\in V(G)}L(v)$ is called an \emph{$L$-coloring of} $G$ if $\phi(v)\in L(v)$ for each $v\in V(G)$, and $\phi(x)\neq\phi(y)$ for any adjacent vertices $x,y$. Given an $S\subseteq V(G)$ and a function $\phi: S\rightarrow\bigcup_{v\in S}L(v)$, we call $\phi$ an \emph{ $L$-coloring of $S$} if $\phi$ is an $L$-coloring of the induced graph $G[S]$. A \emph{partial} $L$-coloring of $G$ is an $L$-coloring of an induced subgraph of $G$. Likewise, given an $S\subseteq V(G)$, a \emph{partial $L$-coloring} of $S$ is a function $\phi:S'\rightarrow\bigcup_{v\in S'}L(v)$, where $S'\subseteq S$ and $\phi$ is an $L$-coloring of $S'$. Given an integer $k\geq 1$, $G$ is called \emph{$k$-choosable} if it is $L$-colorable for every list-assignment $L$ for $G$ such that $|L(v)|\geq k$ for all $v\in V(G)$. Thomassen demonstrated in \cite{AllPlanar5ThomPap} that all planar graphs are 5-choosable. Actually, Thomassen proved something stronger. 

\begin{theorem}\label{thomassen5ChooseThm}
Let $G$ be a planar graph with facial cycle $C$. Let $xy\in E(C)$ and $L$ be a list assignment for $V(G)$ such that each vertex of $V(G\setminus C)$ has a list of size at least five and each vertex of $V(C)\setminus\{x,y\}$ has a list of size at least three, where $xy$ is $L$-colorable. Then $G$ is $L$-colorable.
\end{theorem}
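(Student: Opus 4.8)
The plan is to prove Theorem~\ref{thomassen5ChooseThm} by Thomassen's induction on $|V(G)|$; write $\phi$ for the given $L$-coloring of the edge $xy$. As a preliminary reduction I would fix a plane embedding of $G$ in which $C$ bounds the outer face and add edges in the interior of $C$ until every inner face is a triangle and $G$ is $2$-connected (a \emph{near-triangulation} with outer cycle $C$); this can be arranged without introducing parallel edges or altering $C$, and since adding edges only shrinks the set of $L$-colorings it suffices to treat this case. The base case $|V(G)|\le 3$ is immediate: then $G=C$ is a triangle with $xy$ precolored, and the remaining vertex has a list of size at least three, hence a color avoiding its two neighbours. For the inductive step, write $C=v_1v_2\cdots v_pv_1$ in cyclic order with $x=v_1$ and $y=v_p$, so that $v_1v_p$ is the precolored edge, and split into two cases.

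\emph{Case 1: $C$ has a chord $v_iv_j$.} The chord cuts the closed disk bounded by $C$ into two parts, yielding near-triangulations $G_1,G_2$ with $G_1\cup G_2=G$, with $V(G_1)\cap V(G_2)=\{v_i,v_j\}$ and $E(G_1)\cap E(G_2)=\{v_iv_j\}$, and with $v_1v_p\in E(G_1)$, say; each of $G_1,G_2$ has fewer vertices than $G$. I would apply the induction hypothesis to $G_1$ with precolored edge $v_1v_p$ to obtain an $L$-coloring $\phi_1$. Since $v_iv_j\in E(G)$ we have $\phi_1(v_i)\neq\phi_1(v_j)$, and every vertex of the outer cycle of $G_2$ other than $v_i,v_j$ lies on $C\setminus\{x,y\}$ and so has a list of size at least three; hence the induction hypothesis applies to $G_2$ with precolored edge $v_iv_j$ coloured by $\phi_1$, and gluing the two colorings along $\{v_i,v_j\}$ gives an $L$-coloring of $G$.

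\emph{Case 2: $C$ is chordless.} Let $v_2$ be the neighbour of $x$ on $C$ other than $y$. Then $\deg_G(v_2)\ge 3$: otherwise $v_1v_2v_3$ would be a triangular face, forcing $v_1v_3\in E(G)$, which is a chord of $C$ when $p\ge 4$ and forces $G=C$ when $p=3$ --- either way a contradiction. Reading the neighbours of $v_2$ in order from the edge $v_2v_1$ to the edge $v_2v_3$ therefore gives a path $v_1,u_1,\dots,u_m,v_3$ with $m\ge 1$ in which the $u_i$ are pairwise distinct and each $u_i$ is an interior vertex of $G$ (no $u_i$ can lie on $C$, since $C$ is chordless). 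I would reserve two distinct colors $c_1,c_2\in L(v_2)\setminus\{\phi(x)\}$ (possible as $|L(v_2)|\ge 3$), set $G'=G-v_2$, whose outer face is now bounded by the cycle $C'=v_1u_1\cdots u_mv_3v_4\cdots v_pv_1$, and define $L'$ by deleting $c_1,c_2$ from each $L(u_i)$ and leaving all other lists unchanged. On $G'$ every $u_i$ then has an $L'$-list of size at least $5-2=3$, the vertices $v_3,\dots,v_{p-1}$ keep lists of size at least three, the interior vertices keep lists of size at least five, and $v_1v_p$ is still precolored by $\phi$; so the induction hypothesis gives an $L'$-coloring $\phi'$ of $G'$, which is in particular an $L$-coloring of $G-v_2$. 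Finally none of $\phi'(v_1),\phi'(u_1),\dots,\phi'(u_m)$ equals $c_1$ or $c_2$, so at most one of $c_1,c_2$ is used by a neighbour of $v_2$ (only $v_3$ can use one), and colouring $v_2$ with the other completes an $L$-coloring of $G$.

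The heart of the matter is Case 2, which is Thomassen's key idea: one deletes a boundary vertex incident with the precolored edge, chosen so that its deletion exposes only interior vertices, reserves two colors for it in advance, and checks that the ``$5-2=3$'' arithmetic simultaneously preserves the inductive hypotheses on the new boundary cycle $C'$ and leaves a free color for the deleted vertex at the end. The other ingredients --- the reduction to $2$-connected near-triangulations and the chord case --- are routine once Case 2 is in place; the one point to verify carefully there is that the exposed vertices $u_i$ are genuinely interior and pairwise distinct, so that $C'$ is a bona fide cycle and $|V(G')|<|V(G)|$.
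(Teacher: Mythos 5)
Your proof is correct and reproduces Thomassen's original induction from \cite{AllPlanar5ThomPap}: reduce to a $2$-connected near-triangulation, split on whether the outer cycle has a chord, and in the chordless case delete a boundary neighbour of the precolored edge after reserving two of its list colours. The paper does not prove Theorem~\ref{thomassen5ChooseThm}; it is cited as a known background result, so there is no in-paper argument to compare against beyond noting that yours matches the standard one.
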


In this sequence of papers, we also rely on the following very useful result from \cite{2ListSize2PaperSeriesI} which is an analogue of Theorem \ref{thomassen5ChooseThm} where the precolored edge has been replaced by two lists of size two.

\begin{theorem}\label{Two2ListTheorem} 

Let $G$ be a planar graph,  let $F$ be a facial subgraph of $G$, and let $v,w\in V(F)$. Let $L$ be a list-assignment for $V(G)$ where $|L(v)|\geq 2$, $|L(w)|\geq 2$, and furthermore, for each $x\in V(F)\setminus\{v,w\}$, $|L(x)|\geq 3$, and, for each $x\in V(G\setminus F)$, $|L(x)|\geq 5$. Then $G$ is $L$-colorable. \end{theorem}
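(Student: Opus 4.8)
The plan is to prove this by induction on $|V(G)|$, following the strategy of Thomassen's proof of Theorem~\ref{thomassen5ChooseThm} and using that theorem as a black box to close off branches in which only a precoloured edge (and no genuine size-$2$ vertex) survives. It is cleanest to isolate an auxiliary statement first and then derive the theorem from it.

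First I would establish the following: \emph{if $G$ is planar with outer cycle $C$, if $v_1v_2\in E(C)$ is a precoloured edge, and if $w\in V(C)\setminus\{v_1,v_2\}$ has $|L(w)|\ge 2$ and is not adjacent to both of $v_1$ and $v_2$ (with every other vertex of $C$ having a list of size $\ge 3$ and every interior vertex a list of size $\ge 5$), then the precolouring of $v_1v_2$ extends to an $L$-colouring of $G$.} I would prove this by induction on $|V(G)|$ exactly as in Thomassen's argument: after the usual reductions (to a $2$-connected near-triangulation with $C$ the outer face), if $C$ has a chord we split $G$ along it, colour the side carrying the precoloured edge recursively --- by the auxiliary statement if that side still contains $w$, and by Theorem~\ref{thomassen5ChooseThm} otherwise --- which colours the chord, and then finish the other side by Theorem~\ref{thomassen5ChooseThm} or by the auxiliary statement (according to whether $w$ lies on it); if $C$ has no chord, we peel the neighbour $u\neq v_2$ of $v_1$ on $C$ (here $u\neq w$ since $w\not\sim v_1$, so $|L(u)|\ge 3$), delete two colours of $L(u)\setminus\{\phi(v_1)\}$ from the lists of the interior neighbours of $u$ (each still of size $\ge 3$), apply the inductive hypothesis to $G-u$ (whose outer cycle still carries $v_1v_2$ and $w$), and colour $u$ with whichever of those two colours is not used on the other neighbour of $u$ on $C$. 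The size-$2$ vertex $w$ simply rides along on the outer cycle and never needs to be peeled.

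Granting the auxiliary statement, the theorem follows quickly. Reduce to the case that $v$ and $w$ both lie on the outer cycle $C$ of a $2$-connected near-triangulation. If $v$ and $w$ are adjacent, colour them with distinct colours from their size-$2$ lists (possible since both lists have size $2$) and apply Theorem~\ref{thomassen5ChooseThm} --- to $G$ if $vw\in E(C)$, or to each of the two pieces into which the chord $vw$ splits $G$. If $v$ and $w$ are not adjacent, pick a neighbour $v'$ of $v$ on $C$ (necessarily $v'\neq w$), choose $\phi(v)\in L(v)$ and $\phi(v')\in L(v')\setminus\{\phi(v)\}$, and invoke the auxiliary statement with precoloured edge $vv'$ and extra size-$2$ vertex $w$; its non-adjacency hypothesis holds because $w$ is not adjacent to $v$.

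The main obstacle is the chord case of the auxiliary statement, specifically ensuring that the recursive colouring of the first side does not drive the two ends of the chord onto exactly the two colours of $L(w)$ in the degenerate situation where, on the second side, $w$ is adjacent to both ends of the chord and that side is (essentially) just the triangle on $w$ and the chord. Handling this requires, before recursing on the first side, restricting the list of one chord-end to a size-$2$ subset that omits a colour of $L(w)$ --- so that chord-end becomes the ``extra size-$2$ vertex'' for that recursion --- together with a short case check that this is always possible and that the non-adjacency hypothesis is propagated correctly through all splits and through the passage to a near-triangulation. Everything else is routine Thomassen-style bookkeeping.
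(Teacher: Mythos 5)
The paper does not prove this theorem: it is quoted as a black box from \cite{2ListSize2PaperSeriesI} (Postle and Thomas, ``Two lists of size two in planar graphs''), so there is no proof in the paper to compare your argument against. Evaluating your proposal on its own merits, there is a genuine gap: the auxiliary statement you intend to prove by Thomassen-style induction is in fact false, and the fix you propose for the chord case does not repair it.

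Concretely, take $G$ to be a $5$-cycle $C=v_1v_2xwy$ together with the single chord $xy$, with lists
$L(v_1)=\{1\}$, $L(v_2)=\{2\}$, $L(x)=\{2,a,b\}$, $L(y)=\{1,a,b\}$, $L(w)=\{a,b\}$,
where $1,2,a,b$ are four distinct colours. This satisfies every hypothesis of your auxiliary statement: $v_1v_2\in E(C)$ is properly precoloured, $w\in V(C)\setminus\{v_1,v_2\}$ has a $2$-list and is adjacent to neither $v_1$ nor $v_2$, every other vertex of $C$ (namely $x,y$) has a $3$-list, and there are no interior vertices. Yet the precolouring does not extend: $\phi(x)$ must lie in $\{a,b\}$, whence $\phi(y)$ is forced to be the other element of $\{a,b\}$, and then $L(w)\setminus\{\phi(x),\phi(y)\}=\varnothing$. (Adding the chord $v_1x$ turns this into a near-triangulation without changing anything, so the ``reduce to a near-triangulation'' step does not save you.) Your proposed fix --- restricting one chord-end, say $x$, to a $2$-sublist omitting a colour of $L(w)$ --- fails here for a structural reason: you can force $\phi(x)\neq a$, but since $|L(x)|=3$ and $L(w)\subset L(x)$ there is no $2$-sublist of $L(x)$ disjoint from $L(w)$, so you cannot force $\phi(x)\notin L(w)$, and $\phi(y)$ can still land on the other colour of $L(w)$. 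The obstruction is precisely the broken-wheel / fan phenomenon that the characterization quoted in the paper as Theorem \ref{BohmePaper5CycleCorList} isolates, and handling it is what makes the Postle--Thomas proof substantially more delicate than a drop-in modification of Thomassen's induction. A correct proof along these lines would have to carry significantly more structural information through the induction (e.g.\ a description of which precolourings of the ``edge plus $2$-list'' boundary extend, not merely an existence statement), so that the chord recursion can rule out the forced-fan configurations above.

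As a smaller point, your chordless-case claim ``$u\neq w$ since $w\not\sim v_1$'' does not follow from the stated hypothesis ``$w$ is not adjacent to both $v_1$ and $v_2$,'' which permits $w\sim v_1$; you would need to peel from the $v_2$-side in that subcase. That particular slip is repairable, but it does not help with the false auxiliary statement above.
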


We now recall some notions from topological graph theory. Given an embedding $G$ on surface $\Sigma$, the deletion of $G$ partitions $\Sigma$ into a collection of disjoint, open connected components called the \emph{faces} of $G$. Our main objects of study are the subgraphs of $G$ bounding the faces of $G$. Given a subgraph $H$ of $G$, we call $H$ a \emph{facial subgraph} of $G$ if there exists a connected component $U$ of $\Sigma\setminus G$ such that $H=\partial(U)$. We call $H$ is called a \emph{cyclic facial subgraph} (or, more simply, a \emph{facial cycle}) if $H$ is both a facial subgraph of $G$ and a cycle. Given  a cycle $C\subseteq G$, we say that $C$ is \emph{contractible} if it can be contracted on $\Sigma$ to a point, otherwise we say it is \emph{noncontractible}. We now introduce two standard paramaters that measure the extent to which an embedding deviates from planarity. 

\begin{defn}\label{EWandFWDefn} \emph{Let $\Sigma$ be a surface and let $G$ be an embedding on $\Sigma$. The \emph{edge-width} of $G$, denoted by $\textnormal{ew}(G)$, is the length of the shortest noncontractible cycle in $G$. The \emph{face-width} of $G$, denoted by $\textnormal{fw}(G)$, is the smallest integer $k$ such that there exists a noncontractible closed curve of $\Sigma$ which intersects with $G$ on $k$ points. If $G$ has no noncontractible cycles, then we define $\textnormal{ew}(G)=\infty$, and if $g(\Sigma)=0$, then we define $\textnormal{fw}(G)=\infty$. The face-width of $G$ is also sometimes called the \emph{representativity} of $G$. Some authors consider the face-width to be undefined if $\Sigma=\mathbb{S}^2$, but, for our purposes, adopting the convention that $\textnormal{fw}(G)=\infty$ in this case is much more convenient. The notion of face-width was introduced by Robertson and Seymour in their work on graph minors and has been studied extensively.} \end{defn}

In this paper, our primary interest lies in embeddings without short separating cycles, so we define the following

\begin{defn} \emph{Let $\Sigma$ be a surface and let $G$ be an embedding on $\Sigma$. A \emph{separating cycle} in $G$ is a contractible cycle $C$ in $G$ such that each of the two connected components of $\Sigma\setminus C$ has nonempty intersection with $V(G)$. We call $G$ \emph{short-inseparable} if $\textnormal{ew}(G)>4$ and $G$ does not contain any separating cycle of length 3 or 4.}\end{defn}

We now define the following. 

\begin{defn} \emph{Let $\Sigma$ be a surface and let $G$ be an embedding on $\Sigma$. We say that $G$ is a \emph{2-cell embedding} if each component of $\Sigma\setminus G$ is homeomorphic to an open disc. We say that $G$ is a \emph{closed 2-cell embedding} ifthe closure of each component of $\Sigma\setminus G$ is homeomorphic to a closed disc.}\end{defn}

If $G$ is a 2-cell embedding, then $\textnormal{fw}(G)$ can be alternatively regarded as the smallest integer $k$ such that there exists a collection of $k$ facial subgraphs of $G$ whose union contains a noncontractible cycle of $G$. In practice, we are usually working with a 2-cell embedding, and in that case, we mostly use the above definition of $\textnormal{fw}(G)$ rather than that of Definition \ref{EWandFWDefn}, as it is usually easier to work with for our purposes. Lastly, in this paper, we rely on the following simple but useful result, which is a consequence of a characterization in \cite{lKthForGoBoHm6} of obstructions to extending a precoloring of a cycle of length at most six in a planar graph. 

\begin{theorem}\label{BohmePaper5CycleCorList} Let $G$ be a short-inseparable planar embedding with facial cycle $C$, where $5\leq |V(C)|\leq 6$ . Let $G'=G\setminus C$ and $L$ be a list-assignment for $G$, where $|L(v)|\geq 5$ for all $v\in V(G')$. Let $\phi$ be an $L$-coloring of $V(C)$ which does not extend to $L$-color $G$. If $|V(C)|=5$, then $G'$ consists of a lone vertex $v$ adjacent to all five vertices of $C$, where $L_{\phi}(v)=\varnothing$. On the other hand, if $|V(C)|=6$, then $G'$ consists of one of the following.
\begin{enumerate}[label=\roman*)]
\itemsep-0.1em
\item A lone vertex $v$ adjacent to at least five vertices of $C$, where $L_{\phi}(v)=\varnothing$; OR
\item An edge where, for each $v\in V(G')$, $L_{\phi}(v)$ is the same 1-list and $G[N(v)\cap V(C)]$ is a length-three path; OR
\item A triangle where, for each $v\in V(G')$, $L_{\phi}(v)$ is the same 2-list and $G[N(v)\cap V(C)]$ is a length-two path.
\end{enumerate}
\end{theorem}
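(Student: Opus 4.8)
The plan is to deduce the statement directly from the classification of precoloring‑extension obstructions for cycles of length at most six in \cite{lKthForGoBoHm6}, and then to use the hypothesis that $G$ has no separating cycle of length $3$ or $4$ to rule out most entries of that list and to show that $G$ carries no vertices beyond the obstruction itself. First I would pass to a minimal obstruction: among all subgraphs $G_0\subseteq G$ with $V(C)\subseteq V(G_0)$ to which $\phi$ does not extend as an $L$-coloring, choose one minimizing $|V(G_0)|+|E(G_0)|$. Since $\phi$ is already an $L$-coloring of $C$, the graph $G_0$ is strictly larger than $C$; moreover $C$ remains a facial cycle of $G_0$ (deleting interior vertices, interior edges, and chords of $C$ cannot destroy the face bounded by $C$), and $G_0$ is again short-inseparable, because every cycle of $G_0$ is a cycle of $G$ in the inherited embedding and deleting vertices or edges cannot populate a previously empty side of it. By minimality $G_0$ is connected and has no separate component (a detached piece would be a planar graph with all lists of size at least $5$, hence $L$-colorable by \cite{AllPlanar5ThomPap}, and could be discarded). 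Thus $G_0$ is exactly of the kind handled in \cite{lKthForGoBoHm6}, and I would read off the finite list of candidates for $G_0$.

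For $|V(C)|=5$, that classification leaves only the configuration in which $G_0\setminus C$ is a single vertex $v$ adjacent to all five vertices of $C$ with $L_{\phi}(v)=\varnothing$. It then remains to see that $G$ has no further vertices. Since $C$ is facial, the edges $vc_1,\dots,vc_5$ occur around $v$ in the cyclic order inherited from $C$, so by Euler's formula the plane subgraph $C+v$ of $G$ has exactly six faces: the outer pentagon and five triangular faces bounded by $vc_ic_{i+1}$. Any vertex of $G$ outside $C+v$ would lie in one of these triangles (the outer pentagon is the facial region of $G$ and contains no vertex), and then $vc_ic_{i+1}$ would be a cycle of $G$ with that vertex on one side and $c_{i+2}$ on the other, a separating $3$-cycle, contradiction. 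Hence $G\setminus C=\{v\}$ with $v$ adjacent to all of $C$ and $L_{\phi}(v)=\varnothing$, which is the asserted conclusion.

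For $|V(C)|=6$ the argument is the same in spirit but now the classification yields several candidate cores. A core consisting of a single vertex $v$ adjacent to at least five vertices of $C$ with $L_{\phi}(v)=\varnothing$ is handled by the face-covering argument above: the only possibly non-triangular face of $C+v$ is a quadrilateral $vc_jc_{j+1}c_{j+2}$ when $v$ misses one vertex of $C$, and a vertex there would produce a separating $4$-cycle, so again $G\setminus C=\{v\}$, giving case~(i). For the cores consisting of an edge, or of a triangle, with the stated residual-list and neighborhood structure, I would first check directly that the core is itself short-inseparable — its cycles of length at most $4$ are either facial triangles or enclose no vertex (the central triangle $uvw$ in case~(iii), and $4$-cycles such as $uc_1vc_4$ or $uc_1wv$ which enclose only an edge) — and then the same face argument, with every interior face of the core a triangle or the central triangle, forces $G$ to equal the core, giving cases~(ii) and~(iii). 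Any remaining entry of the \cite{lKthForGoBoHm6} list must then be absorbed or excluded: a chord $c_ic_{i+2}$ of $C$ either creates a separating triangle $c_ic_{i+1}c_{i+2}$, or its ``ear'' is a facial triangle and deleting it reduces $G$ to a short-inseparable instance with boundary cycle of length $5$, which by the previous case lands in~(i); a chord $c_ic_{i+3}$ produces two $4$-cycles which are separating unless both sides are trivial, contradicting non-extendability; and any obstruction internally cut by a cycle of length $3$ or $4$ with vertices on both sides is forbidden outright.

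The step I expect to be the main obstacle is this last one: matching the \cite{lKthForGoBoHm6} classification precisely onto cases~(i)--(iii) and verifying that every configuration in that list which is not one of the three — in particular the ``reducible'' configurations involving chords of $C$ or short internal separators — is incompatible with short-inseparability. The face-covering arguments and the verifications that the cores are short-inseparable are routine, but care is needed to account for every entry of the obstruction list and for the possibility of extra edges among $V(C)\cup V(G')$.
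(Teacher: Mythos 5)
The paper itself does not prove this theorem; it simply presents it as ``a consequence of a characterization in \cite{lKthForGoBoHm6}'' and moves on, so there is no internal proof to compare against. Your approach—pass to a minimal obstructing subgraph, invoke the B\"ohme--Mohar--Stiebitz classification, then use short-inseparability together with a face-covering argument to force $G$ to coincide with the obstruction core—is the natural way to cash out that citation, and the topological steps you do carry out are sound: subgraphs inherit short-inseparability, detached components may be discarded via Thomassen's theorem, and in the $5$-cycle case the plane subgraph $C+v$ has only triangular interior faces, so short-inseparability indeed forbids any further vertex, and (as you should also note explicitly) any further edge would have to lie inside a triangular face joining two already-adjacent vertices, so $G=C+v$ exactly.

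The genuine gap is the one you yourself flag: the argument hinges entirely on ``reading off'' the finite list in \cite{lKthForGoBoHm6}, but you never state that list, never verify that your cases (i)--(iii) are exactly the entries surviving short-inseparability, and never show that every other entry is excluded or reduced. In particular, the $|V(C)|=6$ analysis asserts without proof that the only irreducible cores are the six-wheel, the near-wheel, the ``edge'' configuration, and the ``triangle'' configuration; it sketches how chords $c_ic_{i+2}$ and $c_ic_{i+3}$ are handled, but does not rule out obstructions containing more than one vertex of $D_2(C)$, mixed configurations (a chord of $C$ together with a non-trivial interior core on the shrunken $5$-cycle yielding something outside (i)--(iii)), or whether the B\"ohme--Mohar--Stiebitz list as actually stated aligns with the three cores plus reducible items. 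Since that matching is the entire content of the theorem, the proof as written is an outline rather than a complete argument. You should quote the classification verbatim, check each entry against short-inseparability one by one, and, for the surviving entries, verify that the residual-list equalities ($L_{\phi}(v)=\varnothing$, equal $1$-lists, equal $2$-lists) are forced rather than merely compatible.
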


Our main result for this sequence of three papers is Theorem \ref{5ListHighRepFacesFarMainRes} below. Note that the pairwise-distance lower bound does not depend on the number of special faces or their sizes. 

\begin{theorem}\label{5ListHighRepFacesFarMainRes} Let $\Sigma$ be a surface, $G$ be an embedding on $\Sigma$ of face-width at least $2^{\Omega(g(\Sigma))}$, and $F_1, \ldots, F_m$ be a collection of facial subgraphs of $G$ which are pairwise of distance at least $2^{\Omega(g(\Sigma))}$ apart. Let $x_1y_1, \ldots, x_my_m$ be a collection of edges in $G$, where $x_iy_i\in E(F_i)$ for each $i=1,\ldots, m$. Let $L$ be a list-assignment for $G$ such that 
\begin{enumerate}[label=\arabic*)]
\itemsep-0.1em
\item for each $v\in V(G)\setminus\left(\bigcup_{i=1}^mV(C_i)\right)$, $|L(v)|\geq 5$; AND
\item For each $i=1,\ldots, m$, $x_iy_i$ is $L$-colorable, and, for each $v\in V(F_i)\setminus\{x_i, y_i\}$, $|L(v)|\geq 3$.
\end{enumerate}
Then $G$ is $L$-colorable.  \end{theorem}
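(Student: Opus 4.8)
\medskip
\noindent\textbf{Proof proposal.} I would argue by induction on the genus $g=g(\Sigma)$, with $|V(G)|+|E(G)|$ as a secondary parameter, and throughout work with a strengthened statement that also allows some of the precolored facial subgraphs to be \emph{fully} $L$-colored cycles, rather than only cycles carrying a precolored edge, since cycles of the former type will be manufactured by the cutting argument below. I would begin with the standard reductions on a minimal counterexample $G$: delete any vertex or edge whose removal preserves all the hypotheses and meets no $F_i$, so that $G$ is critical for the precoloring; triangulate every face other than $F_1,\dots,F_m$ by adding edges (this only decreases $5$-lists' worth of freedom and so is harmless), which installs the triangulation conditions of this paper; and observe that $G$ may be taken short-inseparable, since a separating cycle of length at most $4$, or a noncontractible cycle of length at most $4$, can be used to split $G$ into strictly smaller instances to which the induction applies.

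\noindent The base case $g=0$ is planar, and the face-width hypothesis is vacuous. If $m=0$ every list has size at least $5$ and Theorem~\ref{thomassen5ChooseThm} applies to any edge of any facial cycle; if $m=1$, short-inseparability forces $F_1$ to be a cycle and the hypothesis is exactly that of Theorem~\ref{thomassen5ChooseThm}. The case $m\ge 2$ already contains the main idea in miniature: using the distance hypothesis, with $2^{\Omega(0)}$ read as a sufficiently large constant, one finds a separating cycle $C$ lying far from every $F_i$ with $F_1$ on one side and $F_2,\dots,F_m$ on the other; being far from all precolored faces, $C$ is disjoint from them, and by short-inseparability $|V(C)|\ge 5$. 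Restricting $L$ to the disc $D$ on the $F_1$-side, one asks which $L$-colorings of $C$ extend across $D$; when $|V(C)|\in\{5,6\}$ Theorem~\ref{BohmePaper5CycleCorList} lists the non-extendable colorings explicitly, and in general the disc-extension obstructions descending from Theorem~\ref{thomassen5ChooseThm} are sparse enough that some $L$-coloring of $C$ does extend across $D$. Fixing such a coloring and recursing on the two pieces -- one with precolored faces $F_1$ and the now fully colored $C$, the other with $C,F_2,\dots,F_m$ -- settles the planar case, with Theorem~\ref{Two2ListTheorem} available to replace a fully colored $C$ by two vertices bearing $2$-lists whenever that is more convenient.

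\noindent For $g\ge 1$ the crucial topological input is that $\textnormal{fw}(G)\ge 2^{\Omega(g)}$ allows me to choose a noncontractible cycle $C$ at distance at least $2^{\Omega(g)}/10$ from every $F_i$: the balls of that radius about the $F_i$ are pairwise disjoint and each lies in a disc -- any noncontractible cycle has length at least $\textnormal{ew}(G)\ge\textnormal{fw}(G)$ and so does not fit inside one -- and a standard face-width estimate shows that $G$ still has a noncontractible cycle disjoint from all of them. Cutting $\Sigma$ along such a $C$ lowers the genus (and splits $\Sigma$ if $C$ separates it); because $C$ avoids every $F_i$ by a wide margin the new instance keeps all prior pairwise distances, and it can be arranged to retain face-width at least $2^{\Omega(g-1)}$ for a suitable choice of the implied constants; moreover the two new boundary cycles $C',C''$ remain far apart in the cut graph, since a short path between them would close up, with a short arc of $C$, to a noncontractible cycle of length less than $\textnormal{ew}(G)$. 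Iterating this step $O(g)$ times, each cut costing only a constant factor of room while the surviving genus needs proportionally less, is exactly what makes the bound merely exponential in $g$. The one genuine difficulty -- and the step I expect to be the main obstacle -- is the coordination that cutting imposes: $C'$ and $C''$ must be assigned \emph{identical} colorings before the pieces can be reglued, so one must select a single $L$-coloring $\psi$ of $C$ that extends simultaneously on both sides of the cut. This is possible precisely because $C$ is long, so $\psi$ ranges over exponentially many candidates, while the $L$-colorings of $C$ that fail to extend on a given side form, by the disc-extension obstruction theory underlying Theorems~\ref{thomassen5ChooseThm} and~\ref{BohmePaper5CycleCorList}, a small and rigidly structured family; carrying this counting out uniformly in $m$ and in the sizes of the $F_i$, and threading it cleanly through the induction, is the crux of the whole argument.
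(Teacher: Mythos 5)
Your central step---cutting the surface along a noncontractible cycle $C$ and selecting a single $L$-coloring $\psi$ of $C$ that extends across both sides of the cut---has a genuine gap that the tools you cite do not close. You claim that because $C$ is long, the non-extendable precolorings on each side form ``a small and rigidly structured family'' by the obstruction theory behind Theorems \ref{thomassen5ChooseThm} and \ref{BohmePaper5CycleCorList}; but that theory characterizes obstructions only for cycles of length at most six. For a cycle $C$ of length $\ell \geq \textnormal{ew}(G) \geq 2^{\Omega(g)}$, there is no clean structural characterization of the non-extendable precolorings, and they need not be a small fraction of all precolorings (local gadgets such as an interior vertex with a $5$-list adjacent to five consecutive boundary vertices block a constant proportion of colorings of that arc, and these compound along $C$). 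Making anything like your quantitative claim rigorous for unbounded $\ell$ is essentially the Postle--Thomas hyperbolic-families program, not a consequence of Thomassen's theorem. And even if each side individually admitted ``many'' good colorings, you would still need the two good sets to intersect, which your argument does not address. The same unbounded-cycle difficulty already appears in your planar $m \geq 2$ base case, where you would need an inductive statement strong enough to handle a fully precolored separating cycle of length not bounded by $m$ or by anything else under your control.

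The paper avoids all of this by never cutting and regluing. It proves the result only for tessellations (the full Theorem \ref{5ListHighRepFacesFarMainRes} is the cumulative output of a three-paper series), via a strengthened induction hypothesis---mosaics (Definition \ref{MainMosaicAxioms}), in which closed precolored rings have length at most the fixed constant $N_{\textnormal{mo}}$---and shows that a minimal counterexample satisfies strong local structure, culminating in Theorem \ref{NOver4GoodSideCorCritMos} (each ring is uniquely $(N_{\textnormal{mo}}/3)$-determined). It then invokes the Paper-I black boxes (Theorems \ref{FaceConnectionMainResult} and \ref{SingleFaceConnRes}) to \emph{color and delete} a connected, $(L,\phi)$-inert set $A$ touching all the rings; the inertness of $A$ guarantees the deleted region can be colored afterwards, so no two-sided consistency across a cut is ever required, and the genus drop in Section \ref{CritMosFaceWidthSec} comes from choosing $A$ to absorb a noncontractible curve outright. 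Where long boundary cycles threaten to appear (e.g.\ after a separating cycle is used), they are replaced by bounded-length ``webs'' (Definition \ref{InAndOutWebDefn}) so that the precolored boundaries stay at length $O(N_{\textnormal{mo}})$. Your proposal has no analogue of the connect-and-delete step, no mechanism to keep the precolored cycles short, and no substitute for the inertness property; these are the ingredients that the paper uses precisely to sidestep the simultaneous-extension problem you flag as the crux.
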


An immediate consequence of Theorem \ref{5ListHighRepFacesFarMainRes} is the following slightly weaker result about pairwise far-apart components with ordinary 2-colorings. 

\begin{theorem}\label{WVersionThmPrecCompFW}  Let $\Sigma$ be a surface, $G$ be an embedding on $\Sigma$ of face-width at least $2^{\Omega(g(\Sigma))}$, and $L$-be a list-assignment for $V(G)$ in which every vertex has a list of size at least five, except for the vertices of some connected subgraphs $K_1, \cdots, K_m$ of $G$ which are pairwise of distance at least $2^{\Omega(g(\Sigma))}$ apart, where, for each $i=1, \cdots, m$, there is an $L$-coloring of $K_i$ which is an ordinary 2-coloring. Then $G$ is $L$-colorable. 
 \end{theorem}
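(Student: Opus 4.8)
The plan is to derive Theorem~\ref{WVersionThmPrecCompFW} from Theorem~\ref{5ListHighRepFacesFarMainRes} by a localized modification argument near each precolored subgraph $K_i$. Fix an $L$-coloring $\psi_i$ of each $K_i$ that uses only two colors, say $\{a_i, b_i\}$. The key geometric observation is that since the $K_i$ are pairwise of distance at least $2^{\Omega(g(\Sigma))}$ apart and $G$ has large face-width, a suitable small neighborhood of each $K_i$ is planar and, in fact, we may assume $G$ is short-inseparable after a preliminary reduction (or argue directly on a minimal counterexample). The idea is to replace each 2-colored connected subgraph $K_i$ by a single precolored edge sitting on a face, so that Theorem~\ref{5ListHighRepFacesFarMainRes} applies.

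\textbf{Step 1: Reducing a 2-colored component to a precolored edge.} Consider one $K_i$ with its 2-coloring $\psi_i$ in colors $\{a_i,b_i\}$. Let $H_i$ be the subgraph induced by $V(K_i)$ together with all vertices at distance $1$ from $K_i$ (this is a planar patch, by the distance hypotheses and large face-width). I would contract $K_i$ appropriately: since $\psi_i$ is a proper 2-coloring, $K_i$ is bipartite with parts $A_i$ (color $a_i$) and $B_i$ (color $b_i$); identify all of $A_i$ to a single vertex $x_i$ and all of $B_i$ to a single vertex $y_i$, deleting any resulting loops and parallel edges. Give $x_i$ the list $\{a_i\}$ and $y_i$ the list $\{b_i\}$, so that $x_iy_i$ is $L$-colorable (they are distinct colors). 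Every vertex formerly adjacent to $K_i$ loses at most\,\dots\ well, it retains a list of size at least $5$, and since it was not in $K_i$ it is unaffected by the list change. The subtlety is that this identification must be carried out so as to remain an embedding: because $K_i$ is connected and lies in a disc, $A_i$ and $B_i$ can be contracted along a spanning tree of $K_i$ without changing the surface, and $x_iy_i$ then lies on a common face $F_i$ (the face obtained from the outer boundary of the disc patch). One must check that distinct precolored patches stay far apart and that the face-width does not drop below the required threshold — both follow because the modifications are confined to disjoint balls of bounded radius, and contractions along these small connected subgraphs change distances and face-width by at most an additive constant absorbed into the $\Omega(g(\Sigma))$ bound.

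\textbf{Step 2: Applying the main theorem and lifting the coloring.} After performing the above reduction for every $i$, we obtain a new embedding $G'$ on the same surface with facial subgraphs $F_1,\dots,F_m$, precolored edges $x_iy_i\in E(F_i)$, list-assignment $L'$ satisfying hypotheses 1) and 2) of Theorem~\ref{5ListHighRepFacesFarMainRes}, and pairwise distances and face-width still at least $2^{\Omega(g(\Sigma))}$. Theorem~\ref{5ListHighRepFacesFarMainRes} yields an $L'$-coloring $\phi'$ of $G'$. Now lift $\phi'$ back to $G$: define $\phi(v)=\phi'(v)$ for $v\notin\bigcup_i V(K_i)$, and for $v\in A_i$ set $\phi(v)=\phi'(x_i)=a_i$, and for $v\in B_i$ set $\phi(v)=\phi'(y_i)=b_i$. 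This agrees with $\psi_i$ on $K_i$. Properness inside $K_i$ holds since $\psi_i$ was proper; properness on edges from $K_i$ to the rest of $G$ holds because any such edge $uv$ with $u\in A_i$ became an edge $x_iv$ in $G'$ (or a vertex identified with one), and $\phi'$ is proper there, so $\phi(v)=\phi'(v)\neq\phi'(x_i)=\phi(u)$; and edges disjoint from all $K_i$ are fine since $\phi'$ is proper.

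\textbf{Main obstacle.} The delicate point is Step 1: ensuring the contraction of each $K_i$ to an edge is realizable as an embedding on the \emph{same} surface and that the resulting edge lies on a genuine face, while simultaneously controlling the effect on face-width and on the pairwise distances so the quantitative hypotheses of Theorem~\ref{5ListHighRepFacesFarMainRes} are still met. This requires the preliminary observation that each $K_i$, being connected and far from everything else in a high-face-width embedding, sits inside an open disc, so contracting it along a spanning tree is a homeomorphism-preserving operation on $\Sigma$; the bookkeeping on face-width and distances is then an additive-constant estimate. A secondary, easier concern is that one might need the $L$-coloring of $K_i$ to be extendable to some fixed edge — but any proper 2-coloring directly gives the two distinct colors $a_i\neq b_i$ on adjacent vertices, so $x_iy_i$ is automatically $L'$-colorable and no extra work is needed there.
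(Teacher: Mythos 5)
Your reduction hinges on Step~1, where you replace each $K_i$ by a single edge $x_iy_i$ by ``identifying all of $A_i$ to a single vertex $x_i$ and all of $B_i$ to a single vertex $y_i$,'' and you justify the embeddability of the result by asserting that ``$A_i$ and $B_i$ can be contracted along a spanning tree of $K_i$ without changing the surface.'' This step is not correct. Contracting a spanning tree of $K_i$ produces \emph{one} vertex, not two, and the operation you actually perform --- quotienting by the two \emph{independent} sets $A_i$ and $B_i$ --- is not a sequence of edge contractions (every edge of $K_i$ joins $A_i$ to $B_i$, so no edge contraction ever merges two vertices of the same colour class). Hence the resulting graph is not a minor of $G$, and it need not embed on $\Sigma$.

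Concretely, take $K_i$ to be the path $a_1\,b\,a_2$ with $A_i=\{a_1,a_2\}$, $B_i=\{b\}$, and suppose the external neighbours appear in the cyclic order $u_1, v, u_2, v'$ around $K_i$, where $u_1\in N(a_1)$, $u_2\in N(a_2)$, and $v,v'\in N(b)$; this is plainly realizable in a disc. After the proposed identification, $x_i$ must be joined inside the disc to $u_1$ and $u_2$, and $y_i$ to $v$ and $v'$, with $x_iy_i\in E$. But $\{u_1,u_2\}$ and $\{v,v'\}$ alternate on the boundary circle, so the edges $x_iu_1, x_iu_2$ separate the disc into two regions with $v$ on the boundary of one and $v'$ on the boundary of the other; wherever $y_i$ is placed, one of $y_iv$ or $y_iv'$ must cross $x_iu_1$ or $x_iu_2$. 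Thus the quotient is not embeddable in the disc (it contains a $K_{2,3}$-like alternation), and your claim that the modification preserves the surface fails. The subsequent bookkeeping on face-width and distances is moot once the underlying topological step collapses.

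To make a reduction of this flavour work, you cannot simply identify the two colour classes; you must perform a genuine topological operation (a sequence of contractions of \emph{connected} subgraphs, possibly combined with vertex deletions and insertions of auxiliary faces around the hole left by $K_i$) while separately tracking the colouring constraint that $\psi_i$ imposes on $N(K_i)$. The colouring-theoretic observation you make --- that precolouring $K_i$ by $\psi_i$ removes at most two colours from each neighbour's list, so neighbours retain lists of size at least $3$ --- is sound, and is presumably the heart of what the paper has in mind when it calls Theorem~\ref{WVersionThmPrecCompFW} an ``immediate consequence'' of Theorem~\ref{5ListHighRepFacesFarMainRes}; but the face-structure you need for Theorem~\ref{5ListHighRepFacesFarMainRes} has to be produced by a surface-respecting modification, not by quotienting the bipartition classes.
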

The above result can also be viewed as a generalization of the main result of \cite{DistPrecVertChoosePap}, which in turn provided an affirmative answer to a conjecture of Albertson. We now introduce some topological notions used throughout this paper. Unless otherwise specified, all graphs are regarded as embeddings on a previously specified surface, and all surface are compact, connected, and have zero boundary. If we want to talk about a graph $G$ as an abstract collection of vertices and edges, without reference to sets of points and arcs on a surface then we call $G$ an \emph{abstract graph}. 

\begin{defn}\label{ContractNatCPartDefn}\emph{Let $\Sigma$ be a surface, let $G$ be an embedding on $\Sigma$, and let $C$ be a contractible cycle in $G$. Let $U_0, U_1$ be the two open connected components of $\Sigma\setminus C$. The unique \emph{natural $C$-partition} of $G$ is the pair $\{G_0, G_1\}$ of subgraphs of $G$ where, for each $i\in\{0,1\}$, $G_i=G\cap\textnormal{Cl}(U_i)$.} \end{defn}

\begin{defn}
\emph{Given a graph $G$, a subgraph $H$ of $G$, a subgraph $P$ of $G$, and an integer $k\geq 1$, we call $P$ a \emph{$k$-chord} of $H$ if $|E(P)|=k$ and $P$ is of the following form.}
\begin{enumerate}[label=\emph{\arabic*)}]
\itemsep-0.1em
\item \emph{$P:=v_1\cdots v_kv_1$ is a cycle with $v_1\in V(H)$ and $v_2, \cdots, v_k\not\in V(H)$}; OR
\item \emph{$P:=v_1\cdots v_{k+1}$, and $P$ is a path with distinct endpoints, where $v_1, v_{k+1}\in V(H)$ and $v_2,\cdots, v_k\not\in V(H)$.}
\end{enumerate}

\end{defn}

$P$ is called \emph{proper} if it is not a cycle, i.e $P$ intersects $H$ on two distinct vertices. Otherwise it is called \emph{improper}. Note that, for $1\leq k\leq 2$, any $k$-chord of $H$ is proper, as $G$ has no loops or duplicated edges. A 1-chord of $H$ is simply referred to as a \emph{chord} of $H$. In some cases, we are interested in analyzing $k$-chords of $H$ where the precise value of $k$ is not important. We call $P$ a \emph{generalized chord} of $H$ if there exists an integer $k\geq 1$ such that $P$ is a $k$-chord of $H$. We call $P$ a \emph{proper} generalized chord of $H$ if there is an integer $k\geq 1$ such that $P$ is a proper $k$-chord of $H$. (A proper generalized chord of $H$ is also called an \emph{$H$-path}). We define \emph{improper} generalized chords of $H$ analogously. For any $A, B\subseteq V(G)$, an \emph{$(A,B)$-path} is a path $P=x_0\cdots x_k$ with $V(P)\cap A=\{x_0\}$ and $V(P)\cap B=\{x_k\}$. Given a surface $\Sigma$, an embedding $G$ on $\Sigma$, a cyclic facial subgraph $C$ of $G$, and a proper generalized chord $Q$ of $C$, there is, under certain circumstances, a natural way to partition of $G$ specified by $Q$. 

\begin{defn}\label{ContractNatCQPartChordDefn} \emph{Let $\Sigma$ be a surface, $G$ be an embedding on $\Sigma$,  $C$ be a cyclic facial subgraph of $G$, and $Q$ be a generalized chord of $C$, where each cycle in $C\cup Q$ is contractible. The unique \emph{natural $(C,Q)$-partition} of $G$ is the pair $\{G_0, G_1\}$ of subgraphs of $G$ such that $G=G_0\cup G_1$ and $G_0\cap G_1=Q$, where, for each $i\in\{0,1\}$, there is a unique open connected region $U$ of $\Sigma\setminus (C\cup Q)$ such that $G_i$ consists of all the edges and vertices of $G$ in $\textnormal{Cl}(U)$.}

\emph{If the facial cycle $C$ is clear from the context then we usually just refer to $\{G_0, G_1\}$ as the \emph{natural $Q$-partition} of $G$. Note that this is consistent with Definition \ref{ContractNatCPartDefn} in the sense that, if $Q$ is not a proper generalized chord of $C$ (i.e $Q$ is a cycle) then the natural $Q$-partition of $G$ is the same as the natural $(C,Q)$-partition of $G$. If $\Sigma$ is the sphere (or plane) then the natural $(C, Q)$-partition of $G$ is always defined for any $C,Q$.}
\end{defn}

\begin{defn}
\emph{For any graph $G$, vertex set $X\subseteq V(G)$, integer $j\geq 0$, and real number $r\geq 0$, we set $D_j(X, G):=\{v\in V(G): d(v, X)=j\}$ and $B_r(X, G):=\{v\in V(G): d(v, X)\leq r\}$. Given a subgraph $H$ of $G$, we usually just write $D_j(H, G)$ to mean $D_j(V(H), G)$, and likewise, we usually write $B_r(H, G)$ to mean $B_r(V(H), G)$.}
\end{defn}

If $G$ is clear from the context, then we drop the second coordinate from the above notation to avoid clutter. We now introduce some additional notation related to list-assignments. We frequently analyze the situation where we begin with a partial $L$-coloring $\phi$ of a graph $G$, and then delete some or all of the vertices of $\textnormal{dom}(\phi)$ and remove the colors of the deleted vertices from the lists of their neighbors in $G\setminus\textnormal{dom}(\phi)$. We thus define the following.  

\begin{defn}\emph{Let $G$ be a graph, with list-assignment $L$. Let $\phi$ be a partial $L$-coloring of $G$ and $S\subseteq V(G)$. We define a list-assignment $L^S_{\phi}$ for $G\setminus (\textnormal{dom}(\phi)\setminus S)$ as follows.}
$$L^S_{\phi}(v):=\begin{cases} \{\phi(v)\}\ \textnormal{if}\ v\in\textnormal{dom}(\phi)\cap S\\ L(v)\setminus\{\phi(w): w\in N(v)\cap (\textnormal{dom}(\phi)\setminus S)\}\ \textnormal{if}\ v\in V(G)\setminus \textnormal{dom}(\phi) \end{cases}$$ \end{defn}

If $S=\varnothing$, then $L^{\varnothing}_{\phi}$ is a list-assignment for $G\setminus\textnormal{dom}(\phi)$ in which the colors of the vertices in $\textnormal{dom}(\phi)$ have been deleted from the lists of their neighbors in $G\setminus\textnormal{dom}(\phi)$. The situation where $S=\varnothing$ arises so frequently that, in this case, we simply drop the superscript and let $L_{\phi}$ denote the list-assignment $L^{\varnothing}_{\phi}$ for $G\setminus\textnormal{dom}(\phi)$. In some cases, we specify a subgraph $H$ of $G$ rather than a vertex-set $S$. In this case, to avoid clutter, we write $L^H_{\phi}$ to mean $L^{V(H)}_{\phi}$. 

We frequently deal with situations where we have a set $S$ of vertices that we want to delete, and it is desirable to color as few of them as possible in such a way that we can safely delete the remaining vertices of $S$ without coloring them. We thus introduce the following definition.

\begin{defn}\emph{Let $G$ be a graph with a list-assignment $L$. Given an $S\subseteq V(G)$ and a partial $L$-coloring $\phi$ of $V(G)$, we say that $S$ is \emph{$(L, \phi)$-inert in $G$} if, for every extension of $\phi$ to a partial $L$-coloring $\psi$ of $G\setminus (S\setminus\textnormal{dom}(\phi))$ whose domain contains $D_1(S\setminus\textnormal{dom}(\phi))$, we get that $\psi$ extends to an $L$-coloring of $\textnormal{dom}(\psi)\cup V(S)$.}\end{defn}

We repeatedly implicitly rely on the following observation. 

\begin{obs} Let $G$ be a graph with a list-assignment $L$. Let $S, S'\subseteq V(G)$ and $\phi, \phi'$ be partial $L$-colorings of $G$, where $\phi\cup\phi'$ is well-defined and the sets $S\setminus\textnormal{dom}(\phi)$ and $S'\setminus\textnormal{dom}(\phi')$ are of distance at least two apart. If $S$ is $(L, \phi')$-inert in $G$ and $S'$ is $(L, \phi')$-inert in $G$, then $S\cup S'$ is $(L, \phi\cup\phi')$-inert in $G$. \end{obs}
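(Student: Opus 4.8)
The plan is to prove this by checking directly that the hypotheses of the definition of $(L,\phi\cup\phi')$-inertness are met, reducing everything to the two given inertness statements by a locality argument based on the distance-two separation. First I would unwind the definitions: write $T=S\setminus\textnormal{dom}(\phi)$ and $T'=S'\setminus\textnormal{dom}(\phi')$, so that by hypothesis $d(T,T')\geq 2$, and observe that $(S\cup S')\setminus\textnormal{dom}(\phi\cup\phi')$ is contained in $T\cup T'$ (in fact equals it up to vertices already colored by the other coloring, which are harmless). Then I would take an arbitrary extension $\psi$ of $\phi\cup\phi'$ to a partial $L$-coloring of $G\setminus\bigl((S\cup S')\setminus\textnormal{dom}(\phi\cup\phi')\bigr)$ whose domain contains $D_1\bigl((S\cup S')\setminus\textnormal{dom}(\phi\cup\phi')\bigr)$, and I must show $\psi$ extends to an $L$-coloring of $\textnormal{dom}(\psi)\cup V(S)\cup V(S')$.

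The key step is to apply the two inertness hypotheses \emph{in sequence}. Since $\psi$ restricted to an appropriate domain is an extension of $\phi$ and its domain contains $D_1(T)$, the $(L,\phi)$-inertness of $S$ gives an $L$-coloring $\psi_1$ of $\textnormal{dom}(\psi)\cup V(S)$ extending $\psi$; here I need to check that $\psi$ indeed satisfies the premise of $S$'s inertness, namely that $\textnormal{dom}(\psi)\supseteq D_1(T)$ — which holds since $D_1(T)\subseteq D_1\bigl((S\cup S')\setminus\textnormal{dom}(\phi\cup\phi')\bigr)\subseteq\textnormal{dom}(\psi)$ — and that $\psi$ extends $\phi$, which is immediate. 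Now I would like to apply the $(L,\phi')$-inertness of $S'$ to $\psi_1$. The point where distance two is used: the newly colored vertices of $V(S)\setminus\textnormal{dom}(\psi)$ all lie in $T$, hence are at distance $\geq 2$ from every vertex of $T'$, so none of them is adjacent to any uncolored vertex of $T'=S'\setminus\textnormal{dom}(\psi_1)$; consequently $\psi_1$ restricted away from $T'$ is still a legitimate partial $L$-coloring whose domain contains $D_1(T')$ and which extends $\phi'$, and these new colors do not shrink the $L_{\psi_1}$-lists of vertices in $V(S')$ beyond what $\psi_1\setminus(\text{colors on }V(S))$ already did. Therefore $S'$'s inertness yields an $L$-coloring $\psi_2$ of $\textnormal{dom}(\psi_1)\cup V(S')=\textnormal{dom}(\psi)\cup V(S)\cup V(S')$ extending $\psi_1$, and hence extending $\psi$, as required.

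The main obstacle I anticipate is bookkeeping rather than conceptual difficulty: one has to be careful that ``extension'' is used consistently (a partial $L$-coloring of a subgraph versus of a vertex set), that $\textnormal{dom}(\phi\cup\phi')=\textnormal{dom}(\phi)\cup\textnormal{dom}(\phi')$ behaves well under the set differences (using that $\phi\cup\phi'$ is well-defined, so the two colorings agree on the overlap), and — the genuinely load-bearing point — that coloring the vertices of $V(S)$ in the first step cannot create a conflict with or further restrict the vertices that the second application of inertness must handle. This last point is exactly where $d(T,T')\geq 2$ enters: it guarantees $N\bigl(V(S)\setminus\textnormal{dom}(\psi)\bigr)\cap T'=\varnothing$, so the closed neighborhood structure relevant to $S'$ is untouched. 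Once that is spelled out, the rest is a routine ``apply hypothesis, then apply the other hypothesis'' composition, and symmetry means it does not matter which of $S,S'$ we color first.
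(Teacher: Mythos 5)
The paper states this Observation without proof---it is introduced as a fact ``repeatedly implicitly relied on''---so there is no paper argument to compare against; your sequential two-step composition is the natural strategy and is the right one. There is, however, a genuine gap hiding behind the parenthetical remark that vertices ``already colored by the other coloring'' are ``harmless.'' Write $T=S\setminus\textnormal{dom}(\phi)$, $T'=S'\setminus\textnormal{dom}(\phi')$, and $W=(S\cup S')\setminus\textnormal{dom}(\phi\cup\phi')$. The inclusion $D_1(T)\subseteq D_1(W)$ that you assert does two things at once: it uses $d(T,T')\geq 2$ to keep $T'$-vertices out of $D_1(T)$, \emph{and} it silently requires $T\cap\textnormal{dom}(\phi')=\varnothing$. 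If $\phi'$ colors some $t\in T$, then $t\notin W$, so a vertex $v$ whose only neighbor in $T\cup T'$ is $t$ can lie in $D_1(T)$ while $d(v,W)\geq 2$, and then $v$ need not be in $\textnormal{dom}(\psi)$; the $(L,\phi)$-inertness of $S$ cannot be invoked for the given $\psi$.

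There is a second, independent problem in the same situation: the $\psi$ you are handed already colors $t$ (via $\phi'\subseteq\phi\cup\phi'\subseteq\psi$), whereas the coloring $\psi_1$ supplied by the $(L,\phi)$-inertness of $S$ only promises to agree with $\psi$ on $\textnormal{dom}(\psi)\setminus T$; it is free to recolor $t$, in which case $\psi_1$ is not an extension of $\psi$, and the final coloring you produce via the second inertness step is not an extension of $\psi$ either, which is what was required. Both issues vanish once you note (or add as a standing hypothesis, which is the regime in which the paper actually applies the Observation) that $\textnormal{dom}(\phi')\cap T=\varnothing$ and $\textnormal{dom}(\phi)\cap T'=\varnothing$; then $W=T\cup T'$ exactly, $\psi$ avoids all of $T$, and your argument goes through cleanly. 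One further small point: the printed statement says both $S$ and $S'$ are $(L,\phi')$-inert; you correctly read the first occurrence as $(L,\phi)$-inert, which is evidently a typo.
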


\section{Two Black Boxes From \cite{JNevinThesisManyFacePartIPaperRevised}}\label{BlackBoxPIISec}

To prove the main result of this paper, we rely on two results from \cite{JNevinThesisManyFacePartIPaperRevised}, Paper I of this sequence, as black boxes. To state these results, we introduce the following definitions. 

\begin{defn}\label{UniqueKLSpecifiedDefn} \emph{Let $G$ be an embedding on a surface $\Sigma$. Let $C$ be a contractible facial cycle of $G$, and $L$ is a list-assignment for $V(G)$. Given an integer $k\geq 2$, we say that $C$ is \emph{uniquely $k$-determined in $G$} (with respect to $L$) if, for any generalized chord $Q$ of $C$ (proper or improper) of length at most $k$, each cycle in $C\cup Q$ is contractible, and, letting $G=G_0\cup G_1$ be the natural $(C,Q)$-partition of $G$, there is a (necessarily unique) $i\in\{0,1\}$ such that}
\begin{enumerate}[label=\emph{\alph*)}]
\itemsep-0.1em
\item\emph{Every vertex of $G_{1-i}\setminus C$ has an $L$-list of size at least five and the open component of $\Sigma\setminus (C\cup Q)$ whose closure contains $G_{1-i}$ is a disc}; AND
\item\emph{Either $G_i\setminus C$ contains a vertex $v$ with $|L(v)|<5$ or the open component of $\Sigma\setminus (C\cup Q)$ whose closure contains $G_{i}$ is not a disc}.
\end{enumerate}
\emph{If $\Sigma, G, L$ are all clear, then we just say that $C$ is uniquely $k$-determined. In the setting above, we denote $G_{1-i}$ by $G^{\textnormal{small}}_Q=G_{1-i}$ and $G_i$ by $G^{\textnormal{large}}_Q$. If $k\geq 3$ and $Q'$ is a generalized chord of $C$ (of arbitrary length) with the additional property that each vertex of $Q'$ is of distance at most one from $C$, then $Q'$ also satisfies all the partitioning properties above and we define $G^{\textnormal{small}}_{Q'}$ and $G^{\textnormal{large}}_{Q'}$ analogously in this case.} 
 \end{defn}

\begin{defn}\label{DefnSmallSideLargeSideUniqueKLD} \emph{Let $k\geq 1$ be an integer and $\Sigma, G, C, L$ be as in Definition \ref{UniqueKLSpecifiedDefn}, where $C$ is uniquely $k$-determined. We define $\textnormal{Sh}^k_L(C, G)$ to be the union of all sets of the form $V(G^{\textnormal{small}}_Q\setminus Q)$, where $Q$ runs over all generalized chords $Q$ of $C$ with $|E(Q)|\leq k$. If $G, L$ are clear from the context, then we just write $\textnormal{Sh}^k(C)$.}
 \end{defn}

Our two results from \cite{JNevinThesisManyFacePartIPaperRevised} that we rely on as black boxes are Theorems \ref{FaceConnectionMainResult} and \ref{SingleFaceConnRes} below. 

\begin{theorem}\label{FaceConnectionMainResult} There exists a constant $d\geq 0$ such that the following holds: Let $G$ be a short-inseparable embedding on a surface $\Sigma$ with $\textnormal{fw}(G)\geq 6$, and let $\mathcal{C}$ be a family of facial cycles of $G$, where the elements of $\mathcal{C}$ are pairwise of distance $\geq d$ apart and every facial subgraph of $G$ not lying in $\mathcal{C}$ is a triangle. Let $L$ be a list-assignment for $V(G)$, where each vertex of $V(G)\setminus\left(\bigcup_{C\in\mathcal{C}}V(C)\right)$ has a list of size at least five. Let $\mathcal{D}\subseteq\mathcal{C}$, where each element of $\mathcal{D}$ is uniquely $4$-determined and each vertex of $\bigcup_{C\in\mathcal{D}}V(C)$ has a list of size three. Let $F\in\mathcal{D}$ and $\{P_C: C\in\mathcal{D}\setminus\{F\}\}$ be a family of paths, pairwise of distance $\geq d$ apart, where each $P_C$ is a shortest $(C,F)$-path. Then there exist an $A\subseteq V(G)$ and partial $L$-coloring $\phi$ of $A$ such that: 
\begin{enumerate}[label=\arabic*)]
\item $A$ is $(L, \phi)$-inert in $G$, and each vertex of $D_1(A)$ has an $L_{\phi}$-list of size at least three; AND
\item $G[A]$ is connected and has nonempty intersection with each element of $\mathcal{D}$; AND
\item $A\subseteq\left(B_2(F)\cup\textnormal{Sh}^4(F)\right)\cup\bigcup_{C\in\mathcal{D}\setminus\{F\}}\left(\textnormal{Sh}^4(C)\cup B_2(C\cup P_C)\right)$.
\end{enumerate}
In particular, any $d\geq 34$ suffices. 
 \end{theorem}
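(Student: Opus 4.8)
The plan is to reduce the statement to two kinds of local construction — one attached to each facial cycle of $\mathcal D$, and one running along each path $P_C$ — and then glue the pieces, using that the cycles of $\mathcal C$ and the paths $P_C$ are pairwise at distance $\ge d$ to guarantee the local constructions do not interact. Concretely: (1) for each $C\in\mathcal D$ (including $F$) build a connected inert ``core'' $A_C$ near $C$, with colouring $\phi_C$; (2) for each $C\in\mathcal D\setminus\{F\}$ build a coloured ``tube'' $T_C$ along $P_C$ joining $A_C$ to $A_F$; (3) set $A=A_F\cup\bigcup_{C\in\mathcal D\setminus\{F\}}(A_C\cup T_C)$ with $\phi$ the union of the colourings, and check the three conclusions.

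Step (1) already contains the case $|\mathcal D|=1$ as its entire content. The idea is to use that $C$ is uniquely $4$-determined: every generalized chord $Q$ of $C$ of length $\le 4$ induces the natural $(C,Q)$-partition $\{G^{\textnormal{small}}_Q,G^{\textnormal{large}}_Q\}$ in which, by definition, $G^{\textnormal{small}}_Q$ lies in a disc and carries $5$-lists off $C$; so $\textnormal{Sh}^4(C)$ is exactly the union of these $5$-list discs, each of which is therefore colourable from its boundary inward by Thomassen's theorem (Theorem \ref{thomassen5ChooseThm}) or its two-list variant (Theorem \ref{Two2ListTheorem}), and short-inseparability forces those bounded by a cycle of length $\le 4$ to have empty interior. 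Having thereby ``neutralized'' the short chords of $C$, one may treat $C$ as an essentially chordless facial cycle of a near-triangulation with $3$-lists on $C$ and $5$-lists around it, with the short faces abutting $C$ governed by the obstruction analysis of Theorem \ref{BohmePaper5CycleCorList}. One then takes $A_C$ to be $V(C)$ together with a bounded set of vertices within distance $2$ of $C$ (and whatever shadow vertices are needed to keep $A_C$ connected), coloured so that $G[A_C]$ is connected and meets $V(C)$, every vertex of $D_1(A_C)$ keeps an $L_{\phi_C}$-list of size $\ge 3$, and the uncoloured part $A_C\setminus\textnormal{dom}(\phi_C)$ is small and lies deep inside $B_2(C)\cup\textnormal{Sh}^4(C)$, away from the endpoint of $P_C$ on $C$ (and, for $F$, away from every endpoint of a $P_C$ on $F$). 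Everything stays inside $B_2(C)\cup\textnormal{Sh}^4(C)$, and since the cycles of $\mathcal C$ are pairwise $\ge d$ apart with $d$ large, this is carried out locally and independently near each $C\in\mathcal D$.

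Step (2) is where I expect the \emph{real difficulty}. Fix $C\in\mathcal D\setminus\{F\}$; since $P_C$ is a shortest $(C,F)$-path it is induced, and together with the near-triangulation hypothesis this makes the subgraph on $B_1(P_C)$ a narrow, bounded-complexity strip in a disc, with $5$-lists in its interior and $3$-lists only where it meets $C$ or $F$. I would extend $\phi_C$ and $\phi_F$ (already fixed near the two endpoints of $P_C$) to an $L$-colouring of a tube $T_C\subseteq B_1(P_C)$ chosen so that (i) $A_C\cup T_C\cup A_F$ is connected and (ii) no vertex outside $T_C$ sees three distinct colours on its neighbours in $T_C$, so that every vertex of $D_1(T_C)$ retains a list of size $\ge 3$. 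Achieving (ii) together with (i) forces one to enlarge $T_C$ so as to absorb the occasional vertex that would otherwise see three tube-colours (which can occur even along a shortest path in a triangulation), and one must verify that this enlargement never escapes $B_2(P_C)$ and never conflicts with the fixed colourings $\phi_C,\phi_F$ near the endpoints; this is the main obstacle. The hypotheses $\textnormal{fw}(G)\ge 6$, short-inseparability, and the distance bound are exactly what keeps the strip tame enough for such a colouring to exist.

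For step (3), conclusion 2 is immediate and conclusion 3 holds term by term, since $A_C\cup T_C\subseteq\textnormal{Sh}^4(C)\cup B_2(C)\cup B_2(P_C)\subseteq\textnormal{Sh}^4(C)\cup B_2(C\cup P_C)$ and $A_F\subseteq B_2(F)\cup\textnormal{Sh}^4(F)$. For conclusion 1: the uncoloured parts $A_C\setminus\textnormal{dom}(\phi_C)$ sit deep inside their regions, so — since the cycles of $\mathcal D$ and the paths $P_C$ are pairwise $\ge d$ apart — they are pairwise at distance $\ge 2$, and at distance $\ge 2$ from the fully coloured tubes and from $A_F$; each is individually $(L,\phi)$-inert, so the Observation above gives that their union, hence all of $A$, is $(L,\phi)$-inert, and every vertex of $D_1(A)$ has $L_\phi$-list of size $\ge 3$ because this was arranged for $D_1(A_F)$, each $D_1(A_C)$, and each $D_1(T_C)$ separately and these boundaries are far apart. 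Summing the radii that enter — the $B_2$-neighbourhoods of the faces and paths, the reach of $4$-chords into the shadows, the (possibly widened) tubes, plus the slack to keep things pairwise $\ge 2$ apart and to invoke $\textnormal{fw}\ge 6$ — gives the explicit constant, and one checks $d\ge 34$ suffices.
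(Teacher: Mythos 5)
The paper does not prove Theorem \ref{FaceConnectionMainResult} at all: Section \ref{BlackBoxPIISec} explicitly takes it as a black box imported from Paper I of the series (cited as \cite{JNevinThesisManyFacePartIPaperRevised}). There is therefore no proof in this paper to compare your proposal against, and a self-contained proof of the theorem would be a substantial contribution in its own right rather than a routine verification of one of this paper's lemmas.

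As a blind attempt, your global decomposition --- a core $A_C$ near each $C\in\mathcal D$, a coloured tube $T_C$ along each shortest path $P_C$, and a gluing argument using the pairwise-distance hypotheses and the observation that far-apart inert sets union to an inert set --- is the natural strategy and is certainly the right shape for the argument. But as written it is a plan, not a proof, and both local constructions are left open precisely at the points where the difficulty lives. In Step~(1) the passage from ``neutralize the short chords of $C$ via $\textnormal{Sh}^4(C)$ and Theorem \ref{BohmePaper5CycleCorList}'' to ``take $A_C$ to be $V(C)$ together with a bounded set of vertices and whatever shadow vertices are needed, coloured so that \dots'' suppresses the actual engineering: since the vertices of $C$ carry only $3$-lists, any vertex $v\in V(C)\cap D_1(A)$ cannot afford to lose a single colour, so the uncoloured buffer must be placed so that no such $v$ is adjacent to any vertex of $\textnormal{dom}(\phi)$ while still being $(L,\phi)$-inert --- and you never exhibit such a configuration, nor verify inertness for it. In Step~(2) you explicitly flag the cascade problem (a vertex $w\notin T_C$ adjacent to three consecutive path vertices carrying three distinct colours; absorbing $w$ can create new bad vertices) as ``the main obstacle'' and leave it unresolved; you neither show that the alternating two-colour trick along $P_C$ can always be made proper and made to agree with the already-fixed colours at the $C$- and $F$-ends, nor bound the cascade inside $B_2(P_C)$. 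Finally the claim that $d\ge 34$ suffices is asserted without any accounting of the radii involved. So there is a genuine gap: the two local lemmas on which your gluing rests are stated but not proved, and those are the entire content of the theorem.
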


Informally, Theorem \ref{FaceConnectionMainResult} states that, starting with many pairwise far-apart faces with 3-lists, we can, under the specified conditions, color and delete a subgraph of $G$ which is far from all the elements of $\mathcal{C}\setminus\mathcal{D}$ and connects the elements of $\mathcal{D}$ to a single face of $G\setminus A$ (where $G\setminus A$ regarded as an embedding on $\Sigma$) where all the vertices of this new lone face all have 3-lists. We specify that the elements of $\mathcal{D}$ have vertices with lists of size precisely three, rather than at least three, so that, for each $C\in\mathcal{C}$, the requirement that $C$ is uniquely 4-determined enforces the desired property that, for any generalized chord $Q$ of $C$ of length at most four, all the elements of $\mathcal{D}\setminus\{C\}$ lie in the same component of $\Sigma\setminus (C\cup Q)$. Our second main result, Theorem \ref{SingleFaceConnRes},  is a ``single face" version of Theorem \ref{FaceConnectionMainResult}. 

\begin{theorem}\label{SingleFaceConnRes} Let $\Sigma, G, \mathcal{C}, L, d$ be as in Theorem \ref{FaceConnectionMainResult}. Let $F\in\mathcal{C}$ (possibly $\mathcal{C}=\{F\}$), where each vertex of $F$ has a list of size at least three and $F$ uniquely 4-determined. Let $P:=v_0\cdots v_n$ be a path of length $\geq d$ with both endpoints in $F$, where $P$ is a shortest path between its endpoints and $V(P)\cap D_k(F)=\{v_k, v_{n-k}\}$ for each $0\leq k\leq 3$.  Suppose further that there is a noncontractible closed curve of $\Sigma$ contained in $F\cup P$. Then there exist an $A\subseteq V(G)$ and a partial $L$-coloring $\phi$ of $A$ such that 
\begin{enumerate}[label=\arabic*)]
\itemsep-0.1em
\item $A$ is $(L, \phi)$-inert in $G$ and each vertex of $D_1(A)$ has an $L_{\phi}$-list of size at least three.
\item $G[A]$ is connected and $V(F)\cup V(v_3Pv_{n-3})\subseteq A\subseteq B_2(F\cup P)\cup\textnormal{Sh}^4(F)$.
\end{enumerate}
 \end{theorem}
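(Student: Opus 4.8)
The plan is to derive this single-face statement from Theorem \ref{FaceConnectionMainResult} by a reduction: we will cut $\Sigma$ along the noncontractible closed curve living in $F\cup P$ to produce an auxiliary embedding on a surface of lower genus in which the face $F$ together with the path $P$ gives rise to one (or two) new facial cycle(s), and then invoke the multi-face black box on that auxiliary embedding. Concretely, first I would analyze the structure of $F\cup P$: since $P$ is a shortest path between its endpoints in $F$, meets $D_k(F)$ in exactly the two vertices $v_k, v_{n-k}$ for $k\le 3$, and $F\cup P$ contains a noncontractible closed curve $\gamma$, the path $P$ is a proper generalized chord of $F$ whose union with (part of) $F$ carries $\gamma$. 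Because $\mathrm{fw}(G)\ge 6>|E(\gamma)\cap G|$ cannot be forced down, we instead use $P$ as a cutting locus: cutting along $\gamma$ and capping the resulting boundary components with discs yields an embedding $G'$ on $\Sigma'$ with $g(\Sigma')<g(\Sigma)$, in which $F$ and the two sides of $P$ bound either a single new facial cycle $F'$ (if the two endpoints of $P$ coincide after cutting, or $P$ is a one-sided chord) or two new facial cycles.

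The key steps, in order, are: (1) verify that cutting along $\gamma$ preserves short-inseparability and keeps $\mathrm{fw}(G')\ge 6$ — this uses that $G$ had no short separating cycles and $\mathrm{fw}(G)\ge 6$, together with the shortest-path hypotheses on $P$ which prevent the creation of short separating or noncontractible cycles through the cap; (2) set up the new face family $\mathcal{C}'$ to be $(\mathcal{C}\setminus\{F\})$ together with the new facial cycle(s) $F'$ (and $F''$), assign the vertices of $F'$ lists of size exactly three (the vertices of $F$ already have $\ge 3$, the interior vertices $v_1,\dots,v_{n-1}$ of $P$ have $\ge 5$, so trimming to $3$ is harmless), and check that all elements of $\mathcal{C}'$ remain pairwise of distance $\ge d$ apart — here the hypothesis $|E(P)|\ge d$ and $V(P)\cap D_k(F)=\{v_k,v_{n-k}\}$ for $k\le 3$ is exactly what guarantees the two sides of $P$ (hence $F'$ or $F',F''$) are far from each other and from $F$; (3) confirm $F'$ is uniquely $4$-determined in $G'$: a generalized chord $Q$ of $F'$ of length $\le 4$ pulls back to a generalized chord of $F\cup P$ in $G$ of bounded length, and the ``far-apart'' and shortest-path conditions force all other faces of $\mathcal{C}'$ onto one side, while short-inseparability forces the relevant cycles in $F'\cup Q$ to be contractible; (4) apply Theorem \ref{FaceConnectionMainResult} (the single-target case, taking $F'$ as the distinguished face $F$ of that theorem, with $\mathcal{D}=\{F'\}$ or $\mathcal{D}=\{F',F''\}$ and $P_{F''}$ a shortest $(F'',F')$-path) to obtain $A'\subseteq V(G')$ and $\phi'$ with the inertness and connectivity properties, contained in $B_2(F')\cup\mathrm{Sh}^4(F')$ (plus $B_2(F''\cup P_{F''})\cup\mathrm{Sh}^4(F'')$ in the two-face case); (5) translate $A',\phi'$ back to $G$: since cutting and capping is a local operation away from $B_2(F\cup P)$, the set $A'$ corresponds to a set $A\subseteq B_2(F\cup P)\cup\mathrm{Sh}^4(F)$ in $G$, the coloring $\phi'$ pulls back to a partial $L$-coloring $\phi$ of $A$, $G[A]$ is connected, $V(F)\cup V(v_3Pv_{n-3})\subseteq A$, and $(L,\phi)$-inertness is preserved because inertness is a statement about extending colorings in the $2$-neighborhood of $A$, which is unaffected by the surgery.

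The main obstacle I expect is step (1) together with the careful bookkeeping in step (5): showing that the surgery along $\gamma$ genuinely lowers the genus while preserving short-inseparability and the lower bound on face-width is delicate, because the capping discs could in principle introduce short contractible separating cycles or shorten a noncontractible cycle, and ruling this out requires using the shortest-path property of $P$ and the condition $V(P)\cap D_k(F)=\{v_k,v_{n-k}\}$ for $k\le 3$ in an essential way — precisely these hypotheses are engineered to block such degeneracies within distance $3$ of $F$. A secondary subtlety is correctly distinguishing the one-new-face versus two-new-faces cases depending on whether the curve $\gamma$ in $F\cup P$ is one-sided or two-sided and on how it traverses $F$; in the two-face case one must produce the shortest $(F'',F')$-path $P_{F''}$ demanded by Theorem \ref{FaceConnectionMainResult}, which can be taken to be a sub-path of $F$ or a short path realizing the distance between the two copies of $P$, of length controlled by the structure of $\gamma$. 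Once the surgery is justified, the remaining verifications (list sizes, distances, unique $4$-determination) are routine given the hypotheses, and the conclusion follows by transporting the output of Theorem \ref{FaceConnectionMainResult} back through the surgery. \qed
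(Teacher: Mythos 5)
This paper does not contain a proof of Theorem~\ref{SingleFaceConnRes}: both Theorem~\ref{FaceConnectionMainResult} and Theorem~\ref{SingleFaceConnRes} are imported explicitly as black boxes from Part~I of the series (\cite{JNevinThesisManyFacePartIPaperRevised}), so there is no in-paper proof to compare your proposal against. That said, your proposed reduction of \ref{SingleFaceConnRes} to \ref{FaceConnectionMainResult} via surface surgery along the noncontractible curve $\gamma\subseteq F\cup P$ has several genuine gaps.

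First, the surgery itself is not a clean reduction. The noncontractible curve $\gamma$ is contained in the subgraph $F\cup P$, so it runs along edges of $G$; cutting $\Sigma$ along $\gamma$ therefore doubles the vertices and edges of $G$ lying on $\gamma$, and the resulting embedding $G'$ is \emph{not} a subgraph of $G$. Your step~(5) asserts that ``cutting and capping is a local operation away from $B_2(F\cup P)$,'' but the doubled vertices live exactly in $F\cup P$, which is where the output set $A'$ of Theorem~\ref{FaceConnectionMainResult} is concentrated. If the two copies of a doubled vertex $v$ receive different colors under $\phi'$, or one copy is in $A'$ and the other is not, there is no well-defined pullback $\phi$ on $G$ and no way to certify $(L,\phi)$-inertness in $G$; the inert\-ness argument in $G'$ does not control extensions that cross the cut.

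Second, the distance checks in step~(2) are not justified by the stated hypotheses. The condition $V(P)\cap D_k(F)=\{v_k,v_{n-k}\}$ for $k\le 3$ controls the geometry of $P$ only within distance~$3$ of $F$; it does not prevent a short path in $G'$ connecting the two copies of some interior vertex $v_j$ of $P$ (equivalently, a noncontractible closed curve of $\Sigma$ through $v_j$ meeting $G$ in few points). The only face-width bound available from Theorem~\ref{FaceConnectionMainResult} is $\textnormal{fw}(G)\ge 6$, which is far short of the pairwise distance $\ge d\ge 34$ that \ref{FaceConnectionMainResult} demands of $\mathcal{C}'$, so the new faces $F',F''$ could be close in $G'$.

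Third, even granting the surgery, the conclusion transported back is too weak. Theorem~\ref{FaceConnectionMainResult} only guarantees that $G'[A']$ is connected and has \emph{nonempty intersection} with each element of $\mathcal{D}$; Theorem~\ref{SingleFaceConnRes} requires the much stronger containment $V(F)\cup V(v_3Pv_{n-3})\subseteq A$. Your reduction would not produce this containment. Finally, as a sanity check on the order of operations: in the one place this paper actually uses Theorem~\ref{SingleFaceConnRes} (Claims~\ref{RepUseAContainNonContract}--\ref{MosaicTopDistanceConMainCL} inside the proof of Lemma~\ref{MainThmResForHighRepNearRings}), the surgery $\Sigma_N$ is performed \emph{after} deleting $A$, precisely so that the cutting curve $N$ is disjoint from the remaining graph $G'\setminus A$; the entire purpose of Theorem~\ref{SingleFaceConnRes} is to furnish the set $A$ that makes this possible without cutting through $G$. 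Your proposal inverts this order and runs into exactly the vertex-doubling problems that the theorem is designed to avoid.
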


\section{The Structure and Main Result of this Paper}\label{OverviewMainPfSection}

In this paper, we primarily deal with the following structures. 

\begin{defn}\label{MainChartDefn}\emph{Let $k, \alpha\geq 1$ be integers. A tuple $\mathcal{T}=(\Sigma, G, \mathcal{C}, L, C_*)$ is called an $(\alpha, k)$-\emph{chart} if $\Sigma$ is a surface, $G$ is an embedding on $\Sigma$ with list-assignment $L$, and $\mathcal{C}$ is a family of facial subgraphs of $G$ such that}

\begin{enumerate}[label=\emph{\arabic*)}]
\itemsep-0.1em 
\item \emph{$C_*\in\mathcal{C}$ and, for any distinct $H_1, H_2\in\mathcal{C}$, we have $d(H_1, H_2)\geq\alpha$}; AND
\item \emph{$|L(v)|\geq 5$ for all $v\in V(G)\setminus \left(\bigcup_{H\in\mathcal{C}}V(H)\right)$}; AND
\item \emph{For each $H\in\mathcal{C}$, there exists a connected subgraph $\mathbf{P}_{\mathcal{T},H}$ of $H$, called the \emph{precolored subgraph} of $H$, where}
\begin{enumerate}[label=\emph{\roman*)}]
\itemsep-0.1em 
\item \emph{$|E(\mathbf{P}_{\mathcal{T}, H})|\leq k$ and $V(\mathbf{P}_{\mathcal{T},H})$ is $L$-colorable}; AND
\item \emph{$|L(v)|\geq 3$ for all $v\in V(H)\setminus V(\mathbf{P}_{\mathcal{T},H})$.}
\end{enumerate}
\end{enumerate}

\end{defn}

We now introduce some more natural terminology. 

\begin{defn}\label{ChartMoreTerms}
\emph{A tuple $\mathcal{T}$ is called a \emph{chart} if there exist integers $k,\alpha\geq 1$ such that $\mathcal{T}$ is an $(\alpha, k)$-chart. A chart $\mathcal{T}=(\Sigma, G, \mathcal{C}, L, C_*)$ is called \emph{colorable} if $G$ is $L$-colorable. We call $G$ the \emph{underlying graph} of the chart and $\Sigma$ the \emph{underlying surface} of the chart. The elements of $\mathcal{C}$ are called the \emph{rings} of the chart. In particular, the elements of $\mathcal{C}\setminus\{C_*\}$ are called the \emph{inner} rings of the chart and $C_*$ is called the \emph{outer} ring of the chart.} 
\end{defn}

The terms ``inner rings" and ``outer ring" are suggestive in the sense that, when proving results of planar graphs by minimal-counterexample arguments, it is a common proof technique (such as in \cite{AllPlanar5ThomPap}) to allow the outer face to have some special properties. An embedding on a compact surface does not have an outer face, but it is sometimes helpful to think of the special face $C_*$ in a similar way. The terminology in \ref{ChartMoreTerms}, together with the notation $\mathcal{C}$, is suggestive of the fact that our primary interest is the case where the rings of the chart are cyclic facial subgraphs of $G$, but in general, the definition of facial subgraphs does not require the elements of $\mathcal{C}$ to be cycles or even connected subgraphs of $G$. If the underlying chart is clear from the context, we usually drop the $\mathcal{T}$ from the notation $\mathbf{P}_{\mathcal{T}, H}$. 

\begin{defn}
\emph{Let $\mathcal{T}=(\Sigma, G, \mathcal{C}, L, C_*),$ be a chart and let $H\in\mathcal{C}$. We say $H$ is a \emph{closed} $\mathcal{T}$-ring if $V(\mathbf{P}_H)=V(H)$. Otherwise, we say $C$ is an \emph{open} $\mathcal{T}$-ring. If $\mathcal{T}$ is clear from the context then we just call $C$ a closed (resp. open) ring.}
\end{defn}

Of particular importance to us over the course of the proof of Theorem \ref{5ListHighRepFacesFarMainRes} are embeddings which do not have separating cycles of length 3 or 4, so we give them a special name. 

\begin{defn} \emph{Let $\Sigma$ be a surface and let $G$ be an embedding on $\Sigma$. A \emph{separating cycle} in $G$ is a contractible cycle $C$ in $G$ such that each of the two connected components of $\Sigma\setminus C$ has nonempty intersection with $V(G)$. We call $G$ \emph{short-inseparable} if $\textnormal{ew}(G)>4$ and $G$ does not contain any separating cycle of length 3 or 4. Likewise, given a chart $\mathcal{T}$, we call $\mathcal{T}$ a \emph{short-inseparable chart} if the underlying graph of $\mathcal{T}$ is short-inseparable.}
\end{defn}

In this paper, we complete the most difficult step in the proof of Theorem \ref{5ListHighRepFacesFarMainRes}, which is proving that Theorem \ref{5ListHighRepFacesFarMainRes} holds for a restricted class of embeddings defined below.

\begin{defn}\label{TessTriangleDefnTerm} \emph{Let $\mathcal{T}=(\Sigma, G, \mathcal{C}, L, C_*)$ be a chart. We say $\mathcal{T}$ is \emph{chord-triangulated} if, for every facial subgraph $H$ of $G$ with $H\not\in\mathcal{C}$, every induced cycle of $G[V(H)]$ is a triangle. We call $\mathcal{T}$ a \emph{tessellation} if it is short-inseparable and chord-triangulated.}
 \end{defn}

In this paper, we show that Theorem \ref{5ListHighRepFacesFarMainRes} holds for tessellations. Actually, we prove something stronger by defining a structure called a \emph{mosaic} and showing that all mosaics are colorable, where a mosaic is a special kind of a tessellation satisfying some additional properties. The remainder of this paper consists entirely of the proof of this result, which is Theorem \ref{AllMosaicsColIntermRes1-4}. The key to the proof of Theorem \ref{AllMosaicsColIntermRes1-4}  is to choose the right definition of mosaics, i.e the right induction hypothesis. In particular, in Sections \ref{MosaicsAndPropertiesSec}-\ref{BandOpenRingCritMosaicProperSec}, we show that a minimal counterexample to Theorem \ref{AllMosaicsColIntermRes1-4} satisfies some properties which allow us to apply Theorem \ref{FaceConnectionMainResult} from \cite{JNevinThesisManyFacePartIPaperRevised} to create a smaller counterexample. The key result we need from these sections is Theorem \ref{NOver4GoodSideCorCritMos}. We have to be careful when we produce a smaller counterexample to Theorem \ref{AllMosaicsColIntermRes1-4} by applying Theorem \ref{FaceConnectionMainResult} to a minimal counterexample, because when we delete a subset of the vertices of an embedding in a way specified in Theorem \ref{FaceConnectionMainResult}, the face-width might decrease. The resulting smaller embedding still needs to have high face-width in order to be the underlying graph of a smaller counterexample. While edge-width is monotone with respect to subgraphs, this is not true of face-width. By choosing our induction hypothesis correctly, we ensure that the subgraph obtained from a minimal counterexample in the manner described above has high-enough face-width to be a smaller counterexample. Having high edge-width is a weaker condition than having high face-width, and the discussion above suggests that it might be easier to work with edge-width than face-width in a minimal counterexample argument, but the conditions on the face-width cannot be dropped in the statement of Theorem \ref{5ListHighRepFacesFarMainRes} and replaced with lower bounds on just the edge-width, or else there is an infinite family of counterexamples (see \cite{JNevinThesisManyFacePartIPaperRevised}). One of the intermediate steps in the proof of Theorem \ref{AllMosaicsColIntermRes1-4} is to show that, given a minimal counterexample to our induction hypothesis, the underlying embedding has sufficiently high face-width that the embedding obtained by the deletion described above is still the underlying embedding of a mosaic. We prove this in Section \ref{CritMosFaceWidthSec}, by using Theorem \ref{SingleFaceConnRes} as a black box and using some intermediate results that we prove in Sections \ref{RainbowsandConsistPathSec}-\ref{RedForOpRings}. Finally, in Section \ref{CompleteProofMosaic}, we combine Theorem \ref{FaceConnectionMainResult} with the work of the preceding sections to prove Theorem \ref{AllMosaicsColIntermRes1-4}. 

\section{Mosaics and Their Properties}\label{MosaicsAndPropertiesSec}

We begin by fixing constants $N_{\textnormal{mo}}, \beta$, where $N_{\textnormal{mo}}\geq 200$ and $\beta:=100N_{\textnormal{mo}}^2$. It is convenient to also require that $N_{\textnormal{mo}}$ is a multiple of 3. The subscript of the notation $N_{\textnormal{mo}}$ refers to mosaics, which is the term we use for our special charts. In particular, we define a mosaic to be a tessellation which satisfies some additional properties satisfied below, where a mosaic is allowed to contain some precolored faces (i.e closed rings) of length at most $N_{\textnormal{mo}}$. In order to state our stronger induction hypothesis, we begin with the following definitions.

\begin{defn}\label{SigmaOrLPredictableDefn}
\emph{Let $G$ be a graph and $H$ be a subgraph of $G$. We say that $H$ is \emph{shortcut-free} (in $G$) if, for any $x,y\in V(H)$, $d_G(x,y)=d_H(x,y)$. We say that $H$ is \emph{semi-shortcut-free} (in $G$) if it satisfies the weaker condition that, for any $x, y\in V(H)$ with $d_G(x,y)\leq 2$, we have $d_G(x,y)=d_H(x,y)$. Furthermore, given an $S\subseteq V(H)$,}
\begin{enumerate}[label=\emph{\arabic*)}]
\itemsep-0.1em
\item\emph{we say that $H$ is \emph{$(L, S)$-predictable} if there is a $v\in D_1(S)\setminus V(H)$ such that, for any proper $L$-coloring $\phi$ of $V(S)$, every vertex of $D_1(S)\setminus (V(H)\cup\{v\})$ has an $L_{\phi}$-list of size at least three, and $|L_{\phi}(v)|\geq 2$.}
\item\emph{We say that $H$ is \emph{highly $(L, S)$-predictable} if, for any proper $L$-coloring $\phi$ of $V(S)$, every vertex of $D_1(S)\setminus V(H)$ has an $L_{\phi}$-list of size at least three.}
\end{enumerate}
\end{defn}

In order to state the distance conditions we impose on our tessellations, we introduce the following notation.

\begin{defn}\label{RkAndWNotation}

\emph{Let $\mathcal{T}=(G, \mathcal{C}, L, C_*)$ be a chart. We define a rank function $\textnormal{Rk}(\mathcal{T}| \cdot):\mathcal{C}\rightarrow\mathbb{R}$, and for each $H\in\mathcal{C}$, a subset $\mathpzc{w}_{\mathcal{T}}(H)$ of $V(C)$ as follows.}
\begin{center}
\begin{tabular}{ cc } 
$\mathpzc{w}_{\mathcal{T}}(H):=\begin{cases} V(H)\ \textnormal{if $H$ is a closed $\mathcal{T}$-ring}\\ V(H\setminus\mathbf{P}_H)\ \textnormal{if $H$ is an open $\mathcal{T}$-ring}\end{cases}$ & $\textnormal{Rk}(\mathcal{T}| H):=\begin{cases} N_{\textnormal{mo}}\cdot |E(H)|\ \textnormal{if $H$ is a closed $\mathcal{T}$-ring}\\ 2N_{\textnormal{mo}}^2\ \textnormal{if $H$ is an open $\mathcal{T}$-ring} \end{cases}$ 
\end{tabular}
\end{center}
\end{defn}
\bigskip
If the underlying chart $\mathcal{T}$ is clear from the context then we drop the symbol $\mathcal{T}$ from the notation above. We also introduce the following notation, which is a generalization of standard notation for paths.

\begin{defn} \emph{Given a graph $H$, we let $\mathring{H}$ be the graph obtained from $H$ by deleting the vertices of degree at most one. In particular, if $H$ is a path of length at least two, then $\mathring{H}$ is the path obtained by deleting the endpoints of $H$, and if $H$ is a cycle, then $\mathring{H}=H$.} \end{defn}

\begin{defn}\label{MainMosaicAxioms}

\emph{A chart $\mathcal{T}:=(\Sigma, G, \mathcal{C}, L, C_*)$ is called a \emph{mosaic} if $\mathcal{T}$ is a tessellation such that, letting $g$ be the genus of $\Sigma$, the following additional conditions hold.}
\begin{enumerate}
\itemsep-0.1em
\item [\mylabel{}{\textnormal{M0)}}] \emph{For each $H\in\mathcal{C}$, if $H$ is open, then $|E(\mathbf{P}_H)|\leq\frac{2N_{\textnormal{mo}}}{3}$, and if $H$ is closed, then $|E(\mathbf{P}_H)|\leq N_{\textnormal{mo}}$}; AND
\item [\mylabel{}{\textnormal{M1)}}] \emph{For each open ring $H\in\mathcal{C}$, $\mathbf{P}_H$ is a path, where no chord of $H$ has an endpoint in $\mathring{\mathbf{P}}_H$. Furthermore, $H$ is highly $(L, V(\mathbf{P}_H))$-predictable and $\mathbf{P}_H$ is semi-shortcut-free}; AND
\item [\mylabel{}{\textnormal{M2)}}] \emph{For each closed ring $H\in\mathcal{C}$, $H$ is  $(L, V(\mathbf{P}_H))$-predictable and $\mathbf{P}_H$ is semi-shortcut-free}; AND
\item [\mylabel{}{\textnormal{M3)}}] \emph{For each $C\in\mathcal{C}\setminus\{C_*\}$, we have $d(\mathpzc{w}(C_*), \mathpzc{w}(C))\geq 2.9\beta\cdot 6^{g-1}+\textnormal{Rk}(C)+\textnormal{Rk}(C_*)$}; AND
\item [\mylabel{}{\textnormal{M4)}}] \emph{For any distinct $C_1, C_2\in\mathcal{C}\setminus\{C_*\}$, we have $d(\mathpzc{w}(C_1), \mathpzc{w}(C_2))\geq 2\beta\cdot 6^g+\textnormal{Rk}(C_1)+\textnormal{Rk}(C_2)$;} AND
\item [\mylabel{}{\textnormal{M5)}}] \emph{$\textnormal{fw}(G)\geq 2.1\beta\cdot 6^{g-1}$ and $\textnormal{ew}(G)\geq 2.1\beta\cdot 6^{g}$.}
\end{enumerate}

\end{defn}

Note that M5) is weaker than the condition that $\textnormal{fw}(G)\geq 2.1\beta\cdot 6^g$. The remainder of this paper consists of the proof of the following result: 

\begin{theorem}\label{AllMosaicsColIntermRes1-4} All mosaics are colorable. \end{theorem}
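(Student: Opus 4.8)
The plan is to argue by contradiction, taking a counterexample $\mathcal{T} = (\Sigma, G, \mathcal{C}, L, C_*)$ to Theorem~\ref{AllMosaicsColIntermRes1-4} that is minimal with respect to some carefully chosen well-ordering --- presumably first on $|V(G)|$, then perhaps on $\sum_{H \in \mathcal{C}}|E(\mathbf{P}_H)|$ or on the number of rings, with ties broken by a measure of how far the embedding is from being fully triangulated. The first block of work (corresponding to Sections~\ref{MosaicsAndPropertiesSec}--\ref{BandOpenRingCritMosaicProperSec} as flagged in the overview) is to extract structural properties of such a minimal counterexample: that $G$ has no separating cycles of length $\le 4$ is built into being a tessellation, but one needs to show that in the minimal case every non-ring face is genuinely a triangle, that chords of rings behave, that the precolored subgraphs are ``tight'' (e.g. shortest paths, realizing the semi-shortcut-free condition with equality), and crucially that each inner ring is uniquely $4$-determined with respect to $L$. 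This last point is what lets Theorem~\ref{FaceConnectionMainResult} be invoked: one must verify that the distance axioms M3), M4) together with M5) force every short generalized chord $Q$ of a ring $C$ to have all other rings on one side of $\Sigma \setminus (C \cup Q)$, and that the side not containing them is a disc with all $5$-lists. I expect the predictability conditions (M1), M2) and the shortcut-freeness to be exactly the hypotheses needed so that after coloring near a ring the ``$L_\phi$-list of size $\ge 3$ on $D_1$'' conclusions of the black boxes can be fed back in.

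The heart of the argument, and the reason the induction hypothesis (the definition of mosaic) has to be chosen so delicately, is the reduction step in Section~\ref{CompleteProofMosaic}. Here one applies Theorem~\ref{FaceConnectionMainResult} to the minimal counterexample with $\mathcal{D}$ taken to be the set of inner rings that have $3$-lists (using $C_*$ or a suitable $F \in \mathcal{D}$ as the distinguished face, routed along shortest $(C,F)$-paths $P_C$): this yields a set $A \subseteq V(G)$ and a partial $L$-coloring $\phi$ of $A$ that is $(L,\phi)$-inert, with $G[A]$ connected, meeting every ring in $\mathcal{D}$, and contained in the prescribed neighborhood $B_2(F) \cup \mathrm{Sh}^4(F) \cup \bigcup_C (\mathrm{Sh}^4(C) \cup B_2(C \cup P_C))$. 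One then forms $G' := G \setminus A$ with list-assignment $L_\phi$ and collapses the union of the old rings in $\mathcal{D}$ together with $A$ into a single new ring $C_*'$ (a facial subgraph of $G'$, namely the boundary of the face opened up by deleting $A$), whose vertices --- by conclusion~1) of Theorem~\ref{FaceConnectionMainResult} --- all have $L_\phi$-lists of size $\ge 3$; inertness of $A$ guarantees that an $L_\phi$-coloring of $G'$ extends to an $L$-coloring of $G$, contradicting minimality once we know $\mathcal{T}' = (\Sigma, G', \mathcal{C}', L_\phi, C_*')$ is again a mosaic. Verifying that $\mathcal{T}'$ is a mosaic is the crux: $G'$ is still short-inseparable (deleting vertices cannot create short separating cycles, and edge-width is monotone under subgraphs), chord-triangulation survives since we only removed a ``far'' region, the new ring $C_*'$ has all $3$-lists so M0)--M2) for it are about bounding its size and establishing predictability/shortcut-freeness, and the distance axioms M3)--M5) for the surviving rings must be checked against the fact that $A$ lives inside a bounded neighborhood of $F \cup \bigcup P_C$.

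The step I expect to be the main obstacle is exactly this last verification, and within it the face-width bound M5) for $G'$. As the authors emphasize, face-width is not monotone under taking subgraphs, so deleting $A$ could in principle destroy the hypothesis. This is why Section~\ref{CritMosFaceWidthSec} is set aside for it, using Theorem~\ref{SingleFaceConnRes} (the single-face analogue) together with the auxiliary ``rainbow''/``consistent path'' machinery of Sections~\ref{RainbowsandConsistPathSec}--\ref{RedForOpRings}. The idea there should be: if $\mathrm{fw}(G')$ were too small, there is a short noncontractible closed curve meeting $G'$ in few points, hence a short collection of facial subgraphs of $G'$ whose union carries a noncontractible cycle; lifting this back to $G$ and combining it with a noncontractible curve through $F \cup P$ (supplied by Theorem~\ref{SingleFaceConnRes}, whose hypothesis of a noncontractible curve in $F \cup P$ must be arranged) produces a contradiction with the much larger lower bound $\mathrm{fw}(G) \ge 2.1\beta \cdot 6^{g-1}$ --- the gap between $2.1\beta \cdot 6^{g-1}$ in M5) and the $2.1\beta \cdot 6^g$ one might naively want is precisely the slack consumed by this deletion, and the geometrically growing constants $\beta \cdot 6^g$ in M3), M4) are tuned so that after deleting $A$ (which costs a bounded-by-$O(\beta \cdot 6^{g-1})$ amount of the distance budget, roughly one ``level'' of the recursion) the inequalities still hold for $\mathcal{T}'$. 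The remaining distance checks M3)/M4) for $\mathcal{T}'$ then reduce to the triangle inequality: any two surviving rings were far apart in $G$, the new ring $C_*'$ sits near $F$, and subtracting the diameter of $A$'s footprint from the old distances leaves enough. Handling open rings (the ``short-inseparable case'' specialization, closed rings of length $\le N_{\mathrm{mo}}$ being the ones that may be present) and the various degenerate configurations flagged by Theorem~\ref{BohmePaper5CycleCorList} for small rings will require separate, more hands-on case analysis, but those I expect to be routine relative to the face-width argument.
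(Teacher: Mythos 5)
Your overall architecture matches the paper's: start from a critical mosaic, apply Theorem~\ref{FaceConnectionMainResult} to color and delete a connected inert set $A$ meeting several rings, merge what is left of those rings into a single new open ring, and verify the resulting smaller chart is a mosaic, with M5) carved out as the delicate step handled via Theorem~\ref{SingleFaceConnRes}. But there are two concrete gaps. First, Theorem~\ref{FaceConnectionMainResult} requires every vertex of every $C\in\mathcal{D}$ to have a list of size exactly three, whereas in a mosaic every ring $C$ carries a precolored subgraph $\mathbf{P}_C$ whose vertices have $1$-lists (closed rings are entirely precolored). So the black box cannot be fed the rings of $\mathcal{T}$ directly, and ``$\mathcal{D}$ = the set of inner rings that have $3$-lists'' has nothing to point to. The paper spends Sections~\ref{RainbowsandConsistPathSec}--\ref{RedNearRingsSec} establishing, for each ring $C$, the existence of a \emph{$C$-reduction} (Definition~\ref{ReductionOpDefn}, Theorems~\ref{MainCollarOpRingRedClF} and \ref{ClosedRingReductTrip}): an inert partial coloring of a set near $C$ whose deletion produces a new facial cycle $C^r$, uniquely $4$-determined, all of whose vertices have $L'$-lists of size at least three. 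This preliminary reduction is exactly the bridge between the mosaic axioms M1)/M2) and the hypotheses of Theorem~\ref{FaceConnectionMainResult}; your appeal to ``predictability and shortcut-freeness being exactly the hypotheses needed'' gestures at this but does not identify the actual mechanism, which is a nontrivial piece of machinery (rainbows, linking, $k$-consistent paths) in its own right.

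Second, the family of rings to absorb cannot be ``all inner rings.'' Theorem~\ref{FaceConnectionMainResult} also requires the shortest paths $\{P_C\}$ to be pairwise far apart, and if $\mathcal{D}$ contains rings at arbitrary distance from $C_*$ the paths can be arbitrarily long and there is no reason for them to be pairwise far apart. The paper first uses $\mathcal{T}^r_{C_*}$ to show (Claim~\ref{AcCSplitAcc1}) that the nearest ring is at distance $m\le 2.9\beta\cdot 6^{g-1}+6N_{\textnormal{mo}}^2$ from $C_*$, then takes $\mathcal{F}:=\{C\in\mathcal{C}:d(C,C_*)\le 5m/4\}$ and reduces exactly this family. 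The bound $|E(P_D)|\le 5m/4$ is what makes M4) yield the pairwise-far-apart hypothesis (Claim~\ref{ElementMathPDist}), and it also makes the new ring $X$ far from every surviving ring of $\mathcal{C}\setminus\mathcal{F}$ (Claim~\ref{CheckM3ForFinalTess}), so $\mathcal{T}_A$ satisfies M3). The paper notes that the naive ``single nearest ring'' choice fails M3) when $C_*$ is open; the ``all rings'' choice you propose fails a different hypothesis one step earlier, before $\mathcal{T}_A$ is even formed.
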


Note that, given a chart $\mathcal{T}=(\Sigma, G, \mathcal{C}, L, C_*)$ and an $H\in\mathcal{C}$, if $\mathbf{P}_H$ is a path of length at most one, then $H$ automatically satisfies condition M1). Furthermore, if $G$ is short-inseparable and $H\in\mathcal{C}$ is a cycle of length at most four with $V(H)=V(\mathbf{P}_H)$, then $H$ automatically satisfies condition M2), unless $V(G)=V(H)$ and $G$ is a 4-cycle with a lone chord. To prove Theorem \ref{AllMosaicsColIntermRes1-4}, we begin by introducing the following termonology. 

\begin{defn}
\emph{Let $\mathcal{T}=(\Sigma, G, \mathcal{C}, L, C_*)$ be a mosaic. We say that $\mathcal{T}$ is \emph{critical} if the following hold.}

\begin{enumerate}[label=\emph{\arabic*)}]
\itemsep-0.1em 
\item\emph{$G$ is not $L$-colorable}; AND
\item\emph{For any mosaic $(\Sigma', G', \mathcal{C}', L', D)$, if either of the following conditions hold, then $G'$ is $L$-colorable:}
\begin{enumerate}[label=\emph{\alph*)}]
\item $|V(G')|<|V(G)|$; OR
\item\emph{$|V(G')|=|V(G)|$, but either $\sum_{v\in V(G')}|L'(v)|<\sum_{v\in V(G)}|L(v)|$ or $g(\Sigma')<g(\Sigma)$.} 
\end{enumerate}
\end{enumerate}
\end{defn}

\begin{defn}\emph{Given a chart $\mathcal{T}=(\Sigma, G, \mathcal{C}, L, C_*)$ and a subgraph $H$ of $G$, we let $\mathcal{C}^{\subseteq H}$ denote $\{C\in\mathcal{C}: C\subseteq H\}$.} \end{defn}

The main result of Section \ref{MosaicsAndPropertiesSec} is Proposition \ref{BasicPropertiesCricMosProp}, which we prove below. 

\begin{prop}\label{BasicPropertiesCricMosProp}
Let $\mathcal{T}=(\Sigma, G, \mathcal{C}, L, C_*)$ be a critical mosaic. Then the following hold: 
\begin{enumerate}[label=\arabic*)]
\itemsep-0.1em
\item For each $H\in\mathcal{C}$, we have $|L(v)|=3$ for each $v\in V(C\setminus \mathbf{P}_H)$ and $|L(p)|=1$ for each $p\in V(\mathbf{P}_H)$; AND
\item For each $v\in V(G)\setminus\left(\bigcup_{H\in\mathcal{C}}V(H)\right)$, $|L(v)|=5$.
\item $G$ is a 2-connected closed 2-cell embedding, and every facial subgraph of $G$ not in $\mathcal{C}$ is a triangle; AND
\item  For each open ring $H\in\mathcal{C}$, $|V(H)|\geq 5$ and $|E(\mathbf{P}_H)|\geq 1$.
\end{enumerate}
\end{prop}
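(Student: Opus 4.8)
The plan is to prove Proposition \ref{BasicPropertiesCricMosProp} by exploiting the minimality of a critical mosaic $\mathcal{T}$: for each of the four assertions, I would assume it fails and manufacture a smaller mosaic (in the ordering used in the definition of ``critical''), thereby forcing $G$ to be $L$-colorable, and then lift that coloring back to an $L$-coloring of $G$ — contradicting criticality.

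First I would handle 1) and 2) together, since they are really just list-trimming arguments. If some $v\in V(G)\setminus\bigcup_H V(H)$ has $|L(v)|\geq 6$, or some $v\in V(H\setminus\mathbf{P}_H)$ has $|L(v)|\geq 4$, or some $p\in V(\mathbf{P}_H)$ has $|L(p)|\geq 2$, I would replace $L(v)$ (resp. $L(p)$) by an appropriate subset of size $5$, $3$, or $1$ respectively. The point is that this operation preserves every mosaic axiom M0)--M5): the underlying graph, surface, face structure, rings and precolored subgraphs are unchanged, semi-shortcut-freeness and the predictability conditions only get easier to satisfy (smaller lists on the rings make $D_1$-lists no smaller, and predictability is about $L_\phi$-lists of vertices \emph{outside} the ring, whose lists are untouched when we trim a ring vertex's list, while trimming a non-ring vertex keeps its list at size $\geq 5$). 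Since $\sum |L'(v)| < \sum |L(v)|$ and $|V(G')|=|V(G)|$, criticality gives an $L'$-coloring of $G$, which is also an $L$-coloring — contradiction. One subtlety to check: when I trim $|L(p)|$ down to $\{\phi(p)\}$ for $p\in V(\mathbf{P}_H)$, I must pick values coming from an actual proper $L$-coloring of $V(\mathbf{P}_H)$ (which exists by the chart axioms), and do this simultaneously and consistently across all of $\mathbf{P}_H$ for each ring; this is fine since $\mathbf{P}_H$ is $L$-colorable.

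Next, for 3), I would argue connectivity and the closed-2-cell / triangulation structure. Short-inseparability already gives $\textnormal{ew}(G)>4$, so $G$ has no noncontractible cycles of length $\le 4$ and no separating $3$- or $4$-cycles; combined with M5) (high face-width, hence $G$ is a $2$-cell embedding) this is close to what we want. If $G$ is not $2$-connected, it has a cutvertex or is disconnected; I would color the pieces separately (each is a smaller mosaic, or reduces to Theorem \ref{thomassen5ChooseThm}/\ref{Two2ListTheorem} in the degenerate cases) and glue — more carefully, a cutvertex $v$ splits $G$ into blocks, at most one of which can contain the ``hard'' structure, and the others have enough list room to be colored greedily or via the black-box theorems; then identify colors at $v$. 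For the faces: if some facial subgraph $H\notin\mathcal{C}$ is not a triangle, then since $\mathcal{T}$ is chord-triangulated every induced cycle of $G[V(H)]$ is a triangle, so $H$ must either be non-$2$-connected or have a chord; a chord of a non-triangular face, or a facial walk that is not a cycle, contradicts either short-inseparability (a short separating cycle appears) or the closed-2-cell property. I expect this is where I will need to be most careful — teasing out exactly which degenerate face structures survive short-inseparability and M5), and in each surviving case either deriving a direct contradiction or adding an edge to triangulate (the added edge cannot create a short separating cycle precisely because of short-inseparability, and the resulting graph is a smaller-list or same-size mosaic only if I instead delete/contract, so the cleanest route is to show non-triangular non-ring faces simply cannot occur). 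This step — controlling the face and connectivity structure — is the main obstacle.

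Finally, for 4), suppose $H\in\mathcal{C}$ is open with $|E(\mathbf{P}_H)|=0$; then $\mathbf{P}_H$ is a single vertex or empty, so $V(\mathbf{P}_H)\ne V(H)$ forces $\mathpzc{w}(H)$ large, but M1) requires $\mathbf{P}_H$ a \emph{path} and an open ring to have a nonempty precolored part by definition (``open'' means $V(\mathbf{P}_H)\subsetneq V(H)$, and a chart with $|E(\mathbf{P}_H)|=0$ on a ring that is a cycle would actually be a closed ring or reduce to a precolored-vertex case) — so I would show that such an $H$ is really either closed or can be re-described as a chart with strictly smaller data, giving a smaller mosaic. For $|V(H)|\ge 5$: if an open ring $H$ had $|V(H)|\le 4$, then $H$ is a cycle of length $\le 4$; since $\mathbf{P}_H\subsetneq H$ is a proper path, $H$ has at least one vertex of list size $\ge 3$ and at most $3$ precolored vertices on a $\le 4$-cycle — but a $\le 4$-cycle that is a facial subgraph of a short-inseparable embedding with $V(G)\ne V(H)$ would be a separating cycle of length $\le 4$, contradicting short-inseparability (and if $V(G)=V(H)$, color directly). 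So $|V(H)|\ge 5$, and then M1)'s requirement that no chord of $H$ meets $\mathring{\mathbf{P}}_H$ together with $\mathbf{P}_H$ being a proper subpath forces $|E(\mathbf{P}_H)|\ge 1$. In each of these sub-cases the contradiction is with short-inseparability or criticality, so no genuinely new machinery is needed beyond the reductions already set up for 1)--3).
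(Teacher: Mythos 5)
Your handling of 1) and 2) matches the paper: both reduce to the minimality of $\sum_v|L(v)|$, and your observation that list-trimming preserves M0)--M5) is correct.

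For 3), you have the right spirit (appeal to criticality) but the wrong architecture and a missing piece. The paper first proves that every $H\in\mathcal{C}$ is a cycle: if not, there is a cut-set $S$ with $|S|\le 1$ separating $G$, and the crux is a careful lemma showing each component $K+S$ is the underlying graph of a \emph{smaller mosaic} (this requires a case split on whether $H\cap (K+S)$ is contained in $\mathbf{P}_H$ or not, because one must check which pieces of the ring $H$ survive as valid open or closed rings and that M1)--M4) still hold). Your ``at most one piece contains the hard structure, color the others greedily'' skips exactly this verification, which is the substantive content; it is not true in general that only one block contains a ring, and the piece of $H$ in each block has to be certified as a legitimate ring with the right precolored subgraph. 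Then the paper derives 2-connectedness \emph{from} the fact that all facial subgraphs are cycles, not the other way around. Finally, you omit the 2-cell argument entirely: the paper's proof replaces a non-disc face with a disc to re-embed $G$ on $\mathbb{S}^2$, obtaining a mosaic of strictly smaller genus and invoking criticality via condition (b) of the definition of critical. Your proposal never invokes the genus clause, so as written the closed-2-cell assertion has no proof, and you even circularly cite ``the closed-2-cell property'' inside the argument that is supposed to establish it.

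For 4), there is a genuine error. You argue that an open ring $H$ with $|V(H)|\le 4$ ``would be a separating cycle of length $\le 4$.'' It would not: $H$ is a \emph{facial} cycle, so one side of it is a face of $G$, which by definition contains no vertices of $G$; a separating cycle requires vertices on both sides. Short-inseparability is silent about short facial cycles. The correct argument is indirect: because $G$ is short-inseparable, a chord of a facial cycle of length $\le 4$ would yield a separating $3$- or $4$-cycle, so $H$ is chordless; hence $V(H)$ admits a proper $L$-coloring $\psi$; then one replaces $L$ by the list-assignment precoloring $H$ with $\psi$, turning $H$ into a \emph{closed} ring. This is still a mosaic because $\textnormal{Rk}(H)$ drops from $2N_{\textnormal{mo}}^2$ to at most $4N_{\textnormal{mo}}$ (so M3)--M4) only get easier), M2) holds because $H$ is a chordless short cycle, and the total list size strictly decreased --- so criticality gives an $L$-coloring. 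Similarly, your claim that M1) ``forces'' $|E(\mathbf{P}_H)|\ge 1$ is wrong: if $\mathbf{P}_H$ is a single vertex or empty, $\mathring{\mathbf{P}}_H=\varnothing$ and M1) is vacuous. The paper instead extends the precoloring to a length-one path in $H$ containing the lone vertex of $\mathbf{P}_H$ (if any), producing a smaller mosaic with a nonempty precolored edge. You should rework 4) along these lines before attempting 3).
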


\begin{proof} Both 1) and of 2) both follow directly from the minimality of $\sum_{v\in V(G)}|L(v)|$. We now prove 3). We first prove that every element of $\mathcal{C}$ is a cycle. Suppose toward a contradiction that there is an $H\in\mathcal{C}$ which is not a cycle. As $H$ is a facial subgraph of $G$, it follows that there is an $S\subseteq V(H)$ with $|S|\leq 1$ such that $G\setminus S$ has more than one connected component. 

\begin{claim}\label{ForEachCutClaim1}
Let $K$ be a connected component of $G\setminus S$ and let $L'$ be a list-assignment for $V(K+S)$ where each $x\in S$ is precolored with a color of $L(x)$, and otherwise $L'=L$. Then the following hold.  
\begin{enumerate}[label=\roman*)]
\itemsep-0.1em
\item $K+S$ is $L$-colorable; AND
\item If $V(K\cap \mathbf{P}_H)=\varnothing$ then $K+S$ is $L'$-colorable.  
\end{enumerate}
\end{claim}

\begin{claimproof} Let $H^K:=H\cap (K+S)$ and $\mathcal{C}^K:=\{H'\in\mathcal{C}\setminus\{H\}: H'\subseteq K+S\}$. We now define a facial subgraph $C_*^K$ of $K+S$ in the following way. If $C_*\subseteq H^K$, then we set $C_*^K=C_*$. Otherwise, we let $C_*=H^K$. We first prove ii). Suppose that $V(K\cap \mathbf{P}_H)=\varnothing$ and let $\mathcal{T}':=(\Sigma, K+S, \mathcal{C}^K\cup\{H^K\}, L', C_*^K)$. We claim that $\mathcal{T}'$ is a mosaic. Firstly, since $V(K\cap\mathbf{P}_H)=\varnothing$, $H$ is an open $\mathcal{T}$-ring. Since $H$ is an open $\mathcal{T}$-ring and $H^K$ has a precolored path in $\mathcal{T}'$ consisting of one vertex, it follows that $H^K$ is an open $\mathcal{T}'$-ring and that $\mathcal{T}'$ satisfies all of M0)-M4). Note that $\textnormal{fw}(K)\geq\textnormal{fw}(G)$ and $\textnormal{ew}(K)\geq\textnormal{ew}(G)$, so M5) holds for $\mathcal{T}'$ as well. Thus, $\mathcal{T}'$ is a mosaic, and since $|V(K+S)|<|V(G)|$, it follows that $K+S$ is $L'$-colorable. This proves ii). 

Now we prove i). If $V(K\cap \mathbf{P}_H)=\varnothing$ then, by ii), $K+S$ is $L'$-colorable and thus $L$-colorable, so we are done in that case, so now suppose that $V(K\cap \mathbf{P}_H)\neq\varnothing$. Let $\mathcal{T}^{\dagger}:=(\Sigma, K+S, \mathcal{C}^K\cup\{H^K\}, L, C_*^K)$.  Suppose toward a contradiction that $K+S$ is not $L$-colorable. Since $|V(K+S)|<|V(G)|$, it follows from the criticality of $\mathcal{T}$ that $\mathcal{T}^{\dagger}$ is not a mosaic. Each component of $G\setminus S$ has edge-width at least $\textnormal{ew}(G)$ and, since $|S|\leq 1$, each component of $G\setminus S$ also has face-width at least $\textnormal{fw}(G)$, so $\mathcal{T}^{\dagger}$ satisfies M5). If $H$ is an open $\mathcal{T}$-ring, then $|E(\mathbf{P}_H\cap H^K)|\leq |E(\mathbf{P}_H)|\leq\frac{2N_{\textnormal{mo}}}{3}$, and if $H$ is a closed $\mathcal{T}$-ring, then $H^K$ is a closed $\mathcal{T}^{\dagger}$-ring as well, and $|E(\mathbf{P}_H\cap H^K)|\leq |E(\mathbf{P}_H)|\leq N_{\textnormal{mo}}$. Thus, $\mathcal{T}^{\dagger}$ satisfies M0) in any case, so it violates one of M1)-M4).  Now consider the following cases.

\textbf{Case 1:} $V(H^K)\not\subseteq V(\mathbf{P}_H)$,

In this case, $H$ is an open $\mathcal{T}$-ring and $H^K$ is an open $\mathcal{T}^{\dagger}$-ring. In particular, $\mathbf{P}_H$ is a path, and $\mathbf{P}_H\cap H^K$ is a (possibly empty) path, and furthermore, we have $\mathpzc{w}_{\mathcal{T}^{\dagger}}(H^K)=V(H^K)\setminus V(\mathbf{P}_H)$. Since $\mathpzc{w}_{\mathcal{T}}(H)=V(H)\setminus V(\mathbf{P}_H)$, it follows that $\mathcal{T}^{\dagger}$ satisfies M3)-M4), where, possibly, $H^K$ is the outer ring of $\mathcal{T}^{\dagger}$. Furthermore, M1)-M2) are immediate, so we contradict the fact that one of M1)-M4) is violated. 

\textbf{Case 2:} $V(H^K)\subseteq V(\mathbf{P}_H)$

In this case, if $H^K$ is a path, then we have $V(H^K)=V(K+S)$, since $H^K$ is a facial subgraph of $K+S$. If that holds, then $K+S$ is $L$-colorable, contradicting our assumption. Thus, $H^K$ is not a path, and it follows from M1) applied to $\mathcal{T}$ that $H$ is not an open $\mathcal{T}$-ring. Thus, $H$ is a closed $\mathcal{T}$-ring, and $H^K$ is a closed $\mathcal{T}^{\dagger}$-ring. It is again immediate that $\mathcal{T}^{\dagger}$ is a mosaic. In particular, the rank of $H^K$ is less than that of $H$, so $\mathcal{T}^{\dagger}$ satisfies M3) and M4) where, possibly, $H^K$ is the outer ring of $\mathcal{T}^{\dagger}$. Furthermore, M1)-M2) are immediate, and we contradic the fact that one of M1)-M4) is violated. This completes the proof of Claim \ref{ForEachCutClaim1}. \end{claimproof}

It immediately follows from i) of Claim \ref{ForEachCutClaim1} that $S\neq\varnothing$, or else we $L$-color each component of $G\setminus S$, and the union of these $L$-colorings is an $L$-coloring of $G$, contradicting the fact that $\mathcal{T}$ is critical. Thus, we have $|S|=1$, so let $S=\{x\}$ for some $x\in V(H)$. If $x\in V(\mathbf{P}_H)$, then $x$ is already precolored by $L$, and in that case, we again $L$-color $K+x$ for each connected component $K$ of $G-x$, and the union of these $L$-colorings is an $L$-coloring of $G$, contradicting the fact that $\mathcal{T}$ is critical. Thus, we have $x\not\in V(\mathbf{P}_H)$. Since $\mathbf{P}_H$ is connected, there is a unique connected component $K$ of $G-x$ with $\mathbf{P}_H\subseteq K$. By i) of Claim \ref{ForEachCutClaim1}, $K+x$ admits an $L$-coloring $\phi$. By 2) of Claim \ref{ForEachCutClaim1} applied to each of the remaining conected components of $G-x$, $\phi$ extends to an $L$-coloring of $G$,  contradicting the fact that $\mathcal{T}$ is critical. We conclude that every element of $\mathcal{C}$ is a cycle.

\begin{claim}\label{EvFSGnotCTriang} Every facial subgraph of $G$ not lying in $\mathcal{C}$ is a triangle. \end{claim}

\begin{claimproof} Suppose there is a facial subgraph $H$ of $G$ with $H\not\in\mathcal{C}$, where $H$ is not a triangle. If no subgraph of $H$ is a cycle, then $G=H$ and $G$ is $L$-colorable, which is false, so at least one subgraph of $H$ is a cycle. If there is a cycle $H'\subseteq H$ which is not a triangle, then, since $\mathcal{T}$ is chord-triangulated and $G$ is short-inseparable, it follows that $V(H')=V(G)$ and again, $G$ is $L$-colorable, which is false. Thus, every cycle which is a subgraph of $H$ is a triangle. As $H$ is not a triangle and $G$ is short-inseparable, we again have $G=H$, so $G$ is $L$-colorable, which is false. \end{claimproof}

As every element of $\mathcal{C}$ is a cycle, it follows from Claim \ref{EvFSGnotCTriang} that $G$ is 2-connected. Now we finish the proof of 3). It suffices to prove that $G$ is a 2-cell embedding. If that holds, then, since every facial subgraph of $G$ is a contractible cycle, it follows that $G$ is a closed 2-cell-embedding. Suppose 3) does not hold. Thus, there is a facial cycle $H$ of $G$ and a component $U$ of $\Sigma\setminus G$ such that $H=\partial(U)$, but $U$ is not a disc. As $\textnormal{fw}(G)>1$, $G$ admits an embedding $G'$ on $\mathbb{S}^2$ obtained by replacing $U$ with a disc, and $(\mathbb{S}^2, G', \mathcal{C}, L, C_*)$ satisfies all of M0)-M5), as the genus has only decreased. In particular, $\textnormal{fw}(G')=\textnormal{ew}(G')=\infty$. Since $\mathcal{T}$ is critical, $G$ is $L$-colorable, which is false. This proves 3). 

We now prove 4). Suppose toward a contradiction that $|V(H)|\leq 4$. By 3), $H$ is a cycle.  Since $G$ is short-inseparable and $H$ is a cycle of length at most four, $G$ contains no chord of $H$, so $V(H)$ admits an $L$-coloring $\psi$. Let $L'$ be a list-assignment for $V(G)$ where the vertices of $H$ are precolored by $\psi$ and otherwise $L'=L$. By 3), $H$ is a cycle, and $\mathcal{T}':=(\Sigma, G, \mathcal{C}, L', C_*)$ is a tessellation in which $H$ is a closed ring. We claim that $\mathcal{T}'$ is a mosaic. M5) is immediate, since the embedding is the same as in $\mathcal{T}$. Since $H$ is a chordless cycle of length at most four and $G$ is short-inseparable, it follows that that M2) is satisfied. Furthermore, $\mathcal{T}'$ still satifies the distance conditions of Definition \ref{MainMosaicAxioms}, as the rank of $H$ has only decreased. It is immediate that the other conditions are satisfied. Thus, $\mathcal{T}'$ is a mosaic, and, by the minimality of $\sum_{v\in V(G)}|L(v)|$, it follows that $G$ is $L'$-colorable and thus $L$-colorable, contradicting the fact that $\mathcal{T}$ is critical. Now suppose toward a contradiction that $|E(\mathbf{P}_C)|<1$. Thus, $\mathbf{P}_C$ is either empty or consists of a lone vertex, so let $L'$ be a list-assignment for $V(G)$ obtained from $L$ by properly $L$-precoloring a path $P$ of length one in $H$, where this path includes the lone vertex of $\mathbf{P}_C$, if it exists. As $H$ is a cycle, we have $V(H)\neq V(P)$, and it is immediate that $\mathcal{T}':=(\Sigma, G, \mathcal{C}, L', C_*)$ is a tessellation where $H$ is an open ring with a precolored path $P$. As this path has length one, it is immediate that $\mathcal{T}'$ satisfies M1) and is thus a mosaic. It follows from the minimality of $\sum_{v\in V(G)}|L(v)|$ that $G$ is $L'$-colorable and thus $L$-colorable, a contradiction. \end{proof}

The usefulness of imposing both face-width conditions and stronger edge-width conditions in M5) of Definition \ref{MainMosaicAxioms} lies in the fact that, if an embedding has high edge-width and consists mostly of triangles, then it also has high face-width. In the remainder of this paper, we repeatedly make use of the following facts about edge-width and face-width. The following is straightforward to check. 

\begin{prop}\label{HighEw+Triangles=HighFw} Let $\Sigma$ be a surface and let $G$ be a 2-cell embedding on $\Sigma$. Then the following hold.
\begin{enumerate}[label=\emph{\arabic*)}]
\itemsep-0.1em
\item Let $F$ be a noncontractible cycle and let $\mathcal{F}$ be a family of facial subgraphs of $G$, where $F$ is contained in $\bigcup\mathcal{F}$
\begin{enumerate}[label=\alph*)]
\itemsep-0.1em
\item If every element of $\mathcal{F}$ is a triangle, then $\textnormal{ew}(G)\leq |\mathcal{F}|+2$; AND 
\item If there is a $D\in\mathcal{F}$ such that each $D'\in\mathcal{F}\setminus\{D\}$ is a triangle, then $|E(F)\setminus E(D\cap F)|\leq |\mathcal{F}|+2$; AND
\end{enumerate}
\item Let $\alpha\geq 2$ be a constant and let $\mathcal{C}$ be a family of facial subgraphs of $G$, where each facial subgraph of $G$, other than those of $\mathcal{C}$, is a triangle. Let $D\in\mathcal{C}$ be of distance at least $\alpha$ from each element of $\mathcal{C}\setminus\{D\}$. Let $F'\subseteq G$ be a cycle with $V(F'\cap D)\neq\varnothing$. Then either $F'$ contains no vertices of any element of $\mathcal{C}\setminus\{D\}$ or $F'$ cannot be contained in the union of fewer than $2(\alpha-1)$ facial walks of $G$. 
\end{enumerate}
\end{prop}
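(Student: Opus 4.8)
The plan is to handle the two parts by direct combinatorial arguments about how a cycle can be covered by facial walks in a 2-cell embedding. For part 1a), I would take a noncontractible cycle $F$ contained in $\bigcup\mathcal{F}$, where $\mathcal{F}$ is a minimal such family of facial triangles. The key observation is that for a 2-cell embedding, walking around $F$, each edge of $F$ lies on exactly two facial walks, and consecutive edges of $F$ that lie on a common face let us "shortcut." More precisely, I would argue that $\bigcup\mathcal{F}$, viewed as a subcomplex, contains $F$, and since each member is a triangle (3 edges, 3 vertices), a counting/Euler-type bound gives $|E(F)| \le |\mathcal{F}| + 2$: intuitively, the triangles in $\mathcal{F}$ glue along shared edges, and a minimal family covering a cycle forms a disc-like or annular patch whose boundary contributions to $F$ are controlled by $|\mathcal{F}|$. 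Since $\textnormal{ew}(G)\le |E(F)|$ when $F$ is a shortest noncontractible cycle, and we may take $F$ to be shortest, this yields $\textnormal{ew}(G)\le |\mathcal{F}|+2$. For part 1b), the argument is identical except that one designated face $D$ may be large; we simply exclude the edges of $F$ lying on $D$ from the count, since those edges are "absorbed" by the single face $D$ and the remaining edges of $F$ are covered by the triangles of $\mathcal{F}\setminus\{D\}$, giving $|E(F)\setminus E(D\cap F)| \le |\mathcal{F}| + 2$.

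For part 2), suppose $F'$ is a cycle with $V(F'\cap D)\ne\varnothing$ that does contain a vertex of some $C\in\mathcal{C}\setminus\{D\}$. I would show $F'$ cannot be contained in the union of fewer than $2(\alpha-1)$ facial walks. The idea: the portion of $F'$ between a vertex of $D$ and a vertex of $C$ is a path of length at least $d(D,C)-\text{(boundary slack)}$; since $d(D,C)\ge\alpha$, this path has length at least roughly $\alpha$ edges (being careful that the endpoints lie on $D$ and $C$ respectively, so the path joining them has at least $\alpha$ internal structure — actually a shortest $(D,C)$-path has $\ge\alpha-1$ edges strictly between, or $\ge \alpha$ edges if we count to the boundaries properly). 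Then I apply part 1: a path of $t$ edges all of whose faces (except possibly endpoint-faces) are triangles needs, by the analogue of 1a) for paths, at least roughly $t - 2$ facial walks, and since $F'$ contains two such far-apart arcs (one from $D$ outward, one returning), we get the bound $2(\alpha - 1)$. I would state and use a path-version of the edge-width inequality: if a path $P$ with $|E(P)| = t$ lies in $\bigcup\mathcal{F}$ with each member of $\mathcal{F}$ a triangle, then $|\mathcal{F}| \ge t - 1$ (or the appropriate constant), which follows by the same gluing argument as 1a) applied to the disc patch covering $P$.

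The main obstacle I anticipate is making the gluing/counting argument in part 1a) fully rigorous: one must argue that a minimal family $\mathcal{F}$ of facial triangles covering a noncontractible cycle $F$ has the property that $\bigcup\mathcal{F}$ is a surface-with-boundary (an annular or disc-like patch) inside $\Sigma$, and then relate $|E(F)|$ to $|\mathcal{F}|$ via Euler's formula on this patch. The subtlety is that $\bigcup\mathcal{F}$ need not be a nice patch a priori — triangles could meet in complicated ways — so the right move is to take $F$ to be a shortest noncontractible cycle and $\mathcal{F}$ minimal, then observe that any "extra" incidence lets us either shorten $F$ (contradicting shortness) or drop a triangle from $\mathcal{F}$ (contradicting minimality). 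Once $\bigcup\mathcal{F}$ is known to be an annulus containing $F$ as a core curve, Euler's formula with $V - E + F = 0$ for the annulus, combined with each triangle contributing $3$ to the edge-face incidence count and boundary edges contributing $1$, yields $|E(F)| \le |\mathcal{F}| + 2$ after bookkeeping. I expect the other parts to follow routinely once this core estimate is in place.
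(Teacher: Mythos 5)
Your proposal has a concrete gap at the heart of part 1a), and a related one in part 2).

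For 1a), the intermediate claim that a minimal family $\mathcal{F}$ of facial triangles covering the noncontractible cycle $F$ satisfies $|E(F)|\le|\mathcal{F}|+2$ is simply false, and Euler's formula does not deliver it. Take $F=v_1v_2\cdots v_{2m}v_1$ noncontractible, with triangular faces $T_i=v_{2i-1}v_{2i}v_{2i+1}$ on one side for $i=1,\ldots,m$ (indices mod $2m$). Then $\mathcal{F}=\{T_1,\ldots,T_m\}$ is a minimal cover of $F$, $\bigcup\mathcal{F}$ is an annulus with boundary $F$ on one side and the shortcut cycle $F^*=v_1v_3\cdots v_{2m-1}v_1$ on the other, and Euler's formula $V-E+F=0$ is satisfied ($2m-3m+m=0$) while $|E(F)|=2m=2|\mathcal{F}|$. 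So the counting cannot recover $|E(F)|\le|\mathcal{F}|+2$. Your subsequent sentence ``we may take $F$ to be shortest'' is also not available as written: $F$ and $\mathcal{F}$ are given, and a globally shortest noncontractible cycle need not lie in $\bigcup\mathcal{F}$. What does work is the shortcut idea you mention at the start and then abandon: repeatedly, whenever some $T\in\mathcal{F}$ contains two (necessarily consecutive) edges $ab,bc$ of the current cycle, replace them by the third edge $ac$ of $T$. Each replacement is a homotopy across a facial disc, so preserves noncontractibility; the new edge lies on $T\in\mathcal{F}$, so the invariant $F\subseteq\bigcup\mathcal{F}$ is preserved; and the length drops. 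At termination each triangle of $\mathcal{F}$ carries at most one edge, so the resulting noncontractible cycle has length at most $|\mathcal{F}|$, giving $\textnormal{ew}(G)\le|\mathcal{F}|\le|\mathcal{F}|+2$. Part 1b) then needs a separate argument rather than ``the same count with $D$ excluded,'' since the shortcutting changes the cycle, whereas 1b) is a statement about the given $F$.

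For part 2), the ``path version'' you propose --- that a path of $t$ edges covered by triangles needs at least $t-1$ of them --- is false: the path $x_0x_1\cdots x_t$ can be covered by $\lceil t/2\rceil$ triangles $x_0x_1x_2,\,x_2x_3x_4,\ldots$. That lemma would only hold if the path were a geodesic, which the arcs of $F'$ need not be. The correct mechanism here is the distance condition on $\mathcal{C}$, not an edge-count: as you traverse $F'$, consecutive covering faces share a vertex of $F'$, and a facial triangle has diameter $1$; so starting from a vertex $u\in V(D)\cap V(F')$ and walking through $k$ consecutive triangular faces keeps you within distance $k$ of $u$, hence within distance $k$ of $D$. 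Since every non-triangular face other than $D$ lies in $\mathcal{C}\setminus\{D\}$ and $d(D,\mathcal{C}\setminus\{D\})\ge\alpha$, each of the two arcs of $F'$ from $u$ to a vertex $v$ of some $C'\in\mathcal{C}\setminus\{D\}$ must pass through at least $\alpha-1$ triangular faces before it can reach a face touching $\mathcal{C}\setminus\{D\}$; and because a triangle meeting $F'$ in two edges meets it in two consecutive edges, triangular faces do not repeat across the two arcs. This yields the $2(\alpha-1)$ bound. The distance hypothesis is doing the real work in 2), and your sketch does not invoke it in the right place.
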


Many of the arguments in Section \ref{WebInwardContractCycleSec}-\ref{BandOpenRingCritMosaicProperSec} involve starting with a critical mosaic $(\Sigma, G, \mathcal{C}, L, C_*)$, and then constructing a smaller mosaic either from one side of a separating cycle in $G$ or one side of a generalized chord of an element of $\mathcal{C}$. When we do this, we need to ensure the resulting smaller embedding has high enough face-width to satisfy M5). This the purpose of the two facts below, which are straightforward consequences of Proposition \ref{HighEw+Triangles=HighFw}. 

\begin{fact}\label{HighEwTriangleFwF1Cycle} Let $\Sigma$ be a surface and let $G$ be a 2-cell embedding on $\Sigma$. Let $D\subseteq G$ be contractible cycle and let $\alpha\geq 2$ be a constant. Let $U, U'$ be the components of $\Sigma\setminus D$, where $U$ is an open disc. Suppose further that $D$ is of distance at least $\alpha$ from each facial subgraph of $G$ in $\textnormal{Cl}(U')$ which is distinct from $D$ and not a triangle. Then, for any 2-cell embedding $H$ on $\Sigma$ obtained from $G\cap\textnormal{Cl}(U')$ by adding some edges and vertices to $\textnormal{Cl}(U)$, we have $\textnormal{fw}(H)\geq\min\{\textnormal{fw}(G), 2(\alpha-1), \textnormal{ew}(G)-|E(D)|-1\}$. \end{fact}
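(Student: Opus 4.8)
The strategy is to compare a noncontractible cycle of $H$ with a suitable noncontractible cycle or curve of $G$, exploiting that $H$ and $G$ differ only inside the disc $U$. First I would reduce the face-width lower bound to a statement about noncontractible cycles of $H$: since $H$ is a 2-cell embedding, $\textnormal{fw}(H)$ equals the smallest $k$ such that some family of $k$ facial subgraphs of $H$ has union containing a noncontractible cycle, so it suffices to show that every such family has size at least $\min\{\textnormal{fw}(G), 2(\alpha-1), \textnormal{ew}(G)-|E(D)|-1\}$. So let $\mathcal{F}$ be a family of facial subgraphs of $H$ whose union contains a noncontractible cycle $F$, and I want to bound $|\mathcal{F}|$ from below.

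Next I would split into cases according to how $F$ and $\mathcal F$ interact with the modified region $\textnormal{Cl}(U)$. Note that the facial subgraphs of $H$ lying in $\textnormal{Cl}(U')$ are exactly the facial subgraphs of $G$ in $\textnormal{Cl}(U')$ other than $D$, together with (possibly) $D$ itself or a facial subgraph containing $D$; all other facial subgraphs of $H$ lie in the new disc $\textnormal{Cl}(U)$. \textbf{Case 1:} no element of $\mathcal F$ meets the interior of $U$. Then every element of $\mathcal F$ is a facial subgraph of $G$ lying in $\textnormal{Cl}(U')$, except possibly one exceptional face ``at $D$''; since $F$ is noncontractible in $\Sigma$ and contained in $G\cap\textnormal{Cl}(U')\subseteq G$, the family $\mathcal F$ (viewed in $G$) witnesses $\textnormal{fw}(G)\le|\mathcal F|$, giving $|\mathcal F|\ge\textnormal{fw}(G)$. \textbf{Case 2:} some element of $\mathcal F$ meets the interior of $U$, so part of $F$ or of $\bigcup\mathcal F$ enters the disc. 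Here I would use that $D$ separates $U$ from $U'$: the cycle $F$ must cross $D$, and I can reroute the portion of $F$ inside $\textnormal{Cl}(U)$ along $D$ to obtain a noncontractible cycle $F'$ of $G$ contained in $\left(\bigcup_{C\in\mathcal F\cap\textnormal{Cl}(U')}C\right)\cup D$. This is the spot where I'd invoke Proposition \ref{HighEw+Triangles=HighFw}: either $F'$ stays away from all elements of $\mathcal C\setminus\{D\}$, in which case $F'$ lies in $G\cap\textnormal{Cl}(U')$ minus $D$ plus at most a few extra edges from $D$, and the hypothesis that $D$ is $\alpha$-far from all non-triangle faces of $G$ in $\textnormal{Cl}(U')$ forces the relevant facial walks to be triangles, yielding via part 1a) or part 2) of that proposition the bound $2(\alpha-1)$; or $F'$ does meet some element of $\mathcal C\setminus\{D\}$, and then part 2) gives that $F'$ needs at least $2(\alpha-1)$ facial walks. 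The alternative accounting, where I instead bound $\textnormal{ew}(G)$: if rerouting along $D$ produces a short noncontractible cycle, its length is at most (edges of $F'$ outside $D$) plus $|E(D)|$, and edges of $F'$ outside $D$ come from triangle faces in $\mathcal F$, giving roughly $|\mathcal F|+2$ such edges by 1a), hence $\textnormal{ew}(G)\le|\mathcal F|+|E(D)|+1$, i.e. $|\mathcal F|\ge\textnormal{ew}(G)-|E(D)|-1$.

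Assembling the cases, in every case $|\mathcal F|$ is at least one of $\textnormal{fw}(G)$, $2(\alpha-1)$, or $\textnormal{ew}(G)-|E(D)|-1$, which gives the claimed minimum. I would be slightly careful about the one exceptional facial subgraph of $H$ ``at $D$'' (the face of $H$ whose boundary is $D$, or contains $D$) — it need not be a triangle, but there is at most one such face and Proposition \ref{HighEw+Triangles=HighFw} part 1b) and part 2) are stated precisely to tolerate a single distinguished non-triangular face $D\in\mathcal F$, so this is exactly why those ``all but one'' versions appear there.

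\textbf{Main obstacle.} The genuinely delicate step is Case 2: making the rerouting argument precise — showing that the portion of the noncontractible cycle $F$ lying in the new disc $\textnormal{Cl}(U)$ can be replaced by an arc along $D$ so as to produce a genuinely \emph{noncontractible} cycle $F'$ of $G$, and then correctly bookkeeping which facial walks of $G$ contain $F'$ so that Proposition \ref{HighEw+Triangles=HighFw} applies with the distinguished face $D$. The topological content (that $D$ separates and that crossing it an odd/essential number of times preserves noncontractibility of the rerouted curve) is routine but must be handled with care; everything else is a direct appeal to Proposition \ref{HighEw+Triangles=HighFw} and the 2-cell reformulation of face-width.
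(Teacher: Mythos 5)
The paper does not actually supply a proof of Fact \ref{HighEwTriangleFwF1Cycle}; it is introduced with the remark that the two facts are ``straightforward consequences of Proposition \ref{HighEw+Triangles=HighFw},'' so there is no written argument to compare against. Your outline is the natural one and, in its essentials, surely what the authors have in mind: pass to the 2-cell reformulation of face-width, split on whether the witnessing noncontractible cycle $F$ of $H$ stays in $\textnormal{Cl}(U')$ or not; in the former case the covering family already witnesses $\textnormal{fw}(G)\le|\mathcal F|$, and in the latter push the arcs of $F$ inside the disc $U$ onto $D$ (a homotopy inside a disc, so noncontractibility is preserved) to obtain a noncontractible cycle $F'$ of $G\cap\textnormal{Cl}(U')$ to which Proposition \ref{HighEw+Triangles=HighFw} applies, yielding either the $2(\alpha-1)$ term or the $\textnormal{ew}(G)-|E(D)|-1$ term. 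You have also correctly identified the rerouting and the attendant bookkeeping as the only real content.

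Two spots in your sketch would need a sentence of care in a full write-up. First, your case split is phrased as ``does some $C\in\mathcal F$ meet the interior of $U$'' rather than ``does $F$ itself enter $U$.'' These are not the same: a face of $\mathcal F$ lying in $\textnormal{Cl}(U)$ can meet $F$ only along $D$ while $F$ stays in $\textnormal{Cl}(U')$, and that subcase needs to be folded into Case~1 (by swapping such a face for the face of $G$ on the $U'$ side of the relevant $D$-edges) rather than sent to the rerouting argument. Second, Proposition \ref{HighEw+Triangles=HighFw} is stated for a 2-cell embedding all of whose non-distinguished faces are triangles, whereas here the triangle/distance hypothesis only controls the faces of $G$ in $\textnormal{Cl}(U')$, and $D$ need not even be a facial cycle of $G$. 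The point that makes the application legitimate is that $F'\subseteq G\cap\textnormal{Cl}(U')$, so any minimal family of facial subgraphs of $G$ covering $F'$ can be taken to lie in $\textnormal{Cl}(U')$ (for edges of $F'$ on $D$, choose the face on the $U'$ side); your sketch does not flag this and a reader could worry the Proposition is being invoked out of scope. Neither issue, nor the $\pm 1$ in the $\textnormal{ew}(G)-|E(D)|$ term, affects the paper's uses of the Fact, which all have large slack in every parameter, but they are precisely the points where a careful proof would have to pause.
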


Our second fact is the following. 

\begin{fact}\label{HighEwFwF2Chord}  Let $\Sigma$ be a surface and let $G$ be a 2-cell embedding on $\Sigma$. Let $C\subseteq G$ be cyclic facial subgraph of $G$ and let $Q$ be a proper generalized chord of $C$, where each of the three cycles of $C\cup Q$ is contractible. Let $\alpha\geq 2$ be a constant and let $G=G_0\cup G_1$ be the natural $Q$-partition of $G$, and, for each $j=0,1$, let $U_j$ be the unique component of $\Sigma\setminus (C\cup Q)$ such that $G_j=G\cap\textnormal{Cl}(U_j)$, and suppose that $U_1$ is an open disc. Suppose further that $C\cup Q$ is of distance at least $\alpha$ from each facial subgraph of $G$ in $\textnormal{Cl}(U_0)$ which is distinct from $C$ and not a triangle. Let $H$ be a 2-cell embedding on $\Sigma$ obtained from $G_0$ by adding some edges and vertices to $\textnormal{Cl}(U_1)$. Then $\textnormal{fw}(H)\geq\min\{\textnormal{fw}(G), 2(\alpha-1), \textnormal{ew}(G)-2|E(Q)|-2\}$. \end{fact}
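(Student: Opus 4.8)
The plan is to argue by contradiction using the face-covering description of face-width for $2$-cell embeddings. Suppose $\textnormal{fw}(H)=k$ with $k<\min\{\textnormal{fw}(G),\,2(\alpha-1),\,\textnormal{ew}(G)-2|E(Q)|-2\}$, and choose a family $\mathcal{F}$ of $k$ facial subgraphs of $H$, of smallest size, whose union contains a noncontractible cycle $F$ of $H$. Write $C=C_0\cup C_1$ where $C_0,C_1$ are the two arcs of $C$ between the endpoints of $Q$, so $\partial U_0=C_0\cup Q$ and $\partial U_1=C_1\cup Q$; let $U$ denote the face of $G$ bounded by $C$. The first step is to record the face structure of $H$: every facial subgraph of $H$ is either a facial subgraph of $G$ contained in $\textnormal{Cl}(U_0)$ --- these include $C$ itself, still a facial subgraph of $H$ since $C_0\subseteq G_0\subseteq H$ and the patch supplies $C_1$ --- or it lies inside the disc $\textnormal{Cl}(U_1)$, in which case its intersection with $\textnormal{Cl}(U_0)$ is a subgraph of $\partial U_1=C_1\cup Q$; in particular, every facial subgraph of $H$ that is not a triangle and is not $C$ is either one of these patch faces or one of the facial subgraphs of $G$ in $\textnormal{Cl}(U_0)$ that, by hypothesis, lies at distance at least $\alpha$ from $C\cup Q$.

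Second, I would reroute $F$ off the patch. Since $\textnormal{Cl}(U_1)$ is a disc and $F$ is noncontractible, $F\not\subseteq\textnormal{Cl}(U_1)$, so $F\cap\textnormal{int}(U_1)$ is a union of open arcs with endpoints on $\partial U_1$. Each such arc can be replaced, inside $\textnormal{Cl}(U_0)\cup U$, by a detour: first run from the endpoint to one of the two vertices of $Q\cap C$ along $\partial U_1$ --- gliding freely through the empty face $U$ while parallel to $C_1$, and through at most $|E(Q)|$ facial subgraphs of $G$ incident to $Q$ from the $U_0$-side while parallel to $Q$ --- and then, if necessary, continue to the other endpoint the same way. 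The result is a noncontractible cycle $F'$ of $G$ contained in $\textnormal{Cl}(U_0)$, meeting $C$ (since every detour passes through a vertex of $Q\cap C\subseteq V(C)$), and covered by a family $\mathcal{F}'$ of facial subgraphs of $G$ with $|\mathcal{F}'|\le|\mathcal{F}|+2|E(Q)|+1$ (the $+1$ for $C$), where each element of $\mathcal{F}'$ other than $C$ is either a triangle, a face of $G$ incident to $Q$ (hence a triangle by the distance hypothesis), or one of the far non-triangular faces. If instead $F$ already avoids $\textnormal{int}(U_1)$ altogether, we take $F'=F$ and $\mathcal{F}'=\mathcal{F}$, with all of $\mathcal{F}'$ facial subgraphs of $G$.

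Third, I would apply Proposition \ref{HighEw+Triangles=HighFw}. If $\mathcal{F}'=\mathcal{F}$ consists entirely of facial subgraphs of $G$ (the no-reroute case), then $F'$ is a noncontractible cycle of $G$ covered by $k$ facial subgraphs, so $\textnormal{fw}(G)\le k$, a contradiction. Otherwise, take $\mathcal{C}$ to be $\{C\}$ together with all non-triangular facial subgraphs of $G$ in $\textnormal{Cl}(U_0)$; each of the latter lies at distance at least $\alpha$ from $C\cup Q$, hence from $C$, so $C$ is at distance at least $\alpha$ from every other element of $\mathcal{C}$, and every facial subgraph of $G$ outside $\mathcal{C}$ is a triangle. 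Since $F'$ meets $C$, Proposition \ref{HighEw+Triangles=HighFw}(2) applies with $D=C$: either $F'$ meets one of the far non-triangular faces, and then $F'$ cannot be covered by fewer than $2(\alpha-1)$ facial walks of $G$, forcing $|\mathcal{F}'|\ge 2(\alpha-1)$; or $F'$ meets no such face, so by Proposition \ref{HighEw+Triangles=HighFw}(1b) with $D=C$ we get $|E(F')\setminus E(F'\cap C)|\le|\mathcal{F}'|+2$, and combining $|E(F')|\ge\textnormal{ew}(G)$ (as $F'$ is noncontractible in $G$) with the fact that the edges of $F'\cap C$ come only from detours, which can be confined to $Q$ at a cost already absorbed into $\mathcal{F}'$, yields $\textnormal{ew}(G)\le k+2|E(Q)|+2$. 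Each outcome contradicts the assumption on $k$, proving the Fact. The main obstacle --- the part needing genuine care rather than routine checking --- is the second step: one must pin down exactly how many facial subgraphs of $G$ a $Q$-detour introduces, arrange the detours so that $F'$ genuinely meets $C$ without inflating that count, and track precisely how many edges of $C$ land on $F'$, so that the constant $2|E(Q)|+2$, and the clean bounds $\textnormal{fw}(G)$ and $2(\alpha-1)$, come out exactly as stated.
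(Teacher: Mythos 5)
The paper declares this Fact a ``straightforward consequence'' of Proposition~\ref{HighEw+Triangles=HighFw} without supplying a proof, and your strategy --- homotope the noncontractible cycle off the disc $\textnormal{Cl}(U_1)$ and then invoke Proposition~\ref{HighEw+Triangles=HighFw} on the rerouted cycle --- is the right shape. Two points, however, do not go through as written. First, your claim that ``every facial subgraph of $G$ outside $\mathcal{C}$ is a triangle'' is not justified: the Fact's hypothesis only constrains non-triangular faces of $G$ in $\textnormal{Cl}(U_0)$, and $G$ may well have non-triangular faces inside $U_1$; these are neither in your $\mathcal{C}$ nor hypothesized to be far from $C$, so the hypothesis of Proposition~\ref{HighEw+Triangles=HighFw}(2) is simply not established for $G$. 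To repair this one should pass to an auxiliary $2$-cell embedding (for instance, $G_0\cup C$ with a web added in $\textnormal{Cl}(U_1)$ as in Proposition~\ref{InAndOutWebFactsProp}) in which the triangulation hypothesis genuinely holds and the edge-width and covering of $F'$ are unchanged.

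Second, the constants do not come out. Applying Proposition~\ref{HighEw+Triangles=HighFw}(2) to the rerouted $F'$ and the enlarged covering $\mathcal{F}'$ gives $|\mathcal{F}'|\geq 2(\alpha-1)$, which combined with your bound $|\mathcal{F}'|\leq k+2|E(Q)|+1$ yields only $k\geq 2(\alpha-1)-2|E(Q)|-1$, strictly weaker than the $2(\alpha-1)$ in the statement: the reroute costs $O(|E(Q)|)$ faces and nothing in your argument recovers that loss. The cleaner route is to observe that $\textnormal{Cl}(U_0)\cap\textnormal{Cl}(U_1)=Q$, so the arcs of $F$ inside $\textnormal{Cl}(U_1)$ have both endpoints on $V(Q)$; the detours may therefore be taken as subpaths of $Q$ alone, the detour never leaves $B_2(Q)$, and hence any far non-triangular face met by $F'$ was already met by the original $F$ and is covered by $\mathcal{F}$ alone --- so the distance count should be run on $F$ and $\mathcal{F}$, not on $F'$ and $\mathcal{F}'$. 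The same confinement of detours to $Q$ is exactly what disposes of what you call the ``main obstacle'' in the edge-width case, since it makes $E(F'\cap C)\subseteq E(F\cap C_0)$ and this lands squarely on the face $D$ in Proposition~\ref{HighEw+Triangles=HighFw}(1b). Finally, your opening assertion that $C$ remains a facial cycle of $H$ because ``the patch supplies $C_1$'' is not part of the hypothesis --- the paper applies this Fact with $H=G_0$, which need not contain $C_1$ at all (see Claim~\ref{GPxyPrecSideColorable}) --- though this does not impair the reroute itself, which takes place in $G$ rather than $H$.
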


\section{Webs of Contractible Cycles}\label{WebInwardContractCycleSec}

The purpose of Section \ref{ObstructingCycleSec} is to show that, in a critical mosaic, one side of a separating cycle of length at most $N_{\textnormal{mo}}$ is ``obstructed" in a sense that one of the rings of the chart has to be close enough to the separating cycle to prevent us from coloring one side and then extending the precoloring by constructing of a smaller mosaic from the other side. This is made precise in the statement of Theorem \ref{MainRes2CloseCycleBounds}. To prove this, we first show that, under certain conditions, one side of a separating cycle in a critical mosaic is colorable. In this section, we describe a general procedure for constructing a smaller mosaic from a critical mosaic that we use in Section \ref{ObstructingCycleSec}. To avoid repetition, it is helpful to introduce some additional language which is a natural extension of our terminology of inner and outer rings for charts. 

\begin{defn} \emph{A \emph{pre-chart} is a 3-tuple $\mathcal{P}=(\Sigma, G, C_*)$, where $\Sigma$ is a surface, $G$ is an embedding on $\Sigma$, and $C_*$ is a contractible facial subgraph of $G$. As with Definition \ref{ChartMoreTerms}, we call $\Sigma, G$, and $C_*$ the respective \emph{underlying surface}, \emph{underlying graph}, and \emph{outer ring} of $\mathcal{P}$.} \end{defn}

\begin{defn}\label{Def14InOutContract} \emph{Let $\mathcal{T}$ be a tuple which is either a chart or a pre-chart, where the respective underlying surface, underlying graph, and outer ring of $\mathcal{T}$ are $\Sigma, G$, and $C_*$, and furthermore, $C_*$ is a proper subgraph of $G$. Let $D\subseteq G$ be a contractible cycle, and let $U_0, U_1$ be the components of $\Sigma\setminus D$. Let $i\in\{0,1\}$ be the unique index such that both of the following hold: $C_*\subseteq\textnormal{Cl}(U_i)$ and, if $C_*=D$, then $G\cap\textnormal{Cl}(U_i)=C_*$. }
\begin{enumerate}[label=\emph{\arabic*)}]
\itemsep-0.1em
\item\emph{We let $\textnormal{Ext}_{\mathcal{T}}(D)=G\cap\textnormal{Cl}(U_i)$ and $\textnormal{Int}_{\mathcal{T}}(D)=G\cap\textnormal{Cl}(U_{1-i})$.}
\item\emph{We define $\Sigma^D_{\mathcal{T}}$ to be the surface obtained from $\Sigma$ by replacing $\textnormal{Cl}(U_{1-i})$ with a closed disc bounded by $D$.}
\item \emph{We say that $D$ is $\mathcal{T}$-\emph{outward contractible} if $U_i$ is a homeomorphic to a disc. We say that $\mathcal{T}$-\emph{inward contractible} if $U_{1-i}$ is homeomorphic to a disc.}
\item\emph{We say that $U_i$ and $\textnormal{Cl}(U_i)$ are $\mathcal{T}$-\emph{internally bounded} by $D$. Likewise, we say that $U_{1-i}$ and $\textnormal{Cl}(U_{1-i})$ are $\mathcal{T}$-\emph{externally bounded} by $D$.}
\end{enumerate}
 \end{defn}

When using the notation of Definition \ref{Def14InOutContract}, if the underlying chart or pre-chart $\mathcal{T}$ is clear from the context, then we drop the subscript $\mathcal{T}$ from 1)-2) above and the prefix $\mathcal{T}$ from 3)-4) above. 

\begin{defn}\label{InAndOutWebDefn}\emph{Let $\mathcal{T}$ be a tuple which is either a chart or a pre-chart, where the respective underlying surface, underlying graph, and outer ring of $\mathcal{T}$ are $\Sigma, G$, and $C_*$ , and furthermore, $C_*$ is a proper subgraph of $G$. Let $D^0\subseteq G$ be an inward contractible cycle and let $U$ be the open disc externally bounded by $D^0$. Let $n:=\left\lceil\frac{|E(D^0)|}{4}\right\rceil$. A $\mathcal{T}$-\emph{web} of $D^0$ is an embedding on $\Sigma$ obtained from $D^0$ by adding to $U$ a sequence of $n$ concentric cyles $D^1, \ldots, D^n$, an additional vertex $x$, where $x$ is adjacent to all the vertices of $D^n$, and, for each $i=1,\ldots, n$, adding some edges between $D^i$ and $D^{i-1}$, such that the following hold.}
\begin{enumerate}[label=\emph{\arabic*)}]
\item \emph{$D^1$ is a cycle $v_1\ldots v_{2n}$ of length $2n$, where, for each $v_i$ of odd index, the neighborhood of $v_i$ on $D^0$ is a lone vertex, and, for each $v_i$ of even index, the neighborhood of $v_i$ on $D^0$ is an edge of $D^0$, and furthermore, the $D^0$-neighborhoods of any two consecutive $D^1$-vertices have nonempty intersection}; AND
\item\emph{$D^2$ is a cycle of length $n$ lying in the open disc externally bounded by $D^1$, where, for each $v\in V(D^2)$, the neighborhood of $v$ on $D^1$ is a path of length two whose midpoint has even index in $v_1\ldots v_{2n}$. Furthermore, the $D^1$-neighborhoods of consecutive $D^2$-vertices intersect on precisely a common endpoint of the two paths}; AND
\item \emph{For each $i=3, \ldots, n$, both of the following hold.} 
\begin{enumerate}[label=\emph{\alph*)}]
\itemsep-0.1em
\item\emph{$D^i$ separates $x$ from $D^{i-1}$ and, for each $w\in V(D^i)$, the neighborhood of $w$ in $D^{i-1}$ is an edge of $D^{i-1}$.}
\item\emph{For any distinct $w, w'\in V(D^i)$, the neighborhoods of $w, w'$ in $D^{i-1}$ are distinct edges, and these edges share an endpoint of $D^{i-1}$ if and only if $ww'$ is an edge of $D^i$.}
\end{enumerate}
\end{enumerate}
\end{defn}

\begin{prop}\label{InAndOutWebFactsProp} Let $\mathcal{T}$ be either a chart or a pre-chart, where the respective underlying surface, underlying graph, and outer ring of $\mathcal{T}$ are $\Sigma, G$, and $C_*$, and furthermore, $G$ is short-inseparable and $C_*$ is a proper subgraph of $G$. Let $D\subseteq G$ be an inward contractible cycle of length $n\geq 5$. Let $\mathbf{D}$ be the set of induced cycles of $G[V(D)]\cap\textnormal{Int}(D)$ and let $G'$ be an embedding on $\Sigma$ obtained from $G[V(\textnormal{Ext}(D))]$, where, for each $F\in\mathbf{D}$ of length at least five, we add a $\mathcal{T}$-web of $F$ to the open disc externally bounded by $F$. Then the following hold.
\begin{enumerate}[label=\arabic*)]
\itemsep-0.1em
\item $G'$ is short-inseparable; AND
\item For any $x,y\in V(G)$, we have $d_G(x,y)\leq d_{G'}(x,y)$; AND
\item $|V(G'\setminus G)|+|V(D)|\leq 9n^2$. 
\end{enumerate}
\end{prop}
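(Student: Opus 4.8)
The plan is to analyze the construction of a $\mathcal{T}$-web directly against Definition~\ref{InAndOutWebDefn} and verify each of the three claims in turn. Throughout, fix an inward contractible cycle $D\subseteq G$ of length $n\geq 5$, write $\mathbf{D}$ for the set of induced cycles of $G[V(D)]\cap\textnormal{Int}(D)$, and recall that, since $G$ is short-inseparable, every element of $\mathbf{D}$ is either $D$ itself or a cycle of length $\geq 5$ (a chord of $D$ would create a separating triangle or $4$-cycle, or a noncontractible cycle shorter than $\textnormal{ew}(G)>4$, after accounting for the disc structure of $\textnormal{Int}(D)$). For each $F\in\mathbf{D}$ of length $m_F\geq 5$, the $\mathcal{T}$-web attached to the open disc $U_F$ externally bounded by $F$ consists of $n_F:=\lceil m_F/4\rceil$ concentric cycles $D^1_F,\ldots,D^{n_F}_F$ of lengths $2n_F, n_F, n_F,\ldots,n_F$ together with one apex vertex $x_F$. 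Since the distinct discs $U_F$ over $F\in\mathbf{D}$ are pairwise disjoint (they are nested faces-to-be inside $\textnormal{Int}(D)$, and distinct induced cycles on $V(D)$ bound disjoint interior regions), the webs do not interfere with one another, so it suffices to argue locally for a single $F$ and then combine.

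For claim~(3): a single web for $F$ adds $|V(D^1_F)|+\sum_{i\geq 2}|V(D^i_F)|+1 = 2n_F + (n_F-1)n_F + 1$ vertices, and $n_F=\lceil m_F/4\rceil\leq m_F$; summing over $F\in\mathbf{D}$ and using $\sum_F m_F\leq |E(G[V(D)]\cap\textnormal{Int}(D))|$ together with the fact that every edge on $V(D)$ inside $\textnormal{Int}(D)$ lies on $D$ itself (again by short-inseparability, as above — any such edge not on $D$ would be a chord giving a short separating cycle), one gets $\sum_F m_F\leq n$. Hence $\sum_F(2n_F+n_F^2-n_F+1)+|V(D)|$ is crudely bounded by $2n+n^2+n+n\leq 9n^2$ for $n\geq 5$; I would just do this one-line estimate carefully in the write-up. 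For claim~(1), short-inseparability of $G'$: the only new cycles are those inside some $U_F$ or passing through $F$ and a web; one checks from the degree/adjacency conditions 1)--3) of Definition~\ref{InAndOutWebDefn} that every cycle inside the web-region bounded by $F$ that is separating has length $\geq 5$ — the innermost structure around $x_F$ forces faces of size $\geq 3$ with no short separating cycle because consecutive web vertices have neighbourhoods meeting in exactly the prescribed way (a single vertex or a shared endpoint), which is precisely engineered so that the web is itself short-inseparable and ``distance-expanding''; since $\textnormal{Ext}(D)$ inherits short-inseparability from $G$ and $F$ is a facial cycle of $G'$ from the outside of the web, no new short separating cycle or short noncontractible cycle can be created by gluing. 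I would enumerate the face sizes of a web explicitly (all the new bounded faces have size $\geq 3$, and none of size $3$ or $4$ is separating because each such face is incident with $F$ or with the next concentric cycle in a way that leaves vertices on both sides only for length $\geq 5$).

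For claim~(2), the distance inequality $d_G(x,y)\leq d_{G'}(x,y)$ for $x,y\in V(G)$: note $V(G)\cap V(G')=V(\textnormal{Ext}(D))$, and any $G'$-path between two such vertices either stays in $G[V(\textnormal{Ext}(D))]\subseteq G$ (no loss) or dips into some web-region bounded by $F$, entering and leaving through $V(F)\subseteq V(D)\subseteq V(\textnormal{Ext}(D))$. In the latter case I would show the web is ``non-shortcutting'': any path through the web from a vertex of $F$ to another vertex of $F$ has length at least the $F$-distance between its endpoints, because the web was built with exactly $n_F=\lceil m_F/4\rceil$ layers so that a shortest traversal ``over the top'' through $x_F$ costs about $2n_F\approx m_F/2$, while two vertices of $F$ are at $F$-distance at most $m_F/2$; and a path that does not reach the inner layers projects monotonically onto $F$ layer-by-layer (each $D^i_F$-vertex sees a single vertex or a single edge of $D^{i-1}_F$), so it cannot be shorter than the corresponding subpath of $F$. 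Since $F$ is itself shortcut-free in $G$ inside $\textnormal{Int}(D)$ (induced cycle, short-inseparable, disc interior), replacing an $F$-to-$F$ detour through the web by the corresponding arc of $F$ in $G$ only shortens the path, giving the inequality.

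The main obstacle I expect is claim~(2), specifically the quantitative ``no shortcut through a web'' estimate: one must verify that the chosen number of layers $n_F=\lceil |E(F)|/4\rceil$ and the precise adjacency pattern in conditions 1)--3) together guarantee that the cheapest web traversal between two $F$-vertices is never cheaper than their $F$-distance. This is a layer-by-layer bookkeeping argument — at the $D^1/D^2$ interface the ``$2n_F$ vs $n_F$'' length drop has to be tracked against the factor-$2$ compression of $F$ into $D^1$ — and it is the one place where the specific constants in the web definition are being used, so the argument has to be done with care rather than by a soft topological appeal. Everything else (claims 1 and 3, and the reduction to a single $F$) is routine once the short-inseparability of $G$ is used to pin down that $\mathbf{D}$ consists only of $D$ and long induced cycles and that there are no chords of $D$ inside $\textnormal{Int}(D)$.
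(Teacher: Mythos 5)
Your argument for (3) rests on a false premise. You assert that ``every edge on $V(D)$ inside $\textnormal{Int}(D)$ lies on $D$ itself'' and that ``$\mathbf{D}$ consists only of $D$ and long induced cycles,'' citing short-inseparability, but short-inseparability only forbids \emph{separating} $3$- and $4$-cycles. A chord of $D$ inside the disc $\textnormal{Int}(D)$ that cuts off a \emph{facial} triangle is perfectly allowed: that triangle has no vertex of $G$ strictly inside it, so it is not separating. What short-inseparability rules out is an induced $4$-cycle of $G[V(D)]\cap\textnormal{Int}(D)$; thus $\mathbf{D}$ consists of triangles together with cycles of length at least five, exactly as the paper's proof of Proposition~\ref{CritMosaicExtSideColor} states. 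Consequently $\sum_{F\in\mathbf{D}}|E(F)|$ is \emph{not} bounded by $n$, nor by $|E(G[V(D)]\cap\textnormal{Int}(D))|$: each interior chord of $D$ is counted twice in that sum (once for the induced cycle on each side of it), so $\sum_{F\in\mathbf{D}}|E(F)|=n+2k$ where $k$ is the number of interior chords. The paper handles this by noting that $D$ together with its interior chords is an outerplanar graph, which bounds $k$ and yields $\sum_{F\in\mathbf{D}}|E(F)|\leq 3n$; combined with the per-web estimate $|V(\textnormal{Int}_{\mathcal{P}}(F))|\leq|E(F)|^2$ this gives $|V(\textnormal{Int}_{\mathcal{P}}(D))|\leq\bigl(\sum_F|E(F)|\bigr)^2\leq 9n^2$. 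Your final arithmetic still lands under $9n^2$ once $\sum_F m_F\leq 3n$ is substituted, because the factor $n_F=\lceil m_F/4\rceil$ leaves plenty of slack in $\sum_F n_F^2$ versus $\bigl(\sum_F m_F\bigr)^2$, but the reasoning as written is incorrect and the missing chords must be accounted for.

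For (1) and (2) your level of detail matches the paper's, which also treats these as direct consequences of the web construction rather than spelling out the case analysis. Your per-$F$ decomposition and the layer-by-layer plan for the non-shortcutting estimate in (2) are the right shape, and you correctly flag the $D^0$-to-$D^1$ and $D^1$-to-$D^2$ interfaces, where the concentric cycle lengths change, as the places where careful bookkeeping is actually required.
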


\begin{proof} Let $\mathbf{D}_{\geq 5}$ be the set of $F\in\mathbf{D}$ of length at least five. For each $F\in\mathbf{D}_{\geq 5}$, since $\left\lceil\frac{|E(F)|}{4}\right\rceil\geq 2$, we are adding at least two cycles to the open disc externally bounded by $F$ before we add a lone neighbor to all the vertices of the innermost cycle, and the innermost cycle has length $|E(F)|$. Possibly $G$ has some chords of $F$ in $\textnormal{Ext}(F)$, or there is a possibly a vertex of $V(\textnormal{Ext}(F))\setminus V(F)$ with two nonadjacent neighbors in $D$, but in any case, since $G$ is short-inseparable, $G'$ is also short-inseparable. Furthermore, for each $F\in\mathbf{D}_{\geq 5}$, since we have added $\left\lceil\frac{|E(F)|}{4}\right\rceil$ concentric cycles to the open disc bounded by $F$, where the first one has length $2|E(F)|$ and the others all have length $|E(F)|$, it is clear from our construction that, for any $x,y\in V(F)$, there is no $(x,y)$-path in $G'$ of length strictly less than $d_G(x,y)$, so 2) follows. Now we prove 3). Let $\mathcal{P}$ be be the pre-chart $(\Sigma, G', C_*)$. We first note the following.

\begin{claim}\label{WebSideBoundClaim} For each $F\in\mathbf{D}$, we have $|V(\textnormal{Int}_{\mathcal{P}}(F))|\leq |E(F)|^2$. \end{claim}

\begin{claimproof} This is immediate if $|E(F)|\leq 4$ so let $|E(F)|\geq 5$. By Definition \ref{InAndOutWebDefn}, $|V(\textnormal{Int}_{\mathcal{P}}(F))|\leq \frac{|E(F)|^2}{4}+3|E(F)|+1$, where the $3|E(F)|$-term is the contribution from the edges of the outermost cycle in the open disc externally bounded by $F$ and the edges of $F$ itself. Since $|E(F)|\geq 5$, we have $3|E(F)|+1\leq\frac{3|E(F)|^2}{4}$, so $|V(\textnormal{Int}_{\mathcal{P}}(F))|\leq |E(F)|^2$.  \end{claimproof}

Note that the subgraph of $G'$ consisting of $D$ and all of the chords of $D$ in the open disc externally bounded by $D$ can be regarded as an outerplanar embedding, since $D$ is inward contractible. In particular, we have $|\mathbf{D}|\leq n-4$, and furthermore, $\sum_{F\in\mathbf{D}}|E(F)|=n+2(|\mathbf{D}|-1)\leq 3n$, since, in the sum on the left, each chord of $D$ in the open disc bounded by $D$ is counted twice and each edge of $D$ is counted once. Applying Claim \ref{WebSideBoundClaim}, we have the following. 
$$|V(\textnormal{Int}_{\mathcal{P}}(D))|\leq\sum_{F\in\mathbf{D}}|V(\textnormal{Int}_{\mathcal{P}}(F))|\leq\sum_{F\in\mathbf{D}}|E(F)|^2\leq\left(\sum_{F\in\mathbf{D}}|E(F)|\right)^2$$
Thus, we get $|V(\textnormal{Int}_{\mathcal{P}}(D))|\leq 9n^2$, so $|V(G'\setminus G)|+|V(D)|\leq 9n^2$. \end{proof}

\section{Obstructing Cycles}\label{ObstructingCycleSec}

\begin{prop}\label{CritMosaicExtSideColor} Let $\mathcal{T}=(\Sigma, G, \mathcal{C}, L, C_*)$ be a critical mosaic, and let $D\subseteq G$ be a cycle, where $|V(D)|\leq N_{\textnormal{mo}}$ and at least one $C\in\mathcal{C}$ lies in $\textnormal{Int}(D)$. Suppose further that each such $C$ is of distance at least $10N_{\textnormal{mo}}^2$ from $D$. Then $V(\textnormal{Ext}(D))$ is $L$-colorable. \end{prop}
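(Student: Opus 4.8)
The strategy is to build, from $\textnormal{Ext}(D)$, a new embedding on a simpler surface and show it underlies a smaller mosaic, so that criticality of $\mathcal{T}$ forces $V(\textnormal{Ext}(D))$ to be $L$-colorable. Concretely, I would invoke Proposition \ref{InAndOutWebFactsProp}: let $\mathbf{D}$ be the set of induced cycles of $G[V(D)]\cap\textnormal{Int}(D)$, and form $G'$ from $G[V(\textnormal{Ext}(D))]$ by attaching a $\mathcal{T}$-web inside each $F\in\mathbf{D}$ of length $\geq 5$ (for the $F$ of length $\leq 4$, short-inseparability of $G$ forbids chords, so those cycles are already facial in $\textnormal{Ext}(D)$ and need no modification — one should note $|V(D)|\geq 5$ here, which follows since $\textnormal{ew}(G)>4$ and $D$ is a cycle). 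By Proposition \ref{InAndOutWebFactsProp}, $G'$ is short-inseparable, distances in $G'$ dominate those in $G$, and $|V(G'\setminus G)|+|V(D)|\leq 9|V(D)|^2\leq 9N_{\textnormal{mo}}^2$. Since every $C\in\mathcal{C}$ lying in $\textnormal{Int}(D)$ is deleted (it is at distance $\geq 10N_{\textnormal{mo}}^2$ from $D$, hence entirely inside $\textnormal{Int}(D)$ and disjoint from $\textnormal{Ext}(D)$), the surviving rings are exactly $\mathcal{C}^{\subseteq\textnormal{Ext}(D)}$, and I would set $\mathcal{T}'=(\Sigma^D_{\mathcal{T}}\text{-type surface}, G', \mathcal{C}^{\subseteq\textnormal{Ext}(D)}, L', C_*')$, where $L'$ agrees with $L$ on $V(\textnormal{Ext}(D))$ and assigns $5$-lists to the web vertices, and $C_*'$ is $C_*$ if $C_*\subseteq\textnormal{Ext}(D)$ and otherwise a facial cycle arising from $D$ (one of the elements of $\mathbf{D}$, made into a closed ring of rank $\leq N_{\textnormal{mo}}\cdot N_{\textnormal{mo}}$, or left uncolored as appropriate).

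The bulk of the work is verifying $\mathcal{T}'$ is a mosaic. Chord-triangulation and short-inseparability of $G'$ give that it is a tessellation (the web is built from triangles and concentric cycles so no new large induced cycles in non-ring faces appear). Axioms M0)–M2) for the surviving rings are inherited verbatim from $\mathcal{T}$, since those rings and their precolored subgraphs are untouched and predictability/semi-shortcut-freeness are local conditions on $B_1$ of the precolored subgraph, which lies in $\textnormal{Ext}(D)$ and whose first neighborhood is unchanged (here one uses that the distance from $D$ to any inner ring is huge, so no ring's $B_1$-neighborhood meets the web). For the distance axioms M3)–M4): all pairwise distances among surviving rings only increase passing from $G$ to $G'$ by part 2) of Proposition \ref{InAndOutWebFactsProp}, and the genus of $\Sigma^D$ is at most $g$ — in fact, if $D$ was noncontractible-bounding nothing is lost, but since $D$ is a cycle bounding $\textnormal{Int}(D)$ we are in the inward-contractible case and the genus is unchanged — so the right-hand sides of M3)–M4) do not grow. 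The one delicate point is M5): the face-width could a priori drop when we excise $\textnormal{Int}(D)$. This is exactly the situation Fact \ref{HighEwTriangleFwF1Cycle} is designed for: with $D$ as the separating cycle and $\alpha$ taken to be (roughly) the distance $10N_{\textnormal{mo}}^2$ from $D$ to the nearest non-triangular facial subgraph in $\textnormal{Int}(D)$, we get $\textnormal{fw}(G')\geq\min\{\textnormal{fw}(G),\,2(\alpha-1),\,\textnormal{ew}(G)-|E(D)|-1\}$, and since $|E(D)|\leq N_{\textnormal{mo}}$ while $\textnormal{ew}(G)\geq 2.1\beta\cdot 6^g=210N_{\textnormal{mo}}^2\cdot 6^g$ and $\textnormal{fw}(G)\geq 2.1\beta\cdot 6^{g-1}$, all three quantities comfortably exceed $2.1\beta\cdot 6^{g-1}$; the edge-width bound $\textnormal{ew}(G')\geq\textnormal{ew}(G)-|E(D)|-1$ (from short-inseparability and part 2)) likewise exceeds $2.1\beta\cdot 6^g$. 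Hence M5) holds for $\mathcal{T}'$.

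Finally, $|V(G')| < |V(G)|$: we removed the nonempty vertex set $V(\textnormal{Int}(D))\setminus V(D)$ (nonempty because a ring $C\in\mathcal{C}$ lies strictly inside) and added at most $9N_{\textnormal{mo}}^2$ web vertices, but the removed set has size at least $10N_{\textnormal{mo}}^2$ just from the distance-$\geq 10N_{\textnormal{mo}}^2$ ball around $C$, so the net count strictly decreases. Therefore, by criticality of $\mathcal{T}$ (clause 2a), $G'$ is $L'$-colorable. Restricting such a coloring to $V(\textnormal{Ext}(D))\subseteq V(G')$ yields an $L$-coloring of $G[V(\textnormal{Ext}(D))]$, which is what we wanted. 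The main obstacle I anticipate is the bookkeeping in the definition of $C_*'$ and confirming the chord-triangulation/tessellation conditions survive the web construction together with the careful accounting in M5) — in particular making sure the $\alpha$ fed into Fact \ref{HighEwTriangleFwF1Cycle} is legitimately bounded below by the hypothesis distance $10N_{\textnormal{mo}}^2$ and that this dominates $\textnormal{ew}(G)-|E(D)|-1$ is not needed, only that the min is large, which the numerics above guarantee.
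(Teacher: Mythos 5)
Your overall approach is the paper's: form $G'$ from $G[V(\textnormal{Ext}(D))]$ by webbing the induced cycles of $G[V(D)]\cap\textnormal{Int}(D)$, argue via Proposition \ref{InAndOutWebFactsProp} that $G'$ underlies a mosaic on $\Sigma^F$ (where $F$ is the element of $\mathbf{D}$ absorbing the topology), check $|V(G')|<|V(G)|$ via the $9N_{\textnormal{mo}}^2$ vs.\ $10N_{\textnormal{mo}}^2$ comparison, and invoke criticality. Two remarks, one cosmetic and one substantive.

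Cosmetic: your hedge ``$C_*'$ is $C_*$ if $C_*\subseteq\textnormal{Ext}(D)$ and otherwise a facial cycle arising from $D$'' is unnecessary. By Definition \ref{Def14InOutContract}, $\textnormal{Ext}_{\mathcal{T}}(D)$ is \emph{defined} as the side whose closure contains $C_*$; thus $C_*\subseteq\textnormal{Ext}(D)$ always, and the paper simply keeps $C_*$ as the outer ring of the new chart.

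Substantive: your verification of M5) via Fact \ref{HighEwTriangleFwF1Cycle} has a genuine gap, in two ways. First, the numbers do not close as you claim. With $\alpha=10N_{\textnormal{mo}}^2$ you get $2(\alpha-1)=20N_{\textnormal{mo}}^2-2$, whereas the target is $2.1\beta\cdot 6^{g-1}=210N_{\textnormal{mo}}^2\cdot 6^{g-1}\geq 210N_{\textnormal{mo}}^2$ for $g\geq 1$ (the only case that matters, since $g=0$ gives infinite face-width trivially). So $2(\alpha-1)$ is an order of magnitude too small and certainly does not ``comfortably exceed'' the required bound. Second, the $\alpha$ in Fact \ref{HighEwTriangleFwF1Cycle} is the distance from $D$ to non-triangular facial subgraphs in $\textnormal{Cl}(U')$, which is the side being \emph{kept}; in the present application that is $\textnormal{Ext}(D)$, not $\textnormal{Int}(D)$. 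The hypothesis of the proposition controls only the rings lying in $\textnormal{Int}(D)$, so the $10N_{\textnormal{mo}}^2$ distance you invoke is on the wrong side of the cycle, and you have not established any lower bound on the distance from $D$ to rings in $\textnormal{Ext}(D)$ (a priori, $C_*$ itself could be adjacent to $D$). The paper glosses this step with the phrase ``it follows from our distance conditions,'' which is terse; but your proposal makes a concrete and incorrect quantitative claim where the paper is merely vague. To close the gap you would need either to establish that the distance from $D$ to each ring in $\textnormal{Ext}(D)$ is at least roughly $1.05\beta\cdot 6^{g-1}$, or to replace the blind application of Fact \ref{HighEwTriangleFwF1Cycle} with a direct homotopy argument that a noncontractible curve passing through the disc side can be contracted onto $D$ without increasing the number of faces hit by more than $O(|E(D)|)$.
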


\begin{proof} Note that, since $\textnormal{ew}(G)\geq\textnormal{fw}(G)>N_{\textnormal{mo}}$, $D$ is contractible, so the above is well-defined. It suffices to show that $G[V(\textnormal{Ext}(D))]$ is $L$-colorable. Let $\mathbf{D}$ be the set of the induced cycles of $G[V(D)]\cap\textnormal{Int}(D)$. Possibly $\mathbf{D}=\{D\}$. It follows from M5) applied to $\mathcal{T}$ that there is an $F\in\mathbf{D}$ such that each cycle of $\mathbf{D}\setminus\{F\}$ is inward contractible (possibly $F$ is inward contractible as well). Now, $G[V(\textnormal{Ext}(D))]$ can be regarded as an embedding $H$ on the surface $\Sigma^F$ in the natural way, so we let $\mathcal{P}$ denote the pre-chart $(\Sigma^F, H, D)$. Let $G'$ be an embedding on $\Sigma^F$ obtained from $H$, where, for each $D'\in\mathbf{D}$ of length at least five, we add a $\mathcal{P}$-web of $D'$ to the open disc which is $\mathcal{P}$-externally bounded by $D'$. Note that this is well-defined, since it follows from our assumption on $D$ that that $D$ is a proper subgraph of $H$. 

\begin{claim}\label{GPrimeSmallerClaim} $|V(G')|<|V(G)|$. \end{claim}

\begin{claimproof} Note that $D$ is $\mathcal{P}$-inward contractible, so it follows from 3) of Proposition \ref{InAndOutWebFactsProp} that $|V(\textnormal{Int}_{\mathcal{P}}(D))|\leq 9|E(D)|^2$. By assumption, there is a $C\in\mathcal{C}$ with $C\subseteq\textnormal{Int}_{\mathcal{T}}(D)$, and $d(C, D)\geq 10N_{\textnormal{mo}}^2$, so $|V(\textnormal{Int}_{\mathcal{T}}(D))|\geq 10N_{\textnormal{mo}}^2$. Since $D$ has length at most $N_{\textnormal{mo}}$, we have $|V(\textnormal{Int}_{\mathcal{P}}(D))|<|V(\textnormal{Int}_{\mathcal{T}}(D))|$, and thus $|V(G')|<|V(G)|$. \end{claimproof}

Let $\mathcal{C}':=\{C\in\mathcal{C}: C\subseteq\textnormal{Ext}_{\mathcal{T}}(D)\}$. Let $L'$ be a list-assignment for $V(G')$ where each vertex of $G'\setminus G$ is given an arbitrary 5-list and otherwise $L'=L$. By assumption, each $C\in\mathcal{C}$ lying in the closed region externally bounded by $D$ is of distance at least $10N_{\textnormal{mo}}^2$ from $D$, i.e there is no $C\in\mathcal{C}\setminus\mathcal{C}'$ which has nonempty intersection with $D$, so every vertex of $G'$ with an $L'$-list of size less than five lies in $\bigcup_{C\in\mathcal{C}'}V(C)$. Let $\mathcal{T}':=(\Sigma^F, G', \mathcal{C}', L', C_*)$. It follows from 1) of Proposition \ref{InAndOutWebFactsProp} that $G'$ is short-inseparable. Since $D$ is a separating cycle and $G$ is short-inseparable, there is no induced cycle of $G[V(D)]$ of length precisely four, i.e each element of $\mathbf{D}$ is either a triangle or has length at least five. In particular, it follows from our construction that every facial subgraph of $G'$, except those of $\mathcal{C}'$, is a triangle, so $\mathcal{T}'$ is a tessellation. As the genus has not increased, it also follows from 2) of Proposition \ref{InAndOutWebFactsProp} that $\mathcal{T}'$ satisfies M3), M4) of Definition \ref{MainMosaicAxioms}, and that the precolored subgraph of each ring in $\mathcal{T}'$ is still semi-shortcut-free, so M0)-M2) are satisfied well. Now we just check M5). Note that $g(\Sigma^F)\in\{0, g(\Sigma)\}$. If $g(\Sigma^F)=0$, then M5) is trivially satisfied, so we are done in that case, so now suppose that $g(\Sigma^F)=g(\Sigma)$. It follows from 2) of Proposition \ref{InAndOutWebFactsProp} that $\textnormal{ew}(G')\geq\textnormal{ew}(G)$. Note that, since $G$ is a 2-cell embedding on $\Sigma$, $G'$ is a 2-cell embedding on $\Sigma^F$. Now it follows from our distance conditions, together with Fact \ref{HighEwTriangleFwF1Cycle}, that $\textnormal{fw}(G')\geq 2.1\beta\cdot 6^{g(\Sigma)-1}$. Thus, $\mathcal{T}'$ is a mosaic. Since $\mathcal{T}$ is critical, it follows from Claim \ref{GPrimeSmallerClaim} that $G'$ is $L'$-colorable, so $G[V(\textnormal{Ext}(D))]$ is $L$-colorable, as desired. \end{proof} 

We have an analogous fact for the other side. 

\begin{prop}\label{CritMosaicIntSideColor} Let $\mathcal{T}=(\Sigma, G, \mathcal{C}, L, C_*)$ be a critical mosaic, and let $D\subseteq G$, where $|V(D)|\leq N_{\textnormal{mo}}$ and at least one $C\in\mathcal{C}$ lies in $\textnormal{Int}(D)$. Suppose further that each $C\in\mathcal{C}$ lying in $\textnormal{Ext}(D)$ is of distance at least $10N_{\textnormal{mo}}^2$ from $D$. Then $V(\textnormal{Int}(D))$ is $L$-colorable.  \end{prop}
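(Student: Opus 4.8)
The plan is to run the argument of Proposition \ref{CritMosaicExtSideColor} verbatim with the roles of $\textnormal{Int}(D)$ and $\textnormal{Ext}(D)$ interchanged, the sole new ingredient being the choice of an outer ring for the smaller chart (unlike in \ref{CritMosaicExtSideColor}, the side we want to color does not contain $C_*$). First, since $\textnormal{ew}(G)\ge\textnormal{fw}(G)>N_{\textnormal{mo}}\ge|V(D)|$, the cycle $D$ is contractible, so $\textnormal{Int}(D)$ and $\textnormal{Ext}(D)$ are defined; since $C_*\subseteq\textnormal{Ext}(D)$ and some ring lies in $\textnormal{Int}(D)$, the cycle $D$ is separating, so short-inseparability forces $|V(D)|\ge 5$, and $D\neq C_*$ (otherwise $\textnormal{Int}(D)=G$ would be $L$-colorable, contradicting criticality). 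It suffices to $L$-color $G[V(\textnormal{Int}(D))]$. In particular, since $C_*\subseteq\textnormal{Ext}(D)$ is a ring, the hypothesis already gives $d(C_*,D)\ge 10N_{\textnormal{mo}}^2$.

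Next I would mimic the web construction. Set $H:=G[V(\textnormal{Int}(D))]$ and let $\mathbf{D}$ be the set of induced cycles of $G[V(D)]\cap\textnormal{Ext}(D)$ (that is, $D$ together with its chords lying in $\textnormal{Ext}(D)$); as in \ref{CritMosaicExtSideColor}, short-inseparability makes each member of $\mathbf{D}$ a triangle or of length $\ge 5$, and M5) provides an $F\in\mathbf{D}$ such that every member of $\mathbf{D}\setminus\{F\}$ bounds a disc on its $\textnormal{Ext}(D)$-side. Regard $H$ as an embedding on $\Sigma^F$, form the pre-chart $\mathcal{P}:=(\Sigma^F,H,D)$, and let $G'$ be obtained from $H$ by inserting a $\mathcal{P}$-web into the disc externally bounded by each member of $\mathbf{D}$ of length $\ge 5$. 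Proposition \ref{InAndOutWebFactsProp} (applied to $\mathcal{P}$) then yields that $G'$ is short-inseparable, that no pair of vertices of $G$ is closer in $G'$ than in $G$, and that fewer than $9N_{\textnormal{mo}}^2$ vertices are added; combined with $|V(\textnormal{Ext}(D))|\ge d(C_*,D)+1\ge 10N_{\textnormal{mo}}^2+1$ this gives $|V(G')|=|V(\textnormal{Int}(D))|+|V(G'\setminus G)|<|V(\textnormal{Int}(D))|+|V(\textnormal{Ext}(D))|-|V(D)|=|V(G)|$.

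Now choose any ring $C_0\in\mathcal{C}$ with $C_0\subseteq\textnormal{Int}(D)$ (one exists by hypothesis), set $\mathcal{C}':=\{C\in\mathcal{C}:C\subseteq\textnormal{Int}(D)\}$, let $L'$ agree with $L$ except that the new web vertices get arbitrary $5$-lists, and put $\mathcal{T}':=(\Sigma^F,G',\mathcal{C}',L',C_0)$. Since each ring of $\mathcal{C}$ is a facial cycle and hence lies wholly on one side of $D$, every ring not in $\mathcal{C}'$ lies in $\textnormal{Ext}(D)$ and, being at distance $\ge 10N_{\textnormal{mo}}^2$ from $D$ by hypothesis, is disjoint from $V(\textnormal{Int}(D))$; so every low-list vertex of $G'$ lies on a ring of $\mathcal{C}'$ and $\mathcal{T}'$ is a chart. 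It is a tessellation by construction (web-filling is precisely what keeps the new faces triangulated), and M0)--M2) are inherited from $\mathcal{T}$, since the precolored subgraphs, their first neighborhoods, and all relevant lists are untouched (the new web neighbors of $D$-vertices carry $5$-lists and are never precolored). For M3)--M4) one uses $g(\Sigma^F)\le g(\Sigma)$, that $d_{G'}\ge d_G$ on $V(G)$, and that the ranks and the sets $\mathpzc{w}$ of the rings in $\mathcal{C}'$ are unchanged: the bound $2\beta\cdot 6^{g(\Sigma)}+\textnormal{Rk}(C_0)+\textnormal{Rk}(C)$ that M4) of $\mathcal{T}$ imposed between the two inner rings $C_0,C$ dominates both the outer-ring bound of M3) now required of $C_0$ and the inner-ring bound of M4) now required of the remaining pairs. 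Finally M5) follows from $\textnormal{ew}(G')\ge\textnormal{ew}(G)$ together with Fact \ref{HighEwTriangleFwF1Cycle} applied to $F$ (using $|V(F)|\le N_{\textnormal{mo}}$ and the edge-width bound of M5)), exactly as in \ref{CritMosaicExtSideColor}, and trivially when $g(\Sigma^F)=0$. Hence $\mathcal{T}'$ is a mosaic with $|V(G')|<|V(G)|$, so criticality of $\mathcal{T}$ forces $G'$ to be $L'$-colorable; restricting such a coloring to $V(\textnormal{Int}(D))$ gives the desired $L$-coloring.

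I expect the main obstacle to be verifying M5) for $\mathcal{T}'$: capping off $F$ and inserting the webs can in principle lower the face-width, and one must show it stays above $2.1\beta\cdot 6^{g-1}$ by invoking the ``high edge-width plus near-triangulation'' machinery of Section \ref{MosaicsAndPropertiesSec} (Proposition \ref{HighEw+Triangles=HighFw} and Fact \ref{HighEwTriangleFwF1Cycle}) with the separation constant coming from the $10N_{\textnormal{mo}}^2$-gap and from the edge-width bound in M5). A secondary point needing care is the legitimacy of promoting an arbitrary inner ring $C_0\subseteq\textnormal{Int}(D)$ to the outer ring of $\mathcal{T}'$ -- namely that it is still a contractible facial subgraph of $G'$ and that M4) of $\mathcal{T}$ really implies M3) of $\mathcal{T}'$ at $C_0$.
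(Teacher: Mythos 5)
Your proposal matches the paper's own proof in every essential respect: mirror the web construction from Proposition~\ref{CritMosaicExtSideColor}, use the $10N_{\textnormal{mo}}^2$-gap between $D$ and the rings of $\textnormal{Ext}(D)$ (in particular $C_*$) together with part~3) of Proposition~\ref{InAndOutWebFactsProp} to bound $|V(G')|<|V(G)|$, promote an arbitrary ring in $\textnormal{Int}(D)$ to be the new outer ring (the paper calls it $C^{\dagger}$, you call it $C_0$), and check M0)--M5) exactly as you outline, with the observation that the $2\beta\cdot 6^g$ bound of M4) dominates the $2.9\beta\cdot 6^{g-1}$ bound of M3) so that demoting the constraint at $C_0$ to an outer-ring constraint only weakens what needs to be verified. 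The only superficial divergence is that you take $D$ as the outer ring of the auxiliary pre-chart, whereas the paper uses $C^{\dagger}$ for that role (which makes the identification of ``inward'' versus ``outward'' for the cycles in $\mathbf{D}$ slightly cleaner, since $C^{\dagger}$ lies unambiguously on one side of each); this is a notational choice and does not affect the argument. Both worries you flag at the end (M5) and legitimacy of the new outer ring) are handled correctly in your own sketch, as in the paper.
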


\begin{proof} Note that $\textnormal{ew}(G)\geq\textnormal{fw}(G)>N_{\textnormal{mo}}$, so $D$ is contractible and thus the above is well-defined. Let $H=G[V(\textnormal{Int}(D))$. It suffices to prove that $H$ is $L$-colorable. By assumption, $d(C_*, D)\geq 10N^2_{\textnormal{mo}}$, and there is a $C^{\dagger}\in\mathcal{C}$ with $C^{\dagger}\subseteq\textnormal{Int}(D)$. Thus, $D$ is a separating cycle. In particular, $D$ separates $C^{\dagger}$ from $C_*$. Since $\textnormal{fw}(G)>1$, $C^{\dagger}$ is contractible. We now construct a tessellation on a new surface obtained from $\Sigma$, in which the underlying graph of this tessellation contains $H$ as a subgraph and has outer ring $C^{\dagger}$ rather than $C_*$. Let $\mathcal{P}$ be the pre-chart $(\Sigma, H, C^{\dagger})$. Let $\mathbf{D}$ be the set of the induced cycles of $G[V(D)]\cap\textnormal{Int}_{\mathcal{P}}(D)$. Note that the chords of $D$ in $\textnormal{Int}_{\mathcal{P}}(D)$ are precisely the chords of $D$ in $\textnormal{Ext}_{\mathcal{T}}(D)$). Since $\textnormal{ew}(H)=\textnormal{ew}(G)>N_{\textnormal{mo}}$, there is an $F\in\mathbf{D}$ such that every element of $\mathbf{D}\setminus\{F\}$ is $\mathcal{P}$-inward contractible. Possibly $F$ is also $\mathcal{P}$-inward contractible. Now, $H$ can be regarded as an embedding on the surface $\Sigma_{\mathcal{P}}^F$, so $\mathcal{P}'=(\Sigma_{\mathcal{P}}^F, H, C^{\dagger})$ is a pre-chart. Each element of $\mathbf{D}$ is $\mathbf{P}"$-inward contractible. As in Proposition \ref{CritMosaicExtSideColor}, we augment $H$ to an embedding $H'$ on $\Sigma^F_{\mathcal{P}}$ in the following way. For each $D'\in\mathbf{D}$ of length at least five, we add a $D'$-web to the open region of $\Sigma^F_{\mathcal{P}}$ externally $\mathcal{P}'$-bounded by $D'$.

\begin{claim}\label{KSmallerThanGClaim} $|V(H')|<|V(G)|$. \end{claim}

\begin{claimproof} It just suffices to show that $|V(\textnormal{Int}_{\mathcal{P}'}(D))|<|V(\textnormal{Ext}_{\mathcal{T}}(D))|$. Since $D$ is $\mathcal{P}'$-inward contractible, it follows from 3) of Proposition \ref{InAndOutWebFactsProp} that $|V(\textnormal{Int}_{\mathcal{P}'}(D))|\leq 9|E(D)|^2$. By assumption, we have $d_G(C_*, D)\geq 10N_{\textnormal{mo}}$, so $|V(\textnormal{Ext}_{\mathcal{T}}(D))|\geq 10N_{\textnormal{mo}}^2$ and thus $|V(\textnormal{Int}_{\mathcal{P}'}(D))|<|V(\textnormal{Ext}_{\mathcal{T}}(D))|$, as desired. \end{claimproof}

Let $\mathcal{C}':=\{C\in\mathcal{C}: C\subseteq K\}$. Let $L'$ be a list-assignment for $V(H')$ where each vertex of $H'\setminus G$ is given an arbitrary 5-list and otherwise $L'=L$. By assumption, each $C\in\mathcal{C}$ lying in $\textnormal{Int}_{\mathcal{T}}(D)$ is of distance at least $10N_{\textnormal{mo}}^2$ from $D$, so every vertex of $H'$ with an $L'$-list of size less than five lies in $\bigcup_{C\in\mathcal{C}'}V(C)$. Let $\mathcal{T}':=(\Sigma^F_{\mathcal{P}}, H', \mathcal{C}', L', C^{\dagger})$. By 1) of Proposition \ref{InAndOutWebFactsProp}, $H'$ is short-inseparable. As $G$ is short-inseparable and $D$ is a separating cycle of $G$, there is no cycle of $\mathbf{D}$ of length precisely four, i.e each element of $\mathbf{D}$ is either a triangle or has length at least five. In particular, every facial subgraph of $H'$, except those of $\mathcal{C}'$, is a triangle, so $\mathcal{T}'$ is a tessellation. Since $C^{\dagger}$ is an inner ring of $\mathcal{C}$ but the outer ring of $\mathcal{T}'$, and the genus has not increased, the distance conditions that $C^{\dagger}$ needs to satisfy have only weakened, so it also follows from 2) of Proposition \ref{InAndOutWebFactsProp} that $\mathcal{T}'$ satisfies M1)-M4) of Definition \ref{MainMosaicAxioms}. In particular, the precolored subgraph of each ring of $\mathcal{T}'$ is still semi-shortcut-free, and it is immediate that M0) is satisfied. We just need to check M5). We have $g(\Sigma^F_{\mathcal{P}})\in\{0, g(\Sigma)\}$, and if $g(\Sigma^F_{\mathcal{P}})=0$, then M5) is trivially satisfied, so suppose that $g(\Sigma^F_{\mathcal{P}})=g(\Sigma)$. It follows from 2) of Proposition \ref{InAndOutWebFactsProp} that $\textnormal{ew}(H')\geq\textnormal{ew}(G)$. Since $G$ is a 2-cell embedding on $\Sigma$, it follows from our construction of $K$ that $H'$ is a 2-cell embedding on $\Sigma^F_{\mathcal{P}}$. It follows from Fact \ref{HighEwTriangleFwF1Cycle}, together with our distance conditions, that $\textnormal{fw}(H')\geq 2.1\beta\cdot 6^{g(\Sigma)-1}$, so $\mathcal{T}'$ satisfies M5). Thus, $\mathcal{T}'$ is a mosaic. As $\mathcal{T}$ is critical, it follows from Claim \ref{KSmallerThanGClaim} that $H'$ is $L'$-colorable, so $G[V(\textnormal{Int}(D))]$ is $L$-colorable, as desired. \end{proof}

Now we prove the two main results of Section \ref{ObstructingCycleSec}, the first of which is stated below in Theorem \ref{FirstMainNoHandleInside}. Note that, in the statement of Theorem \ref{FirstMainNoHandleInside}, $D$ is contractible by our edge-width conditions, so $\textnormal{Int}(D)$ is well-defined. 

\begin{theorem}\label{FirstMainNoHandleInside} Let $\mathcal{T}=(\Sigma, G, \mathcal{C}, L, C_*)$ be a critical mosaic and let $D\subseteq G$ be a separating cycle of length at most $N_{\textnormal{mo}}$. Then either $D$ is inward contractible or $\textnormal{Int}(D)$ contains at least one element of $\mathcal{C}$. \end{theorem}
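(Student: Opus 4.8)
The plan is to argue by contradiction: suppose $D$ is a separating cycle of length at most $N_{\textnormal{mo}}$ which is not inward contractible and such that $\textnormal{Int}(D)$ contains no element of $\mathcal{C}$. Since $\textnormal{ew}(G) > N_{\textnormal{mo}}$, the cycle $D$ is contractible, so $\textnormal{Int}(D)$ and $\textnormal{Ext}(D)$ are both well-defined, and since $D$ is a separating cycle, both sides contain at least one vertex not on $D$. The key point to exploit is that every ring of $\mathcal{T}$ lies in $\textnormal{Ext}(D)$, together with the fact that $D$ is not inward contractible; this second fact means the region externally bounded by $D$ carries nonzero genus (or at least is not a disc). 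I would then split into cases according to whether $\textnormal{Int}(D)$ contains the outer ring $C_*$ or not — but by hypothesis no element of $\mathcal{C}$ lies in $\textnormal{Int}(D)$, so $C_*\subseteq\textnormal{Ext}(D)$, and in fact \emph{all} rings are in $\textnormal{Ext}(D)$.

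The strategy is to color $G$ in two stages and reach a contradiction with criticality. First I would apply Proposition \ref{CritMosaicExtSideColor} (or rather its hypotheses): here $D$ has length at most $N_{\textnormal{mo}}$, and I want to say $\textnormal{Ext}(D)$ is $L$-colorable. However, Proposition \ref{CritMosaicExtSideColor} requires that some $C\in\mathcal{C}$ lie in $\textnormal{Int}(D)$, which is exactly what we are denying, so I cannot invoke it directly. Instead I would use Proposition \ref{CritMosaicIntSideColor}-style reasoning on the \emph{interior} side: since $\textnormal{Int}(D)$ contains no ring, every vertex of $\textnormal{Int}(D)\setminus V(D)$ has a list of size at least $5$. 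The natural move is to build a smaller mosaic from the exterior side by capping off the interior with a small disc: form the surface $\Sigma^D_{\mathcal{T}}$ obtained from $\Sigma$ by replacing $\textnormal{Cl}(U_{1-i})$ (the closure of the interior) with a closed disc bounded by $D$, and let $G^*$ be $\textnormal{Ext}(D)$ regarded as an embedding on $\Sigma^D_{\mathcal{T}}$. Because $D$ is \emph{not} inward contractible, the interior carried some genus, so $g(\Sigma^D_{\mathcal{T}}) < g(\Sigma)$; hence even though $G^*$ has fewer vertices than $G$ (the interior contains at least one vertex off $D$), we get a genuinely smaller instance. I would then check that, after adding a web of $D$ inside the new disc (as in Propositions \ref{CritMosaicExtSideColor} and \ref{CritMosaicIntSideColor}, using $D$ has length $\le N_{\textnormal{mo}}$ and Proposition \ref{InAndOutWebFactsProp} to control the web and preserve short-inseparability, distances, and edge-width), the resulting tuple $\mathcal{T}^*=(\Sigma^D_{\mathcal{T}}, G', \mathcal{C}, L', C_*)$ is a mosaic: the rings all survive unchanged since they lie in $\textnormal{Ext}(D)$, M0)–M4) are preserved by Proposition \ref{InAndOutWebFactsProp}(2) (distances can only grow), and M5) follows from Fact \ref{HighEwTriangleFwF1Cycle} together with the fact that $D$ is short and the interior-facing structure is all triangles. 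By criticality (case b, since $g(\Sigma^D_{\mathcal{T}}) < g(\Sigma)$, or case a if somehow the vertex count already dropped enough), $G'$ is $L'$-colorable, hence $\textnormal{Ext}(D)$ is $L$-colorable.

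Having $L$-colored $\textnormal{Ext}(D)$, in particular $V(D)$ is $L$-colored; now I need to extend this coloring across $\textnormal{Int}(D)$. Here $\textnormal{Int}(D)$ is a planar graph (it is a disc side of a contractible cycle) with $D$ as a facial cycle, all of whose interior vertices have $5$-lists. The precoloring of $V(D)$ need not be extendable in general — this is the crux. But $D$ has length at most $N_{\textnormal{mo}}$, and the only obstruction I can see to extending is if $D$ is short, e.g. length at most six, in which case I would appeal to Theorem \ref{BohmePaper5CycleCorList} (for $|V(D)| \in \{5,6\}$) or to the short-inseparability of $G$ (which forbids chords and separating $3$- and $4$-cycles, ruling out the small obstructions) to conclude the precoloring of $D$ does extend. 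For larger $D$ I expect the correct move is instead \emph{not} to color $\textnormal{Ext}(D)$ first but to color $\textnormal{Int}(D)$ first via a smaller-mosaic argument making $D$ a closed ring: precolor $V(D)$ by first $L$-coloring $\textnormal{Int}(D)$ (which, being planar with a short outer cycle all of whose chords are forbidden and all interior lists of size $5$, is $L$-colorable — e.g. because $(\Sigma, \textnormal{Int}(D), \{C_*^{\textnormal{new}}\}, L, D)$-type instances reduce, or directly by Theorem \ref{thomassen5ChooseThm} after picking a precolored edge on $D$), then feed that coloring of $D$ as a closed ring of a smaller mosaic on the exterior side and again invoke criticality. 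Either way, combining the two colorings yields an $L$-coloring of $G$, contradicting that $\mathcal{T}$ is critical. The main obstacle I anticipate is the bookkeeping to guarantee the smaller tuple genuinely satisfies M5): capping the interior with a disc lowers the genus (good), but the web insertion and the possibility that $D$ has chords in $\textnormal{Ext}(D)$ must be handled so that face-width stays at least $2.1\beta\cdot 6^{g-1}$ for the new genus — this is exactly the content of Fact \ref{HighEwTriangleFwF1Cycle} and Proposition \ref{InAndOutWebFactsProp}, and I would lean on those two as the technical engine.
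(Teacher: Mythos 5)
Your first stage --- coloring $\textnormal{Ext}(D)$ by capping off the genus-carrying interior with a $D$-web, passing to a smaller mosaic on the sphere, and invoking criticality --- is essentially the paper's own argument, and the size estimate follows as you suggest because $|V(\textnormal{Int}(D))|\geq\textnormal{ew}(G)$ dwarfs the at most $9N_{\textnormal{mo}}^2$ vertices of the web.

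The second stage, however, contains a topological orientation error that derails the rest. Since $D$ is not inward contractible, Definition~\ref{Def14InOutContract} says the component of $\Sigma\setminus D$ externally bounded by $D$ --- whose closure meets $G$ exactly in $\textnormal{Int}(D)$ --- is the \emph{non}-disc side; the disc side is the one containing $\textnormal{Ext}(D)$ and $C_*$. You use this correctly in stage one (that is precisely why $\textnormal{Ext}(D)$ becomes planar), but then immediately reverse it, asserting that ``$\textnormal{Int}(D)$ is a planar graph (it is a disc side of a contractible cycle)'' and proposing to extend the coloring of $D$ across $\textnormal{Int}(D)$ via Theorem~\ref{thomassen5ChooseThm} or Theorem~\ref{BohmePaper5CycleCorList}. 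Neither is available: $\textnormal{Int}(D)$ carries all of the genus of $\Sigma$ and is not planar, and even for a planar disc a precoloring of a length-$N_{\textnormal{mo}}$ outer cycle need not extend. Your fallback --- color $\textnormal{Int}(D)$ first via Thomassen, then make $D$ a closed ring of a mosaic on the exterior side --- is built on the same false planarity premise and is also structurally awkward, since $\textnormal{Ext}(D)$ contains all of $\mathcal{C}$, so one would have to verify M3)/M4) distances between $D$ and every existing ring rather than the trivial single-ring conditions.

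What the paper actually does after coloring $\textnormal{Ext}(D)$ is form $\mathcal{T}'=(\Sigma,\textnormal{Int}(D),\{D\},L^D_{\psi},D)$: a mosaic with $D$ as its \emph{only} ring, now closed, living on the full surface $\Sigma$, with M5) supplied by Fact~\ref{HighEwTriangleFwF1Cycle}. The nontrivial condition is M2): $D$ must be $(L^D_{\psi},V(D))$-predictable and semi-shortcut-free. This is where a crucial ingredient you never set up enters, namely choosing the counterexample $D$ to \emph{minimize} $|V(\textnormal{Int}(D))|$. That minimality (together with the observation that any length-$\leq N_{\textnormal{mo}}$ cycle inside $D$ that still encloses a noncontractible curve is again separating) forces $D$ to be shortcut-free in $\textnormal{Int}(D)$ and forces every vertex of $\textnormal{Int}(D)\setminus D$ to have a path of length at most one on $D$ as its $D$-neighborhood, which is exactly what makes $D$ highly predictable. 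Your proposal never extremizes the choice of $D$ and never addresses how M2) would be certified for the interior-side mosaic; that, together with the planarity confusion, is the genuine gap.
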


\begin{proof} Suppose there is a separating cycle $D\subseteq G$ of length at most $N_{\textnormal{mo}}$ for which neither of these hold, and, among all such cycles, we choose $D$ so that $|V(\textnormal{Int}(D))|$ is minimized. In particular, $g>0$. Note that $D$ is contractible by M5). Let $g:=g(\Sigma)$ and let $U$ be the component of $\Sigma\setminus D$ externally bounded by $D$. 

\begin{claim}\label{ogir45traifs} Let $D'$ be a cycle of length at most $N_{\textnormal{mo}}$ with $D'\subseteq\textnormal{Int}(D)$ and let $U'$ be the open component of $\Sigma\setminus D'$ externally bounded by $D'$. If $U'$ contains a noncontractible closed curve of $\Sigma$, then $D'$ is a separating cycle of $G$. \end{claim}

\begin{claimproof} By our edge-width conditions, $D'$ is contractible, so $U'$ is well-defined. As $G$ is a 2-cell embedding, there is a noncontractible cycle in $G\cap\textnormal{Cl}(U')$. We have $\textnormal{ew}(G)\geq 2.1\beta\cdot 6^{g}$, so $|V(\textnormal{Int}(D'))\setminus V(D'))|>0$. Since $D$ is a separating cycle of $G$ and $D'\subseteq\textnormal{Int}(D)$, we also have $|V(\textnormal{Ext}(D'))\setminus V(D')|>0$, so $D'$ is a separating cycle of $G$. \end{claimproof}

$\mathcal{C}^{\subseteq\textnormal{Int}(D)}=\varnothing$ and $D$ has been chosen to minimize $|V(\textnormal{Int}(D))|$, Claim \ref{ogir45traifs} immediately implies the following.

\begin{claim}\label{ClaimDHighPred1} $D$ is a shortcut-free subgraph of $\textnormal{Int}(D)$. Furthermore, for each $v\in\textnormal{Int}(D)$, the graph $G[N(v)\cap V(D)]$ is a path of length at most one. \end{claim} 

Applying Claim \ref{ClaimDHighPred1}, we have the following:

\begin{claim}\label{VExtLColClaim0} $V(\textnormal{Ext}(D))$ is $L$-colorable. \end{claim}

\begin{claimproof} Since $D$ is contractible and all of noncontractible closed curves of $\Sigma$ have nonempty intersection with $\textnormal{Cl}(U)$, it follows that $\textnormal{Ext}(D)$ can be regarded as an embedding on $\mathbb{S}^2$, and $\mathcal{P}=(\mathbb{S}^2, \textnormal{Ext}(D), C_*)$ is a pre-chart. Thus, there is an embedding $G^{\dagger}$ on $\mathbb{S}^2$ which is obtained from $\textnormal{Ext}(D)$ by adding a $D$-web to the open disc of $\mathbb{S}^2$ which is $\mathcal{P}$-externally bounded by $D$. Let $L^{\dagger}$ be a list-assignment for $V(G^{\dagger})$ where each vertex of $G^{\dagger}\setminus\textnormal{Ext}(D)$ is assigned an arbitrary 5-list, and otherwise $L^{\dagger}=L$. Let $\mathcal{T}^{\dagger}:=(\mathbb{S}^2, G^{\dagger}, \mathcal{C}, L^{\dagger}, C_*)$ and note that $\mathcal{T}^{\dagger}$ is a tessellation. We claim now that $\mathcal{T}^{\dagger}$ is a mosaic. M5) is trivially satisfied, since the underlying surface is $\mathbb{S}^2$, and M0) is immediate as well. Since the genus has only decreased, it follows from 2) of Proposition \ref{InAndOutWebFactsProp} that $\mathcal{T}^{\dagger}$ also satisfies M1)-M4), so $\mathcal{T}^{\dagger}$ is indeed a mosaic. By 3) of Proposition \ref{InAndOutWebFactsProp}, we have $|V(\textnormal{Int}_{{\mathcal{T}}^{\dagger}}(D)|\leq 9N_{\textnormal{mo}}^2$. Since $G$ is a 2-cell embedding, $\Sigma$ contains a noncontractible cycle of $G$ in $\textnormal{Cl}(U)$, so $|V(\textnormal{Int}_{\mathcal{T}}(D))|\geq\textnormal{ew}(G)\geq 2.1\beta\cdot 6^{g}$. Thus, we have $|V(G^{\dagger})|<|V(G)|$. Since $\mathcal{T}$ is critical, it follows that $G^{\dagger}$ is $L^{\dagger}$-colorable, so $\textnormal{Ext}(D)$ is $L$-colorable. It follows from Claim \ref{ClaimDHighPred1} there are no chords of $D$ in $\textnormal{Int}(D)$, so $V(\textnormal{Ext}(D))$ is $L$-colorable. \end{claimproof}

By Claim \ref{VExtLColClaim0}, there is an $L$-coloring $\psi$ of $V(\textnormal{Ext}(D))$. Let $\mathcal{T}':=(\Sigma, \textnormal{Int}(D), \{D\}, L^D_{\psi}, D)$. Every facial subgraph of $\textnormal{Int}(D)$, other than $D$, is a triangle, so $\mathcal{T}$ is a tessellation, where $D$ is a closed ring. We claim now that $\mathcal{T}'$ is a mosaic. M0) and M1) are trivially satisfied, since $|E(D)|\leq N_{\textnormal{mo}}$ and $D$ is a closed ring. It follows from Claim \ref{ClaimDHighPred1} that $\mathcal{T}'$ satisfies M2) as well. Since $\mathcal{T}'$ has only one ring, M3)-M4) are trivially satisfied, so we just need to check M5). It follows from Fact \ref{HighEwTriangleFwF1Cycle} that $\textnormal{fw}(\textnormal{Int}(D))\geq 2.1\beta\cdot 6^{g-1}$, and $\textnormal{ew}(\textnormal{Int}(D))\geq\textnormal{ew}(G)$, so $\mathcal{T}'$ is indeed a mosaic. Since $D$ is a separating cycle of $G$, we have $|V(\textnormal{Int}(D))|<|V(G)|$. Since $\mathcal{T}$ is critical, it follows that $V(\textnormal{Int}(D))$ is $L^D_{\psi}$-colorable, so $\psi$ extends to an $L$-coloring of $G$, contradicting the fact that $\mathcal{T}$ is not colorable. \end{proof}

In the second of the two main results of Section \ref{ObstructingCycleSec}, which is stated below in Theorem \ref{MainRes2CloseCycleBounds}, we establish some bounds on the distance between separating cycles and rings in a critical mosaic.

\begin{theorem}\label{MainRes2CloseCycleBounds}
Let $\mathcal{T}=(\Sigma, G, \mathcal{C}, L, C_*)$ be a critical mosaic and $D\subseteq G$ be a cycle of length at most $N_{\textnormal{mo}}$, where $D$ is a separating cycle of $G$ and $\mathcal{C}^{\subseteq\textnormal{Int}(D)}\neq\varnothing$. Let $g:=g(\Sigma)$ and $g':=g(\Sigma^D)$. Then both of the following hold.
\begin{enumerate}[label=\arabic*)]
\itemsep-0.1em 
\item there is a $C\in\mathcal{C}^{\subseteq\textnormal{Int}(D)}$ with $\max\{d(v, \mathpzc{w}(C)): v\in V(D)\}<2.9\beta\cdot 6^{(g-g')-1}+\textnormal{Rk}(C)+(N_{\textnormal{mo}}+\frac{1}{2})\cdot |E(D)|$.
\item For each $C\in\mathcal{C}^{\subseteq\textnormal{Int}(D)}$, we have $d(D, \mathpzc{w}(C))>2.9\beta\cdot (6^{g-1}-6^{g'-1})+\textnormal{Rk}(C)-\left(N_{\textnormal{mo}}+\frac{1}{2}\right)\cdot|E(D)|$.
\end{enumerate}

 \end{theorem}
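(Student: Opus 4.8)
The plan is to prove both statements by contradiction: in each case I produce a mosaic with strictly fewer vertices, colour it by criticality, and glue back to colour $G$. The engine is the pair of one-sided colourability results, Propositions \ref{CritMosaicExtSideColor} and \ref{CritMosaicIntSideColor}; since $\beta=100N_{\textnormal{mo}}^2$, every threshold in the statement dwarfs $10N_{\textnormal{mo}}^2$, so those propositions are available whenever I need to colour one side of $D$. I will use throughout that $D$, being a cycle with $|E(D)|\leq N_{\textnormal{mo}}$, has $\mathrm{diam}_G(V(D))\leq\tfrac12|E(D)|$; this single fact is what the term $\tfrac12|E(D)|$ in both bounds pays for, as it converts ``$\max$ over $V(D)$'' into ``$\min$ over $V(D)$'' and back, and it combines with $N_{\textnormal{mo}}|E(D)|=\textnormal{Rk}(D)$ (for $D$ regarded as a closed ring) into the coefficient $(N_{\textnormal{mo}}+\tfrac12)|E(D)|$.

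\emph{Part 1.} Suppose for contradiction that \emph{every} $C\in\mathcal{C}^{\subseteq\textnormal{Int}(D)}$ has $\max\{d(v,\mathpzc{w}(C)):v\in V(D)\}\geq 2.9\beta\cdot 6^{(g-g')-1}+\textnormal{Rk}(C)+(N_{\textnormal{mo}}+\tfrac12)|E(D)|$. Then each such $C$ lies at distance $\geq 10N_{\textnormal{mo}}^2$ from $D$, so Proposition \ref{CritMosaicExtSideColor} yields an $L$-colouring $\psi$ of $V(\textnormal{Ext}(D))$. Let $\widehat\Sigma$ be obtained from $\Sigma$ by capping the exterior side of $D$ with a disc; since $g(\Sigma^D)=g'$ and genus is additive under cutting along a contractible separating curve, $g(\widehat\Sigma)=g-g'$. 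Form $\mathcal{T}'=(\widehat\Sigma,\textnormal{Int}(D),\mathcal{C}^{\subseteq\textnormal{Int}(D)}\cup\{D\},L^D_\psi,D)$ with $D$ a closed outer ring precoloured by $\psi|_{V(D)}$ --- after the standard modification of replacing $D$ together with its chords in $\textnormal{Int}(D)$ by the induced cycles of $G[V(D)]\cap\textnormal{Int}(D)$ and the associated webs, exactly as in the proof of Proposition \ref{CritMosaicExtSideColor}, so that the new outer ring is a chordless, hence semi-shortcut-free, facial cycle. Then $\mathcal{T}'$ is a mosaic: M0) and M1) are immediate for a short closed ring; M2) for the new outer ring follows from short-inseparability, which bounds how the vertices of $\textnormal{Int}(D)$ attach to $V(D)$; M3) is exactly the contradiction hypothesis together with $\mathrm{diam}_G(V(D))\leq\tfrac12|E(D)|$ and $\textnormal{Rk}(D)=N_{\textnormal{mo}}|E(D)|$; M4) is inherited from $\mathcal{T}$ since distances only grow in the subgraph $\textnormal{Int}(D)$ and $g-g'\leq g$; and M5) follows from $\textnormal{ew}(\textnormal{Int}(D))\geq\textnormal{ew}(G)$ and Fact \ref{HighEwTriangleFwF1Cycle}, the parameter $\alpha$ there being the (very large) distance from $D$ to the nearest non-triangular facial subgraph. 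As $D$ separates, $|V(\textnormal{Int}(D))|<|V(G)|$, so criticality gives an $L^D_\psi$-colouring of $\mathcal{T}'$, which agrees with $\psi$ on $V(D)$; their union is an $L$-colouring of $G$, a contradiction.

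\emph{Part 2.} Fix $C\in\mathcal{C}^{\subseteq\textnormal{Int}(D)}$ and suppose for contradiction that $d(D,\mathpzc{w}(C))\leq 2.9\beta(6^{g-1}-6^{g'-1})+\textnormal{Rk}(C)-(N_{\textnormal{mo}}+\tfrac12)|E(D)|$. Since $D$ is a separating cycle it is not the facial cycle $C_*$, so $C_*\subseteq\textnormal{Ext}(D)$, and by the triangle inequality for vertex sets $d(\mathpzc{w}(C_*),\mathpzc{w}(C))\leq d(\mathpzc{w}(C_*),D)+\tfrac12|E(D)|+d(D,\mathpzc{w}(C))$. Feeding in M3) for $\mathcal{T}$ and the contradiction hypothesis gives $d(\mathpzc{w}(C_*),D)\geq 2.9\beta\cdot 6^{g'-1}+\textnormal{Rk}(C_*)+N_{\textnormal{mo}}|E(D)|$ --- precisely what M3) demands of $D$, as a closed ring of rank $N_{\textnormal{mo}}|E(D)|$, in a mosaic on $\Sigma^D$ (of genus $g'$). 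To reach the contradiction I would invoke Part 1 to fix an interior ring $C^\dagger$ close to $D$, deduce from M4) for $\mathcal{T}$ and the closeness of $C^\dagger$ that every $C'\in\mathcal{C}^{\subseteq\textnormal{Ext}(D)}$ lies at distance $\geq 10N_{\textnormal{mo}}^2$ from $D$ (using $2\beta\cdot 6^g\geq 12\beta\cdot 6^{g-1}$, which dominates the $2.9\beta\cdot 6^{(g-g')-1}$ lost to the closeness of $C^\dagger$), apply Proposition \ref{CritMosaicIntSideColor} to obtain an $L$-colouring $\psi$ of $V(\textnormal{Int}(D))$, and form $\mathcal{T}''=(\Sigma^D,\textnormal{Ext}(D),\mathcal{C}^{\subseteq\textnormal{Ext}(D)}\cup\{D\},L^D_\psi,C_*)$ with $D$ a precoloured closed \emph{inner} ring (again webbing chords of $D$ if necessary). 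The distance inequalities just obtained, together with $6^{g'}\leq 6^g$, give M3) and M4) for $\mathcal{T}''$; M5) comes again from Fact \ref{HighEwTriangleFwF1Cycle}; the graph of $\mathcal{T}''$ has fewer vertices than $G$ because $\textnormal{Int}(D)$ has genus $g-g'\geq 1$ and hence contains at least $\textnormal{ew}(G)$ vertices; and criticality supplies the contradiction.

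The step I expect to be the main obstacle is the genus bookkeeping in M3)--M5) for the auxiliary mosaics: capping off a handle changes $g$ and hence the $6^{g}$ thresholds in Definition \ref{MainMosaicAxioms}, and one must track exactly how $\textnormal{ew}$, $\textnormal{fw}$, the ranks, and the $\tfrac12|E(D)|$ correction interact so that the precise constants in the statement come out. A second genuine issue is the degenerate case $g'=g$, i.e.\ $D$ inward contractible: then $\widehat\Sigma$ (resp.\ $\Sigma^D$) is the sphere, the $\Sigma^D$-construction in Part 2 loses a term of order $\beta$ in M4), and the displayed bounds themselves become weak --- indeed trivial when $C$ is a closed ring --- so the remaining case of an open ring $C$ inside the disc $\textnormal{Int}(D)$ must be treated separately, either directly in the planar region $\textnormal{Int}(D)$ via Theorems \ref{thomassen5ChooseThm}--\ref{Two2ListTheorem}, or by ruling such a configuration out of a critical mosaic. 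Finally one must confirm that short-inseparability and chord-triangulatedness pass to $\textnormal{Int}(D)$ and $\textnormal{Ext}(D)$ after capping, which follows from $\textnormal{ew}(G)>4$ and the absence of separating $3$- and $4$-cycles in $G$.
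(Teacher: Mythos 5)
Your overall plan---colour one side of $D$ via Propositions \ref{CritMosaicExtSideColor}/\ref{CritMosaicIntSideColor}, then argue the other side is the underlying graph of a smaller mosaic with $D$ as a closed ring---is the right idea, and your constant-tracking for M3) and M4) is essentially correct. But there is a genuine gap in how you establish M2) for the new ring $D$, and this is precisely the step the paper's proof is organised around.

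You assert that ``M2) for the new outer ring follows from short-inseparability, which bounds how the vertices of $\textnormal{Int}(D)$ attach to $V(D)$.'' This is not true for an arbitrary separating cycle $D$ of length up to $N_{\textnormal{mo}}$. Short-inseparability rules out separating $3$- and $4$-cycles, but it does not prevent $D$ from having a chord in $\textnormal{Int}(D)$ (a chord would create two cycles of lengths summing to $|E(D)|+2$, and both can be triangles or long), nor does it prevent a proper generalized chord of $D$ in $\textnormal{Int}(D)$ that is shorter than the corresponding arc of $D$, nor does it bound the number of vertices $w\in\textnormal{Ext}(D)$ with $|N(w)\cap V(D)|=3$. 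If any of these occur, $D$ fails to be semi-shortcut-free or $(L,V(D))$-predictable in $\textnormal{Int}(D)$, and $\mathcal{T}'$ is not a mosaic. Your suggested fix---``replacing $D$ together with its chords by the induced cycles of $G[V(D)]\cap\textnormal{Int}(D)$ and the associated webs, exactly as in the proof of Proposition \ref{CritMosaicExtSideColor}''---does not apply here: in that proposition, the webs are glued into the region being \emph{discarded}, so they never touch the graph being kept. Here the region you are keeping is $\textnormal{Int}(D)$, and no webbing of the exterior disc removes a chord of $D$ sitting \emph{inside} $\textnormal{Int}(D)$.

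The paper closes this gap with a minimality device that your proposal omits entirely. In Part 1 it does not fix a single $D$: it considers the family $\mathcal{F}$ of all separating cycles of length at most $N_{\textnormal{mo}}$ violating the bound and chooses one that minimizes $|V(\textnormal{Int}(D))|$. From that minimality it deduces Claims \ref{DExpectClaiminU} and \ref{AtMostOnevMrtwn}: $D$ is shortcut-free in $\textnormal{Int}(D)$ (so in particular chordless there), and at most one vertex of the deleted side has three neighbours on $D$. Together these give exactly the semi-shortcut-free and $(L,V(D))$-predictable conditions that M2) demands. Part 2 is symmetric, minimizing $|V(\textnormal{Ext}(D'))|$. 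Without this minimization, the step ``$\mathcal{T}'$ is a mosaic'' is unjustified, and your contradiction does not arrive. Your worries about the $g'=g$ degenerate case and about passing short-inseparability to subgraphs are real but secondary; the missing minimality argument is the load-bearing issue, and adding it changes the logical structure of both halves of the proof.
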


\begin{proof} We first prove 1). Let $\mathcal{F}$ be the set of separating cycles $D$ of length at most $N_{\textnormal{mo}}$ such that $\mathcal{C}^{\subseteq\textnormal{Int}(D)}\neq\varnothing$ and $C\in\mathcal{C}$ violates 1). We show that $\mathcal{F}=\varnothing$. Suppose $\mathcal{F}\neq\varnothing$ and choose a $D\in\mathcal{F}$ which minimizes the quantity $|V(\textnormal{Int}(D))|$. Let $g':=g(\Sigma^D)$. Since $D$ is a separating cycle, there is precisely one of components of $\Sigma\setminus D$ whose closure contain $C_*$, so let $U, U_*$ be the connected component of $\Sigma\setminus D$, where $C_*\subseteq \textnormal{Cl}(U_*)$.

\begin{claim}\label{DExpectClaiminU} $D$ is shortcut-free in $\textnormal{Int}(D)$. \end{claim}

\begin{claimproof} Suppose not. Thus, there exist $x,y\in E(D)$ and a proper generalized chord $P$ of $D$ with endpoints $x,y$, where $|E(P)|<d_D(x,y)$ and $P\subseteq\textnormal{Int}(D)$. Let $D_0, D_1$ be the two cycles of $D\cup P$ which are distinct from $D$. Each of $D_0, D_1$ is of length at most $|E(D)|$ and contractible. For each $i=0,1$ and any subgraph $H$ of $\textnormal{Int}(D_i)$, we have $$\max\{d(v, H): v\in V(D_i)\}\geq\max\{d(v, H): v\in V(D)\}-\frac{|E(D_{1-i})|-|E(P)|}{2}$$ 
For each $i=0,1$, we have $g-g'\geq g-g(\Sigma^{D_i}))$. Thus, for any $i\in\{0,1\}$ and $C\in\mathcal{C}^{\subseteq\textnormal{Int}(D^i)}$, we have
$$\max\{d(v, \mathpzc{w}(C)): v\in V(D_i)\}\geq 2.9\beta\cdot 6^{\left(g-g(\Sigma^{D_i})\right)-1}+\textnormal{Rk}(C)+\left(N_{\textnormal{mo}}+\frac{1}{2}\right)\cdot |E(D)|-\frac{|E(D_{1-i})|-|E(P)|}{2}$$
There is at least one $k\in\{0,1\}$ with $\mathcal{C}^{\subseteq\textnormal{Int}(D_k)}\neq\varnothing$, so the minimality of $|V(\textnormal{Int}(D))|$ implies that there exists a $k\in\{0,1\}$ such that $$\left(N_{\textnormal{mo}}+\frac{1}{2}\right)\cdot |E(D_k)|>\left(N_{\textnormal{mo}}+\frac{1}{2}\right)\cdot |E(D)|-\frac{|E(D_{1-k})|-|E(P)|}{2}$$ 
Let $|E(D)|=n$ and $|E(P)|=p$, and for each $i=0,1$, let $|E(D_i)|=n_i$. Thus, we get $\frac{n_{1-k}-p}{2}>(N_{\textnormal{mo}}+\frac{1}{2})(n-n_k)$. On the other hand, we have $n=n_k+n_{1-k}-2p$, so we get $\frac{n_{1-k}-p}{2}>(N_{\textnormal{mo}}+\frac{1}{2})(n_{1-k}-2p)$. Yet our assumption on $P$ implies that $n_{1-k}-2p\geq 1$, so we get $\frac{n_{1-k}-p}{2}>N_{\textnormal{mo}}+\frac{1}{2}$, which is false. \end{claimproof}

\begin{claim}\label{AtMostOnevMrtwn} For each $w\in U$, the graph $G[N(w)\cap V(D)]$ is a path of length at most two. Furthermore, there is at most one $w\in U$ such that $G[N(w)\cap V(D)]$ is a path of length two.\end{claim}

\begin{claimproof} Since $G$ is short-inseparable and has no induced 4-cycles, together with the fact that $D$ is shortcut-free in $\textnormal{Int}(D)$. Now suppose there is a $w\in U$ such that $G[N(w)\cap V(D)]$ is a path $p_1p_2p_3$ of length two. Let $D^*:=(D-p_2)+p_1wp_3$. Note that $|E(D^*)|=|E(D)|$. As $G$ is short-inseparable and a 2-cell embedding, we get $g(\Sigma^{D_*})= g(\Sigma^D)$, and $\textnormal{Int}(D^*)$ contains an element of $\mathcal{C}$. By the minimality of $|V(\textnormal{Int}(D))|$, there is a $C\in\mathcal{C}^{\textnormal{Int}(D_*)}$ with $\max\{d(v, \mathpzc{w}(C)): v\in V(D_*)\}=\max\{d(v, \mathpzc{w}(C)): v\in V(D)\}-1$. Thus, $p_2$ is the unique vertex of $D$ at which $\max\{d(v, \mathpzc{w}(C)): v\in V(D)\}$ is attained, so $w$ is unique. \end{claimproof}

Now, since $D\in\mathcal{F}$ and $g-g'\geq 0$ and $\beta\geq 100N_{\textnormal{mo}}^2$, it follows from Proposition \ref{CritMosaicExtSideColor} that $V(\textnormal{Ext}(D))$ admits an $L$-coloring $\phi$. Let $\Sigma_*$ be the surface obtained from $\Sigma$ by replacing $U_*$ with an open disc. Now, $\textnormal{Int}(D)$ can be regarded as an embedding on $\Sigma_*$. Note that $g(\Sigma_*)=g-g'$. We claim now that $\mathcal{T}_*:=(\Sigma_*, \textnormal{Int}(D), \mathcal{C}^{\subseteq\textnormal{Int}(D)}\cup\{D\}, L^D_{\phi}, D)$ is a mosaic, where $D$ is a closed $\mathcal{T}_*$-ring. M0) and M1) are clearly satisfied, and it follows from Claims \ref{DExpectClaiminU} and \ref{AtMostOnevMrtwn} that M2) is satisfied as well. 

\begin{claim} $\mathcal{T}_*$ satisfies M5). \end{claim}

\begin{claimproof} This is trivial if $\Sigma_*=\mathbb{S}^2$ so suppose that $\Sigma_*\neq\mathbb{S}^2$. Since $D$ is contractible, all the noncontractible closed curves of $\Sigma$ have nonempty intersection with $\textnormal{Cl}(U)$, and thus $\Sigma_*=\Sigma$ and $g'=0$. Since edge-width is monotone, we have $\textnormal{ew}(\textnormal{Int}(D))\geq 2.1\beta\cdot 6^{g}$. Because $D$ is a counterexample to 1), each element of $\mathcal{C}^{\subseteq\textnormal{Int}(D)}$ is of distance at least $2.9\beta\cdot 6^{g-1}$ from $D$. Since $G$ is a 2-cell embedding, $\textnormal{Int}(D)$ is also a 2-cell embedding. Since $|E(D)|\leq N_{\textnormal{mo}}$ it follows from Fact \ref{HighEwTriangleFwF1Cycle} that $\textnormal{fw}(\textnormal{Int}(D))\geq 2.1\beta\cdot 6^{g-1}$. \end{claimproof}

Since $D\in\mathcal{F}$ and any two vertices of $D$ are of distance at most $\frac{|E(D)|}{2}$ apart, it follows that $\mathcal{T}_*$ satisfies the distance conditions M3)-M4) as well.  Thus, $\mathcal{T}_*$ is a mosaic. Since $D$ is a separating cycle of $G$, we have $|V(\textnormal{Int}(D))|<|V(G)|$. It follows from the criticality of $\mathcal{T}$ that $\textnormal{Int}(D)$ admits an $L^D_{\phi}$-coloring, so $\phi$ extends to an $L$-coloring of $\mathcal{T}$, a contradiction. This proves 1) of Theorem \ref{MainRes2CloseCycleBounds}.

 Now we prove 2). We proceed analogously. Suppose 2) does not hold, and let $\mathcal{F}'$ be the set of separating cycles of $D$ of length at most $N_{\textnormal{mo}}$ such that $\mathcal{C}^{\subseteq\textnormal{Int}(D)}\neq\varnothing$ and there exists a $C\in\mathcal{C}^{\subseteq\textnormal{Int}(D)}$ violating 2). We show that $\mathcal{F}'=\varnothing$. Suppose $\mathcal{F}'\neq\varnothing$ and choose a $D'\in\mathcal{F}'$ so that quantity $|V(\textnormal{Ext}(D'))|$ is minimized over the elements of $\mathcal{F}'$. Let $g':=g(\Sigma^{D'})$. Since $D'\in\mathcal{F}'$, there is a $C^{\dagger}\in\mathcal{C}$ with $C^{\dagger}\subseteq\textnormal{Int}(D')$, where
\begin{equation}\label{daggerEqnForDwCDag}\tag{Ineq1} d(D', \mathpzc{w}( C^{\dagger}))\leq 2.9\beta\cdot (6^{g-1}-6^{g'-1})+\textnormal{Rk}(C^{\dagger})-\left(N_{\textnormal{mo}}+\frac{1}{2}\right)\cdot |E(D')|\end{equation}

\begin{claim}\label{DistBoundsExtFor2ofMainThm2} 
\textcolor{white}{aaaaaaaaaaaaaaaaaaaaaaaaaa}
\begin{enumerate}[label=\roman*)]
\item $d(D', \mathpzc{w}(C_*))\geq 2.9\beta\cdot 6^{g'-1}+\textnormal{Rk}(C)+N_{\textnormal{mo}}\cdot |E(D')|$; AND
\item For each $C\in\mathcal{C}^{\subseteq\textnormal{Ext}(D')}\setminus\{C_*\}$, we have $d(D', \mathpzc{w}(C))\geq 2\beta\cdot 6^{g'}+\textnormal{Rk}(C)+N_{\textnormal{mo}}\cdot |E(D')|$. 
\end{enumerate}
 \end{claim}

\begin{claimproof} Let $C\in\mathcal{C}$. Since any two vertices of $D'$ are of distance at most $\frac{|E(D')|}{2}$ apart, we have $$d(\mathpzc{w}(C^{\dagger}, \mathpzc{w}(C))\leq d(\mathpzc{w}(C^{\dagger}), D')+d(\mathpzc{w}(C), D')+\frac{|E(D')|}{2}$$
Suppose first that $C=C_*$, but i) does not hold. Thus, $d(\mathpzc{w}(C^{\dagger}, \mathpzc{w}(C))< 2.9\beta\cdot 6^{g-1}+\textnormal{Rk}(C^{\dagger})+\textnormal{Rk}(C)$. Since $C_*\neq C^{\dagger}$, we contradict our distance conditions on $\mathcal{T}$. Now suppose that $C\neq C_*$, but ii) does not hold. Thus, we get
 $$d(\mathpzc{w}(C^{\dagger}, \mathpzc{w}(C))< 2.9\beta\cdot (6^{g-1}-6^{g'-1})+\textnormal{Rk}(C^{\dagger})+\left(2\beta\cdot 6^{g'}+\textnormal{Rk}(C)\right)$$
Note that $C\neq C^{\dagger}$. If $g=g'$, then we contradict M4). If $g'<g$, then $g'\leq g-1$ and we again contradict M4). \end{claimproof}

Now, let $U'$ be the connected component of $\Sigma\setminus D'$ which is $\mathcal{T}$-internally bounded by $D'$. Analogous to Claims \ref{DExpectClaiminU}-\ref{AtMostOnevMrtwn}, we have the following: 

\begin{claim}\label{DPredictableOnOneSide2}
$D'$ is shortcut-free in $\textnormal{Ext}(D')$.\end{claim}

\begin{claimproof} Suppose not. Thus, there exist $x,y\in E(D')$ and a proper generalized chord $P$ of $D'$ with endpoints $x,y$, where $|E(P)|<d_{D'}(x,y)$ and $P\subseteq\textnormal{Ext}(D')$. Let $D_0, D_1$ be the two cycles of $D'\cup P$ which are distinct from $D'$. Each of $D_0, D_1$ is of length at most $|E(D')|$ and contractible. There is a unique $k\in\{0,1\}$ such that $\textnormal{Int}(D')\subseteq\textnormal{Int}(D_k)$. It follows from Claim \ref{DistBoundsExtFor2ofMainThm2} that $D_k$ is also a separating cycle of $G$. Furthermore, $|E(D_k)|<|E(D)|\leq N_{\textnormal{mo}}$, and, by the minimality of $|V(\textnormal{Ext}(D))|$, we have $D_k\not\in\mathcal{F}'$. Now, $g'\geq g(\Sigma^{D_k})$, so $6^{g-1}-6^{g(\Sigma^{D_k})-1}\geq 6^{g-1}-6^{g'-1}$ and thus
$$d(D_k, \mathpzc{w}(C^{\dagger}))>2.9\beta (6^{g-1}-6^{g'-1})+\textnormal{Rk}(C^{\dagger})-\left(N_{\textnormal{mo}}+\frac{1}{2}\right)\cdot |E(D_k)|$$
Note that $d(D_k, \mathpzc{w}(C^{\dagger}))\leq d(D', \mathpzc{w}(C^{\dagger})+\frac{|E(D_{1-k})|-|E(P)|}{2}$. Thus, by $(\dagger)$, we obtain
$$\left(N_{\textnormal{mo}}+\frac{1}{2}\right)\cdot |E(D_k)|>\left(N_{\textnormal{mo}}+\frac{1}{2}\right)\cdot |E(D')|-\frac{|E(D_{1-k})|-|E(P)|}{2}$$
As in Claim \ref{DExpectClaiminU}, since $|E(D_0)|+|E(D_1)|=|E(D')|+2|E(P)|$, our assumption that $|E(D_{1-k})|-2|E(P)|\geq 1$ implies that $\frac{|E(D_{1-k})|-|E(P)|}{2}>N_{\textnormal{mo}}+\frac{1}{2}$, which is false. \end{claimproof}

\begin{claim}\label{ImprovPredfromShort}  For each $w\in U'$, the graph $G[N(w)\cap V(D')]$ is a path of length at most two. Furthermore, there is at most one $w\in U$ such that $G[N(w)\cap V(D')]$ is a path of length two. \end{claim}

\begin{claimproof} Since $G$ is short-inseparable, the first part of the claim follows from our triangulation conditions, together with the fact that $D'$ is shortcut-free in $\textnormal{Ext}(D')$. Now suppose there is a $w\in U$ such that $G[N(w)\cap V(D')]$ is a path $p_1p_2p_3$ of length two. Let $D^*:=(D'-p_2)+p_1wp_3$. As $G$ is short-inseparable and a 2-cell embedding, we get $g(\Sigma^{D_*})= g'$, and $C^{\dagger}\subseteq\textnormal{Int}(D')\subsetneq\textnormal{Int}(D^*)$. By the minimality of $|V(\textnormal{Ext}(D'))|$, we have $D^*\not\in\mathcal{F}'$, so we have
$$d(D^*, \mathpzc{w}(C^{\dagger}))>2.9\beta\cdot (6^{g-1}-6^{g'-1})+\textnormal{Rk}(C^{\dagger})-\left(N_{\textnormal{mo}}+\frac{1}{2}\right)\cdot |E(D')|$$
Thus, (\ref{daggerEqnForDwCDag}) implies that $p_2$ is the unique vertex of $D'$ at which $d(D', \mathpzc{w}(C^{\dagger}))=\min\{d(v, \mathpzc{w}(C^{\dagger}))): v\in V(D')\}$ is attained, so $w$ is indeed unique.\end{claimproof}

It follows from Claim \ref{DistBoundsExtFor2ofMainThm2} that each element of $\mathcal{C}^{\subseteq\textnormal{Ext}(D')}$ is of distance at least $10N_{\textnormal{mo}}^2$ from $D'$. By Proposition \ref{CritMosaicIntSideColor}, $V(\textnormal{Int}(D'))$ admits an $L$-coloring $\psi$. We claim now that $\mathcal{T}'':=(\Sigma^{D'}, \textnormal{Ext}(D'), \mathcal{C}^{\subseteq\textnormal{Ext}(D')}\cup\{D\}, L^{D'}_{\psi}, C_*)$ is a mosaic. Note that $\mathcal{T}''$ is a tessellation and $D'$ is a closed $\mathcal{T}''$-ring. It is clear that $\mathcal{T}''$ satisfies M0) and M1), and it follows from Claims \ref{DPredictableOnOneSide2}-\ref{ImprovPredfromShort} that $\mathcal{T}''$ satisfies M2). As all the rings of $\mathcal{C}^{\subseteq\textnormal{Ext}(D')}$ have the same rank in $\mathcal{T}$ and $\mathcal{T}''$, it follows from Claim \ref{DistBoundsExtFor2ofMainThm2} that $\mathcal{T}''$ also satisfies M3)-M4). 

\begin{claim} $\mathcal{T}''$ satisfies M5). \end{claim}

\begin{claimproof} This is trivial if $g'=0$ so suppose that $g'>0$. Thus, we have $g=g'$ and $\Sigma^D=\Sigma$. Since edge-width is monotone, we have $\textnormal{ew}(\textnormal{Ext}(D'))\geq\textnormal{ew}(G)\geq 2.1\beta\cdot 6^{g}$. Since $G$ is a 2-cell embedding $\textnormal{Ext}(D')$ is also a 2-cell embedding, and since $|E(D')|\leq N_{\textnormal{mo}}$, it follows from Fact \ref{HighEwTriangleFwF1Cycle} that $\textnormal{fw}(\textnormal{Ext}(D'))\geq 2.1\beta\cdot 6^{g-1}$ \end{claimproof}

We conclude that $\mathcal{T}''$ is a mosaic. Since $D'$ is a separating cycle of $G$, we have $|V(\textnormal{Ext}(D'))|<|V(G)|$. As $\mathcal{T}$ is critical, $\mathcal{T}''$ is colorable, so $\psi$ extends to an $L$-coloring of $G$, a contradiction. This proves Theorem \ref{MainRes2CloseCycleBounds}. \end{proof}

To conclude Section \ref{ObstructingCycleSec}, we prove the following the following corollary to Theorems \ref{FirstMainNoHandleInside} and \ref{MainRes2CloseCycleBounds}. 

\begin{cor}\label{NonRingSeparatingCor}
Let $\mathcal{T}=(\Sigma, G, \mathcal{C}, L, C_*)$ be a critical mosaic and $D\subseteq G$ be a cycle of length at most $N_{\textnormal{mo}}$, where $D$ is a separating cycle of $G$. Let $g':=g(\Sigma^D)$ and suppose there is a $C\in\mathcal{C}$ such that one of the following holds.
\begin{enumerate}[label=\alph*)]
\itemsep-0.1em
\item $C\neq C_*$ and $d(D, \mathpzc{w}(C))\leq 2.9\beta\cdot (6^{g-1}-6^{g'-1})+\textnormal{Rk}(C)-(N_{\textnormal{mo}}+\frac{1}{2})\cdot |E(D)|$; OR
\item $C=C_*$ and $d(D, \mathpzc{w}(C))\leq 2.9\beta\cdot (6^{g-1}-6^{(g-g')-1})+\textnormal{Rk}(C)-(N_{\textnormal{mo}}+\frac{1}{2})\cdot |E(D)|$
\end{enumerate}
Then every element of $\mathcal{C}$ lies in $\textnormal{Ext}(D)$, and furthermore, $g=g'$, i.e $D$ is inward contractible.\end{cor}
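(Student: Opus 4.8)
The plan is to argue by contradiction: suppose that some element $C$ satisfying (a) or (b) exists but the conclusion fails, i.e.\ either $D$ is not inward contractible or $\textnormal{Int}(D)$ contains an element of $\mathcal{C}$. First I would separate the two cases according to which side the offending ring $C$ lies on. \emph{Case 1: $C\subseteq\textnormal{Ext}(D)$.} Then $D$ must contain an element of $\mathcal{C}$ on its interior, for otherwise the conclusion holds trivially; so $\mathcal{C}^{\subseteq\textnormal{Int}(D)}\neq\varnothing$, and Theorem \ref{MainRes2CloseCycleBounds}(2) applies to give, for the ring $C'\in\mathcal{C}^{\subseteq\textnormal{Int}(D)}$, the lower bound $d(D,\mathpzc{w}(C'))>2.9\beta\cdot(6^{g-1}-6^{g'-1})+\textnormal{Rk}(C')-(N_{\textnormal{mo}}+\tfrac12)|E(D)|$. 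I would then combine this with the hypothesis on $C$ (case (a) or (b)) and the triangle inequality through $D$ — any two vertices of $D$ being at distance at most $\tfrac{|E(D)|}{2}$ — to bound $d(\mathpzc{w}(C),\mathpzc{w}(C'))$ from above by something strictly smaller than the distance lower bound forced by M3) or M4) of Definition \ref{MainMosaicAxioms} (using $C\neq C'$, since one lies in $\textnormal{Ext}(D)$ and the other in $\textnormal{Int}(D)$), a contradiction. Here I'd use $\beta\geq 100N_{\textnormal{mo}}^2$ and $|E(D)|\leq N_{\textnormal{mo}}$ to absorb the $(N_{\textnormal{mo}}+\tfrac12)|E(D)|$ slack terms, exactly as in the proof of Claim \ref{DistBoundsExtFor2ofMainThm2}.

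\emph{Case 2: $C\subseteq\textnormal{Int}(D)$.} Now $\mathcal{C}^{\subseteq\textnormal{Int}(D)}\neq\varnothing$ automatically, so both parts of Theorem \ref{MainRes2CloseCycleBounds} are available. From part (1) there is a $C''\in\mathcal{C}^{\subseteq\textnormal{Int}(D)}$ with $\max\{d(v,\mathpzc{w}(C'')): v\in V(D)\}<2.9\beta\cdot 6^{(g-g')-1}+\textnormal{Rk}(C'')+(N_{\textnormal{mo}}+\tfrac12)|E(D)|$. Since $C$ itself satisfies the \emph{upper} bound in hypothesis (a) (with $g'$ replaced appropriately) — or, if $C=C_*$, then $C_*$ would lie in $\textnormal{Int}(D)$, forcing $g'=g$ and hence a trivial or contradictory bound — I would play these two distance estimates against the mosaic axioms M3)/M4) to derive a contradiction, again via the triangle inequality through $D$ and the numerical slack from $\beta\geq 100N_{\textnormal{mo}}^2$. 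The upshot of Cases 1 and 2 together is that no ring can lie in $\textnormal{Int}(D)$ and the offending ring $C$ cannot exist in $\textnormal{Int}(D)$ either, so $\mathcal{C}^{\subseteq\textnormal{Int}(D)}=\varnothing$; in particular $C\in\textnormal{Ext}(D)$, giving the first assertion.

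Finally, with $\mathcal{C}^{\subseteq\textnormal{Int}(D)}=\varnothing$ established, I would invoke Theorem \ref{FirstMainNoHandleInside}: since $D$ is a separating cycle of length at most $N_{\textnormal{mo}}$ and $\textnormal{Int}(D)$ contains no element of $\mathcal{C}$, the theorem forces $D$ to be inward contractible, which is exactly the statement $g=g'$ (the genus $g(\Sigma^D)$ equals $g$ precisely when the piece cut off by $D$ carries the whole handle structure — wait, rather: $D$ inward contractible means $\textnormal{Int}(D)$ is a disc, so $\Sigma^D=\Sigma$ and $g'=g$). This completes the proof.

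\textbf{Main obstacle.} The bookkeeping step is where the real work lies: I expect the delicate point to be checking that in \emph{every} subcase the linear error terms of the form $(N_{\textnormal{mo}}+\tfrac12)|E(D)|$, $\tfrac{|E(D)|}{2}$, and the various $\textnormal{Rk}$ contributions are dominated by the exponential gaps $2.9\beta\cdot 6^{g-1}$ (resp.\ $2\beta\cdot 6^g$) mandated by M3) and M4), uniformly over the possibilities $g'=g$, $0<g'<g$, and $g'=0$, and over whether $C=C_*$ or not. Getting the case $C=C_*$ right — where one must be careful that $C_*\subseteq\textnormal{Int}(D)$ is impossible unless $D$ already bounds everything — and correctly matching up $6^{g-1}-6^{g'-1}$ versus $6^{g-1}-6^{(g-g')-1}$ in hypotheses (a) and (b) with the exponents produced by Theorem \ref{MainRes2CloseCycleBounds} is the step most prone to sign/indexing errors, so I would handle it by first normalizing all distances to quantities measured from $D$ and only then triangulating.
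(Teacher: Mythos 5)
There is a concrete error in your Case~1: you invoke part 2) of Theorem \ref{MainRes2CloseCycleBounds}, which gives a \emph{lower} bound on $d(D,\mathpzc{w}(C'))$ for each $C'\in\mathcal{C}^{\subseteq\textnormal{Int}(D)}$, and then say you will ``bound $d(\mathpzc{w}(C),\mathpzc{w}(C'))$ from above'' via the triangle inequality. That cannot work: the triangle inequality through $D$ requires \emph{upper} bounds on all three summands, and a lower bound on $d(D,\mathpzc{w}(C'))$ is useless for that purpose. The tool you need is part 1) of Theorem \ref{MainRes2CloseCycleBounds}, which produces a specific ring $C^{\dagger}\subseteq\textnormal{Int}(D)$ with $\max\{d(v,\mathpzc{w}(C^{\dagger})): v\in V(D)\}<2.9\beta\cdot 6^{(g-g')-1}+\textnormal{Rk}(C^{\dagger})+(N_{\textnormal{mo}}+\tfrac12)|E(D)|$; since this $\max$ runs over all of $V(D)$, it combines cleanly with the hypothesis bound on $d(D,\mathpzc{w}(C))$ to give $d(\mathpzc{w}(C),\mathpzc{w}(C^{\dagger}))\leq d(\mathpzc{w}(C),D)+\max\{d(v,\mathpzc{w}(C^{\dagger})): v\in V(D)\}$, which then contradicts M3) or M4). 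This is in fact how the paper argues.

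Your handling of $C=C_*$ in Case~2 is also off the mark. By Definition \ref{Def14InOutContract}, $\textnormal{Ext}(D)$ is by construction the side of $D$ whose closure contains $C_*$, so $C_*\subseteq\textnormal{Ext}(D)$ holds automatically; the subcase ``$C=C_*$ and $C\subseteq\textnormal{Int}(D)$'' is vacuous, not a matter of ``forcing $g'=g$ and hence a trivial or contradictory bound.'' What actually needs to be ruled out is the possibility that the hypothesis is satisfied with $C=C_*$ (which lies in $\textnormal{Ext}(D)$) at the same time that $\mathcal{C}^{\subseteq\textnormal{Int}(D)}\neq\varnothing$. The paper does this directly: combining hypothesis b) with the part-1) bound for $C^{\dagger}$ yields $d(\mathpzc{w}(C_*),\mathpzc{w}(C^{\dagger}))<2.9\beta\cdot 6^{g-1}+\textnormal{Rk}(C_*)+\textnormal{Rk}(C^{\dagger})$, contradicting M3). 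Finally, note the paper does not split by which side $C$ lies on; it first extracts $C^{\dagger}$ from part 1), shows $C\neq C_*$ via M3), then uses part 2) to conclude $C\subseteq\textnormal{Ext}(D)$ (so in particular $C\neq C^{\dagger}$), and closes with M4). If you restructure your argument along these lines, swapping your appeal to part 2) in Case~1 for part 1), the proof goes through; the numerics you describe and the final appeal to Theorem \ref{FirstMainNoHandleInside} to obtain inward contractibility are correct.
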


\begin{proof} It suffices to show that every element of $\mathcal{C}$ lies in $\textnormal{Ext}(D)$. If this holds, then Theorem \ref{FirstMainNoHandleInside} implies that $D$ is also inward contractible. Suppose toward a contradiction that not every element of $\mathcal{C}$ lies in $\textnormal{Ext}(D)$. Since each element of $\mathcal{C}$ is a facial subgraph of $G$, it follows that $\mathcal{C}^{\textnormal{Int}(D)}\neq\varnothing$. Thus, by 1) of Theorem \ref{MainRes2CloseCycleBounds}, there is a $C^{\dagger}\subseteq\textnormal{Int}(D)$ with $\max\{d(\mathpzc{w}(C^{\dagger}), v): v\in V(D)\}<2.9\beta\cdot 6^{(g-g')-1}+\textnormal{Rk}(C^{\dagger})+\left(N_{\textnormal{mo}}+\frac{1}{2}\right)\cdot |E(D)|$. Now, by assumption, there is a $C\in\mathcal{C}$ satisfying either a) or b). We have $d(\mathpzc{w}(C), \mathpzc{w}(C^{\dagger}))\leq d(\mathpzc{w}(C), D)+\max\{d(v, \mathpzc{w}(C^{\dagger}): v\in V(D)\}$. We now show that $C\neq C_*$. Suppose $C=C_*$. In this case, $C$ satisfies b) and, since $D$ is a separating cycle, we have $C^{\dagger}\neq C$. By our assumption on $C^{\dagger}$, we get $d(\mathpzc{w}(C), \mathpzc{w}(C^{\dagger}))<2.9\beta\cdot 6^{g-1}+\textnormal{Rk}(C)+\textnormal{Rk}(C^{\dagger})$. This contradicts M3) applied to $\mathcal{T}$. Thus, $C\neq C_*$, so $C$ satisfies a). Now it follows from 2) of Theorem \ref{MainRes2CloseCycleBounds} that $C\subseteq\textnormal{Ext}(D)$. Since $D$ is a separating cycle, we have $C\neq C^{\dagger}$. Thus, by our assumption on $C^{\dagger}$, we get $$d(\mathpzc{w}(C), \mathpzc{w}(C^{\dagger}))<2.9\beta\cdot (6^{g-1}+6^{(g-g')-1}-6^{g'-1})+\textnormal{Rk}(C)+\textnormal{Rk}(C^{\dagger})$$ 
This contradicts M4) applied to $\mathcal{T}$.\end{proof}

\section{Generalized Chords of Closed Rings in Critical Mosaics}\label{GenChordClosedRingCritSec}

Below, we state and prove the lone theorem which makes up Section \ref{GenChordClosedRingCritSec}. Note that, in the statement below, the natural $Q$-partition of $G$ is well-defined by our face-width conditions.

\begin{theorem}\label{ShortGenChordClosedRingThm}
Let $\mathcal{T}=(\Sigma, G, \mathcal{C}, L, C_*)$ be a critical mosaic and $C\in\mathcal{C}$ be closed. Let $Q$ be a generalized chord of $C$ with $|E(Q)|\leq\frac{N_{\textnormal{mo}}}{3}$ and $G_0\cup G_1$ be the natural $Q$-partition of $G$. Then there is a $j\in\{0,1\}$ such that each element of $\mathcal{C}\setminus\{C\}$ lies in $G_j$ and the unique component of $\Sigma\setminus (C\cup Q)$ whose closure contains $G_{1-j}$ is a disc. \end{theorem}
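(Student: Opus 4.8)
The plan is to argue by contradiction, mimicking the structure of the proofs of Theorems~\ref{FirstMainNoHandleInside} and \ref{MainRes2CloseCycleBounds}, using the criticality of $\mathcal{T}$ to derive an $L$-coloring of $G$. First observe that since $|E(Q)|\leq \frac{N_{\textnormal{mo}}}{3}$, the three cycles of $C\cup Q$ all have length at most $|E(C)|+\frac{2N_{\textnormal{mo}}}{3}\leq \frac{5N_{\textnormal{mo}}}{3}$ — but more importantly, since $\textnormal{ew}(G)\geq 2.1\beta\cdot 6^g$ is enormous compared to $N_{\textnormal{mo}}$, every cycle in $C\cup Q$ is contractible, so the natural $Q$-partition $G=G_0\cup G_1$ with $G_0\cap G_1=Q$ is well-defined (Definition~\ref{ContractNatCQPartChordDefn}) and, for each $j$, there is a unique open region $U_j$ of $\Sigma\setminus(C\cup Q)$ with $G_j=G\cap\textnormal{Cl}(U_j)$. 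Now suppose toward a contradiction that the conclusion fails. Since each element of $\mathcal{C}\setminus\{C\}$ is a facial subgraph of $G$ and hence lies entirely in one of $G_0,G_1$ (it cannot cross $C\cup Q$, as $d(H,C)\geq\alpha$ for $H\neq C$ forces $H$ disjoint from $C\cup Q$ — here I would use M3), M4) together with $|E(Q)|\leq\frac{N_{\textnormal{mo}}}{3}$ to see every inner ring is far from $C\cup Q$), the failure means either some element of $\mathcal{C}\setminus\{C\}$ lies in $G_0$ and another lies in $G_1$ (both sides occupied by rings), or exactly one side contains all of $\mathcal{C}\setminus\{C\}$ but the opposite side $U_{1-j}$ is not a disc. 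I would handle these cases by constructing a smaller mosaic from the ``disc-and-ring-free'' side after coloring it via a web argument, then extending into the other side.

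The core step is the following. Since $C$ is a closed ring, $V(\mathbf{P}_C)=V(C)$, so $C$ is already precolored. In the case where one side $U_j$ is a disc and contains no element of $\mathcal{C}\setminus\{C\}$: I would first produce an $L$-coloring of $V(G_j)$ directly, because $G_j$ with facial cycles $C$ and the cycles of $C\cup Q$ in $\textnormal{Cl}(U_j)$, together with 3-lists on $V(C)$'s complement (empty here) and 5-lists elsewhere, is governed by Theorem~\ref{thomassen5ChooseThm} / Theorem~\ref{Two2ListTheorem}-type planar results — but actually the cleaner route is to realize that if $G_j$ is a disc with no inner ring, then coloring $V(C\cup Q)$ appropriately and deleting everything in $G_j\setminus(C\cup Q)$ produces a chart on the other side; so I would instead color $C$ (it already has a precoloring), then use Theorem~\ref{BohmePaper5CycleCorList} or the Thomassen-type extension to color all of $G_j$, restricting to an $L$-coloring $\psi$ of $V(C\cup Q)$ on the boundary. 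Then $\textnormal{Int}$-style: the other side $G_{1-j}$ with $C\cup Q$ now fully colored becomes (after possibly web-augmenting any induced subcycles of $C\cup Q$ of length $\geq 5$ on that side, exactly as in Propositions~\ref{CritMosaicExtSideColor}–\ref{CritMosaicIntSideColor}, and re-embedding on a surface $\Sigma'$ of genus $\leq g$) the underlying graph of a smaller mosaic $\mathcal{T}'$ whose outer ring is a suitable induced cycle of $C\cup Q$ and whose inner rings are $\mathcal{C}\setminus\{C\}$. Checking M0)–M5) for $\mathcal{T}'$ proceeds as in Section~\ref{ObstructingCycleSec}: M0),M1) are immediate; M2) follows since $C\cup Q$ is shortcut-free on that side (which I would deduce from $\textnormal{ew}(G)$ being huge, ruling out short proper generalized chords of the new ring) plus the triangulation hypotheses; M3),M4) hold because the rank of the new closed ring is $\leq N_{\textnormal{mo}}\cdot|E(C\cup Q\text{-cycle})|\leq \frac{5N_{\textnormal{mo}}^2}{3}$, tiny compared to the $\beta\cdot 6^g$ slack in M3),M4), and all distances among $\mathcal{C}\setminus\{C\}$ are inherited (web augmentation does not shrink distances by Proposition~\ref{InAndOutWebFactsProp}(2)); M5) follows from Fact~\ref{HighEwFwF2Chord}, using that $C\cup Q$ is far from all non-triangular faces other than $C$ — which again is where M3),M4) and $|E(Q)|\leq\frac{N_{\textnormal{mo}}}{3}$ enter. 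By criticality, $\mathcal{T}'$ is colorable, and the coloring glues with $\psi$ to an $L$-coloring of $G$, contradiction.

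The remaining case — both $G_0$ and $G_1$ contain an element of $\mathcal{C}\setminus\{C\}$, regardless of whether the sides are discs — is handled symmetrically: I would color one side (say $G_0$) by building a smaller mosaic whose outer ring is the appropriate induced cycle of $C\cup Q$, extracting the boundary coloring $\psi$, and then color $G_1$ by a second smaller-mosaic argument. Both invocations use $|V(G_j)|<|V(G)|$, which holds because the other side contains an inner ring $H$ with $d(H,C)\geq\alpha$, hence at least $\alpha$ vertices not on $G_j$; here $\alpha$ is at least the huge bound from M3),M4), far exceeding the at-most-$9N_{\textnormal{mo}}^2$ web vertices added. The subtle point common to all cases, and the one I expect to be the main obstacle, is verifying that the new closed ring (an induced cycle of $C\cup Q$) together with $G_j$ on one side really does satisfy M2) — i.e. that this cycle is $(L,\cdot)$-predictable and semi-shortcut-free in $G_j$ — since $C\cup Q$ can genuinely have chords and the induced-cycle structure of $G[V(C)\cup V(Q)]$ must be analyzed carefully; this is the analogue of Claims~\ref{DExpectClaiminU}–\ref{AtMostOnevMrtwn}, but complicated by $C$ having a full precoloring rather than being an arbitrary separating cycle, so I would lean on short-inseparability and the chord-triangulation condition of Definition~\ref{TessTriangleDefnTerm} to control the local structure near the new ring.
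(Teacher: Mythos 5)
Your high-level plan (contradiction by building a smaller mosaic from one side, web-augmenting, then extending) is in the right spirit, but it diverges from the paper's proof in ways that create a real gap rather than just a different route.

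The paper splits the proof into a \emph{proper} generalized chord case (Lemma~\ref{FirstCasePropChordClosedRingLem}) and an \emph{improper} case (Lemma~\ref{ImpropGenChordClosedRing2Lem}), and the two are handled with entirely different mechanisms. The proper case never constructs a coloring at all: it shows $C\neq C_*$, compares $|E(C_{\textnormal{out}})|$ and $|E(C_{\textnormal{in}})|$, and derives a contradiction with M3)/M4) purely via Theorems~\ref{FirstMainNoHandleInside} and \ref{MainRes2CloseCycleBounds}. Your plan of ``color one side, then build a smaller mosaic on the other'' is not what happens there, and trying to push a coloring argument through this case would force you to verify M2) for a cycle assembled from $C\cup Q$ --- precisely the difficulty you flag at the end as your expected ``main obstacle,'' but which you never resolve.

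That obstacle is in fact lethal to the plan as written, and the paper's improper-case lemma is designed specifically to avoid it: when a coloring argument \emph{is} used (Lemma~\ref{ImpropGenChordClosedRing2Lem}), the smaller mosaic's outer ring is not a closed ring built from $C\cup Q$ but an \emph{open} ring $F'$ consisting of the vertices of $D_1(F\cup C)$ on the $C$-side, with a precolored path of length zero. Because the new ring is open with a trivial precolored path, M0)--M2) become essentially automatic, whereas a closed ring built from $C\cup Q$ would force you to prove $(L,V(\cdot))$-predictability and semi-shortcut-freeness for a cycle that can have chords on its interior side. Proving those facts is exactly the content of Claims~\ref{DExpectClaiminU}--\ref{AtMostOnevMrtwn} in Theorem~\ref{MainRes2CloseCycleBounds}, and they hinge on a minimality choice of the separating cycle that your plan doesn't invoke. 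In the paper's Lemma~\ref{ImpropGenChordClosedRing2Lem} the analogous control (Claims~\ref{ChordsAnd2ChordsForFSide}--\ref{Chords2ChordsFromFToC}) is obtained by minimizing $|V(G_0)|$ over all ``undesirable'' cycles, another step absent from your proposal.

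There is also a smaller confusion in your case analysis: you list ``one side $U_j$ is a disc and contains no element of $\mathcal{C}\setminus\{C\}$'' as a case to handle, but that is precisely the theorem's conclusion, not a failure mode. The two genuine failure cases are (i) both sides contain a ring, or (ii) one side contains all the rings but the region bounding the other side is not a disc, and the paper's Claim~\ref{SigmaDaggerSurfaceFactsLem2} shows that in the improper case both sides must contain rings and $G_0$ must sit in a disc --- a key preparatory step your plan does not establish.

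In short: the web-construction machinery and the ``shrink to a smaller mosaic'' intuition are correct, but without the proper/improper split, the minimality-over-undesirable-cycles device, and above all the switch to an open ring at distance one (rather than a closed ring on $C\cup Q$), the M2) verification you correctly identify as the crux has no proof. The paper's approach is not a cosmetic variant of yours --- it is engineered to make that crux vanish.
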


We break the proof of Theorem \ref{ShortGenChordClosedRingThm} into two lemmas, the first of which deals with proper generalized chords.

\begin{lemma}\label{FirstCasePropChordClosedRingLem} Let $\mathcal{T}=(\Sigma, G, \mathcal{C}, L, C_*)$ be a critical mosaic and let $C\in\mathcal{C}$ be a closed ring. Let $Q$ be a proper generalized chord of $C$ with $|E(Q)|\leq\frac{2N_{\textnormal{mo}}}{5}$ and let $G=G_0\cup G_1$ be the natural $Q$-partition of $G$. Then there exists a $j\in\{0,1\}$ such that each element of $\mathcal{C}\setminus\{C\}$ lies in $G_j$ and the unique open component of $\Sigma\setminus (C\cup Q)$ whose closure contains $G_{1-j}$ is a disc. \end{lemma}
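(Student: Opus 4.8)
The plan is to run a minimal-counterexample argument inside the critical mosaic, exactly parallel to the proofs of Propositions \ref{CritMosaicExtSideColor} and \ref{CritMosaicIntSideColor} and the theorems of Section \ref{ObstructingCycleSec}. Suppose the conclusion fails for the closed ring $C$ and the proper generalized chord $Q$. Write $C\cup Q$ as the union of $C$ together with the (at most two) cycles it creates; by the face-width condition M5) and $|E(Q)|\le\frac{2N_{\textnormal{mo}}}{5}$, each such cycle is contractible, so the natural $Q$-partition $\{G_0,G_1\}$ is well-defined and each side is bounded by a cycle of length at most $|E(C)|+|E(Q)|\le N_{\textnormal{mo}}+\frac{2N_{\textnormal{mo}}}{5}$. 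Since the conclusion fails, \emph{both} $G_0\setminus C$ and $G_1\setminus C$ contain an inner ring of $\mathcal{C}$, or the component not containing a given side is a non-disc. I would first dispose of the non-disc case: if the component whose closure contains, say, $G_1$ is not a disc, then M5) forces $G_0$ to be a disc-side; a symmetric statement rules out both sides being non-disc, so the interesting case is where both $G_0\setminus C$ and $G_1\setminus C$ contain elements of $\mathcal{C}\setminus\{C\}$, and I focus on that.

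The heart of the argument is then to color one side and transfer the coloring. Fix the side $G_1$ and consider the cycle $D:=\partial(G_1)\subseteq C\cup Q$, which is a separating cycle of $G$ of length at most $N_{\textnormal{mo}}+\frac{2N_{\textnormal{mo}}}{5}$ — note this exceeds $N_{\textnormal{mo}}$, so I cannot quote Theorem \ref{MainRes2CloseCycleBounds} directly; instead I would re-run the web-construction argument of Proposition \ref{CritMosaicExtSideColor}/\ref{CritMosaicIntSideColor} in this setting. The key point making this work is that $C$ is a \emph{closed} ring, so $V(C)$ is precolored, and $Q$ has at most $\frac{2N_{\textnormal{mo}}}{5}$ edges; using that $C$ is $(L,V(\mathbf{P}_C))$-predictable and $\mathbf{P}_C$ is semi-shortcut-free (axioms M2), and that $G$ is short-inseparable, one argues that $D$ is (semi-)shortcut-free in the relevant side and that each vertex off $D$ sees $D$ in a short path. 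Then, building a $\mathcal{P}$-web on the $G_1$-side inside a suitable surface $\Sigma^{\bullet}$ (the sphere if $D$ bounds a disc on that side, otherwise $\Sigma$ with one handle-free region capped off), one produces a strictly smaller mosaic whose underlying graph contains $G_1$; criticality gives an $L$-coloring of $V(G_1)$. The genus bookkeeping and the Fact \ref{HighEwTriangleFwF1Cycle} / Fact \ref{HighEwFwF2Chord} face-width estimates are exactly as in the earlier propositions, using $|E(Q)|\le\frac{2N_{\textnormal{mo}}}{5}$ and $\beta=100N_{\textnormal{mo}}^2$ to absorb all the error terms.

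Once $V(G_1)$ is $L$-colored by some $\phi$, I would build a second smaller mosaic from $G_0$: since $Q$ and $C$ are now colored (the vertices of $C$ are precolored, and $\phi$ colors $V(Q)\cap V(G_1)\supseteq V(Q)$), set $\mathcal{T}^{\dagger}:=(\Sigma^{\bullet}, G_0, \mathcal{C}^{\subseteq G_0}\cup\{D\}, (L)^{D}_{\phi}, \ldots)$ with $D$ as a closed ring. The condition $|E(D)|\le N_{\textnormal{mo}}+\frac{2N_{\textnormal{mo}}}{5}$ is a problem because M0) caps closed rings at $N_{\textnormal{mo}}$ edges — this is where I expect the real obstacle to be. The fix is the standard one used throughout the paper: $D$ need not be taken as a single closed ring; rather, one observes that $C\cup Q$ on the $G_0$-side, after contracting $G_1$, is bounded by the original closed ring $C$ (length $\le N_{\textnormal{mo}}$) together with a precolored $Q$-path of length $\le\frac{2N_{\textnormal{mo}}}{5}<\frac{2N_{\textnormal{mo}}}{3}$, so one keeps $C$ as the closed ring and treats $Q$ as lying inside the triangulated part — or, more cleanly, since $V(C\cup Q)$ is entirely precolored and $Q$ has at most $\frac{2N_{\textnormal{mo}}}{5}$ edges, one replaces the $G_1$-side by a disc triangulated so that the new boundary walk of the capped region is still just $C$. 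Either way one gets a mosaic smaller than $\mathcal{T}$ (strictly fewer vertices, since the $G_1$-side contained an inner ring hence at least $2\beta\cdot 6^g$ vertices, vastly more than any web replacing it), so criticality yields an $L$-coloring of $V(G_0)$ extending $\phi$ on $V(C\cup Q)$; gluing gives an $L$-coloring of $G$, contradicting that $\mathcal{T}$ is critical. The main work, then, is (i) the surgery to keep the capped-off side's boundary a ring of admissible length, and (ii) verifying the face-width/edge-width conditions M5) survive the web-insertion on whichever side is bounded in $\Sigma$ — both handled by the facts of Section \ref{MosaicsAndPropertiesSec} together with $\beta=100N_{\textnormal{mo}}^2$.
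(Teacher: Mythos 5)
Your proposal correctly identifies the central obstacle — namely that the bounding cycle $D:=(C\cap G_0)+Q$ has length up to $|E(C)|+|E(Q)|$, which may exceed the $N_{\textnormal{mo}}$ cap in M0) for closed rings — but the two ``fixes'' you offer do not actually work, and the paper takes a different route that sidesteps the issue entirely. Since $Q$ is a \emph{proper} generalized chord, $C$ is split into two arcs, one in $G_0$ and one in $G_1$; in particular $G_0$ does \emph{not} contain all of $C$, so you cannot ``keep $C$ as the closed ring'' of a mosaic built on $G_0$. Your alternative --- capping the $G_1$-side by a disc ``so that the new boundary walk of the capped region is still just $C$'' --- is topologically impossible: capping off the region whose closure contains $G_1$ yields a boundary cycle $(C\cap G_0)+Q$, not $C$. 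There is no surgery that recovers $C$ as a facial cycle of a graph contained in the $G_0$-side.

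The paper's proof never colors either side and never builds a second mosaic on the $G_0$-side. Instead, writing $C\cup Q$ as $C$ together with the two cycles $C_{\textnormal{in}}=(C\cap G_{1-k})+Q$ and $C_{\textnormal{out}}=(C\cap G_k)+Q$ (where $C_*\subseteq G_k$), it applies the already-proven separating-cycle results of Section \ref{ObstructingCycleSec} --- Theorem \ref{FirstMainNoHandleInside}, Theorem \ref{MainRes2CloseCycleBounds}, and Corollary \ref{NonRingSeparatingCor} --- purely as distance/length inequalities. The pivotal step you are missing is Claim \ref{StrenClaimOnCOut}: since $C_{\textnormal{out}}$ separates $C$ from $C_*$ and $\mathpzc{w}(C)=V(C)$, Theorem \ref{MainRes2CloseCycleBounds}~(2) forces $|E(C_{\textnormal{out}})|\geq|E(C)|$; because $|E(C_{\textnormal{in}})|+|E(C_{\textnormal{out}})|=|E(C)|+2|E(Q)|$, this gives $|E(C_{\textnormal{in}})|\leq 2|E(Q)|\leq\frac{4N_{\textnormal{mo}}}{5}<N_{\textnormal{mo}}$. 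Now $C_{\textnormal{in}}$ \emph{is} a short separating cycle, and (after checking via Theorem \ref{FirstMainNoHandleInside} that $\textnormal{Int}(C_{\textnormal{in}})$ contains an inner ring) Theorem \ref{MainRes2CloseCycleBounds}~(1) produces an inner ring $X$ too close to $C$, contradicting M4). No new mosaic needs to be built on either side; the web-construction work has already been packaged into those two theorems. You also omit the case $C=C_*$, which in the paper requires its own subargument (Claim \ref{OuterFaceCaseClaimForL1}) because then $C_{\textnormal{out}}$ does not separate $C$ from $C_*$ and the analogous length bound has to be established by a different balancing of the distance inequalities. As written, your proof has a genuine gap at exactly the point you yourself flag as the obstacle, and the repair is not more surgery but a length inequality you do not state.
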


\begin{proof}  Suppose toward a contradiction that $Q$ violates Lemma \ref{FirstCasePropChordClosedRingLem}, and let $g:=g(\Sigma)$.

\begin{claim}\label{OuterFaceCaseClaimForL1} $C\neq C_*$. \end{claim}

\begin{claimproof} Suppose $C=C_*$. Let $G=G_0\cup G_1$ be the natural $Q$-partition of $G$. For each $j=0,1$, let $U_j$ be the unique component of $\Sigma\setminus (C_*\cup Q)$ such that $G\cap\textnormal{Cl}(U_j)=G_j$, and let $C_j:=G_j\cap (C_*\cup Q)$. Since $Q$ is a proper generalized chord of $C$, each of $C_0, C_1$ is a cycle. Since $C_*$ is the outer ring of $G$, we have $G_j=\textnormal{Int}_{\mathcal{T}}(C_j)$ for each $j=0,1$. 

\vspace*{-8mm}
\begin{addmargin}[2em]{0em}
\begin{subclaim}\label{LengthBoundsC0C1Sub1} Each of $C_0, C_1$ has length at most $N_{\textnormal{mo}}$ \end{subclaim}

\begin{claimproof} Suppose not, and suppose without loss of generality that $|E(C_0)|>N_{\textnormal{mo}}$. Since $|E(C_0)|+|E(C_1)|=|E(C_*)|+2|E(Q)|\leq \frac{9N_{\textnormal{mo}}}{5}$, we have $|E(C_1)|<\frac{4N_{\textnormal{mo}}}{5}$. We note now that at least one element of $\mathcal{C}$ lies in $\textnormal{Int}(C_1)$. If $C_1$ is not inward contractible, then this just follows from Theorem \ref{FirstMainNoHandleInside}. On the other hand, if $C_1$ is not inward contractible, then, by our assumption on $Q$, at least one element of $\mathcal{C}$ lies in $\textnormal{Int}(C_1)$, so we have $\mathcal{C}^{\subseteq\textnormal{Int}(C_1)}\neq\varnothing$ in any case. Let $g':=g(\Sigma^{C_1})$. It now follows from 1) of Theorem \ref{MainRes2CloseCycleBounds} that there is $C^{\dagger}\in\mathcal{C}$ with $C^{\dagger}\subseteq\textnormal{Int}(C_1)$ and $\max\{d(v, \mathpzc{w}(C^{\dagger})): v\in V(C_1)\}<2.9\beta\cdot 6^{(g-g')-1}+\textnormal{Rk}(C^{\dagger})+\left(N_{\textnormal{mo}}+\frac{1}{2}\right)\cdot |E(C_1)|$. 

By our distance conditions on $\mathcal{T}$, we have $d(C_*, \mathpzc{w}(C^{\dagger}))\geq 2.9\beta\cdot 6^{g-1}+\textnormal{Rk}(C^{\dagger})+N_{\textnormal{mo}}\cdot |E(C_*)|$, so, since $V(C_*\cap C_1)\neq\varnothing$ and $|E(C_1)|<\frac{4N_{\textnormal{mo}}}{5}$ we get $$\left(1+\frac{1}{2N_{\textnormal{mo}}}\right)\cdot |E(C_1)|>|E(C_*)|=|E(C_1)|+\left(|E(C_0)|-2|E(Q)|\right)\geq |E(C_1)|+\frac{N_{\textnormal{mo}}}{5}\geq\frac{5|E(C_1)|}{4}$$
which is false.  \end{claimproof}\end{addmargin}

\vspace*{-8mm}
\begin{addmargin}[2em]{0em}
\begin{subclaim} $g=0$. \end{subclaim}

\begin{claimproof} Suppose that $g>0$. Since each of $C_0, C_1$ is contractible, there is an $\ell\in\{0,1\}$ such that all the noncontractible closed curves of $\Sigma$ intersect with $\textnormal{Cl}(U_{\ell})$, say $\ell=0$ without loss of generality Thus, $C_1$ is inward contractible. By our assumption on $Q$, at least one element of $\mathcal{C}$ lies in $\textnormal{Int}(C_1)$. Let $g':=g(\Sigma^{C_1})$. As $C_1$ is inward contractible, we have $g'=g$. By Subclaim \ref{LengthBoundsC0C1Sub1}, $|E(C_1)|\leq N_{\textnormal{mo}}$. Now, since $G_1=\textnormal{Int}(C_1)$ and $V(C_*\cap C_1)\neq\varnothing$. Since $g>0$, we have $g>g-g'$, so we get
$$d(C_1, \mathpzc{w}(C_*))\leq 2.9\beta\cdot (6^{g-1}-6^{(g-g')-1})+\textnormal{Rk}(C_*)-\left(N_{\textnormal{mo}}+\frac{1}{2}\right)\cdot |E(C_1)|$$
It follows from b) of Corollary \ref{NonRingSeparatingCor} that every element of $\mathcal{C}$ lies in $\textnormal{Ext}(C_1)$, a contradiction. \end{claimproof}\end{addmargin}

Since $g=0$, it follows from 1) of Theorem \ref{MainRes2CloseCycleBounds}, together with our assumption on $Q$, that, for each $j=0,1$, there is a $C_j'\in\mathcal{C}$ with $C_j'\subseteq\textnormal{Int}(C_j)$ such that $d(C_j, \mathpzc{w}(C_j'))\leq\frac{2.9\beta}{6}+4N_{\textnormal{mo}}^2$. Since each of $C_0', C_1'$ is an inner ring of $\mathcal{T}$ and any two vertices of $C_0\cup C_1$ are of distance at most $N_{\textnormal{mo}}/2$ apart, we contradict M4). This proves Claim \ref{OuterFaceCaseClaimForL1}. \end{claimproof}

Since $C\neq C_*$, we have $Q\subseteq\textnormal{Ext}(C)$ and there exists precisely one $k\in\{0,1\}$ such that $C_*\subseteq G_k$.  Thus, there exist cycles $C_{\textnormal{in}}$ and $C_{\textnormal{out}}$ such that the $C_{\textnormal{in}}:=(C\cup Q)\cap G_{1-k}$ and $C_{\textnormal{out}}:=(C\cup Q)\cap G_{k}$. In particular, $G_{1-k}=\textnormal{Int}(C_{\textnormal{in}})$ and $G_{k}=\textnormal{Ext}(C_{\textnormal{out}})$, and $C_{\textnormal{out}}$ separates $C$ from $C_*$. 

\begin{claim}\label{StrenClaimOnCOut} $|E(C_{\textnormal{out}})|\geq |E(C)|$ . \end{claim}

\begin{claimproof} Suppose not. In particular, $|E(C_{\textnormal{out}})|\leq N_{\textnormal{mo}}$. We have $V(C\cap C_{\textnormal{out}})\neq\varnothing$ Since $C_{\textnormal{out}}$ separates $C$ from $C_*$ and $\mathpzc{w}(C)=V(C)$, it follows from 2) of Theorem \ref{MainRes2CloseCycleBounds} that $$0>2.9\beta\cdot (6^{g-1}-6^{g(\Sigma^{C_{\textnormal{out}}})-1})+N_{\textnormal{mo}}\cdot |E(C)|-\left(N_{\textnormal{mo}}+\frac{1}{2}\right)\cdot |E(C_{\textnormal{out}})|$$
Thus, $g(\Sigma^{C_{\textnormal{out}}})=g$, so $C_{\textnormal{out}}$ is inward contractible and, by assumption, $|E(C_{\textnormal{out}})|\leq |E(C)|-1$ and we get 
$$N_{\textnormal{mo}}\cdot |E(C)|=\textnormal{Rk}(C)<\left(N_{\textnormal{mo}}+\frac{1}{2}\right)\cdot |E(C_{\textnormal{out}})|\leq \left(N_{\textnormal{mo}}+\frac{1}{2}\right)\cdot (|E(C)|-1)$$
which is false, since $|E(C)|\leq N_{\textnormal{mo}}$.  \end{claimproof}

Since $|E(C_{\textnormal{in}})|+|E(C_{\textnormal{out}})|=|E(C)|+2|E(Q)|$, it follows from Claim \ref{StrenClaimOnCOut} that $|E(C_{\textnormal{in}})|\leq 2|E(Q)|\leq \frac{4N_{\textnormal{mo}}}{5}$. 

\begin{claim}\label{CInRingSepCycle1} At least one element of $\mathcal{C}\setminus\{C\}$ lies in $\textnormal{Int}(C_{\textnormal{in}})$. \end{claim}

\begin{claimproof}  Suppose that no elements of $\mathcal{C}\setminus\{C\}$ lie in $\textnormal{Int}(C_{\textnormal{in}})$. Since $Q$ is unacceptable, it follows that $g>0$ and all the noncontractible closed curves of $\Sigma$ intersect with the closed region externally bounded by $C_{\textnormal{in}}$. Since $|E(C_{\textnormal{in}})|\leq N_{\textnormal{mo}}$, we contradict Theorem \ref{FirstMainNoHandleInside}. \end{claimproof}

Now, 1) of Theorem \ref{MainRes2CloseCycleBounds}, together with Claim \ref{CInRingSepCycle1}, implies that there is an $X\in\mathcal{C}$ with $X\subseteq\textnormal{Int}(C_{\textnormal{in}})$, where 
$$d(\mathpzc{w}(C), \mathpzc{w}(X))\leq \max\{d(v, \mathpzc{w}(X)): v\in V(C_{\textnormal{in}})\}<2.9\beta\cdot 6^{(g-g(\Sigma^{C_{\textnormal{in}}}))-1}+\textnormal{Rk}(X)+\left(N_{\textnormal{mo}}+\frac{1}{2}\right)\cdot |E(C_{\textnormal{in}})|$$
But since each of $C, X$ is an internal ring of $\mathcal{T}$ and $C\neq X$, this contradicts our distance conditions. This completes . This completes the proof of Lemma \ref{FirstCasePropChordClosedRingLem}. \end{proof}

We now complete the proof of Theorem \ref{ShortGenChordClosedRingThm} by dealing with improper generalized chords. For technical reasons, we actually prove something slightly stronger.

\begin{lemma}\label{ImpropGenChordClosedRing2Lem} Let $\mathcal{T}=(\Sigma, G, \mathcal{C}, L, C_*)$ be a critical mosaic and $C\in\mathcal{C}$ be closed. Let $D\subseteq G$ be a cycle with $|E(D)|\leq\frac{N_{\textnormal{mo}}}{3}$ and $d(D, C)\leq 1$. Let $G_0\cup G_1$ be the natural $D$-partition of $G$. Then there is a $j\in\{0,1\}$ such that each element of $\mathcal{C}\setminus\{C\}$ lies in $G_j$ and the unique component of $\Sigma\setminus (C\cup Q)$ whose closure contains $G_{1-j}$ is a disc. \end{lemma}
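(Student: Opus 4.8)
The plan is to reduce the improper case to the proper case already handled in Lemma \ref{FirstCasePropChordClosedRingLem}. Let $D$ be as in the statement, and let $G_0\cup G_1$ be the natural $D$-partition of $G$, with $U_0, U_1$ the corresponding open regions of $\Sigma\setminus D$. By M5), $D$ is contractible (its length is at most $N_{\textnormal{mo}}/3<\textnormal{ew}(G)$), so the partition is well-defined. If $d(D,C)=0$ then $D$ shares a vertex with $C$; since $C$ is an induced cycle and $|E(D)|\le N_{\textnormal{mo}}/3\le\frac{2N_{\textnormal{mo}}}{5}$, $D$ is a generalized chord of $C$ and I can invoke Lemma \ref{FirstCasePropChordClosedRingLem} directly (noting that an improper generalized chord of $C$ is itself a cycle $D$, and the statement of Theorem \ref{ShortGenChordClosedRingThm} is exactly what we want). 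So the substantive case is $d(D,C)=1$, i.e.\ $D\cap C=\varnothing$ but some vertex of $D$ is adjacent to a vertex of $C$. Since $C$ is a facial cycle and $D$ is contractible, $C$ lies entirely in the closure of one of $U_0, U_1$; say $C\subseteq\textnormal{Cl}(U_1)$. First I would dispose of the trivial subcase where $\textnormal{Cl}(U_0)$ contains no element of $\mathcal{C}$: then $\mathcal{C}\setminus\{C\}$ together with $C$ all lie in $G_1$, and it remains only to check that $U_0$ is a disc, which follows from Theorem \ref{FirstMainNoHandleInside} applied to $D$ (if $U_0$ were not a disc it would contain a noncontractible cycle, forcing $D$ to be a separating cycle with $\textnormal{Int}(D)=G_0$ containing a noncontractible cycle but no ring, contradicting Theorem \ref{FirstMainNoHandleInside}).

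For the main subcase, where $\textnormal{Cl}(U_0)$ contains some element of $\mathcal{C}$ (necessarily distinct from $C$), I want to show $D$ is a separating cycle and then get a contradiction from the distance results of Section \ref{ObstructingCycleSec}. Since $d(D,C)=1$ there is an edge $zw$ with $z\in V(D)$, $w\in V(C)$; because $C\subseteq\textnormal{Cl}(U_1)$, this vertex $w$ witnesses $V(\textnormal{Ext}_{\mathcal{T}}(D))\setminus V(D)\ne\varnothing$ or $V(\textnormal{Int}_{\mathcal{T}}(D))\setminus V(D)\ne\varnothing$ depending on which side $C_*$ is on; combined with the ring in $\textnormal{Cl}(U_0)$ we conclude $D$ is a separating cycle. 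Now I apply part 1) of Theorem \ref{MainRes2CloseCycleBounds} to whichever side of $D$ does not contain $C_*$ — call it $\textnormal{Int}(D)$ after possibly relabelling — to produce a ring $C^{\dagger}\subseteq\textnormal{Int}(D)$ with $\max\{d(v,\mathpzc{w}(C^{\dagger})):v\in V(D)\}<2.9\beta\cdot 6^{(g-g')-1}+\textnormal{Rk}(C^{\dagger})+(N_{\textnormal{mo}}+\tfrac12)|E(D)|$. Here I split on whether $C$ lies in $\textnormal{Int}(D)$ or $\textnormal{Ext}(D)$. If $C\subseteq\textnormal{Int}(D)$, then since $d(D,C)\le 1$ and $|E(D)|\le N_{\textnormal{mo}}/3$, any vertex of $C$ is within $\tfrac12|E(D)|+1$ of every vertex of $D$, so $d(\mathpzc{w}(C),\mathpzc{w}(C^{\dagger}))$ is bounded above by roughly $2.9\beta\cdot 6^{(g-g')-1}+\textnormal{Rk}(C^{\dagger})+(N_{\textnormal{mo}}+\tfrac12)|E(D)|+\tfrac12|E(D)|+1$; since $C\ne C^{\dagger}$ (as $C$ is the special closed ring and $C^{\dagger}$ could in principle equal $C$ — I need to argue $C^{\dagger}\ne C$, which holds because $C^{\dagger}$ is supplied by Theorem \ref{MainRes2CloseCycleBounds} as a ring strictly inside $\textnormal{Int}(D)$ that is far from $D$, whereas $C$ is adjacent to $D$; if $C^{\dagger}=C$ then the displayed inequality would already force $C$ within $O(\beta\cdot 6^{g-1})+\textnormal{Rk}(C)+O(N_{\textnormal{mo}}^2)$ of $D$, i.e.\ not a contradiction by itself, so here I instead reuse the machinery: if $C^\dagger = C$ is the only ring inside $\textnormal{Int}(D)$ then I am in the situation covered by Lemma \ref{FirstCasePropChordClosedRingLem} with a slightly enlarged chord, contracting $D$ against $C$). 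More cleanly: if $C\subseteq\textnormal{Int}(D)$, I form the cycle $C'$ from $D$ together with the edge $zw$ and a path in $C$, obtaining a genuine generalized chord of $C$ of length $\le|E(D)|+2\le\tfrac{N_{\textnormal{mo}}}{3}+2\le\tfrac{2N_{\textnormal{mo}}}{5}$, and apply Lemma \ref{FirstCasePropChordClosedRingLem} to $C'$; the natural $C'$-partition refines the $D$-partition in a controlled way, and the conclusion of Lemma \ref{FirstCasePropChordClosedRingLem} for $C'$ transfers to $D$ because the vertex set on the "small" (disc) side changes by at most $O(|E(D)|)$ vertices, none of which belong to any ring other than $C$ (the rings are pairwise far apart, so none can be within distance $O(N_{\textnormal{mo}})$ of $D\cup C$ except $C$ itself). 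If instead $C\subseteq\textnormal{Ext}(D)$, then $C^{\dagger}\ne C$ automatically, and with $C^{\dagger}$ an inner ring of $\mathcal{T}$ distinct from $C$, the bound $d(\mathpzc{w}(C),\mathpzc{w}(C^{\dagger}))<2.9\beta\cdot 6^{(g-g')-1}+\textnormal{Rk}(C^{\dagger})+(N_{\textnormal{mo}}+\tfrac32)|E(D)|+1$, together with $|E(D)|\le N_{\textnormal{mo}}/3$ and $\beta=100N_{\textnormal{mo}}^2$, violates either M3) (if $C=C_*$ — but $C$ is assumed to be a closed ring and could be $C_*$, in which case I use M3)) or M4) (if both are inner rings), because $2.9\beta\cdot 6^{(g-g')-1}\le 2.9\beta\cdot 6^{g-1}$ and the additive $(N_{\textnormal{mo}}+\tfrac32)|E(D)|+1=O(N_{\textnormal{mo}}^2)$ is absorbed far below the $2\beta\cdot 6^g$ or $2.9\beta\cdot 6^{g-1}$ slack.

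The cleanest organization is therefore: handle $d(D,C)=0$ by direct citation of Lemma \ref{FirstCasePropChordClosedRingLem}; for $d(D,C)=1$, build the length-$\le\tfrac{2N_{\textnormal{mo}}}{5}$ proper generalized chord $C'$ of $C$ by routing through the single edge $zw$ and a shortest arc of $C$ between its two attachment points (if there is only one attachment vertex $w$ on $C$, then $C'=D+zw$ with $w$ pendant, and I instead take the improper route — but in that degenerate configuration $D\cup\{zw\}\cup\{w\}$ is essentially $D$ plus one vertex, and the natural $D$-partition and the natural $C'$-partition have the same "non-disc / ring-containing" side, so the conclusion still transfers). Apply Lemma \ref{FirstCasePropChordClosedRingLem} to $C'$ to get the index $j$ and the disc side for the $C'$-partition, then argue that the $D$-partition inherits the same $j$: every ring other than $C$ avoids the $O(N_{\textnormal{mo}})$-neighborhood of $C\cup D$ by the distance axioms M3)–M4), so moving between the $C'$-partition and the $D$-partition only shuffles vertices within that neighborhood and cannot move a ring across; and the disc side stays a disc because we are only deleting/adding a bounded bridge.

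\textbf{Main obstacle.} The delicate point is the transfer step: showing that the conclusion of Lemma \ref{FirstCasePropChordClosedRingLem} for the auxiliary proper chord $C'$ genuinely implies the conclusion for the original cycle $D$, in particular that the component of $\Sigma\setminus(C\cup D)$ whose closure is $G_{1-j}$ is a disc and contains no ring other than possibly $C$. This requires carefully relating the three regions of $\Sigma\setminus(C\cup C')$ to the two regions of $\Sigma\setminus D$, using that $C'$ was built from $D$ by attaching a path lying in $\textnormal{Cl}(U_1)$ (the side containing $C$), so the region of $\Sigma\setminus(C\cup C')$ disjoint from $C$ coincides with $U_0$ up to a bounded set. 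Everything else is a routine comparison of the constants $2.9\beta\cdot 6^{g-1}$ and $2\beta\cdot 6^{g}$ against additive error terms of size $O(N_{\textnormal{mo}}^2)$, which are dwarfed since $\beta=100N_{\textnormal{mo}}^2$.
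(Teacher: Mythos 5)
Your plan tries to reduce the improper case to Lemma \ref{FirstCasePropChordClosedRingLem}, but this reduction breaks down exactly where the lemma needs to do new work. Consider the case $d(D,C)=0$ with $|V(D\cap C)|=1$: then $D$ is an \emph{improper} generalized chord of $C$, and Lemma \ref{FirstCasePropChordClosedRingLem} is stated only for \emph{proper} generalized chords — you cannot "invoke it directly." That case, together with the case where $D$ is disjoint from $C$ but has only a single attachment edge to $C$, is precisely the content Lemma \ref{ImpropGenChordClosedRing2Lem} is meant to supply, so claiming to cite the proper-chord lemma there is circular. You acknowledge the single-attachment degeneracy ("I instead take the improper route") but give no argument for it; and when you fall back to Theorem \ref{MainRes2CloseCycleBounds}, the ring $C^{\dagger}$ it produces may equal $C$ itself (since $C$ can sit inside $\textnormal{Int}(D)$ at distance $\le 1$), in which case no M3)/M4) violation follows because those axioms only constrain distances between \emph{distinct} rings. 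You do not resolve this, and it is not a technicality.

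The transfer step you flag as the "main obstacle" is also a genuine gap, not just a delicate point. A proper chord $C'$ built from an arc of $D$ plus attachment edges is a subpath of $D+C$; the regions of $\Sigma\setminus(C\cup C')$ and the regions of $\Sigma\setminus D$ are \emph{a priori} unrelated: the second arc of $D$ (not used in $C'$) lives entirely on one side of $C'$, and the cycle $D$ itself becomes a separating cycle inside one of the $C'$-regions, so the disc conclusion for the $C'$-partition does not descend to the $D$-partition without an argument about what $D$ separates inside that region. Further, you do not address the case $|V(D\cap C)|\ge 2$ at all, where $D$ is not a generalized chord of $C$ in any sense.

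The paper's argument is structurally different: it picks an "undesirable" cycle $F$ minimizing the size of the side of $F$ containing $C$, uses Lemma \ref{FirstCasePropChordClosedRingLem} only once (to force $|V(F\cap C)|\le 1$), and then proceeds with a direct minimal-counterexample construction — it proves $\Sigma_1=\mathbb{S}^2$, derives chord and $(F,C)$-path restrictions, uses a padded cycle plus a web to build a smaller mosaic that $L$-colors $V(G_1\cup C)$, and finally builds a second smaller mosaic on $G_0$ to extend that coloring, contradicting criticality. If you want to salvage the reduction strategy you would need, at minimum, a correct treatment of the single-attachment and one-vertex-intersection cases that does not appeal to Lemma \ref{FirstCasePropChordClosedRingLem}, plus an explicit topological argument matching the two partitions; at that point you are essentially re-proving what the paper does directly.
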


\begin{proof} Suppose toward a contradiction that Lemma \ref{ImpropGenChordClosedRing2Lem} does not hold.  Given a cycle $D$, we say that $D$ is \emph{undesirable} if $d(D, C)\leq 1$ and $|E(D)|\leq\frac{N_{\textnormal{mo}}}{3}$, but $D$ violates Lemma \ref{ImpropGenChordClosedRing2Lem}. We choose an undesirable cycle cycle $F$ which minimizes the size of the (unique) side of $F$ containing $C$. More formally, we choose $F$ so that, letting $G=G_0\cup G_1$ be the natural $F$-partition of $G$, where $C\subseteq G_0$, the quantity $|V(G_0)|$ is minimized among all undesirable cycles. Note that, if $C=C_*$ then $G_0=\textnormal{Ext}_{\mathcal{T}}(F)$. Let $g:=g(\Sigma)$ and let $U_1$ be the unique component of $\Sigma\setminus F$ with $G\cap\textnormal{Cl}(U_1)=G_1$. Let $\Sigma_1$ be the surface obtained from $\Sigma$ by replacing $U_1$ with an open disc. Now, if $\textnormal{Int}(F)$ contains no elements of $\mathcal{C}$, then, since $F$ violates Lemma \ref{ImpropGenChordClosedRing2Lem}, it follows that $F$ is not $\mathcal{T}$-inward-contractible, contradicting Theorem \ref{FirstMainNoHandleInside}. Thus, $\textnormal{Int}(F)$ contains an element of $\mathcal{C}$. Furthermore, $F\cap C$ consists of at most a lone vertex of $C$, or else there is a proper generalized chord of $C$ violating Lemma \ref{FirstCasePropChordClosedRingLem}. 

\begin{claim}\label{SigmaDaggerSurfaceFactsLem2} 
\textcolor{white}{aaaaaaaaaaaaaaaaa}
\begin{enumerate}[label=\arabic*)]
\itemsep-0.1em
\item $\Sigma_1=\mathbb{S}^2$ and $F$ separates two elements of $\mathcal{C}\setminus\{C\}$; AND
\item If $C\neq C_*$, then  $C\subseteq\textnormal{Int}_{\mathcal{T}}(F)$ and $F$ is $\mathcal{T}$-inward contractible. On the other hand, if $C=C_*$, then $C\subseteq\textnormal{Ext}_{\mathcal{T}}(F)$ and $F$ is $\mathcal{T}$-outward contractible.  
\end{enumerate}\end{claim}

\begin{claimproof} We deal with the two cases.

\textbf{Case 1:} $C\neq C_*$

In this case, we first note that $C\subseteq\textnormal{Int}(F)$, or else, since $\mathcal{C}^{\textnormal{Int}(F)}\neq\varnothing$ and $C\neq C_*$, it follows from 1) of Theorem \ref{MainRes2CloseCycleBounds} that there is an $X\in\mathcal{C}\setminus\{C\}$ such that $d(\mathpzc{w}(C), \mathpzc{w}(X))$ violates M4), which is false. Thus, $C\subseteq\textnormal{Int}(F)$. In particular, it follows from 2) of Theorem \ref{MainRes2CloseCycleBounds} that $6^{g-1}-6^{g(\Sigma^F)-1}=0$, so $g=g(\Sigma^F)$ and $F$ is inward-contractible. Since $C\subseteq\textnormal{Int}(F)$, we have $G_0=\textnormal{Int}(F)$. Since $F$ is undesirable and inward-contractible, there is at least one element of $\mathcal{C}\setminus\{C\}$ in $\textnormal{Int}(F)$, so $F$ separates an element of $\mathcal{C}\setminus\{C\}$ from $C_*$. Since $G_0=\textnormal{Int}(F)$, $U$ is the component of $\Sigma\setminus F$ which is internally bounded by $F$, so  $\Sigma_1=\mathbb{S}^2$.

\textbf{Case 2:} $C=C_*$

In this case, $G_1=\textnormal{Int}(F)$ and $G_0=\textnormal{Ext}(F)$. We firsst show that $F$ is outward contractible. Suppose not. Since $F$ is contractible, it is inward contractible. Recall that $\mathcal{C}^{\subseteq\textnormal{Int}(F)}\neq\varnothing$.  Since $g>0$ and $g=g'$, we contradict  b) of Corollary \ref{NonRingSeparatingCor}. Thus, $F$ is outward contractible. Since $G_1=\textnormal{Int}(F)$, we get $\Sigma_1=\Sigma^F=\mathbb{S}^2$. To finish, we just need to show that $F$ separates two elements of $\mathcal{C}\setminus\{C\}$. Suppose not. Since $F$ is undesirable, we get $g>0$ and $\mathcal{C}\setminus\{C\}\neq\varnothing$, and furthermore, every noncontractible closed curve of $\Sigma$ which intersects an element of $\mathcal{C}\setminus\{C\}$ also intersects with $F$. Since $g>0$ and $F$ is outward contractible, it is not inward contractible, so we contradict Theorem \ref{FirstMainNoHandleInside}. \end{claimproof}

\begin{claim}\label{ChordsAnd2ChordsForFSide} $F$ has no chords in $G_0$, and furthermore, for any $v\in V(G_0)\setminus V(C\cup F)$, either $N(v)\cap V(F)|\leq 1$ or $N(v)\cap V(F)$ consists of two consecutive vertices of $F$.  \end{claim}

\begin{claimproof} Suppose not, and, for some $k\in\{1,2\}$, let $P\subseteq G_0$ be a $k$-chord of $F$ which violates Claim \ref{ChordsAnd2ChordsForFSide}. Since $P$ violates Claim \ref{ChordsAnd2ChordsForFSide}, there is a pair of cycles $K, K'\subseteq (F\cup P)\cap G_0$ such that the following hold.
\begin{enumerate}[label=\arabic*)]
\item $d(K, C)\leq 1$ and each of $K, K'$ has length at most $|E(F)|$; \emph{AND}
\item $K\cap K'=P$ and $C\subseteq\textnormal{Ext}(K')$
\end{enumerate}

By the minimality of $|V(G_0)|$, $K$ is not an undesirable cycle. By 1) of Claim \ref{SigmaDaggerSurfaceFactsLem2}, $F$ separates two elements of $\mathcal{C}\setminus\{C\}$. Siince $K$ is not undesirable, it follows that $\textnormal{Int}(K')$ contains an element of $\mathcal{C}\setminus\{C\}$. Since $|E(K')|\leq N_{\textnormal{mo}}$, it follows from 1) of Theorem \ref{MainRes2CloseCycleBounds} that there is a $C_1\in\mathcal{C}$ with $C_1\subseteq\textnormal{Int}_{\mathcal{T}}(K')$ with $d(\mathpzc{w}(C_1), K')<2.9\beta\cdot 6^{g-1}+4N_{\textnormal{mo}}^2$.  Each vertex of $K'$ has distance at most $\frac{N_{\textnormal{mo}}}{3}$ from $V(C)$, so if $C\neq C_*$, then $C, C_1$ are distinct inner rings and we contradict M4). Thus we have $C=C_*$, and $\textnormal{Int}(K')\setminus V(K')$ is disjoint to $\textnormal{Int}(F)$. Since $F$ separates two elements of $\mathcal{C}\setminus\{C\}$, it follows from 1) of Theorem \ref{MainRes2CloseCycleBounds} that there is a $C_2\in\mathcal{C}$ with $C_2\subseteq\textnormal{Int}(F)$ and $d(\mathpzc{w}(C_2), F)<2.9\beta\cdot 6^{g-1}+4N_{\textnormal{mo}}^2$. Since $C_1\neq C_2$ and each of $C_1, C_2$ is an inner ring, we again contradict M4). \end{claimproof}

Recall that $F\cap C$ consists of at most one vertex. Since $d(F, C)\leq 1$, there is a shortest $(F,C)$-path $R$, where $|E(R)|\leq 1$. 

\begin{claim}\label{Chords2ChordsFromFToC}  Any $(F,C)$-path of length at most two has at least one endpoint in $R$.
 \end{claim}

\begin{claimproof} Suppose there is such a path $P$ violating the claim. Since $F$ is undesirable, it follows that there is a proper generalized chord $Q$ of $C$ violating Lemma \ref{FirstCasePropChordClosedRingLem}, where, in particular, this proper generalized chord of $C$ is a path of length at most $|E(F)|+3\leq\frac{2N_{\textnormal{mo}}}{5}$. \end{claimproof}
 
We now have the following. 

\begin{claim}\label{OneSideColorabilityClaimForLem2} $V(G_1\cup C)$ is $L$-colorable \end{claim}

\begin{claimproof} Since $C$ is shortcut-free in $G$, there are no chords of $C$.  By Claim \ref{Chords2ChordsFromFToC}, there is a $u\in V(F\cup C)$ such that any $(F,C)$-path of length at most two has $u$ as an endpoint. Let $G_1^+=(G_1\cup C)+E(F\setminus C, C\setminus F)$. Possibly the edge-set $E(F\setminus C, C\setminus F)$ is empty and $R$ has length zero. In any case, since $F$ has no chords in $G_0$ and $C$ has no chords, we get that $G_1^+$ is an induced subgraph of $G$, so it suffices to show that $G_1^+$ is $L$-colorable. To show this, we construct a mosaic whose underlying graph has fewer vertices than $G$ and contains $G_1^+$ as a subgraph. 

Let $U^+$ be the unique component of $\Sigma\setminus G_1^+$ such that $\partial(U^+)$ contains at least one of the cycles $F, C$. Note that $u$ is a cut-vertex of $G_1^+$. In particular, $\partial(U^+)$ is a closed facial walk of $G_1^+$, but not a cycle. Let $e^1e^2\cdots e^r$ denote this walk. Note that this walk contains at most two edges of $E(F\setminus C, C\setminus F)$, and $r\leq |E(C)|+|E(F)|+2$. 

 By 1) of Claim \ref{SigmaDaggerSurfaceFactsLem2}, $U^+$ is an open disc. We now ``pad" $U^+$ with a cycle in the following way: Let $G_1'$ be a graph obtained from $G_1^+$ by adding to $U^+$ a cycle $K$ of length $r$, where, for each $x\in V(K)$, the graph $G[N(x)\cap V(C\cup F)]$ is an edge of $e_x$ of $e^1\cdots e^r$, and, for any two vertices $x,y\in V(K)$, $xy$ is an edge of $K$ if and only if $e_x$ and $e_y$ are consecutive in $e^1\cdots e^r$. Note that $K$ is contractible, as each of the cycles of $C\cup Q$ is contractible. The idea now is to construct a new, smaller mosaic whose underlying graph is an embedding on $\Sigma$. The outer ring of this new mosaic will still be $C_*$. We define an embedding $G_1^{\dagger}$ on $\Sigma$ which is obtained from $G_1'$ by adding a $K$-web in $\mathcal{P}$ in the open disc of $\Sigma$ which is $\mathcal{T}$-externally bounded by $K$. Note that it follows from Claim \ref{SigmaDaggerSurfaceFactsLem2} that the open disc of $\Sigma$ which is $\mathcal{T}$-externally bounded by $K$ is contained within $U'$, and that $C_*\subseteq G_1\cup C\subseteq G^{\dagger}_1$. 

\vspace*{-8mm}
\begin{addmargin}[2em]{0em}
\begin{subclaim} $|V(G_1^{\dagger})|<|V(G)|$. \end{subclaim}

\begin{claimproof}Since $|E(K)|\leq\frac{5N_{\textnormal{mo}}}{3}$, we just need to check that $|V(G_0)\setminus V(C\cup F)|>25N_{\textnormal{mo}}^2$, and then it follows from 3) of Proposition \ref{InAndOutWebFactsProp} that $|V(G_1^{\dagger})|<|V(G)|$. By Claim \ref{SigmaDaggerSurfaceFactsLem2}, $F$ separates two elements of $\mathcal{C}\setminus\{C\}$, so it separates an element of $\mathcal{C}\setminus\{C\}$ from $C$. Since $\beta=100N_{\textnormal{mo}}^2$, it follows from our distance conditions on $\mathcal{T}$ that $|V(G_0)\setminus V(C\cup Q)|>25N_{\textnormal{mo}}^2$, as desired. \end{claimproof}\end{addmargin}

Let $L^{\dagger}$ be a list-assignment for $V(G_1^{\dagger})$, where all the vertices of $G^1_{\dagger}\setminus G$ are given arbitrary 5-lists, and otherwise $L^{\dagger}=L$. Let $\mathcal{T}^{\dagger}:=(\Sigma, G_1^{\dagger}, \mathcal{C}^{\subseteq G_1}, L^{\dagger}, C^*)$. It follows from 1) of Proposition \ref{InAndOutWebFactsProp} that $\mathcal{T}^{\dagger}$ is a tessellation, where $C$ is a closed $\mathcal{T}^{\dagger}$-ring, and it follows from our construction of $G_1^{\dagger}$ that $\mathcal{T}^{\dagger}$ satisfies M0)-M2). It also follows from 2) of Proposition \ref{InAndOutWebFactsProp} that $\mathcal{T}^{\dagger}$ still satisfies the distance conditions M3) and M4). Now we just need to check M5). All the noncontractible cycles of $G_1^{\dagger}$ lie in $\textnormal{Cl}(U)$, so we get $\textnormal{ew}(G_1^{\dagger})\geq\textnormal{ew}(G_1)\geq\textnormal{ew}(G)$. Furthermore, since $G$ is a 2-cell embedding, it follows from our construction of $G_1^{\dagger}$ that $G_1^{\dagger}$ is also a 2-cell embedding. Since $\textnormal{ew}(G)\geq 2.1\beta\cdot 6^{g}$, it follows from Fact \ref{HighEwTriangleFwF1Cycle}, together with our distance conditions on $\mathcal{T}$, that $\textnormal{fw}(G_1^{\dagger})\geq\textnormal{fw}(G_1)\geq 2.1\beta\cdot 6^{g-1}$ We conclude that $\mathcal{T}^{\dagger}$ satisfies M5), so $\mathcal{T}^{\dagger}$ is a mosaic. Since $|V(G_1^{\dagger})|<|V(G)|$, it follows that $G_1^{\dagger}$ is $L^{\dagger}$-colorable, and thus $G_1\cup C$ is $L$-colorable, as desired. \end{claimproof}

Applying Claim \ref{OneSideColorabilityClaimForLem2}, let $\phi$ be an $L$-coloring of $V(G_1\cup C)$. We now show that $\phi$ extends to an $L$-coloring of $G$. Since $C$ satisfies M2), it follows from Claims \ref{ChordsAnd2ChordsForFSide}-\ref{Chords2ChordsFromFToC} that there exists an $z\in V(G_0)\cap D_1(C\cup F)$ such that $|L_{\phi}(z)|\geq 2$ and $|L_{\phi}(y)|\geq 3$ for each $y\in (V(G_0)\cap D_1(C\cup F))\setminus\{z\}$. Let $G':=G\setminus V(C\cup G_1)$ and let $L'$ be a list-assignment for $V(G')$ where $L'(x)$ consists of a lone color of $L_{\phi}(x)$ and otherwise $L'=L_{\phi}$. Regarding $G'$ as an embedding on $\Sigma_1$, we let $F'$ be the unique facial subgraph of $G'$ which consists of all the vertices of $V(G_0)\cap D_1(F\cup C)$. Let $\mathcal{T}':=(\Sigma_1, G', \mathcal{C}^{\subseteq G_0}\setminus\{C\}, L', F')$. Now, $\mathcal{T}'$ is a tessellation, where the outer ring is an open $\mathcal{T}'$-ring with precolored path $z$. We claim now that $\mathcal{T}'$ is a mosaic. Firstly, it follows from Claim \ref{SigmaDaggerSurfaceFactsLem2} that $C_*\in\mathcal{C}^{\subseteq G_0}$ if and only if $C=C_*$, so each closed, internal $\mathcal{T}'$-ring is highly $L'$-predictable. Thus, M2) is satisfied, as the outer ring of $\mathcal{T}'$ is open. Furthermore, since the precolored path of $F'$ is a lone vertex, M0) and M1) are also immediate. By 1) of Claim \ref{SigmaDaggerSurfaceFactsLem2}, $\Sigma_1=\mathbb{S}^2$, so M5) is trivally satisfied. Since $F'$ is the outer ring of $\mathcal{T}'$, M4) is immediate as well so we just need to check that $\mathcal{T}'$ satisfies M3). 

\begin{claim} $\mathcal{T}'$ satisfies M3). \end{claim}

\begin{claimproof} Firstly, every vertex of $F'$ has distance at most $\frac{N_{\textnormal{mo}}}{3}$ from $C$.  We have $\textnormal{Rk}(\mathcal{T}'|F')=2N_{\textnormal{mo}}^2$, i.e the rank of $F'$ is greater than that of $C$, as $F'$ is an open $\mathcal{T}'$-ring and $C$ is a closed $\mathcal{T}$-ring, but since $F'$ is the outer ring of $\mathcal{T}'$, and the genus has not increased, it is immediate that $\mathcal{T}'$ satisfies M4) if $C$ is an inner ring of $\mathcal{T}$.. Now suppose that $C=C_*$. Thus, we have $G_0=\textnormal{Ext}_{\mathcal{T}}(F)$ and, by 1) of Claim \ref{SigmaDaggerSurfaceFactsLem2}, $\mathcal{C}^{\subseteq\textnormal{Int}_{\mathcal{T}}(F)}\neq\varnothing$. 

Since $V(F\cap C_*)\neq\varnothing$, it follows from 1) of Theorem \ref{MainRes2CloseCycleBounds} that there is a $C'\in\mathcal{C}^{\subseteq\textnormal{Int}_{\mathcal{T}}(F)}$ with $d(C_*, \mathpzc{w}(C'))<2.9\beta\cdot 6^{g-1}+4N_{\textnormal{mo}}^2$. For each $C''\in\mathcal{C}^{\subseteq G_0}$, we have $d(\mathpzc{w}(C''), C')\geq 2\beta\cdot 6^g$ by our distance conditions on $\mathcal{T}$, as each of $C', C''$ is an inner ring of $\mathcal{T}$. As $C=C_*$ is a closed ring, any two vertices of $C_*$ are of distance at most $\frac{N_{\textnormal{mo}}}{2}$ apart, it follows that, for each $C''\in\mathcal{C}^{\subseteq G_0}$, we have $d(\mathpzc{w}(C''), C_*)> (2\beta\cdot 6^g-2.9\beta\cdot 6^{g-1})-4N_{\textnormal{mo}}^2$ and thus $$d(\mathpzc{w}(C''), F)\geq (12-2.9)\beta\cdot 6^{g-1}-4N_{\textnormal{mo}}^2-\left(\frac{N_{\textnormal{mo}}}{6}-1\right)>2.9\beta\cdot 6^{g-1}+4N_{\textnormal{mo}}^2$$
 so $\mathcal{T}'$ satisfies M3).  \end{claimproof}

Since $\mathcal{T}'$ satisfies all of M0)-M5), $\mathcal{T}'$ is indeed a mosaic. Since $|V(G')|<|V(G)|$ and $\mathcal{T}$ is critical, we conclude that $G'$ is $L'$-colorable, so $\phi$ extends to an $L$-coloring of $G$, a contradiction. \end{proof}

Combining Lemmas \ref{FirstCasePropChordClosedRingLem} and \ref{ImpropGenChordClosedRing2Lem}, we complete the proof of Theorem \ref{ShortGenChordClosedRingThm}.

\section{Generalized Chords of Open Rings in Critical Mosaics}\label{GenChordOpenRingCritSec2}

The purpose of Section \ref{GenChordOpenRingCritSec2} is to prove an analogue to Theorem \ref{ShortGenChordClosedRingThm} for open rings. In order to prove this result, we first prove the following lemma.

\begin{lemma}\label{OneSideColorableOpRingLem1} Let $\mathcal{T}=(\Sigma, G, \mathcal{C}, L, C_*)$ be a critical mosaic and let $C\in\mathcal{C}$ be an open ring. Let $Q$ be a proper generalized chord of $C$ with $|E(Q)|\leq\frac{2N_{\textnormal{mo}}}{3}$. Let $G=G_0\cup G_1$ be the natural $Q$-partition of $G$ and, for each $j=0,1$, let $U_j$ be the unique component of $\Sigma\setminus (C\cup Q)$ such that $G\cap\textnormal{Cl}(U_j)=G_j$. Suppose that $\textnormal{Cl}(U_1)$ contains either a noncontractible closed curve of $\Sigma$ or an element of $\mathcal{C}\setminus\{C\}$. Then $G_0$ is $L$-colorable. \end{lemma}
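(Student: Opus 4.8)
The plan is to run the familiar minimal-counterexample surgery of Propositions~\ref{CritMosaicExtSideColor}--\ref{CritMosaicIntSideColor} and Lemma~\ref{ImpropGenChordClosedRing2Lem}: I will produce a mosaic $\mathcal{T}'$ whose underlying graph is (a surface embedding of) $G_0$ with strictly fewer vertices than $G$, so that criticality of $\mathcal{T}$ forces $G_0$ to be $L$-colorable. Set $g:=g(\Sigma)$. Since the natural $Q$-partition is well defined, $C\cup Q$ is a theta graph whose three faces are the face of $G$ bounded by $C$ together with $U_0$ and $U_1$; writing $q_1,q_2$ for the endpoints of $Q$ and $A_j:=C\cap G_j$ for the two subpaths of $C$ they cut off, the region $U_0$ is bounded by the cycle $C_0:=A_0\cup Q$ and $U_1$ by $C_1:=A_1\cup Q$. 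A preliminary observation, using M0), M3)--M4) and $|E(Q)|\le\frac{2N_{\textnormal{mo}}}{3}$, is that every vertex of $V(C\cup Q)$ lies within distance $O(N_{\textnormal{mo}})$ of $\mathpzc{w}(C)$; hence every internal vertex of $Q$ lies on no ring other than $C$ and has a $5$-list, and, since open rings have rank $2N_{\textnormal{mo}}^2$, every ring $C''\in\mathcal{C}^{\subseteq G_0}$ satisfies $d(C\cup Q,C'')\ge 2\beta\cdot 6^{g}$ (and similarly for $C_*$), the $O(N_{\textnormal{mo}})$ slack being absorbed by the rank terms in M3)--M4).

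To build $\mathcal{T}'$, let $\Sigma_0$ be $\mathbb{S}^2$ if $U_0$ is a disc and $\Sigma$ otherwise; in the latter case $C_0$ is contractible with its disc side containing $U_1$, so $U_1$ is a disc and $g(\Sigma_0)=g$. Regard $G_0$ as a $2$-cell embedding on $\Sigma_0$ with $C_0$ a facial cycle bounding the single face replacing the $G_1$-side; every other face of $G_0$ is a face of $G$ lying inside $U_0$, hence a triangle or a ring of $\mathcal{C}$ by Proposition~\ref{BasicPropertiesCricMosProp}. I take the rings of $\mathcal{T}'$ to be $\mathcal{C}^{\subseteq G_0}\cup\{C_0\}$, with $C_0$ assuming the role of $C$ and with outer ring $C_*$ if $C_*\subseteq G_0$ and $C_0$ otherwise, and I precolor $\mathbf{P}_{C_0}:=\mathbf{P}_C\cap A_0$ (always a subpath of $A_0$), padded out to an edge of $C_0$ if it is empty, with $L'$ equal to $L$ off the padded vertices, so that an $L'$-coloring of $G_0$ is an $L$-coloring of $G_0$. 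Then $\mathcal{T}'$ is a tessellation; M0) and M1)/M2) for the inherited rings are unchanged, and for $C_0$ they are inherited from $C$ in $G$: semi-shortcut-freeness because $G_0\subseteq G$, and high-predictability and the no-chord-in-$\mathring{\mathbf{P}}_{C_0}$ condition because $N_{G_0}(v)\subseteq N_G(v)$ forces every vertex of $D_1(\mathbf{P}_{C_0})\setminus V(C_0)$ in $G_0$ to lie in $D_1(\mathbf{P}_C)\setminus V(C)$ in $G$, and every chord of $C_0$ in $G_0$ with an endpoint in $\mathring{\mathbf{P}}_{C_0}$ to be one for $C$ as well. Conditions M3)--M4) survive since distances only grow and ranks do not increase in passing from $C$ to $C_0$ (and M3) is weaker than M4), so promoting $C_0$ to be the outer ring is harmless), and $G_0$ is short-inseparable since it is obtained from $G$ by deleting vertices only.

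For M5): if $U_0$ is a disc then $G_0$ is planar and M5) is vacuous; otherwise $U_1$ is a disc, $g(\Sigma_0)=g$, and Fact~\ref{HighEwFwF2Chord} applied to the chord $Q$ of $C$ --- with $\alpha$ the large lower bound from the preliminary observation --- gives $\textnormal{fw}(G_0)\ge\min\{\textnormal{fw}(G),\,2(\alpha-1),\,\textnormal{ew}(G)-2|E(Q)|-2\}\ge 2.1\beta\cdot 6^{g-1}$, since $\textnormal{ew}(G)\ge 2.1\beta\cdot 6^{g}$ and $|E(Q)|\le\frac{2N_{\textnormal{mo}}}{3}$; together with $\textnormal{ew}(G_0)\ge\textnormal{ew}(G)$ this yields M5). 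Finally $|V(G_0)|<|V(G)|$: by hypothesis $\textnormal{Cl}(U_1)$ contains either a noncontractible closed curve --- hence, as $G$ is a $2$-cell embedding, a noncontractible cycle of $G$ of length $\ge\textnormal{ew}(G)>|V(Q)|$ --- or a ring $C''\ne C$ whose vertices, being far from $C$, avoid $Q$; either way $V(G_1)\setminus V(Q)\ne\varnothing$. Thus $\mathcal{T}'$ is a mosaic with $|V(G_0)|<|V(G)|$, so by criticality $G_0$ is $L'$-colorable, hence $L$-colorable.

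I expect the main obstacle to be verifying the ring axioms for $C_0$, which is spliced together from a subpath of the open ring $C$ and the chord $Q$: one must choose the precolored path of $C_0$ correctly (handled by $\mathbf{P}_C\cap A_0$ always being a subpath), transfer the predictability and no-chord conditions of M1)/M2) (handled by the neighborhood containment $N_{G_0}\subseteq N_G$), and keep the distance/rank bookkeeping of M3)--M4) intact when $C_0$ may become the outer ring; the face-width step is then a clean application of Fact~\ref{HighEwFwF2Chord} once one observes it is needed only in the case where $U_1$ is genuinely a disc.
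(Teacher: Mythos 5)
Your approach is genuinely different from the paper's and, as written, has a gap that the paper's construction is specifically designed to avoid. The paper does \emph{not} cut $C$ and use $C_0 = A_0 + Q$ as the new ring: instead it keeps the entire cycle $C$ intact, works with $G_0 \cup C$, and ``pads'' the deleted $G_1$-side with an auxiliary path of new vertices glued along $C_1$ plus a web (as in Definition \ref{InAndOutWebDefn}), so that the underlying graph of the new mosaic has outer ring $C$ itself with precolored path $\mathbf{P}_C \cap G_0$. Your construction is closer to what the paper does in the \emph{second half} of the proof of Theorem \ref{GeneralChordCritMosaicOpMain0}, where a mosaic on the other side $G_1$ is built with ring $C_1 = (C\cap G_1)+Q$ and $Q$ precolored, but there the paper first proves Claims \ref{ChordOneEndinQClaim} and \ref{ChordsAnd2ChordsFromQClaim} to rule out chords of $C_1$ incident to $\mathring{Q}$. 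No analogue of those claims is available here, which is exactly where your argument breaks down.

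Concretely: you assert that M1) for $C_0$ is inherited because ``every chord of $C_0$ in $G_0$ with an endpoint in $\mathring{\mathbf{P}}_{C_0}$ [is] one for $C$ as well.'' That is false. A chord of $C_0$ may join a vertex $x\in\mathring{\mathbf{P}}_{C_0}\subseteq\mathring{\mathbf{P}}_C$ to a vertex $y\in\mathring{Q}$. Since $y\notin V(C)$, the edge $xy$ is not a chord of $C$, so M1) for $\mathcal{T}$ says nothing about it, and high predictability of $C$ does not forbid it either (it only bounds how many $\mathbf{P}_C$-neighbors $y$ can have). So $\mathcal{T}'$ may violate M1) for $C_0$, and there is no license to conclude it is a mosaic. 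The paper's web padding is precisely what keeps $Q$ out of the boundary of the new ring and thereby sidesteps this whole issue. A secondary problem: $\mathbf{P}_C\cap A_0$ need not be a subpath of $A_0$. If $Q$ has both endpoints in $\mathring{\mathbf{P}}_C$, then one of $\mathbf{P}_C\cap A_0$ or $\mathbf{P}_C\cap A_1$ has two components, and there is nothing in the lemma's hypotheses that prevents the disconnected piece from being $\mathbf{P}_C\cap A_0$; your ``padded to an edge if it is empty'' does not repair disconnectedness. (The paper has the same latent restriction --- it invokes an unstated assumption that $\mathbf{P}_C\cap G_0$ is a nonempty path, and that assumption is indeed satisfied at the lemma's sole point of use inside Theorem \ref{GeneralChordCritMosaicOpMain0} --- but the paper at least flags it, whereas your claim that the intersection is ``always a subpath'' is incorrect as stated.) The remainder of your bookkeeping (M0), M3)--M5) via Fact \ref{HighEwFwF2Chord}, $|V(G_0)|<|V(G)|$) is sound, but without resolving the $\mathring{Q}$-chord issue the argument does not go through as a substitute for the paper's web construction.
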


\begin{proof} For each $j=0,1$, let $C_j:=(C\cap G_j)+Q$. Since $Q$ is a proper generalized chord of $C$, each of $C_0, C_1$ is a cycle. We construct a mosaic whose underlying subgraph has strictly fewer vertices than $G$ and has $G_0\cup C$ as a subgraph. Since $G$ is a 2-cell embedding, it follows that $\textnormal{Cl}(U_1)$ either contains a noncontractible cycle of $G$ or an element of $\mathcal{C}\setminus\{C\}$, so it follows from either our face-width conditions or our distance conditions on $\mathcal{T}$ that $|V(G_1\setminus C_1)|\neq\varnothing$. Let $\Sigma^{\dagger}$ be a surface obtained from $\Sigma$ by replacing $U_1$ with an open disc $U^{\dagger}$. Note that, if $U_1$ is not externally bounded by $C_1$, then $C\neq C_*$. We also note that $\Sigma^{\dagger}$ is connected. The idea here is to apply a construction similar to that of Definition \ref{InAndOutWebDefn} to construct a new mosaic whose outer ring is $C$, whose underlying surface is $\Sigma^{\dagger}$ and whose underlying embedding on $\Sigma^{\dagger}$ contains $G_0\cup C$ as a subgraph and has fewer vertices than $G$. Because $C$ is an open ring, the path $C\cap G_1$ can be arbitrarily long, so we need to do this in two steps in order to bound the size of the new embedding we construct. Now, $G_0\cup C$ can be regarded as an embedding on $\Sigma^{\dagger}$, so we let $\mathcal{P}$ be the pre-chart $(\Sigma^{\dagger}, G_0\cup C, C)$. Let $k=\min\{|E(Q)|+1, |E(C\cap G_1)|\}$. Regarding $G_0\cup C$ as an embedding on $\Sigma^{\dagger}$, we define an embedding $H$ on $\Sigma^{\dagger}$ which is obtained from $G_0\cup C$ by doing the following.

\begin{enumerate}[label=\arabic*)]
\itemsep-0.1em
\item First we add $k$ new vertices $w_1, \ldots, w_{k}$ to the open disc which replaces $U_1$, and we add some edges to the closure of this open disc, so that $w_1\ldots w_{k}$ is a path and, for each $1\leq i<i'\leq k$, the neighborhoods of $w_i$ and $w_{i'}$ on $C_1$ are subpaths of $C_1$ of length at least one which intersecting precisely on a common endpoint, where the respective neighborhoods of $w_1$ and $w_{k}$ on $C_1$ are disjoint terminal subpaths of $C\cap G_1$. 
\item Now, letting $K$ be the cycle $Q+w_1\ldots w_{k}$, we add a $K$-web to the open disc of $\Sigma^{\dagger}$ which is $\mathcal{P}$-externally bounded by $K$. 
\end{enumerate}

Let $L^{\dagger}$ be a list-assignment for $H$ where each vertex of $H\setminus G_0$ is given arbitrary 5-list and otherwise $L^{\dagger}=L$. Since $|E(K)|\geq 5$, it follows from Proposition \ref{InAndOutWebFactsProp} that $H$ is short-inseparable and $\mathcal{T}^{\dagger}:=(\Sigma^{\dagger}, H, L^{\dagger}, \{C\}\cup\mathcal{C}^{\subseteq G_0}, C)$ is a tessellation, where $C$ is an open $\mathcal{T}^{\dagger}$-ring with precolored path $\mathbf{P}^{\dagger}:=\mathbf{P}_{\mathcal{T}, C}\cap G_0$. Recall that, by assumption, we have $\mathbf{P}_{\mathcal{T}, C}\cap G_0\neq\varnothing$. We claim now that $\mathcal{T}^{\dagger}$ is a mosaic. M0), M1), and M2) are immediate. Applying Proposition \ref{InAndOutWebFactsProp} to our construction, there is no $F\in\mathcal{C}^{\subseteq G_0}$ such that $d_H(\mathpzc{w}(F), C\setminus\mathbf{P}^{\dagger})<d_G(\mathpzc{w}(F), C\setminus\mathbf{P}_{\mathcal{T}, C})$, as lone vertex of $K\setminus C$ has no neighbors in $\mathring{Q}$. Thus, $\mathcal{T}^{\dagger}$ also satisfies the distance conditions M3)-M4).  Let $g:=g(\Sigma^{\dagger})$.

\begin{claim}\label{TUpdownarrowM5} $\mathcal{T}^{\dagger}$ satisfies M5).  \end{claim}

\begin{claimproof}. If $\textnormal{Cl}(U_1)$ contains a noncontractible closed curve of $\Sigma$, then $g(\Sigma^{\dagger})=0$ so M5) is trivially satisfied in that case. Now suppose that $\textnormal{Cl}(U_1)$ contains no noncontractible closed curve of $\Sigma$. Thus, we have $g=g(\Sigma)$.

\vspace*{-8mm}
\begin{addmargin}[2em]{0em}
\begin{subclaim} $\textnormal{ew}(H)\geq 2.1\beta\cdot 6^{g}$. \end{subclaim}

\begin{claimproof} Suppose not. Thus, there is a noncontractible cycle $F$ of $H$ with $|E(F)|<2.1\beta\cdot 6^{g}$. It follows from our construction of $H$ that $d_H(x,y)=d_{G_0\cup C}(x,y)$ for any $x,y\in V(Q)$. As $U^{\dagger}$ is an open disc, it follows that there is a noncontractible cycle $F'$ of $G_0\cup C$ which is obtained from $F$ by replacing (if necessary) some edges of $H\setminus (G_0\cup C)$ with a subpath of $C_1$ whose endpoints lie in $F\cap Q$, where $|E(F')|\leq |E(F)|$. Thus, $\textnormal{ew}(G_0\cup C)<2.1\beta\cdot 6^{}$. As $\textnormal{ew}(G_0\cup C)\geq\textnormal{ew}(G)$, this contradicts the fact that $\mathcal{T}$ satisfies M5). \end{claimproof}\end{addmargin}

Now we just need to check that the face-width conditions are satisfied. As $G$ is a 2-cell embedding on $\Sigma$, it  follows from our construction of $H$ that $H$ is a 2-cell embedding on $\mathcal{T}^{\dagger}$. Combining this with our distance conditions and the fact that $\textnormal{fw}(G)\geq 2.1\beta\cdot 6^{g-1}$, it follows from Fact \ref{HighEwFwF2Chord} that $\textnormal{fw}(H)\geq 2.1\beta\cdot 6^{g-1}$, so $\mathcal{T}^{\dagger}$ satisfies M5). \end{claimproof}

Since $\mathcal{T}^{\dagger}$ satisfies all of M0)-M5), we conclude that $\mathcal{T}^{\dagger}$ is indeed a mosaic. 

\begin{claim}\label{HStrictlySmallerThanGClaim2} $|V(H)|<|V(G)|$. \end{claim}

\begin{claimproof} Note that $|E(K)|\leq 2|E(Q)|+1$, so $|E(K)|\leq 2N_{\textnormal{mo}}$. It follows from 3) of Proposition \ref{InAndOutWebFactsProp} that the closed disc $\textnormal{Cl}(U^{\dagger})$ of $\Sigma^{\dagger}$ contains at most $36N_{\textnormal{mo}}^2$ vertices of $H$, and we get $|V(H)\cap\textnormal{Cl}(U^{\dagger})|\leq |V(C_1)|+36N_{\textnormal{mo}}^2$. We just need to show that $|V(G)\cap\textnormal{Cl}(U_1)|>|V(C_1)|+36N_{\textnormal{mo}}^2$. Since $|E(Q)|\leq\frac{2N_{\textnormal{mo}}}{3}$, it follows from our distance conditions that every element of $\mathcal{C}\setminus\{C\}$ has distance at least $2.9\beta\cdot 6^{g(\Sigma)-1}$ from $C_1$. If there is an element of $\mathcal{C}\setminus\{C\}$ in $\textnormal{Cl}(U_1)$, then we are done, so suppose that no such element of $\mathcal{C}\setminus\{C\}$ exists. Thus, every facial subgraph of $G\cap\textnormal{Cl}(U_1)$, except for $C_1$, is a triangle, and $G\cap\textnormal{Cl}(U_1)$ contains a noncontractible cycle. In particular, $g(\Sigma)>0$ and $\textnormal{fw}(G)\geq 2.9\beta$. As $\textnormal{fw}(G)>2$, $G\cap\textnormal{Cl}(U_1)$ contains a noncontractible cycle $K$ such that $K$ contains at most two chords of $C$, and since $\textnormal{fw}(G)\geq 2.9\beta$ it follows that $(G\cap\textnormal{Cl}(U_1))\setminus E(C)$ contains at least $2.9\beta-3$ edges which each have at most one endpoint in $V(C)$.  As $|E(C_1)\setminus E(C)|\leq\frac{2N_{\textnormal{mo}}}{3}$, we have $|V(G)\cap\textnormal{Cl}(U_1)|>|V(C_1)|+36N_{\textnormal{mo}}^2$. \end{claimproof}

Since $\mathcal{T}^{\dagger}$ is a mosaic and $|V(H)|<|V(G)|$, it follows from the criticality of $\mathcal{T}$ that $H$ is $L^{\dagger}$-colorable and so $G_0$ is $L$-colorable, as desired. \end{proof}

We now state and prove the lone main result of Section \ref{GenChordOpenRingCritSec2}. As in the statement of Theorem \ref{ShortGenChordClosedRingThm}, the natural $Q$-partition of $G$ in the statement below is well-defined by the face-width conditions on mosaics. 

\begin{theorem}\label{GeneralChordCritMosaicOpMain0} Let $\mathcal{T}=(\Sigma, G, \mathcal{C}, L, C_*)$ be a critical mosaic and let $C\in\mathcal{C}$ be an open ring. Let $Q$ be a proper generalized chord of $C$ with $|E(Q)|\leq\frac{2N_{\textnormal{mo}}}{3}$. Let $G=G_0\cup G_1$ be the natural $Q$-partition of $G$, and suppose that $\mathbf{P}_C\cap G_0$ is connected and has at least one edge. Suppose further that $|E(\mathbf{P}_C\cap G_1)|+|E(Q)|\leq \frac{2N_{\textnormal{mo}}}{3}$. Then every element of $\mathcal{C}\setminus\{C\}$ lies in $G_0$, and the component of $\Sigma\setminus (C\cup Q)$ whose closure contains $G_1$ is a disc. \end{theorem}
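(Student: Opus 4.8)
The plan is to argue by contradiction, along the lines of the proof of Lemma~\ref{ImpropGenChordClosedRing2Lem}: color $G_0$ using Lemma~\ref{OneSideColorableOpRingLem1}, and then extend that coloring to all of $G$ by producing a strictly smaller mosaic on the other side, which contradicts the criticality of $\mathcal{T}$. So suppose the conclusion fails and, for $j\in\{0,1\}$, let $U_j$ be the component of $\Sigma\setminus(C\cup Q)$ with $G\cap\textnormal{Cl}(U_j)=G_j$. The distance conditions M3)--M4) (together with Proposition~\ref{BasicPropertiesCricMosProp} and the bounds on $|E(\mathbf{P}_H)|$ in M0)) imply that distinct rings are vertex-disjoint and that no ring other than $C$ meets $C\cup Q$, so each ring $\neq C$ lies in the interior of one of the regions cut off by $C\cup Q$; hence the failure of the conclusion is exactly the statement that $\textnormal{Cl}(U_1)$ contains a noncontractible closed curve of $\Sigma$ or an element of $\mathcal{C}\setminus\{C\}$. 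Since $|E(Q)|\leq\frac{2N_{\textnormal{mo}}}{3}$ and $C$ is open, Lemma~\ref{OneSideColorableOpRingLem1} applies, so $G_0$ is $L$-colorable; fix such an $L$-coloring $\phi$. It now suffices to extend $\phi$ to an $L$-coloring of $G$.

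Let $A_j:=C\cap G_j$ be the two arcs of $C$ cut off by the endpoints of $Q$, let $C_1:=A_1+Q$, a contractible cycle bounding $U_1$, and set $\mathbf{P}':=Q\cup(\mathbf{P}_C\cap G_1)$. Using that $\mathbf{P}_C\cap G_0$ is connected with at least one edge, $\mathbf{P}_C\cap G_1$ is a connected subpath of $A_1$ meeting $Q$ only in an endpoint, so $\mathbf{P}'$ is a path, of length at most $|E(\mathbf{P}_C\cap G_1)|+|E(Q)|\leq\frac{2N_{\textnormal{mo}}}{3}$ --- unless $A_1\subseteq\mathbf{P}_C$, in which case $\mathbf{P}'=C_1$ and $|E(C_1)|\leq\frac{2N_{\textnormal{mo}}}{3}$; the vertices of $C_1$ pinned down by $\phi$ together with the forced singleton colors on $\mathbf{P}_C$ are exactly $V(\mathbf{P}')$. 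I would build an embedding $G'$ on $\Sigma^{C_1}_{\mathcal{T}}$ from $G_1\cup A_0\cup Q$ by deleting the vertices of $G$ lying in the open region $U_0$; if $A_0$ is still present it is discarded as well, so that $C_1$ becomes a facial cycle of $G'$ bounding the (now empty) region obtained from $U_0\cup W$, and where necessary I would absorb a redundant degree-two vertex of $Q$ into the incident edge of $C_1$, or pad the capped-off disc by a web as in Definition~\ref{InAndOutWebDefn} and the proof of Claim~\ref{OneSideColorabilityClaimForLem2}, so that $C_1$ is realized as a genuine ring with precolored path $\mathbf{P}'$ and the remaining faces are triangles. Take $L'$ to agree with $L$ except that $V(Q)$ is recolored by the singletons prescribed by $\phi$ and any new web vertices get arbitrary $5$-lists, and let $\mathcal{T}'$ be the resulting tuple with ring $C_1$ and the rings of $\mathcal{C}$ that lie in $G_1$.

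Checking that $\mathcal{T}'$ is a mosaic goes as usual. M0) is the length bound above. For M3)--M4) the point is that the un-precolored part of $C_1$ is a subarc of $\mathpzc{w}(C)$ and $\textnormal{Rk}(\mathcal{T}'|C_1)\leq\textnormal{Rk}(\mathcal{T}|C)$ while, by part~2) of Proposition~\ref{InAndOutWebFactsProp}, the construction does not shorten distances, so each inherited distance bound degrades by only $O(N_{\textnormal{mo}})$, which the slack $\beta=100N_{\textnormal{mo}}^2$ absorbs; distances among the rings inside $G_1$ are unchanged. M5) follows from $\textnormal{ew}(G')\geq\textnormal{ew}(G)$, Fact~\ref{HighEwTriangleFwF1Cycle} (or Fact~\ref{HighEwFwF2Chord}), and the distance conditions, exactly as in Propositions~\ref{CritMosaicExtSideColor}--\ref{CritMosaicIntSideColor}, and is automatic when $g(\Sigma^{C_1}_{\mathcal{T}})=0$. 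For the size comparison: if $G$ has a vertex in $U_0$, then this vertex dominates the bounded size of any padding by the distance conditions, giving $|V(G')|<|V(G)|$; otherwise $U_0$ is a single face, no padding is needed, $|V(G')|=|V(G)|$, but then $Q$ necessarily has interior vertices (else $A_0$ and $Q$ would be a doubled edge), which drop from $5$-lists to singletons, so $\sum_{v\in V(G')}|L'(v)|<\sum_{v\in V(G)}|L(v)|$ and criticality still applies via clause b).

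The step I expect to be the main obstacle is M1)/M2) for $C_1$: that $\mathbf{P}'$ is semi-shortcut-free and that $C_1$ is (highly) $(L',V(\mathbf{P}'))$-predictable, i.e.\ that after coloring $G_0$ every vertex of $D_1(\mathbf{P}')\setminus V(C_1)$ keeps an $L'$-list of size at least three (with at most one vertex of size two allowed in the closed case). Near $\mathbf{P}_C\cap G_1$ this is immediate: the colors $\phi$ places on $V(\mathbf{P}_C)$ together with the forced singletons form a proper $L$-coloring of $V(\mathbf{P}_C)$, and $C$ is semi-shortcut-free and highly $(L,V(\mathbf{P}_C))$-predictable by M1). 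The delicate part is the vertices near $Q$: $Q$ is an arbitrary chord, so there may be chords of $C_1$ with an endpoint interior to $Q$, or a vertex of $U_1$ adjacent to a long subpath of $Q$ seeing three or more colors of $\phi$. This must be controlled exactly as in Lemma~\ref{ImpropGenChordClosedRing2Lem}: using $\textnormal{ew}(G)>4$, the absence of separating $3$- and $4$-cycles, and the chord-triangulation, one shows that any chord of $C_1$ meeting the interior of $Q$ on the $U_1$-side cuts off a triangular face whose apex is a redundant degree-two vertex (which is therefore suppressed in the construction above), and that, after this cleanup, each vertex on the $U_1$-side has at most two colored neighbors on $\mathbf{P}'$, so predictability survives; the hypothesis $|E(\mathbf{P}_C\cap G_1)|+|E(Q)|\leq\frac{2N_{\textnormal{mo}}}{3}$ keeps $\mathbf{P}'$ within the M0)-budget throughout. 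Granting this, $\mathcal{T}'$ is a mosaic strictly smaller than $\mathcal{T}$, so $G'$ is $L'$-colorable by criticality, hence $\phi$ extends to an $L$-coloring of $G$ --- the desired contradiction.
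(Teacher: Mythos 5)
Your overall skeleton (argue by contradiction, use Lemma~\ref{OneSideColorableOpRingLem1} to $L$-color $G_0$, form a smaller mosaic on the $G_1$-side with precolored path $\mathbf{P}' = Q\cup(\mathbf{P}_C\cap G_1)$, and invoke criticality) is the same as the paper's, but you are missing the one idea that actually makes the mosaic axioms verifiable, and you introduce a construction the paper deliberately avoids.

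The gap is in your handling of M1) for the new ring $C_1$, which you correctly identify as ``the main obstacle'' but then dismiss with a claim that does not hold. You assert that any chord of $C_1$ with an endpoint in $\mathring{Q}$ ``cuts off a triangular face whose apex is a redundant degree-two vertex,'' deriving this from $\textnormal{ew}(G)>4$, short-inseparability, and chord-triangulation alone. This is false as stated: a chord $uv$ of $C_1$ with $u\in V(\mathring{Q})$ and $v\in V(C\cap G_1)$ can cut off an arbitrarily large region of $G_1$, and none of the structural conditions on mosaics forbid it a priori. What does rule it out is a minimality argument that you never set up. The paper does not take an arbitrary counterexample $Q$; after Claim~\ref{SplittingChordUniquelySpecifiedClaim} it defines the sets $\mathcal{B}_p$ of ``bad'' proper generalized chords and chooses $Q$ to minimize $|V(G_1^{Q,q})|$ over $\mathcal{B}_p\cup\mathcal{B}_{p'}$. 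Claims~\ref{ChordOneEndinQClaim} and~\ref{ChordsAnd2ChordsFromQClaim} --- which together establish that $C_1$ has no chord with an endpoint in $\mathring{\mathbf{P}}_1$ and that the $2$-chords behave --- all hinge on that minimality (plus Corollary~\ref{NonRingSeparatingCor} and a nontrivial case split over $\mathbf{P}_C$-splitting). Without it, M1) simply isn't available, and your new tuple $\mathcal{T}'$ need not be a mosaic.

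Separately, your choice to pass to $\Sigma^{C_1}_{\mathcal{T}}$ and pad the capped disc with a web (in the style of Lemma~\ref{ImpropGenChordClosedRing2Lem}) is both unnecessary and problematic here. The direction of the cap in Definition~\ref{Def14InOutContract} depends on which side of $C_1$ contains $C_*$, so when $C_*\in G_0$ the notation $\Sigma^{C_1}_{\mathcal{T}}$ would cap off the $G_1$-side, not the $U_0$-side you intend to discard. Your vertex-count comparison is also incomplete: if $U_0$ contains a nonzero but small number of vertices of $G$, the web you add (up to $\sim 9|E(C_1)|^2$ vertices) can make $|V(G')|\geq |V(G)|$, and ``this vertex dominates the bounded size of any padding by the distance conditions'' does not hold when, say, $C=C_*$ and $G_0$ is a small neighborhood of $Q\cup\mathbf{P}_C$. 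The paper sidesteps all of this by keeping the surface $\Sigma$ and setting the new underlying graph to be $G_1$ itself: $C_1$ is allowed to bound a non-disc face, no web is needed, and the size comparison reduces to the trivial dichotomy $|V(G_1)|<|V(G)|$ or $V(G_0)=V(Q)$ (in which case $\sum_v|L^Q_\psi(v)|<\sum_v|L(v)|$, covered by criticality clause~b)).
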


\begin{proof} By 4) of Proposition \ref{BasicPropertiesCricMosProp}, $\mathbf{P}_C$ is a path of length at least one, and since $C$ is an open ring, we have $V(C)\setminus V(\mathbf{P}_C)\neq\varnothing$. Given a proper generalized chord $Q$ of $C$, we say that $Q$ is \emph{$\mathbf{P}_C$-splitting} if it has one endpoint in $V(\mathring{\mathbf{P}}_C)$ and the other endpoint in $V(C)\setminus V(\mathbf{P}_C)$. In particular, we have the following simple observation. 

\begin{claim}\label{SplittingChordUniquelySpecifiedClaim} Let $Q$ be a proper generalized chord of $C$ and let $C_0, C_1$ be the two cycles of $C\cup Q$ which are distinct from $C$. Then precisely one of the following holds.
\begin{enumerate}[label=\arabic*)]
\itemsep-0.1em
\item $Q$ is $\mathbf{P}_C$-splitting; OR
\item There is precisely one $j\in\{0,1\}$ such that $\mathbf{P}_C\cap C_j$ either has two connected components or at most one vertex.
\end{enumerate}\end{claim}

Given Claim \ref{SplittingChordUniquelySpecifiedClaim}, we introduce the following notation. For any proper generalized $Q$ of $C$ of length at most $\frac{2N_{\textnormal{mo}}}{3}$ and any endpoint $p$ of $\mathbf{P}_C$, we let $G=G_0^{Q, p}\cup G_1^{Q, p}$ be the natural $Q$-partition of $G$, where $\mathbf{P}_C\cap G^{Q, p}_0$ is a path of length at least one, and either $Q$ is not $\mathbf{P}_C$-splitting or $p\in V(G^{Q, p}_1)\setminus V(G^{Q,p}_0)$. By Claim \ref{SplittingChordUniquelySpecifiedClaim}, $G^{Q, p}_0$ and $G^{Q, p}_1$ are uniquely specified by the definition above. For each endpoint $p$ of $\mathbf{P}_C$, let $\mathcal{B}_p$ be the set of proper generalized chords $Q$ of $C$ with $E(\mathbf{P}_C\cap G^{Q, p}_1)+|E(Q)|\leq\frac{2N_{\textnormal{mo}}}{3}$, where $G^{Q, p}_1$ either contains a noncontractible cycle or an element of $\mathcal{C}\setminus\{C\}$. Suppose toward a contradiction that Theorem \ref{GeneralChordCritMosaicOpMain0} does not hold. Thus, letting $p, p'$ be the endpoints of $\mathbf{P}_C$, we have $\mathcal{B}_p\cup\mathcal{B}_{p'}\neq\varnothing$. We now choose a $q\in\{p, p'\}$ and a $Q\in\mathcal{B}_q$ which minimizes $|V(G^{Q, q}_1)|$ among all the elements of $\mathcal{B}_p\cup\mathcal{B}_{p'}$. For each $j=0,1$, we let $C_j:=(C\cap G_j^{Q, q})+Q$, and, to avoid clutter, we let $G_j:=G^{Q, q}_j$. 

\begin{claim}\label{ChordOneEndinQClaim} Let $P\subseteq G_1$ be a $k$-chord of $C_1$ for some $1\leq k\leq 2$, where $P$ has one endpoint in $\mathring{Q}$ and the other endpoint in $C\cap G_1$. Then the following hold.
\begin{enumerate}[label=\arabic*)]
\itemsep-0.1em
\item $|E(P)|=2$; AND
\item If both endpoints of $P$ lie in $Q\cup (\mathbf{P}_C\cap G_1)$, then the endpoints of $P$ are consecutive in $Q$.
\end{enumerate}
\end{claim}

\begin{claimproof} Suppose toward a contradiction that, for some $1\leq k\leq 2$, $G_1$ contains such a $k$-chord $P$ of $C_1$ which violates Claim \ref{ChordOneEndinQClaim}. Let $x,y$ be the endpoints of $P$, where $x\in V(\mathring{Q})$ and $y\in V(C\cap G_1)$. Let $Q_0, Q_1$ be the two proper generalized chords of $C$ such that $Q_0\cup Q_1=Q+P$ and $Q_0\cap Q_1=P$. By our face-width conditions, each of the three cycles in $C_1+P$ is contractible, so let $G_1=H_0\cup H_1$ be the natural $(C_1, P)$-partition of $G_1$, where, for each $i=0,1$, $H_i\cap (Q+P)=Q_i$. 

\vspace*{-8mm}
\begin{addmargin}[2em]{0em}
\begin{subclaim}\label{IneqSubclDaggerReplace} For each $i=0,1$, $|E(\mathbf{P}_C\cap H_i)|+|E(Q_i)|\leq |E(\mathbf{P}_C\cap G_1)|+|E(Q)|\leq\frac{2N_{\textnormal{mo}}}{3}$. \end{subclaim}

\begin{claimproof} By our choice of $Q$, $|E(\mathbf{P}_C\cap G_1)|+|E(Q)|\leq\frac{2N_{\textnormal{mo}}}{3}$. Suppose that Subclaim \ref{IneqSubclDaggerReplace} does not hold, and suppose for the sake of definiteness that $|E(\mathbf{P}_C\cap H_0)|+|E(Q_0)|>|E(\mathbf{P}_C\cap G_1)|+|E(Q)|$. In particular, since $\mathbf{P}_C\cap H_0$ is a (possibly empty) subpath of $\mathbf{P}_C\cap G_1)$, we have $|E(Q_0)|>|E(Q)|$. Thus, $x$ is an endpoint of $\mathring{Q}$ and $|E(P)|=2$. In particular, letting $xx'$ be the unique terminal edge of of $Q$ incident to $x$, we have $Q_0=(Q-x')+P$ and $|E(Q_0)|=|E(Q)|+1$. Furthermore, $Q_1$ has endpoints $x', y$. Since $|E(P)|=2$, $P$ violates 2) of Claim \ref{ChordOneEndinQClaim}, so $y\in V(\mathbf{P}_C\cap G_1)$. Since $|E(Q_0)|=|E(Q)|+1$, and since $\mathbf{P}_C\cap G_0$ is connected and has at least one edge, we get $\mathbf{P}_C\cap H_0=\mathbf{P}_C\cap G_1$, so $y$ is an endpoint of $\mathbf{P}_C$ and $x'\not\in V(\mathbf{P}_C)$. Furthermore, letting $x''$ be the endpoint of $Q$ which is distinct from $y$, we have $x''\in V(\mathbf{P}_C)$. In particular, $x''$ is an internal vertex of $\mathbf{P}_C$, as $\mathbf{P}_C\cap G_0$ has at least one edge. Let $D:=(Q-x')+P+(C\cap H_0)$. Note that $D$ is a cyclic facial subgraph of $H_0$. Furthermore, since $x''\in V(\mathring{P}_C)$ and $y$ is an endpoint of $\mathbf{P}_C$, and since $H_1$ contains no edges of $\mathbf{P}_C$, it follows that $C\cap H_0$ is a terminal subpath of $\mathbf{P}_C$. Since $|E(\mathbf{P}_C\cap G_1)|+|E(Q)|\leq\frac{2N_{\textnormal{mo}}}{3}$, we have $|E(D)|\leq\frac{2N_{\textnormal{mo}}}{3}$ as well. Furthermore, since $x'\not\in V(\mathbf{P}_C)$, $Q$ is $\mathbf{P}_C$-splitting. Consider the following cases.

\textbf{Case 1:} Either $H_0$ contains an element of $\mathcal{C}\setminus\{C\}$ or the unique open component of $\Sigma\setminus (C_1\cup P)$ whose closure contains $H_0$ is not a disc. 

Note that, since $C$ is an open ring, $\textnormal{Rk}(C)=2N_{\textnormal{mo}}^2$. We first show that $H_0=\textnormal{Ext}(D)$. Suppose $H_0=\textnormal{Int}(D)$. Thus, either $D$ is not inward contractible or $\textnormal{Int}(D)$ contains an element of $\mathcal{C}\setminus\{C\}$. Letting $g=g(\Sigma)$ and $g'=g(\Sigma^D)$, we have $g\geq\max\{g', g-g'\}$, and, since $D\cap C\neq\varnothing$, we contradict Corollary \ref{NonRingSeparatingCor}. Thus, $H_0=\textnormal{Ext}(D)$. In particular, $C\neq C^*$ and $D$ separates $C^*$ from $C$, so $C\subseteq\textnormal{Int}(D)$. Since $C$ is an open ring and $V(C\cap D)\neq\varnothing$, this contradicts Corollary \ref{NonRingSeparatingCor}. 

\textbf{Case 2:} $H_0$ contains no element of $\mathcal{C}\setminus\{C\}$ and the unique open component of $\Sigma\setminus (C_1\cup P)$ whose closure contains $H_0$ is a disc. 

In this case, by assumption on $Q$, either $H_1$ it contains an element of $\mathcal{C}\setminus\{C\}$ or the unique open component of $\Sigma\setminus (C_1\cup P)$ whose closure contains $H_1$ is a not disc. Let $R=x'x+P$. Note that $R$ is a proper 3-chord of $C$, neither endpoint of which is an internal vertex of $\mathbf{P}_C$, so, for each endpoint $p^*$ of $\mathbf{P}_C$, we have $H_1=G_1^{R, p^*}$. As $|V(H_1)|<|V(G_1)|$, we contradict the minimality of $|V(G_1)|$.  \end{claimproof}\end{addmargin}

It follows from Subclaim \ref{IneqSubclDaggerReplace} that each $Q_0$ and $Q_1$ has length at most $\frac{2N_{\textnormal{mo}}}{3}$. Each of $Q_0, Q_1$ is a proper generalized chord of $C$, so, for each $i\in\{0,1\}$ and endpoint $p_*$ of $\mathbf{P}_C$, each of $G_0^{Q_i, p_*}$ and $G_1^{Q_i, p_*}$ is well-defined. 

\vspace*{-8mm}
\begin{addmargin}[2em]{0em}
\begin{subclaim}\label{HSideIs1SideSub} There exists an $i\in\{0,1\}$ such that, for each endpoint $p_*$ of $\mathbf{P}_C$, we have $H_i=G_0^{Q_i, p_*}$.  \end{subclaim}

\begin{claimproof} Suppose not. Thus, there exist endpoints $p_0, p_1$ of $\mathbf{P}_C$ (where possibly $p_0=p_1$) such that $H_i=G_1^{Q_i, p_i}$ for each $i=0,1$. Since $Q\in\mathcal{B}_q$ and $H_0\cup H_1=G_1$, it follows from $(\dagger)$ that there is at least one $i\in\{0,1\}$ such that $H_i$ contains either a noncontractible cycle or an element of $\mathcal{C}\setminus\{C\}$. Since $|V(H_i)|<|V(G_1)|$ for each $i=0,1$, we contradict the minimality of $|V(G_1)|$.   \end{claimproof}\end{addmargin}

\vspace*{-8mm}
\begin{addmargin}[2em]{0em}
\begin{subclaim}\label{QEndpointPrec} At least one endpoint of $Q$ lies in $\mathring{\mathbf{P}}_C$, and furthermore, $Q$ is not $\mathbf{P}_C$-splitting. \end{subclaim}

\begin{claimproof} If neither endpoint of $Q$ lies in $\mathring{\mathbf{P}}_C$, then, for each $i=0,1$, the path $\mathbf{P}_C\cap H_i$ has at most one vertex and thus $H_i=G_1^{Q_i, q}$, contracting Subclaim \ref{HSideIs1SideSub}. Thus, at least one endpoint of $Q$ lies in $\mathring{\mathbf{P}}_C$. Now suppose toward a contradiction that $Q$ is $\mathbf{P}_C$-splitting. Thus, $\mathbf{P}_C\cap G_1$ is a path of length at least one which contains $q$. If $y\in V(C\cap G_1)\setminus\mathring{\mathbf{P}}_C$, then, for each $i=0,1$, we have $H_i=G_1^{Q_i, q}$, contradicting Subclaim \ref{HSideIs1SideSub}. Thus, we have $y\in V(C\cap G_1)\cap\mathring{\mathbf{P}}_C$. It follows that precisely one of $Q_0, Q_1$ has both endpoints in $\mathring{\mathbf{P}}_C$, and the other one is $\mathbf{P}_C$-splitting, so suppose without loss of generality that $Q_0$ has both endpoints in $\mathring{\mathbf{P}}_C$ and $Q_1$ is $\mathbf{P}_C$-splitting. Thus, we have $q\in V(H_1)\setminus V(Q_1)$ and furthermore, $\mathbf{P}_C\cap H_0$ has one connected component and $\mathbf{P}_C\setminus (H_0\setminus Q_0)$ has two connected components. We conclude that $H_0=G^{Q_0, q}_0$ and $H_1=G^{Q_1, q}_1$. Since $|V(H_1)|<|V(G_1)$, it follows from the minimality of $G_1$ that $Q_1\not\in\mathcal{B}_q$, so it follows from $(\dagger)$ that $H_0$ either contains a noncontractible cycle or an element of $\mathcal{C}\setminus\{C\}$. Since $|E(\mathbf{P}_C\cap G_1)|+|E(Q)|\leq\frac{2N_{\textnormal{mo}}}{3}$, it follows that $G$ contains a cycle $F$ with $|E(F)|\leq N_{\textnormal{mo}}$, where $F$ has nonempty intersection with $\mathbf{P}_C$ and $F$ separates $C$ from either a noncontractible cycle of $G$ or an element of $\mathcal{C}\setminus\{C\}$. Since $\textnormal{Rk}(C)=2N_{\textnormal{mo}}^2$ and each vertex of $C$ has distance at most $\frac{N_{\textnormal{mo}}}{3}$ from $\mathpzc{w}(C)$, we contradict Corollary \ref{NonRingSeparatingCor}. \end{claimproof}\end{addmargin}

It follows from Subclaim \ref{QEndpointPrec} that both endpoints of $Q$ lie in $\mathbf{P}_C$ and at least one of them is an internal vertex of $\mathbf{P}_C$. Thus, $\mathbf{P}_C\cap G_1$ has two connected components. 

\vspace*{-8mm}
\begin{addmargin}[2em]{0em}
\begin{subclaim} $y\in V(G\cap C_1)\cap V(\mathbf{P}_C)$. \end{subclaim}

\begin{claimproof} Suppose not. Thus, for each $i=0,1$, $\mathbf{P}_C\cap H_i$ is nonempty and connected. Let $q'$ be the endpoint of $\mathbf{P}_C$ which is distinct from $q$. Since $y\in V(G\cap C_1)\setminus V(\mathbf{P}_C)$, there exists an $i\in\{0,1\}$ such that $H_i=G^{q, Q_i}_1$ and $H_{1-i}=G^{q', Q_{1-i}}_1$, contradicting Subclaim \ref{HSideIs1SideSub}. \end{claimproof}\end{addmargin}

Since $y\in V(G\cap C_1)\cap V(\mathbf{P}_C)$, and both endpoints of $Q$ lie in $\mathbf{P}_C$, there exists an $i\in\{0,1\}$ such that $\mathbf{P}_C\cap H_i$ has one connected component and $\mathbf{P}_C\cap H_{1-i}$ has two connected components. Thus, $H_{1-i}$ contains no noncontractible cycles or elements of $\mathcal{C}\setminus\{C\}$, or else we contradict the minimality of $|V(G_1)|$. Since $G_1=H_0\cup H_1$, it follows that $H_i$ contains either an element of $\mathcal{C}\setminus\{C\}$ or a noncontractible cycle, so $(\mathbf{P}_C\cap H_i)+Q_i$ is a cycle of length at most $N_{\textnormal{mo}}$ which separates $C$ from either a noncontractible cycle of $G$ or an element of $\mathcal{C}\setminus\{C\}$, contradicting Corollary \ref{NonRingSeparatingCor}. This completes the proof of Claim \ref{ChordOneEndinQClaim}. \end{claimproof}

\begin{claim}\label{ChordsAnd2ChordsFromQClaim} $G_1$ contains no chord of $C_1$ with both endpoints in $Q$. Furthermore, for any 2-chord $P$ of $C_1$ with $P\subseteq G_1$ and both endpoints in $Q$, the endpoints of $P$ are consecutive vertices of $Q$. \end{claim}

\begin{claimproof} Suppose toward a contradiction that there is a path $P$ of length at most two which violates Claim \ref{ChordsAnd2ChordsFromQClaim}. Since $|E(Q)|+|E(\mathbf{P}_C\cap G_1)|\leq\frac{2N_{\textnormal{mo}}}{3}$ by assumption, and since the endpoints of $P$ are not consecutive in $Q$, it immediately follows that each vertex of $C_1\cup P$ has distance at most $\frac{N_{\textnormal{mo}}}{3}$ from $\mathpzc{w}(C)$. Possibly $P=C\cap G_1$ and the endpoints of $P$ are also the endpoints of $Q$. In that case, $C_1$ is a cycle of length at most $N_{\textnormal{mo}}$, and since $|V(C)|\geq 5$ by 4) of Proposition \ref{BasicPropertiesCricMosProp}, we have $|V(C\cap G_0)|>2$ and $C_1$ is a separating cycle of $G$. On the other hand, if $P\neq C\cap G_1$, then, letting $Q'$ be the proper generalized chord of $C$ obtained from $Q$ by replacing $xQy$ with $P$, we have $|E(Q')|\leq |E(Q)|$ by our assumption on $P$, and it follows from the minimality of $|V(G_1)|$ that $C$ lies on the opposite side of the cycle $(xQy)+xy$ from either a noncontractible cycle of $G$ or an element of $\mathcal{C}\setminus\{C\}$. In any case, since $|E(Q)|\leq\frac{2N_{\textnormal{mo}}}{3}$ and $G$ intersects with $\mathbf{P}_C$ at most on the endpoints of $\mathbf{P}_C$, it follows that there is a separating cycle $D$ of $G$ with $d(D, C\setminus\mathbf{P}_C)\leq\frac{N_{\textnormal{mo}}}{3}$, where $|E(D)|\leq N_{\textnormal{mo}}$ and $D$ separates $C$ from either a noncontractible cycle of $G$ or an element of $\mathcal{C}\setminus\{C\}$. As $\textnormal{Rk}(C)=2N_{\textnormal{mo}}$, we contradict Corollary \ref{NonRingSeparatingCor}. \end{claimproof}

Let $\mathbf{P}_1:=(\mathbf{P}_C\cap G_1)+Q$. Possibly $\mathbf{P}_C\cap G_1=\varnothing$, but in any case, since $G_0$ contains at least one edge of $\mathbf{P}_C$, it follows that $\mathbf{P}_1$ is a subpath of $C_1$, and it follows from Claim \ref{ChordsAnd2ChordsFromQClaim} that any edge of $G_1$ with both endpoints in $Q$ is an edge of $Q$. It follows from Claim \ref{ChordOneEndinQClaim} that $G_1$ has no chord of $C_1$ with an endpoint in $V(\mathring{Q})$ and the other endpoint in $V(C\cap G_1)$, so any $L$-coloring of $G_0$ is also a proper $L$-coloring of the subgraph of $G$ induced by $V(G_0)$ and extends to an $L$-coloring of the subgraph of $G$ induced by $V(G_0\cup\mathbf{P}_C)$, even if $G_1$ contains edges of $\mathbf{P}_C$. Thus, by Lemma \ref{OneSideColorableOpRingLem1}, $V(G_0)$ admits an $L$-coloring $\psi$, where $V(\mathbf{P}_1)$ is $L^Q_{\psi}$-colorable. Now, let $\mathcal{C}':=\{C_1\}\cup\{F\in\mathcal{C}: F\subseteq G_1\}$ and let $F_*$ be a cycle degined as follows. We set $F_*=C_*$ if $C\neq C_*$ and otherwise $F_*=C_1$. Finally, let $\mathcal{T}':=(\Sigma, G_1, \mathcal{C}', L^Q_{\psi}, F_*)$. Now, $\mathcal{T}'$ is a tessellation in which $C_1$ is an open ring with precolored path $\mathbf{P}_1$. In particular, we have $\mathpzc{w}_{\mathcal{T}'}(C_1)=V(C_1)\setminus V(\mathbf{P}_1)$, and thus $\mathpzc{w}_{\mathcal{T}'}(C_1)=\mathpzc{w}_{\mathcal{T}}(C)\cap V(G_1)$. 

\begin{claim} $\mathcal{T}'$ is a mosaic. \end{claim}

\begin{claimproof} Since $\mathpzc{w}_{\mathcal{T}'}(C_1)=\mathpzc{w}_{\mathcal{T}}(C)\cap V(G_1)$, it is immediate that $\mathcal{T}'$ satisfies the distance conditions M3)-M4). Since $|E(\mathbf{P}_1)|\leq\frac{2N_{\textnormal{mo}}}{3}$, M0) is satisfied and M2) is immediate. It follows from Claims \ref{ChordOneEndinQClaim} and \ref{ChordsAnd2ChordsFromQClaim} that M1) is still satisfied, so now we just need to check M5). We have $\textnormal{ew}(G^{Q, p}_1)\geq\textnormal{ew}(G)\geq 2.1\beta\cdot 6^{g(\Sigma)}$ by the monotonicity of edge-with, and since $\textnormal{fw}(G)\geq 2.1\beta\cdot 6^{g(\Sigma)-1}$, it follows from our distance conditions, together with Fact \ref{HighEwFwF2Chord}, that $\textnormal{fw}(G_1)\geq 2.1\beta\cdot 6^{g(\Sigma)-1}$, so $\mathcal{T}'$ satisfies M5) as well. \end{claimproof}

We claim now that $G_1$ is $L_{\psi}^Q$-colorable. If $|V(G_1)|<|V(G)|$, then this just follows immediately from the criticality of $\mathcal{T}$ so suppose that $|V(G_1)|=|V(G)|$. Thus, $V(G_0)=V(Q)$ and $|V(Q)|\geq 3$ (indeed, $G_0$ is a triangle), so $Q$ has a vertex with an $L$-list of size five. Thus, $\sum_{v\in V(G_1)}|L^Q_{\psi}(v)|<\sum_{v\in V(G)}|L(v)|$. It again follows from the criticality of $\mathcal{T}$ that $G_1$ is $L^Q_{\psi}$-colorable. In any case, $\psi$ extends to an $L$-coloring of $G$, a contradiction.  \end{proof}

To conclude Section \ref{GenChordOpenRingCritSec2}, we prove the following consequence of Theorem \ref{GeneralChordCritMosaicOpMain0}. 

\begin{prop}\label{LowGenus+OneFaceCase}
Let $\mathcal{T}=(\Sigma, G, \mathcal{C}, L, C_*)$ be a critical mosaic. Then each ring of $\mathcal{C}$ is an induced cycle. Furthermore, $|\mathcal{C}|+g(\Sigma)>1$ and, if $C_*$ is a closed ring, then $|\mathcal{C}|>1$. 
\end{prop}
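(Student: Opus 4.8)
The plan is to dispatch the three assertions in turn, using the chord theorems of Sections \ref{GenChordClosedRingCritSec}--\ref{GenChordOpenRingCritSec2}, Proposition \ref{BasicPropertiesCricMosProp}, and the classical results \ref{thomassen5ChooseThm} and \ref{Two2ListTheorem}.

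\textbf{Rings are induced cycles.} By Proposition \ref{BasicPropertiesCricMosProp}(3) every $C\in\mathcal{C}$ is a cycle, so it suffices to rule out chords. If $C$ is a closed ring then $V(\mathbf{P}_C)=V(C)$, so a hypothetical chord $xy$ would give $d_G(x,y)=1$ while $d_{\mathbf{P}_C}(x,y)\geq 2$ (as $x,y$ are non-adjacent on the cycle $C$, and $\mathbf{P}_C$ is $C$ or a Hamilton path of $C$), contradicting the semi-shortcut-freeness of $\mathbf{P}_C$ demanded by M2). If $C$ is open, M1) already forbids chords meeting $\mathring{\mathbf{P}}_C$, and the same distance computation (applied to an endpoint $p$ of $\mathbf{P}_C$ and the $\mathbf{P}_C$-neighbour of the other endpoint) rules out the chord joining the two endpoints of $\mathbf{P}_C$ once $|E(\mathbf{P}_C)|\geq 4$. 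In every remaining case -- a chord $Q=xy$ with both endpoints in $\mathpzc{w}(C)$, or one endpoint an endpoint of $\mathbf{P}_C$, or the endpoints-of-$\mathbf{P}_C$ chord with $|E(\mathbf{P}_C)|\leq 3$ -- the path $\mathbf{P}_C$ lies entirely on one side of the natural $Q$-partition $\{G_0,G_1\}$, with $|E(\mathbf{P}_C\cap G_1)|=0$, so Theorem \ref{GeneralChordCritMosaicOpMain0} applies: every element of $\mathcal{C}\setminus\{C\}$ lies in $G_0$ and the component carrying $G_1$ is a disc. One then checks, using Fact \ref{HighEwFwF2Chord} for M5) and the (straightforward) inheritance of semi-shortcut-freeness and predictability under passing to the subgraph $G[V(G_0)]$ with the reconstituted ring $C_0:=(C\cap G_0)+Q$, that $(\Sigma_0,G[V(G_0)],(\mathcal{C}\setminus\{C\})\cup\{C_0\},L,\cdot)$ is a mosaic with strictly fewer vertices; by criticality $G[V(G_0)]$ has an $L$-colouring $\phi_0$, which colours the edge $xy$ properly, and since $G_1$ is a plane graph with facial cycle $C_1:=(C\cap G_1)+Q$ whose vertices off $\{x,y\}$ have $L$-lists of size $\geq 3$ and whose interior has lists of size $\geq 5$, Theorem \ref{thomassen5ChooseThm} extends $\phi_0$ over $G_1$, contradicting that $G$ is uncolourable.

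\textbf{$|\mathcal{C}|+g(\Sigma)>1$.} Since $C_*\in\mathcal{C}$ we must exclude $\mathcal{C}=\{C_*\}$ with $\Sigma=\mathbb{S}^2$. Then $G$ is planar with a single special facial cycle $C_*$, which by the previous paragraph is an induced cycle; off $C_*$ all lists have size $\geq 3$, with sizes pinned down by Proposition \ref{BasicPropertiesCricMosProp}. Colour $\mathbf{P}_{C_*}$ by its forced colouring $\phi$. If $C_*$ is open, then by M1) -- high predictability of $C_*$, the chordlessness of $C_*$, and semi-shortcut-freeness of $\mathbf{P}_{C_*}$ -- after deleting $\mathbf{P}_{C_*}$ the new facial subgraph $F'$ of $G\setminus\mathbf{P}_{C_*}$ has all vertices with $L_\phi$-lists of size $\geq 3$ except the two $C_*$-neighbours of the endpoints of $\mathbf{P}_{C_*}$, which keep lists of size $\geq 2$, while the rest of the graph keeps size $\geq 5$; the key local fact here is that semi-shortcut-freeness together with short-inseparability (no separating $4$-cycle) forbids any vertex off $C_*$ from being adjacent to three consecutive vertices of $\mathbf{P}_{C_*}$. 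Theorem \ref{Two2ListTheorem} then $L_\phi$-colours $G\setminus\mathbf{P}_{C_*}$, so $G$ is $L$-colourable -- a contradiction. If $C_*$ is closed the argument is identical with M2) replacing M1): colouring $V(C_*)$ leaves one distinguished neighbour $v_0$ of $C_*$ with a list of size $\geq 2$ and all other neighbours with lists of size $\geq 3$, so Theorem \ref{Two2ListTheorem} again applies to $G\setminus C_*$ with $v_0$ and one further vertex of $F'$ as the two size-$2$ vertices.

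\textbf{$C_*$ closed $\Rightarrow|\mathcal{C}|>1$.} By the last paragraph it remains only to exclude $\mathcal{C}=\{C_*\}$ with $C_*$ closed and $g:=g(\Sigma)\geq 1$. Here $C_*$ is a short precoloured facial cycle in an otherwise-triangulated $2$-cell embedding with all other lists of size $5$ and $\textnormal{fw}(G),\textnormal{ew}(G)$ large. Delete $C_*$; the resulting embedding $G\setminus C_*$ on $\Sigma$ has one new face whose boundary $W$ is a cycle ($2$-connectedness of $G$ from Proposition \ref{BasicPropertiesCricMosProp}(3) and short-inseparability force $W$ to be a genuine cycle), and by M2) the forced colouring of $C_*$ leaves all of $D_1(C_*)$ with $L_\phi$-lists of size $\geq 3$ except one vertex $v_0$ with a list of size $\geq 2$. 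Take $W$ as an open ring of a new chart, with a short precoloured path $\mathbf{P}_W\subseteq W$ having $v_0$ in its interior and precoloured by (the restriction of a suitable extension of) $\phi$; choosing $\mathbf{P}_W$ short and invoking Facts \ref{HighEwTriangleFwF1Cycle}--\ref{HighEwFwF2Chord} for M5), one verifies M0)--M5) and so obtains a mosaic on $\Sigma$ with strictly fewer vertices than $G$. Criticality yields a colouring of $G\setminus C_*$ extending the colours on $\mathbf{P}_W$, which together with $\phi$ colours $G$, a contradiction.

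\textbf{Main obstacle.} In all three parts the genuine work is verifying that the auxiliary tuples are really mosaics -- in particular re-establishing M1)/M2) (semi-shortcut-freeness and predictability are not automatically preserved when one reconstitutes a ring from a chord, and still less when, as in the third part, one creates a brand-new open ring $W$ carrying a non-trivial precoloured path) and re-establishing M5) via the edge-width/face-width facts. The single most delicate point is the third part: showing that the new boundary $W$ is a cycle and that a precoloured path on it can be chosen so that M1) holds (so that, unlike parts one and two, we are truly building a new open ring rather than trimming an existing one) while keeping every vertex of $\mathpzc{w}(W)$ with a list of size at least three.
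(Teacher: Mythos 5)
The overall structure matches the paper's: rings are shown to be induced via M1)/M2) together with Theorem \ref{GeneralChordCritMosaicOpMain0}, and the residual cases $\mathcal{C}=\{C_*\}$ are dispatched by deleting $C_*$ (or $\mathbf{P}_{C_*}$) and either applying the planar colouring theorems directly or constructing a smaller mosaic. Your first two parts follow the paper's argument closely (though the semi-shortcut-freeness detour for the chord joining the two endpoints of $\mathbf{P}_C$ is unnecessary: for \emph{any} chord $xy$ with $x,y\notin V(\mathring{\mathbf{P}}_C)$, the whole of $\mathbf{P}_C$ lies on one side $G_0$ of the natural $xy$-partition, even when $x,y$ are the endpoints of $\mathbf{P}_C$, so Theorem \ref{GeneralChordCritMosaicOpMain0} plus the smaller-mosaic-and-Thomassen argument disposes of all sub-cases uniformly).

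The third part is where there is a genuine gap. You insist on a precoloured path $\mathbf{P}_W$ with $v_0$ in its interior, which forces you to establish the full strength of M1) -- chordlessness of the new boundary $W$ at $\mathring{\mathbf{P}}_W$, high $(L', V(\mathbf{P}_W))$-predictability of $W$, and semi-shortcut-freeness of $\mathbf{P}_W$ -- and you flag this yourself as the ``single most delicate point'' without carrying it out. The chordlessness of $W$ at $v_0$ is genuinely nontrivial: the paper establishes the induced-ness of this boundary only much later (Lemma \ref{StructureofC1inClosedRingLemma} in Section \ref{BoundCloseRiSec}), which you cannot invoke here. The paper's fix is much simpler: take $\mathbf{P}_W$ to be the \emph{single vertex} $v_0$ (the paper's $z$). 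Nothing in Definition \ref{MainChartDefn} requires the precoloured-path vertices to have singleton lists -- the condition $|L(v)|\geq 3$ applies only to $v\in V(H)\setminus V(\mathbf{P}_H)$ -- and, as noted immediately after the statement of Theorem \ref{AllMosaicsColIntermRes1-4}, a precoloured path of length at most one automatically satisfies M1): with $\mathring{\mathbf{P}}_W=\varnothing$ the chord condition and semi-shortcut-freeness are vacuous, and high predictability with respect to a singleton is immediate, since the vertices of $D_1(v_0)\setminus V(W)$ are at distance two from $C_*$, have $5$-lists, and lose at most one colour. There is also no need to extend $\phi$ or actually colour $\mathbf{P}_W$: the vertex $v_0$ keeps its $L_{\psi}$-list of size at least two, which is all the chart definition asks for. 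With this choice, verifying M0)--M2) is immediate, and the only real work is M5) via Fact \ref{HighEwTriangleFwF1Cycle}.
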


\begin{proof} By M2), each closed ring is an induced cycle, so let $C$ be an open ring and suppose toward a contradiction that there is a chord $xy$ of $C$. By M1), we have $x,y\not\in V(\mathring{\mathbf{P}}_C)$. Let $G=G_0\cup G_1$ be the natural $xy$-partition. By 4) of Proposition \ref{BasicPropertiesCricMosProp}, $|E(\mathbf{P}_C)|\geq 1$, so there is a unique $j\in\{0,1\}$ with $\mathbf{P}_C\subseteq G_j$, say $j=0$ without loss of generality. For each $j=0,1$, let $C_j:=(G_j\cap C)+xy$.  By Theorem \ref{GeneralChordCritMosaicOpMain0}, each element of $\mathcal{C}\setminus\{C\}$ lies in $G_0$ and $G_1$ can be regarded as an embedding on a disc with facial cycle $C_1$. 

\begin{claim}\label{GPxyPrecSideColorable} $G_0$ is $L$-colorable. \end{claim}

\begin{claimproof} We define a cycle $C_*'$ as follows: We set $C_*'=C_*$ if $C$ is an inner ring of $\mathcal{T}$, and otherwise $C_*'=C_0$. Let $\mathcal{T}':=(\Sigma, G_0, (\mathcal{C}\setminus\{C\})\cup\{C_0\}, L, C_*')$. We claim now that $\mathcal{T}'$ is a mosaic. Firstly, $\mathcal{T}'$ is a tessellation, and, since $xy$ has one endpoint outside of $\mathbf{P}_C$, it follows that $C^{\textnormal{p}}_{xy}$ is an open $\mathcal{T}'$-ring with precolored path $V(\mathbf{P}_C)$, so $\mathcal{T}'$ satisfies M0), and it is clear that $\mathcal{T}'$ satisfies M1) as well. Since $C_0$ is an open ring of $\mathcal{T}'$ with the same precolored path as $C$, and since $V(C_0)\subseteq V(C)$, it is immediate that M2)-M4) are also satisfied, so we just need to check M5). Since edge-width is monotone, we have $\textnormal{ew}(G_0)\geq 2.1\beta\cdot 6^{g(\Sigma)}$, so it follows from Fact \ref{HighEwFwF2Chord}, together with our distance conditions, that $\textnormal{fw}(G_0)\geq 2.1\beta\cdot 6^{g(\Sigma)-1}$. \end{claimproof}

Applying Claim \ref{GPxyPrecSideColorable}, let $\psi$ be an $L$-coloring of $G_0$. By Theorem \ref{thomassen5ChooseThm}, $G_1$ is $L^{xy}_{\psi}$-colorable, so $\psi$ extends to an $L$-coloring of $G$, contradicting the fact that $\mathcal{T}$ is critical. Thus, each element of $\mathcal{C}$ is an induced cycle. Now suppose that either $|\mathcal{C}|+g(\Sigma)\leq 1$, or, if $C_*$ is a closed ring, then $\mathcal{C}=\{C_*\}$. Let $\psi$ be the unique $L$-coloring of $V(\mathbf{P}_{C_*})$ and let $F$ be the unique facial subgraph of $G\setminus\mathbf{P}_{C_*}$ consisting of all the vertices of $V(C_*\setminus\mathbf{P}_{C_*})\cup D_1(\mathbf{P}_{C_*})$.  Since $G$ is not $L$-colorable, $G\setminus\mathbf{P}_{C_*}$ is not $L_{\psi}$-colorable. Consider the following cases.

\textbf{Case 1:} $C_*$ is an open ring

In this case, $C_*\setminus\mathbf{P}_C$ is a nonempty path, and, by assumption, $\mathcal{C}=\{C_*\}$ and $\Sigma=\mathbb{S}^2$. By 1), $C_*$ is an induced cycle of $G$. Let $p, p'$ be the endpoints of the path $C_*\setminus\mathbf{P}_{C_*}$. Possibly $p=p'$. Since $C_*$ has no chords, it follows from M1) that every vertex of $V(F)\setminus\{p, p'\}$ has an $L_{\psi}$-list of size three, and furthermore, $\{p, p'\}$ either consists of one vertex with an $L_{\psi}$-list of size at least one, or two vertices with $L_{\psi}$-lists of size at most two. In the first case, it follows from Theorem \ref{thomassen5ChooseThm} that $G\setminus\mathbf{P}_{C_*}$ is $L_{\psi}$-colorable, and in the second case, it follows from Theorem \ref{Two2ListTheorem} that $G\setminus\mathbf{P}_{C_*}$ is $L_{\psi}$-colorable. In either case, we have a contradiction

\textbf{Case 2:} $C_*$ is a closed ring

In this case, $V(C_*)=V(\mathbf{P}_{C_*})$ and $V(F)=D_1(C_*)$ and, by assumption, $\mathcal{C}=\{C_*\}$, but possibly $g(\Sigma)>0$. Consider the tuple $\mathcal{T}'=(\Sigma, G\setminus C, \{F\}, L_{\psi}, F)$. By M2), there is a $z\in V(F)$ such that $|L_{\psi}(z)|\geq 2$, where each vertex of $F-z$ has an $L_{\psi}$-list of size at least three. We claim that $\mathcal{T}'$ is a mosaic, where the lone ring $F$ is an open ring with precolored path $z$. Since $F$ is the only ring, M3)-M4) are immediate. Since the precolored path of $F$ has length zero, all of M0)-M2) are immediate as well. We have $\textnormal{ew}(G\setminus C)\geq\textnormal{ew}(G)$. Since $|E(C)|\leq N_{\textnormal{mo}}$ and $\textnormal{ew}(G\setminus C)\geq\textnormal{ew}(G)$, it follows from Fact \ref{HighEwTriangleFwF1Cycle} that both bounds in M5) are satisfied as well and $\mathcal{T}'$ is a mosaic. By the criticality of $\mathcal{T}$, we get that $G\setminus C$ is $L_{\psi}$-colorable, which is false. \end{proof}

\section{Bands of Open Rings in Critical Mosaics}\label{BandOpenRingCritMosaicProperSec}

To state the main result for this section, we first introduce the following definitions.

\begin{defn}\label{QBandOpRingDefn}

\emph{Let $\mathcal{T}=(\Sigma, G, \mathcal{C}, L, C_*)$ be a critical mosaic and let $C\in\mathcal{C}$ be an open ring. Given a proper generalized chord $Q$ of $\mathbf{P}_C$ with $|E(Q)|\leq N_{\textnormal{mo}}$, we have the following.}
\begin{enumerate} [label=\emph{\arabic*)}]
\itemsep-0.1em
\item\emph{We let $Q_{\textnormal{aug}}$ be the unique cycle of $Q\cup\mathbf{P}_C$ and we let $U_Q$ be the unique connected component of $\Sigma\setminus (C\cup Q)$ such that $\partial(U)=Q_{\textnormal{aug}}$ and $\textnormal{Cl}(U_Q)\cap V(G)\not\subseteq V(C)$}
\item\emph{We say that $Q$ is a \emph{$C$-band} if there is either an element of $\mathcal{C}\setminus\{C\}$ or a noncontractible closed curve of $\Sigma$ in $U_Q$, and we say that $Q$ is a \emph{short} $C$-band if $Q$ is a $C$-band and $|E(Q_{\textnormal{aug}})|\leq N_{\textnormal{mo}}$}
\end{enumerate}

\end{defn}

In the setting above, note that $Q_{\textnormal{aug}}$ is not necessarily a generalized $C$-chord, as it possibly intersects with $C$ on many vertices. Indeed, if the path $C\setminus\mathbf{P}_C$ is sufficiently short, then there is a $C$-band $Q$ with $\mathring{Q}=C\setminus\mathbf{P}_C$. In any case, the open set $U_Q$ from 1) of Definition \ref{QBandOpRingDefn} is uniquely specified. Our main result for Section \ref{BandOpenRingCritMosaicProperSec} is Theorem \ref{QBandMainResCondensed}. To prove Theorem \ref{QBandMainResCondensed}, we first prove the following intermediate result. 

\begin{lemma}\label{QBandMainLemmaForOpRing} Let $\mathcal{T}=(\Sigma, G, \mathcal{C}, L, C_*)$ be a critical mosaic and let $C\in\mathcal{C}$ be an open $\mathcal{T}$-ring. Then, 
\begin{enumerate}[label=\arabic*)]
\itemsep-0.1em
\item  $G$ does not contain a short $C$-band; AND
\item For any proper generalized chord $Q$ of $C$ with $|E(Q)|\leq \frac{N_{\textnormal{mo}}}{3}$, where the endpoints of $Q$ lie in $V(\mathbf{P}_C)$, we have $|E(\mathbf{P}_C)|+|E(Q)|>|E(Q_{\textnormal{aug}}\cap\mathbf{P}_C)|+\frac{2N_{\textnormal{mo}}}{3}$.
\end{enumerate}
 \end{lemma}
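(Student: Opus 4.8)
The plan is to treat both statements by contradiction, using statement 1) inside the proof of statement 2). Throughout I use that, by Propositions \ref{BasicPropertiesCricMosProp} and \ref{LowGenus+OneFaceCase}, each ring of a critical mosaic is an induced cycle, $G$ is a closed $2$-cell embedding all of whose non-ring faces are triangles, $|\mathcal{C}|+g(\Sigma)>1$, and $|V(C)|\ge 5$, $|E(\mathbf{P}_C)|\ge 1$, $\mathpzc{w}(C)=V(C)\setminus V(\mathbf{P}_C)\neq\varnothing$.

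For 1), suppose $Q$ is a short $C$-band. Since $|E(Q_{\textnormal{aug}})|\le N_{\textnormal{mo}}<\textnormal{ew}(G)$, the cycle $Q_{\textnormal{aug}}$ is contractible. I first argue $Q_{\textnormal{aug}}$ is a \emph{separating} cycle: being a $C$-band forces $U_Q$ to contain either a ring distinct from $C$ or a noncontractible cycle of $G$, so $U_Q$ is not a single face and hence contains a vertex of $G$, while the opposite side of $Q_{\textnormal{aug}}$ contains the nonempty set $\mathpzc{w}(C)$; the degenerate case $Q_{\textnormal{aug}}=C$ (i.e.\ $\mathring Q=C\setminus\mathbf{P}_C$), which forces $C$ to be a short open ring carrying everything else on its non-facial side, is excluded by a short separate argument via Proposition \ref{LowGenus+OneFaceCase} and the colourability results of Section \ref{ObstructingCycleSec}. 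Now every vertex of $\mathbf{P}_C$ is within distance $1$ of $\mathpzc{w}(C)$, so $d(Q_{\textnormal{aug}},\mathpzc{w}(C))\le |E(\mathbf{P}_C)|+1\le\frac{2N_{\textnormal{mo}}}{3}+1$, which — since $\textnormal{Rk}(C)=2N_{\textnormal{mo}}^2$ and $|E(Q_{\textnormal{aug}})|\le N_{\textnormal{mo}}$ — is far below $\textnormal{Rk}(C)-(N_{\textnormal{mo}}+\frac12)|E(Q_{\textnormal{aug}})|$; hence Corollary \ref{NonRingSeparatingCor} applies (case a) if $C\neq C_*$, case b) if $C=C_*$) and yields that every ring lies in $\textnormal{Ext}(Q_{\textnormal{aug}})$ and that $Q_{\textnormal{aug}}$ is inward contractible. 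Since $\mathpzc{w}(C)$ lies opposite $U_Q$ and also lies in $\textnormal{Ext}(Q_{\textnormal{aug}})$, we get $U_Q=\textnormal{Int}(Q_{\textnormal{aug}})$, which is then a disc carrying no ring — contradicting that $Q$ is a $C$-band.

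For 2), suppose the inequality fails. Write $a,b$ for the endpoints of $Q$ and $p,p'$ for those of $\mathbf{P}_C$, so that the two arcs of $\mathbf{P}_C$ outside $Q_{\textnormal{aug}}\cap\mathbf{P}_C=a\mathbf{P}_Cb$ are $p\mathbf{P}_Ca$ and $b\mathbf{P}_Cp'$, and the $C$-path $Q':=(p\mathbf{P}_Ca)\cup Q\cup(b\mathbf{P}_Cp')$ joining $p$ to $p'$ satisfies $|E(Q')|=|E(\mathbf{P}_C)|-|E(Q_{\textnormal{aug}}\cap\mathbf{P}_C)|+|E(Q)|\le\frac{2N_{\textnormal{mo}}}{3}$. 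Form the natural $Q$-partition $G=G_0\cup G_1$ with $\mathbf{P}_C\cap G_0=a\mathbf{P}_Cb$ (connected and with an edge since $a\neq b$); then $|E(\mathbf{P}_C\cap G_1)|+|E(Q)|=|E(Q')|\le\frac{2N_{\textnormal{mo}}}{3}$, so Theorem \ref{GeneralChordCritMosaicOpMain0} applies: every ring other than $C$ lies in $G_0$, and the component $U_1$ of $\Sigma\setminus(C\cup Q)$ with $G\cap\textnormal{Cl}(U_1)=G_1$ is a disc. There is a ring $C'\neq C$: otherwise $g(\Sigma)>0$ and $C=C_*$ by Proposition \ref{LowGenus+OneFaceCase}, so $G_0$ carries no ring, and $G_0$ carries no noncontractible cycle either (such a cycle inside the region bounded by $Q_{\textnormal{aug}}$ would make $Q$ a short $C$-band, contradicting 1)), whence $\Sigma$ decomposes into three discs — contradicting $g(\Sigma)>0$. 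This ring $C'$ lies in $G_0$, so applying Lemma \ref{OneSideColorableOpRingLem1} with the two sides of the $Q$-partition interchanged (its hypothesis $|E(Q)|\le\frac{2N_{\textnormal{mo}}}{3}$ holds) shows $G_1$ is $L$-colourable; fix such a colouring $\psi$. Together with the precolouring of $\mathbf{P}_C$, $\psi$ colours the whole cycle $Q_{\textnormal{aug}}=(a\mathbf{P}_Cb)\cup Q$, which has length $\le N_{\textnormal{mo}}$, and I then form the chart $\mathcal{T}'$ on $\Sigma$ with underlying graph $G_0$, rings $\mathcal{C}^{\subseteq G_0}\cup\{Q_{\textnormal{aug}}\}$ ($Q_{\textnormal{aug}}$ a closed ring precoloured by this colouring), outer ring $C_*$ if $C\neq C_*$ and $Q_{\textnormal{aug}}$ otherwise, and list-assignment obtained from $L$ by precolouring $V(Q_{\textnormal{aug}})$. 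One checks $\mathcal{T}'$ is a mosaic: M0) is clear; M2) for $Q_{\textnormal{aug}}$, together with semi-shortcut-freeness and the chord conditions for the remaining rings, follows from the chord analysis in the proof of Theorem \ref{GeneralChordCritMosaicOpMain0} (no chord of $Q_{\textnormal{aug}}$ with both ends in $Q$, etc.); M3)–M4) hold because $\textnormal{Rk}(Q_{\textnormal{aug}})\le N_{\textnormal{mo}}^2$ and $\mathpzc{w}(Q_{\textnormal{aug}})$ is within $O(N_{\textnormal{mo}})$ of $\mathpzc{w}(C)$; and M5) follows from Fact \ref{HighEwFwF2Chord} and the monotonicity of edge-width. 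Since $\mathpzc{w}(C)$ is excluded from $G_0$, we have $|V(G_0)|<|V(G)|$, so by criticality $G_0$ is colourable for the new list-assignment, and this extends $\psi$ to an $L$-colouring of $G$ — a contradiction.

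The step I expect to be the main obstacle is the verification, in part 2), that the reassembled tuple $\mathcal{T}'$ really is a mosaic — that promoting $Q_{\textnormal{aug}}$ to a closed ring introduced no chords with endpoints in forbidden positions, destroyed no (semi-)shortcut-freeness, and left the face-width above $2.1\beta\cdot 6^{g-1}$ — which essentially amounts to re-running the chord- and face-width estimates from the proofs of Theorems \ref{ShortGenChordClosedRingThm} and \ref{GeneralChordCritMosaicOpMain0} in this configuration; the degenerate sub-case of part 1) is a smaller but similarly fiddly side point.
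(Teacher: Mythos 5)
Your argument for part 1) in the case where $Q_{\mathrm{aug}}$ is a separating cycle — bounding $d(Q_{\mathrm{aug}},\mathpzc{w}(C))$ and invoking Corollary \ref{NonRingSeparatingCor} — is correct and is essentially the paper's Claim \ref{ShortCBandIsNonSepCL}. But you are wrong to treat the remaining ``degenerate'' case $V(C)\subseteq\partial(U_Q)$ (equivalently $Q_{\mathrm{aug}}=C$, $Q=C\setminus\mathring{\mathbf{P}}_C$) as a fiddly side point dispatchable by Proposition \ref{LowGenus+OneFaceCase} and Section \ref{ObstructingCycleSec}. That case cannot be reached by Corollary \ref{NonRingSeparatingCor} because $Q_{\mathrm{aug}}$ is then a facial cycle, not a separating one, so the colourability and obstruction results of Section \ref{ObstructingCycleSec} simply do not apply. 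In the paper this is the \emph{entire} remaining content of part 1): once Claim \ref{ShortCBandIsNonSepCL} forces every short $C$-band into this degenerate configuration, one has to build a genuinely new mosaic $\mathcal{T}'=(\Sigma,\,G\cap\textnormal{Cl}(U_Q),\,(\mathcal{C}\setminus\{C\})\cup\{Q_{\mathrm{aug}}\},\,L^{Q_{\mathrm{aug}}}_{\psi},\,F')$ in which $Q_{\mathrm{aug}}$ is a \emph{closed} ring, and the contradiction comes from $\sum|L'|<\sum|L|$. Verifying M2) for $Q_{\mathrm{aug}}$ is not free: it is exactly what Claim \ref{ShortCBand2ChordCL} is for, and establishing that claim requires a separate application of Corollary \ref{NonRingSeparatingCor} together with a careful length bookkeeping. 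Omitting all of this leaves a genuine gap.

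Your proof of part 2) is in the right spirit but takes an unnecessary and undercooked detour. After applying Theorem \ref{GeneralChordCritMosaicOpMain0} you correctly find that $\textnormal{Cl}(U_1)$ is a disc carrying no ring of $\mathcal{C}\setminus\{C\}$; you even note, inside your digression about $|\mathcal{C}|=1$, that a noncontractible cycle on the $G_0$ side ``would make $Q$ a short $C$-band, contradicting 1)''. But the very same observation disposes of the general case immediately: by Proposition \ref{LowGenus+OneFaceCase} there \emph{must} be either a ring $C'\neq C$ or a noncontractible cycle somewhere, and since the $U_1$ side and the face of $C$ together form a disc containing no such object, it has to sit inside $\textnormal{Cl}(U_Q)$; as $|E(Q_{\mathrm{aug}})|\le|E(Q)|+|E(\mathbf{P}_C)|\le N_{\textnormal{mo}}$, this makes $Q$ a short $C$-band, contradicting part 1), and you are done. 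That is what the paper does. Instead you press on to colour $G_1$ via Lemma \ref{OneSideColorableOpRingLem1} and assemble a new mosaic on $G_0$ with $Q_{\mathrm{aug}}$ as a closed ring. That construction might be made to work, but the predictability check M2) for $Q_{\mathrm{aug}}$ there cannot just ``follow from the chord analysis in the proof of Theorem \ref{GeneralChordCritMosaicOpMain0}'': Claims \ref{ChordOneEndinQClaim}--\ref{ChordsAnd2ChordsFromQClaim} in that proof analyse chords on the \emph{disc} side, whereas your new ring faces the other side, so the relevant 2-chord control would have to be re-derived from scratch. The detour is both longer and less solid than the one-line contradiction you already had available.
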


\begin{proof} We first prove 1). We begin with the following intermediate result.

\begin{claim}\label{ShortCBandIsNonSepCL} For any short $C$-band $Q$, every vertex of $C$ lies in $\partial(U_Q)$. \end{claim}

\begin{claimproof} Suppose toward a contradiction that there is a $C$-band $Q$ with $V(C)\not\subseteq\partial(U_Q)$. Thus, $Q_{\textnormal{aug}}$ separates either an element of $\mathcal{C}\setminus\{C\}$ or a noncontractible cycle of $G$ from at least one vertex of $C$. We have $\textnormal{Rk}(C)=2N_{\textnormal{mo}}^2$, and since each vertex of $\mathbf{P}_C$ has distance at most $\frac{N_{\textnormal{mo}}}{3}$ from $\mathpzc{w}(C)$, we contradict Corollary \ref{NonRingSeparatingCor}. \end{claimproof}

\begin{claim}\label{ShortCBand2ChordCL} For any short $C$-band $Q$ and any proper generalized chord $P$ of $Q_{\textnormal{aug}}$ with $P\subseteq\textnormal{Cl}(U_Q)$ and $|E(P)|\leq 2$, the endpoints of $P$ are consecutive in $Q$. In particular, $|E(P)|=2$. \end{claim}

\begin{claimproof} Suppose there is a path $P\subseteq\textnormal{Cl}(U_Q)$ which violates Claim \ref{ShortCBand2ChordCL}. By M1), $P$ has at most one endpoint in $\mathbf{P}_C$. If $P$ has one endpoint in $\mathring{\mathbf{P}}_C$ and one endpoint in $Q_{\textnormal{aug}}\setminus\mathbf{P}_C$, then, since $\textnormal{Cl}(U_Q)$ contains either a noncontractible cycle or an element of $\mathcal{C}\setminus\{C\}$, it follows that there exists a short $C$-band $Q'$ such that $V(C)\not\subseteq\partial(U_{Q'})$, contradicting Claim \ref{ShortCBandIsNonSepCL}. Thus, both endpoints of $P$ lie in $Q$. By our face-width conditions, each cycle of $Q_{\textnormal{aug}}\cup P$ is contractible, so let $H_0\cup H_1$ be the natural $(Q_{\textnormal{aug}}, P)$-partition of $G\cap\textnormal{Cl}(U_Q)$, where $\mathbf{P}_C\cap Q_{\textnormal{aug}}\subseteq H_0$. Since $\mathbf{P}_C\cap Q_{\textnormal{aug}}$ has at least one edge, $H_0$ and $H_1$ are uniquely specified. Furthermore, since the endpoints of $P$ are not consecutive in $Q_{\textnormal{aug}}$, we get that $H_0$ contains no noncontractible cycle of $G$ or element of $\mathcal{C}\setminus\{C\}$, or else there is a short $C$-band $Q'$ with $P\subseteq Q'$, where $Q'$ violates Claim \ref{ShortCBandIsNonSepCL}. For each $i=0,1$, let $F_i$ be the cycle $(H_i\cap Q_{\textnormal{aug}})+P$. Since the endpoints of $P$ are not consecutive in $Q_{\textnormal{aug}}$ we have $|E(F_i)|\leq |E(Q)|\leq N_{\textnormal{mo}}$ for each $i=0,1$, and $F_1$ separates at least one vertex of $C$ from either a noncontractible cycle of $G$ or an element of $\mathcal{C}\setminus\{C\}$. It follows from Corollary \ref{NonRingSeparatingCor} that $d(F_1, C\setminus\mathbf{P}_C)>2N_{\textnormal{mo}}-\frac{3|E(F_1)|}{2}$. Now, at least one endpoint of $P$ has distance at most $\frac{(|E(F_0)|-|E(P)|)+|E(\mathbf{P}_C)|}{2}-|E(F_0\cap\mathbf{P}_C)|$ from $V(C\setminus\mathbf{P}_C)$, so we get 
$$\frac{(|E(F_0)|-|E(P)|)+|E(\mathbf{P}_C)|}{2}-|E(F_0\cap\mathbf{P}_C)|+\frac{3|E(F_1)|}{2}>2N_{\textnormal{mo}}$$
Since $|E(F_0)|+|E(F_1)|=|E(F_0\cap\mathbf{P}_C)|+|E(Q)|+2|E(P)|$, we have the following inequality
$$|E(F_1)|+|E(P)|+\frac{|E(Q)|+(|E(\mathbf{P}_C)|-|E(F_0\cap\mathbf{P}_C)|)}{2}>2N_{\textnormal{mo}}$$
Since $|E(\mathbf{P}_C)|\leq\frac{2N_{\textnormal{mo}}}{3}$ and $|E(Q)|\leq |E(Q_{\textnormal{aug}})|\leq N_{\textnormal{mo}}$, the fractional term on the left-hand side of the inequality above is bounded from above by $\frac{5N_{\textnormal{mo}}}{6}$. Since $|E(F_1)|\leq N_{\textnormal{mo}}$, we have $\frac{11N_{\textnormal{mo}}}{6}+2>2N_{\textnormal{mo}}$, which is false. \end{claimproof}

Now we have enough to finish the proof of 1) of Lemma \ref{QBandMainLemmaForOpRing}. Suppose toward a contradiction that $G$ contains a short $C$-band $Q$. By Proposition \ref{LowGenus+OneFaceCase}, $C$ is an induced cycle, so $V(C)$ admits an $L$-coloring $\psi$. Let $\mathcal{C}':=(\mathcal{C}\setminus\{C\})\cup\{Q_{\textnormal{aug}}\}$ and let $G':=G\cap\textnormal{Cl}(U_Q)$. We define a cycle $F'$ of $G'$ as follows. We set $F'=C_*$ if $C_*\neq C$, and otherwise $F'=Q_{\textnormal{aug}}$. Since $C$ is a facial subgraph of $G$, it follows from Claim \ref{ShortCBandIsNonSepCL} that $V(G)\subseteq\textnormal{Cl}(U_Q)$ and $\mathcal{T}'=(\Sigma, G', \mathcal{C}', L^{Q_{\textnormal{aug}}}_{\psi}, F')$ is a tessellation, where $Q_{\textnormal{aug}}$ is a closed $\mathcal{T}'$-ring. We claim now that $\mathcal{T}'$ is a mosaic. Since $|E(Q_{\textnormal{aug}})|\leq N_{\textnormal{mo}}$, M0) is satisfied, and M1) is immediate, since $Q_{\textnormal{aug}}$ is a closed $\mathcal{T}'$-ring. 

It follows from Claim \ref{ShortCBand2ChordCL} that $Q_{\textnormal{aug}}$ is a highly $L^{Q_{\textnormal{aug}}}_{\psi}$-predictable facial subgraph of $G'$, so M2) is satisfied as well. Since $V(Q_{\textnormal{aug}})=V(C)$, we get that each vertex of $Q_{\textnormal{aug}}$ has distance at most $\frac{N_{\textnormal{mo}}}{3}$ from $\mathpzc{w}_{\mathcal{T}}(C)$. Since $\textnormal{Rk}(\mathcal{T}'|Q_{\textnormal{aug}})\leq\textnormal{Rk}(\mathcal{T}|C)-N_{\textnormal{mo}}$, the distance conditions M3)-M4) are still satisfied. Now we just need to check M5). Since edge-width is monotone, we have $\textnormal{ew}(G')\geq\textnormal{ew}(G)\geq 2.1\beta\cdot 6^{g(\Sigma)}$, and since $|E(Q_{\textnormal{aug}})|\leq N_{\textnormal{mo}}$ and $\textnormal{fw}(G)\geq 2.1\beta\cdot 6^{g(\Sigma)-1}$, it follows from our distance conditions, together with Fact \ref{HighEwTriangleFwF1Cycle}, that $\textnormal{fw}(G')\geq 2.1\beta\cdot 6^{g(\Sigma)-1}$. Thus, M5) is satisfied as well, and $\mathcal{T}'$ is indeed a mosaic. Since $V(G)=V(G')$ and $C$ contains at least one vertex with an $L$-list of size at least three, we have $\sum_{v\in V(G')}|L^{Q_{\textnormal{aug}}}_{\psi}(v)|<\sum_{v\in V(G)}|L(v)|$, so it follows from the criticality of $\mathcal{T}$ that $G'$ is $L'$-colorable, and thus $G$ is $L$-colorable, a contradiction. This proves 1).

Now we prove 2) of Lemma \ref{QBandMainLemmaForOpRing}. Suppose toward a contradiction that $|E(\mathbf{P}_C)|+|E(Q)|\leq |E(Q_{\textnormal{aug}}\cap\mathbf{P}_C)|+\frac{2N_{\textnormal{mo}}}{3}$. Let $U$ be the unique connected component of $\Sigma\setminus (C\cup Q)$ with $U\neq U_Q$ and $Q\subseteq\partial(U)$. Since $C$ is an open ring, $\mathbf{P}_C\cap\textnormal{Cl}(U)$ has two connected components. By Theorem \ref{GeneralChordCritMosaicOpMain0}, $\textnormal{Cl}(U)$ contains no noncontractible cycles of $G$ or elements of $\mathcal{C}\setminus\{C\}$. By Proposition \ref{LowGenus+OneFaceCase}, $g(\Sigma)+|\mathcal{C}\setminus\{C\}|>1$, and since $G$ is a 2-cell embedding, it follows that that $\textnormal{Cl}(U_Q)$ contains either an element of $\mathcal{C}\setminus\{C\}$ or a noncontractible cycle, contradicting 1). \end{proof}

With Lemma \ref{QBandMainLemmaForOpRing} in hand, we prove Theorem \ref{QBandMainResCondensed}.

\begin{theorem}\label{QBandMainResCondensed} Let $\mathcal{T}=(\Sigma, G, \mathcal{C}, L, C_*)$ be a critical mosaic and $C\in\mathcal{C}$ be an open ring. Then $|E(\mathbf{P}_C)|=\frac{2N_{\textnormal{mo}}}{3}$ and, for any $x,y\in V(\mathbf{P}_C)$, there is no $(x,y)$-path in $G$ of length at most $\min\left\{|E(x\mathbf{P}_Cy)|, \frac{N_{\textnormal{mo}}}{3}\right\}$, except possibly $x\mathbf{P}_Cy$. \end{theorem}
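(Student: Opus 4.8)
The plan is to derive both halves of the statement from Lemma~\ref{QBandMainLemmaForOpRing}, together with the structural facts about critical mosaics recorded in Propositions~\ref{BasicPropertiesCricMosProp} and~\ref{LowGenus+OneFaceCase}. \emph{Length equality.} Since M0) already gives $|E(\mathbf{P}_C)|\le\frac{2N_{\textnormal{mo}}}{3}$, it suffices to exclude $|E(\mathbf{P}_C)|<\frac{2N_{\textnormal{mo}}}{3}$. Assuming this, let $p$ be an endpoint of $\mathbf{P}_C$ and let $p^{-}$ be the neighbor of $p$ on $C$ lying in $V(C)\setminus V(\mathbf{P}_C)$; this exists because $C$ is an open ring and, by Proposition~\ref{LowGenus+OneFaceCase}, an induced cycle. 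Extend the precoloring by assigning $p^{-}$ a color distinct from that of $p$, obtaining a list-assignment $L'$ for which $C$ carries the longer precolored path $\mathbf{P}_C':=p^{-}p\mathbf{P}_C$, and set $\mathcal{T}':=(\Sigma,G,\mathcal{C},L',C_*)$. I would then check that $\mathcal{T}'$ is again a mosaic: M0) holds as $|E(\mathbf{P}_C')|\le\frac{2N_{\textnormal{mo}}}{3}$; M2) is vacuous; M3)--M5) are unaffected since the embedding does not change and passing to $\mathbf{P}_C'$ only shrinks $\mathpzc{w}(C)$, while $\textnormal{Rk}(C)=2N_{\textnormal{mo}}^2$ is constant for open rings. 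The real content is M1): no chord of $C$ ends in $\mathring{\mathbf{P}}_C'$ (immediate, $C$ being induced), $\mathbf{P}_C'$ is semi-shortcut-free, and $C$ is highly $(L',V(\mathbf{P}_C'))$-predictable. For predictability I would use Proposition~\ref{BasicPropertiesCricMosProp}: every non-ring face of $G$ is a triangle and every vertex outside $\bigcup_{H\in\mathcal{C}}V(H)$ has list size exactly $5$, so a vertex $v\in D_1(V(\mathbf{P}_C'))\setminus V(C)$ meets $C$ in a subpath of length at most one and hence loses at most two colors, keeping an $L'_\psi$-list of size at least three; and for the semi-shortcut-free property, a short path joining two vertices of $\mathbf{P}_C'$ and avoiding $C$ would yield a short proper generalized chord of $C$ with an endpoint in $\{p^{-}\}$ or in $V(\mathbf{P}_C)$, excluded by Lemma~\ref{QBandMainLemmaForOpRing} (and, when the endpoint is $p^{-}$, by Corollary~\ref{NonRingSeparatingCor}). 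Once $\mathcal{T}'$ is a mosaic we have $\sum_{v\in V(G)}|L'(v)|<\sum_{v\in V(G)}|L(v)|$, so criticality makes $G$ $L'$-colorable, hence $L$-colorable --- a contradiction.

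\emph{No short shortcuts.} Fix $x,y\in V(\mathbf{P}_C)$ and suppose toward a contradiction that $G$ has an $(x,y)$-path $R\ne x\mathbf{P}_Cy$ with $|E(R)|\le\min\{|E(x\mathbf{P}_Cy)|,\frac{N_{\textnormal{mo}}}{3}\}$; among all such violations, over all choices of $x,y$, take one with $|E(R)|$ minimum. Using that $C$ is an induced cycle (no chord of $C$ can be a maximal $C$-avoiding subpath of $R$) and that $\mathbf{P}_C$ is semi-shortcut-free, a routine splitting-and-rerouting argument forces $R$ to be internally disjoint from $C$, the remaining degenerate possibility --- that $R$ is the arc of $C$ complementary to $x\mathbf{P}_Cy$, so that $C$ itself is a facial cycle of length at most $N_{\textnormal{mo}}$ --- being ruled out by coloring $\mathbf{P}_C$ and handling the short residual arc via M1) together with Theorems~\ref{thomassen5ChooseThm} and~\ref{Two2ListTheorem}, as at the end of the proof of Proposition~\ref{LowGenus+OneFaceCase}. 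Thus $R$ is a proper generalized chord $Q$ of $C$ with both endpoints in $V(\mathbf{P}_C)$ and $|E(Q)|\le\frac{N_{\textnormal{mo}}}{3}$, so $Q_{\textnormal{aug}}\cap\mathbf{P}_C=x\mathbf{P}_Cy$, and Lemma~\ref{QBandMainLemmaForOpRing}(2) gives $|E(\mathbf{P}_C)|+|E(Q)|>|E(x\mathbf{P}_Cy)|+\frac{2N_{\textnormal{mo}}}{3}$. Combined with $|E(Q)|=|E(R)|\le|E(x\mathbf{P}_Cy)|$ and M0), this yields $|E(\mathbf{P}_C)|>\frac{2N_{\textnormal{mo}}}{3}$, a contradiction.

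The main obstacle is the verification of M1) for the extended path $\mathbf{P}_C'$ in the first part: one must rule out a vertex $v\notin V(C)$ forming a triangle with $p$ and $p^{-}$ that would leave $v$ with an $L'$-list of size two, and any short $2$-chord running from $p^{-}$ into the interior of $\mathbf{P}_C$; both require the triangulation and distance structure of a critical mosaic (Propositions~\ref{BasicPropertiesCricMosProp}, \ref{LowGenus+OneFaceCase}), Lemma~\ref{QBandMainLemmaForOpRing}, and Corollary~\ref{NonRingSeparatingCor}. The minimality reduction in the second part is then routine once this structural control is in place.
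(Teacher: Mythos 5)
Your second half (ruling out short shortcuts) follows the paper's route closely: reduce by minimality to a proper generalized chord $Q$ of $C$ with both endpoints in $V(\mathbf{P}_C)$, so that $Q_{\textnormal{aug}}\cap\mathbf{P}_C = x\mathbf{P}_Cy$, and then part 2) of Lemma~\ref{QBandMainLemmaForOpRing} combined with M0) gives the contradiction. The degenerate case where $R$ winds around $C$ is one the paper dispatches by observing that such an $R$ is a \emph{short $C$-band}: its augmented cycle is $C$ itself, of length at most $N_{\textnormal{mo}}$, and the side of $C$ carrying the graph contains either another ring or a noncontractible closed curve by Proposition~\ref{LowGenus+OneFaceCase}, so part 1) of Lemma~\ref{QBandMainLemmaForOpRing} applies directly. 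Your appeal to Theorems~\ref{thomassen5ChooseThm} and~\ref{Two2ListTheorem} as at the end of Proposition~\ref{LowGenus+OneFaceCase} does not work here once $|\mathcal{C}|>1$, but this is a repairable detour.

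Your first half (length equality) genuinely departs from the paper and does not go through. The paper's argument is two lines: since $C$ is induced and every non-ring face is a triangle, there is a 2-chord $Q$ of $C$ whose endpoints are consecutive vertices of $\mathbf{P}_C$, and part 2) of Lemma~\ref{QBandMainLemmaForOpRing} applied to $Q$ gives $|E(\mathbf{P}_C)|+2 > 1 + \frac{2N_{\textnormal{mo}}}{3}$, forcing $|E(\mathbf{P}_C)| \ge \frac{2N_{\textnormal{mo}}}{3}$. Your alternative --- extend the precolored path by one vertex $p^{-}$ and invoke criticality --- hinges on M1) holding for the new precolored path $\mathbf{P}_C'$, and the justification you give for high $(L',V(\mathbf{P}_C'))$-predictability is wrong. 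The claim that each $v\in D_1(V(\mathbf{P}_C'))\setminus V(C)$ ``meets $C$ in a subpath of length at most one'' does not follow from Propositions~\ref{BasicPropertiesCricMosProp} or~\ref{LowGenus+OneFaceCase}, and it is false in a critical mosaic: nothing excludes a vertex $v$ adjacent simultaneously to $p^{-}$, $p_1$, and $p_2$ (a vertex adjacent to two consecutive vertices of $\mathbf{P}_C$ is exactly the triangulating 2-chord the paper's own argument exploits). For such a $v$ with $|L(v)|=5$ and $|L_\pi(v)|=3$, it may be impossible to choose a color $c\in L(p^{-})\setminus\{\pi(p_1)\}$ avoiding $L_\pi(v)$: the set $L(p^{-})\setminus\{\pi(p_1)\}$ has only two colors, and both could lie in $L_\pi(v)$, so $v$ would be left with only two colors and M1) would fail for $\mathcal{T}'$. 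Part 2) of Lemma~\ref{QBandMainLemmaForOpRing} constrains 2-chords with \emph{both} endpoints in $V(\mathbf{P}_C)$; it says nothing about the 2-chords $p^{-}vp_1$ or $p^{-}vp_2$, and Corollary~\ref{NonRingSeparatingCor} does not help either because the relevant cycle need not separate $C$ from anything. The direct application of part 2) of Lemma~\ref{QBandMainLemmaForOpRing} to a triangulating 2-chord across an edge of $\mathbf{P}_C$ is the clean way to close this half.
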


\begin{proof} Suppose that $|E(\mathbf{P}_C)|\neq\frac{2N_{\textnormal{mo}}}{3}$. Since $|E(\mathbf{P}_C)|\leq\frac{2N_{\textnormal{mo}}}{3}$ and $N_{\textnormal{mo}}$ is a multiple of three, we have $|E(\mathbf{P}_C)|<\frac{2N_{\textnormal{mo}}}{3}$. By Proposition \ref{LowGenus+OneFaceCase}, $C$ is induced, so, by our triangulation conditions, there is a 2-chord of $C$ whose endpoints are consecutive vertices of $\mathbf{P}_C$. Applying 2) of Lemma \ref{QBandMainLemmaForOpRing} to this 2-chord of $C$, we have $2+(|E(\mathbf{P}_C)|-1)>\frac{2N_{\textnormal{mo}}}{3}$, contradicting our assumption that $|E(\mathbf{P}_C)|<\frac{2N_{\textnormal{mo}}}{3}$. Thus, $|E(\mathbf{P}_C)|=\frac{2N_{\textnormal{mo}}}{3}$. Now let $x,y\in V(\mathbf{P}_C)$, and suppose there is a path $P$, distinct from $x\mathbf{P}_Cy$, such that $|E(P)|\leq\min\left\{|E(x\mathbf{P}_Cy)|, \frac{N_{\textnormal{mo}}}{3}\right\}$. Since $|E(\mathbf{P}_C)|\leq\frac{2N_{\textnormal{mo}}}{3}$, it follows that there is either a short $C$-band, contradicting 1) of Lemma \ref{QBandMainLemmaForOpRing}, or there is a subpath of $P$ which is a proper generalized chord of $C$ violating Theorem \ref{GeneralChordCritMosaicOpMain0}. In any case, we have a contradiction. \end{proof}

Bringing together the result above with the results of Section \ref{GenChordClosedRingCritSec} and \ref{GenChordOpenRingCritSec2}, we obtain the following.

\begin{theorem}\label{NOver4GoodSideCorCritMos}
Let $\mathcal{T}=(\Sigma, G, \mathcal{C}, L, C_*)$, let $C\in\mathcal{C}$, and let $Q$ be a generalized chord of $Q$ with $|E(Q)|\leq \frac{N_{\textnormal{mo}}}{3}$. Let $G=G_0\cup G_1$ be the natural $Q$-partition of $G$. Then there exists a unique $j\in\{0,1\}$ such that
\begin{enumerate}[label=\Alph*)]
\itemsep-0.1em
\item Each $C'\in\mathcal{C}\setminus\{C\}$ lies in $G_j$ and the unique open component of $\Sigma\setminus (C\cup Q)$ whose closure contains $G_{1-j}$ is a disc; AND
\item If $C$ is an open ring, then the following hold.
\begin{enumerate}[label=\roman*)]
\itemsep-0.1em
\item $|E(Q)|+|E(\mathbf{P}_C\cap G_j)|>\frac{2N_{\textnormal{mo}}}{3}$. In particular, if $Q$ is disjoint to $\mathring{\mathbf{P}}_C$, then $\mathbf{P}_C\subseteq G_j$; AND
\item If $Q$ is a proper generalized chord of $C$ and both endpoints of $Q$ lie in $\mathbf{P}_C$, then $\mathbf{P}_C\cap G_{1-j}$ has one connected component and $\mathbf{P}_C\cap G_j$ has two connected components.
\end{enumerate} 
\end{enumerate}\end{theorem}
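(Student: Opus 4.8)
The plan is a case analysis on whether the ring $C$ is closed or open and, in the open case, on whether the generalized chord $Q$ is proper or improper; in each case one locates the ``good'' half $j$ and then reads off A) and B) from the appropriate theorem of Sections \ref{GenChordClosedRingCritSec}--\ref{BandOpenRingCritMosaicProperSec}. Uniqueness of $j$ is uniform: by Proposition \ref{LowGenus+OneFaceCase}, $|\mathcal{C}|+g(\Sigma)>1$, so either $\mathcal{C}\setminus\{C\}\neq\varnothing$ — and then $j$ is forced because no ring other than $C$ can lie in both halves, such a ring being contained in $G_0\cap G_1=Q$ — or $g(\Sigma)>0$, and then $j$ is forced because exactly one of the two distinguished components of $\Sigma\setminus(C\cup Q)$ fails to be a disc, all cycles of $C\cup Q$ being contractible.

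If $C$ is closed, then A) is exactly Theorem \ref{ShortGenChordClosedRingThm} (using $|E(Q)|\leq N_{\textnormal{mo}}/3$) and B) is vacuous. If $C$ is open and $Q$ is improper — a cycle of length at most $N_{\textnormal{mo}}/3$ meeting $C$ in one vertex $v_1$ — then either $Q$ bounds a triangular face (that triangle being the disc half, all other rings on the other half) or $Q$ is a separating cycle of length at most $N_{\textnormal{mo}}$ at distance at most about $|E(\mathbf{P}_C)|=\frac{2N_{\textnormal{mo}}}{3}$ from $\mathpzc{w}(C)$ (Theorem \ref{QBandMainResCondensed} bounds where $v_1$ can sit in $\mathbf{P}_C$); since $\textnormal{Rk}(C)=2N_{\textnormal{mo}}^2$ dominates, Corollary \ref{NonRingSeparatingCor} puts every ring in $\textnormal{Ext}(Q)$ and makes $Q$ inward contractible, which gives A) with $j$ the half containing $C$, B)i) because $\mathbf{P}_C\subseteq C$ lies there, and B)ii) vacuously.

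The substantial case is $C$ open with $Q$ a proper generalized chord, $|E(Q)|\leq N_{\textnormal{mo}}/3$. Let $C_0,C_1$ be the two cycles of $C\cup Q$ other than $C$. Using Claim \ref{SplittingChordUniquelySpecifiedClaim} I would split into three sub-cases. If one of $\mathbf{P}_C\cap C_0,\mathbf{P}_C\cap C_1$ is a single vertex, all of $\mathbf{P}_C$ sits on one half and Theorem \ref{GeneralChordCritMosaicOpMain0} applies directly with that half as its ``$G_0$'' (B)i) trivial, B)ii) vacuous). If one is disconnected, then $Q$ has both endpoints in $\mathbf{P}_C$, 2) of Lemma \ref{QBandMainLemmaForOpRing} gives $|E(Q)|>|E(Q_{\textnormal{aug}}\cap\mathbf{P}_C)|$, so $Q_{\textnormal{aug}}$ is a short separating cycle (or bounds a triangle) near $C$ and Corollary \ref{NonRingSeparatingCor} gives A), with B)ii) reading off Claim \ref{SplittingChordUniquelySpecifiedClaim} and B)i) being that same inequality, after noting $|E(\mathbf{P}_C\cap G_j)|+|E(\mathbf{P}_C\cap G_{1-j})|=\frac{2N_{\textnormal{mo}}}{3}$. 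And if $Q$ is $\mathbf{P}_C$-splitting, its interior endpoint cuts $\mathbf{P}_C$ into $\mathbf{P}'\cup\mathbf{P}''$ with $|E(\mathbf{P}'')|\leq|E(\mathbf{P}')|$; if $|E(\mathbf{P}')|+|E(Q)|\leq\frac{2N_{\textnormal{mo}}}{3}$, then both halves qualify as ``$G_0$'' for Theorem \ref{GeneralChordCritMosaicOpMain0}, and applying it with each choice forces $\mathcal{C}\setminus\{C\}=\varnothing$ and both halves to be discs, contradicting $|\mathcal{C}|+g(\Sigma)>1$; hence $|E(\mathbf{P}')|+|E(Q)|>\frac{2N_{\textnormal{mo}}}{3}$, the theorem applies with the $\mathbf{P}'$-half as ``$G_0$'' alone, and this gives A) with $j$ that half and B)i) ($=|E(Q)|+|E(\mathbf{P}')|>\frac{2N_{\textnormal{mo}}}{3}$), B)ii) being vacuous.

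The hard part is exactly this open-ring, proper-chord analysis — making the labeling of $\{G_0,G_1\}$ consistent between conclusion A) (which half holds the rings) and conclusion B) (which half holds the long connected piece of $\mathbf{P}_C$), and pinning the strict inequality in B)i). The decisive inputs are Theorem \ref{QBandMainResCondensed} (which forbids short paths between vertices of $\mathbf{P}_C$, ruling out degenerate $Q$ such as one joining the two endpoints of $\mathbf{P}_C$, and supplying $|E(Q)|>|E(Q_{\textnormal{aug}}\cap\mathbf{P}_C)|$), the ``apply Theorem \ref{GeneralChordCritMosaicOpMain0} in both orientations'' device in the splitting case, and 1) of Lemma \ref{QBandMainLemmaForOpRing} (absence of short $C$-bands) for the residual ``$\mathbf{P}_C$ entirely inside a short region'' possibility.
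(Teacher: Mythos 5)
Your proposal is correct and follows essentially the same route as the paper: establish uniqueness via Proposition \ref{LowGenus+OneFaceCase}, handle closed rings by Theorem \ref{ShortGenChordClosedRingThm}, handle the improper case via Corollary \ref{NonRingSeparatingCor} plus $|E(\mathbf{P}_C)|=\tfrac{2N_{\textnormal{mo}}}{3}$ from Theorem \ref{QBandMainResCondensed}, and in the open/proper case split by where the endpoints of $Q$ land on $\mathbf{P}_C$, invoking Theorem \ref{GeneralChordCritMosaicOpMain0} (applied in both orientations in the splitting sub-case) and Lemma \ref{QBandMainLemmaForOpRing}. The only cosmetic divergences are that you organize the proper-chord sub-cases through Claim \ref{SplittingChordUniquelySpecifiedClaim} where the paper counts endpoints of $Q$ in $\mathring{\mathbf{P}}_C$, and that in the both-endpoints-in-$\mathbf{P}_C$ sub-case you route through Corollary \ref{NonRingSeparatingCor} applied to $Q_{\textnormal{aug}}$, whereas the paper invokes 1) of Lemma \ref{QBandMainLemmaForOpRing} directly (both yield A), and your derivation of B)i) from 2) of that lemma together with $|E(\mathbf{P}_C)|=\tfrac{2N_{\textnormal{mo}}}{3}$ is the right bookkeeping).
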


\begin{proof} Given a $j\in\{0,1\}$, we call $j$ \emph{ideal} if it satisfies both of A) and B). Let $U$ be the unique connected component of $\Sigma\setminus C$ such that $Q\not\subseteq\textnormal{Cl}(U)$, i.e the unique component of $\Sigma\setminus C$ containing no vertices of $G$. Now, $U$ is an open disc, and, by Proposition \ref{LowGenus+OneFaceCase}, $g(\Sigma)+|\mathcal{C}\setminus\{C\}|>0$. It follows that at most one $j\in\{0,1\}$ can satisfy A), so we just need to show that there exists an ideal $j\in\{0,1\}$. If $C$ is a closed ring, it it follows from Theorem \ref{ShortGenChordClosedRingThm} that there is an ideal $j\in\{0,1\}$. Now suppose that $C$ is an open ring. Consider the following cases. 

\textbf{Case 1:} $Q$ is an improper generalized chord of $C$ 

In this case, we let $j\in\{0,1\}$ be the unique index such that $C\subseteq G_j$. Thus, $\mathbf{P}_C\subseteq G_j$ and $V(\mathbf{P}_C\cap G_j)$ consists of at most one vertex. By Theorem \ref{QBandMainResCondensed}, $|E(\mathbf{P}_C)|=\frac{2N_{\textnormal{mo}}}{3}$, so B i) is satisfied. We just need to check A). If $C\neq C_*$, then it follows from Corollary ref{NonRingSeparatingCor} that $j$ is ideal, so suppose that $C=C_*$. In that case, we have $G_j=\textnormal{Ext}(Q)$, and since $\textnormal{Rk}(C)=2N_{\textnormal{mo}}^2$, it follows from a) of Corollary \ref{NonRingSeparatingCor} that $j$ is ideal. 

\textbf{Case 2:} $Q$ is a proper generalized chord of $C$

In this case, if neither endpoint of $Q$ lies in $\mathring{\mathbf{P}}_C$, then, by Theorem \ref{GeneralChordCritMosaicOpMain0}, there is an ideal $j\in\{0,1\}$. Now suppose precisely one endpoint of $Q$ lies in $\mathbf{P}_C$. As $|E(Q)|\leq\frac{N_{\textnormal{mo}}}{3}$ and $|E(\mathbf{P}_C)|\leq\frac{2N_{\textnormal{mo}}}{3}$, there is a $k\in\{0,1\}$ such that $|E(Q)|+|E(\mathbf{P}_C\cap G_k)|\leq\frac{2N_{\textnormal{mo}}}{3}$, so it follows from Theorem \ref{GeneralChordCritMosaicOpMain0} that every element of $\mathcal{C}\setminus\{C\}$ lies in $G_{1-k}$ and the unique open component of $\Sigma\setminus (C\cup Q)$ whose closure contains $G_k$ is a disc. As $g(\Sigma)+|\mathcal{C}\setminus\{C\}|>0$, it also follows from Theorem \ref{GeneralChordCritMosaicOpMain0} that $|E(Q)|+|E(\mathbf{P}_C\cap G_{1-k})|>\frac{2N_{\textnormal{mo}}}{3}$, so $1-k$ is ideal. Now suppose both endpoints of $Q$ lie in $\mathring{\mathbf{P}}_C$. Let $j\in\{0,1\}$ be the unique index such that $G_{1-j}\cap\mathbf{P}_C=Q_{\textnormal{aug}}\cap\mathbf{P}_C$. By 1) of Lemma \ref{QBandMainLemmaForOpRing}, $j$ is ideal. \end{proof}

\section{Rainbows and Consistent Paths}\label{RainbowsandConsistPathSec}

In the language of Section \ref{BlackBoxPIISec}, Theorem \ref{NOver4GoodSideCorCritMos} tells us that, given a critical mosaic $\mathcal{T}=(\Sigma, G, \mathcal{C}, L, C_*)$, each ring $C\in\mathcal{C}$ is uniquely $N_{\textnormal{mo}}/3$-determined in $G$ with respect to the list-assignment $L$. To complete the proof of Theorem \ref{AllMosaicsColIntermRes1-4}, we want to apply Theorem \ref{FaceConnectionMainResult} to produce a smaller mosaic from $\mathcal{T}$, but Theorem \ref{FaceConnectionMainResult} applies to facial cycles which all have lists of size at least three, and the elements of $\mathcal{C}$ have some precolored vertices. In Sections \ref{RedForOpRings}-\ref{RedNearRingsSec}, we show how we can perform a ``reduction" operation near each of the rings of $\mathcal{T}$, to give us a new tessellation $\mathcal{T}'$, whose underlying graph is a subgraph of $G$, where Theorem \ref{FaceConnectionMainResult} can be applied to $\mathcal{T}'$. We make this precise in Definition \ref{ReductionOpDefn}. The two results we need from this section to perform the reduction operations in Sections \ref{RedForOpRings}-\ref{RedNearRingsSec} are Theorems \ref{MainLinkingResultAlongFacialCycle} and \ref{LinkPlusOneMoreVertx}. To prove these, we first need some machinery and facts about planar graphs. 

\begin{defn}\label{GUniversalDefinition}
\emph{Let $H$ be a graph and $q\in V(H)$. We call $H$ a \emph{broken wheel with principal vertex $q$} if $H-q$ is a path of length at least two whose vertices are all adjacent to $q$. Given a length-two path $P:= pqp'\subseteq H$, a list-assignment $L$ for $V(H)$, and an $a\in L(p)$, we say that $a$ is \emph{$(P, H)$-universal} (with respect to $L$) if, for each $b\in L(q)\setminus\{a\}$ and each $c\in L(p)\setminus\{b\}$, there is an $L$-coloring of $\{p,q, p'\}$ which extends to $L$-color $H$ and uses $a,b,c$ on $p, q, p'$ respectively.}
 \end{defn}

\begin{defn}\label{RainbowDefnX} \emph{A \emph{rainbow} is a tuple $(G, C, P, L)$, where $G$ is a planar graph with outer cycle $C$, $P$ is a path on $C$ with $|E(P)|\geq 1$, and $L$ is a list-assignment for $V(G)$ such that each endpoint of $P$ has a nonempty list and furthermore, $|L(v)|\geq 3$ for each $v\in V(C\setminus P)$ and $|L(v)|\geq 5$ for each $v\in V(G\setminus C)$. }
 \end{defn}

\begin{defn}\label{GeneralAugCrownNotForLink}  \emph{Let $G$ be a graph with list-assignment $L$, and $P\subseteq G$ be a path with endpoints $p, p'$. We let $\textnormal{End}_L(P, G)$ denote the set of $L$-colorings $\phi$ of $\{p, p'\}$ such that any extension of $\phi$ to an $L$-coloring of $V(P)$ extends to $L$-color $G$.}
 \end{defn}

We usually drop the subscript $L$ or the coordinate $G$ from the notation above (or both) if these are clear from the context. The following is proven in Section 3 of \cite{JNevinHolepunchI}. 

\begin{theorem}\label{SumTo4For2PathColorEnds} Let $(G, C, P, L)$ be a rainbow, where $P:=p_0qp_1$ and $L(p_0)+L(p_1)\geq 4$. Then  $\textnormal{End}(P,G)\neq\varnothing$. Furthermore, either $|\textnormal{End}(P, G)|\geq 2$ or there is an even-length path $Q$ with $V(Q)\subseteq V(C)$, where $Q$ has endpoints $p_0, p_1$ and each vertex of $Q$ is adjacent to $q$.   \end{theorem}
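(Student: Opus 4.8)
The plan is to prove Theorem \ref{SumTo4For2PathColorEnds} by induction on $|V(G)|$, following the standard Thomassen-style discharging/peeling approach but tracking the extra structure needed to produce the path $Q$ in the degenerate case. First I would set up the induction and dispose of the base cases: if $V(G) = V(C)$ and $C$ is a triangle, then $G$ is a triangle $p_0 q p_1$ (possibly with chords, but there are none since it is already a triangle), and one checks directly that both colorings of $\{p_0,p_1\}$ with distinct colors lie in $\textnormal{End}(P,G)$ unless the lists force a collision — and when they do, $Q = p_0 q p_1$ itself is the required path. More generally, when $|L(p_0)| + |L(p_1)| \geq 4$ there are at least two ordered pairs $(a,b)$ with $a \in L(p_0)$, $b \in L(p_1)$, $a \neq b$, so the count of $2$ in the conclusion really tracks how many such pairs survive as valid elements of $\textnormal{End}(P,G)$.

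The main induction would proceed by case analysis on the structure of $C$ near $P$, exactly as in Thomassen's proof (and as used for Theorem \ref{Two2ListTheorem} in \cite{2ListSize2PaperSeriesI} / \cite{JNevinHolepunchI}). If $C$ has a chord, split $G$ along the chord into two rainbows and apply induction to the piece containing $P$, then extend across the chord using Theorem \ref{thomassen5ChooseThm} on the other piece; the chord endpoints get effective lists of size $\geq 2$ which is enough. If $C$ is chordless, let $v$ be the neighbor of $p_1$ on $C$ other than $q$ (or handle the symmetric situation at $p_0$), delete $p_1$, and remove $\phi(p_1)$ — for each candidate color — from the lists of the interior neighbors of $p_1$; the neighbors of $p_1$ in $G \setminus C$ drop to lists of size $\geq 4 \geq 3$ when two colors are deleted, wait — here one must be careful and delete only one color, so interior neighbors keep size $\geq 4$, and $v$ keeps size $\geq 2$, making $(G - p_1, C', p_0 q v, L')$ a rainbow with $|L'(p_0)| + |L'(v)| \geq 4$ after choosing $\phi(p_1)$ appropriately from the $\geq 2$ available colors of $L(p_1)$ (using $|L(p_1)| \geq 3$, or $|L(p_0)| + |L(p_1)| \geq 4$ with one of them $\geq 2$). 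Apply induction to the smaller rainbow, then lift colorings back up by coloring $p_1$ last.

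The subtle part, and what I expect to be the main obstacle, is bookkeeping the dichotomy: showing that if $|\textnormal{End}(P,G)| < 2$ then the exceptional path $Q$ exists. This requires that in each inductive step, either we produce at least two distinct endpoint-colorings, or the ``bad'' configuration that collapses the count propagates down to the base case and witnesses a path $Q$ with all vertices in $V(C)$, all adjacent to $q$, of even length. The key is that the only obstruction to having two valid colorings is when, for every admissible color $a$ on $p_0$, there is exactly one forced color on $p_1$ and vice versa, and chasing this through the chordless-reduction step forces the deleted path $p_1, v, \dots$ of $C$-vertices to be a ``fan'' around $q$ — i.e., $q$-universality (Definition \ref{GUniversalDefinition}) fails in a structured way. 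I would isolate this as a lemma: in a chordless rainbow $(G,C,p_0qp_1,L)$ with $|L(p_0)|+|L(p_1)|\geq 4$, either $\textnormal{End}(P,G)$ has a color on $p_0$ that is $(P,G)$-universal (giving $\geq 2$ endpoint colorings), or $C - \{p_0\text{-side}\}$ is an even path of $q$-neighbors. Then the main proof is just: chord case $\Rightarrow$ reduce to rainbows with a universal color available on the shared vertex, so $\geq 2$; chordless case $\Rightarrow$ invoke the lemma. The parity of $Q$ comes out because each reduction step removes exactly one vertex of the fan, and the triangle base case contributes the length-$2$ path $p_0qp_1$.

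Throughout, the bounds are comfortable: interior vertices always retain lists of size $\geq 4$ after a single deletion, so Theorem \ref{thomassen5ChooseThm} applies to glue pieces back, and the only genuinely tight constraint is the size-$3$ lists on $C \setminus P$, which is exactly why the induction must delete a $C$-vertex adjacent to $p_1$ rather than an interior vertex. I would write the chord-splitting and the chordless-peeling as two clearly delineated cases, state and prove the fan lemma inline, and keep the $Q$-construction explicit (concatenating the fan paths recovered from each step), so that the even-length claim is manifest rather than an afterthought.
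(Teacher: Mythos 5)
Note first that the paper does not prove Theorem~\ref{SumTo4For2PathColorEnds}: it is imported as a black box from Section~3 of \cite{JNevinHolepunchI}, so there is no in-paper argument against which to compare yours. Assessed on its own terms, the central structural gap is that your chord/chordless dichotomy puts the exceptional path $Q$ on the wrong side. If $C$ is chordless, the only $C$-vertices adjacent to $q$ are $p_0$ and $p_1$, so the only candidate for $Q$ is the single edge $p_0p_1$, which has odd length; the exceptional alternative is therefore vacuous in the chordless case and you must prove $|\textnormal{End}(P,G)|\geq 2$ outright there. Conversely, the broken-wheel configuration of Definition~\ref{GUniversalDefinition} that actually realizes an even-length $Q$ occurs precisely when $q$ is the endpoint of chords of $C$, and splitting $G$ along a chord $qw$ with $w\in V(C)\setminus V(P)$ separates $p_0$ from $p_1$, so the 2-path $P$ does not survive intact on either side of the split. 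Your chord case, which ``applies induction to the piece containing $P$ and extends with Theorem~\ref{thomassen5ChooseThm},'' simply does not cover chords incident to $q$, and that is exactly where the hard content of the theorem lives: you get two rainbows $(G_1,C_1,p_0qw,L)$ and $(G_2,C_2,wqp_1,L)$ sharing the 2-path vertex $w$, and the theorem you are proving must be applied to both and their answers combined, which requires something like the universality machinery of Proposition~\ref{ForSplToEachWalk} rather than a one-sided inductive call.

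Beyond this, the base case is wrong: you propose $Q=p_0qp_1$ as a degenerate witness, but $q$ is not adjacent to itself and so cannot lie on $Q$, and in fact $|L(p_0)|+|L(p_1)|\geq 4$ already forces at least two proper colorings of $\{p_0,p_1\}$ even when they are adjacent (check the cases $(1,3)$ and $(2,2)$), so the exceptional clause never activates in the triangle base case; this also voids your parity argument, which is anchored to that nonexistent contribution. The ``fan lemma'' meant to carry the bookkeeping is only asserted, not proved, and as stated it is a restatement of the theorem in the one regime (chordless) where the fan alternative cannot occur anyway. Finally, peeling the endpoint $p_1$ of $P$ as you describe requires $|L(p_0)|\geq 2$ to keep the rainbow sum at $4$, while the hypothesis permits $|L(p_0)|=1$, and the lifting of an element of $\textnormal{End}(p_0qv, G-p_1)$ back to an element of $\textnormal{End}(P,G)$, uniformly over all admissible choices of $\phi(p_1)$ and all colors of $q$, is exactly the step that needs care and is left unaddressed. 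These are not bookkeeping details; they are where a Thomassen-style argument for this statement has to earn its conclusion.
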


Using Theorem \ref{SumTo4For2PathColorEnds}, the following is straightforward to check:

\begin{prop}\label{ForSplToEachWalk} Let $(G, C, P, L)$ be a rainbow, where $P=p_0qp_1$ and $|L(p_i)|\geq 2$ for each $i=0,1$. Then either 
\begin{enumerate}[label=\arabic*)]
\itemsep-0.1em
\item There is an $i\in\{0,1\}$ such that there is a $(P, G)$-universal color of $L(p_i)$; OR
\item There is a path $Q$ with $V(Q)\subseteq V(C-q)$, where $Q$ has endpoints $p_0, p_1$ and every vertex of $Q$ is adjacent to $q$, and furthemore, either $Q$ is an edge with $L(p_0)=L(p_1)$, or every internal vertex of $Q$ has a list of size three which contain $L(p_0)\cup L(p_1)$.
\end{enumerate}
\end{prop}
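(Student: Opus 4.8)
The plan is to deduce Proposition \ref{ForSplToEachWalk} directly from Theorem \ref{SumTo4For2PathColorEnds}, dividing into cases according to whether the list sizes at the endpoints add up to at least four or exactly four (i.e. both are exactly $2$). First I would handle the ``easy'' case where $|L(p_0)| + |L(p_1)| \geq 5$; say without loss of generality $|L(p_0)| \geq 3$. Here the goal is to show some color of $L(p_0)$ is $(P,G)$-universal. For a color $a \in L(p_0)$, $(P,H)$-universality fails only because of a bad pair $(b,c)$ with $b \in L(q)\setminus\{a\}$, $c \in L(p_1)\setminus\{b\}$ for which no valid extension exists; the point is to restrict $L(p_0)$ to $\{a\}$, apply Theorem \ref{SumTo4For2PathColorEnds} to the resulting rainbow (the hypothesis $|L(p_0)| + |L(p_1)| \geq 4$ is then met as $1 + 2 \geq 4$? no — so this needs $|L(p_1)| \geq 3$; I'd instead restrict carefully so the sum stays $\geq 4$, e.g. restrict $L(p_0)$ to a $1$-list only when $|L(p_1)| \geq 3$, and otherwise restrict $L(p_1)$). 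The cleaner route: by symmetry assume $|L(p_0)| \geq |L(p_1)|$; if $|L(p_0)| \geq 3$, for each $a \in L(p_0)$ restricting $L(p_0) := \{a\}$ keeps the sum $\geq 1 + 2 = 3$, which is not enough, so one actually argues: $\textnormal{End}(P,G) \neq \varnothing$ already, and one shows that if no universal color exists, repeated application of the ``furthermore'' clause of Theorem \ref{SumTo4For2PathColorEnds} forces the path $Q$ with all vertices adjacent to $q$, and then a counting/list-size argument on the internal vertices of $Q$ (each has a $3$-list since it lies on $C \setminus P$) pins down that the lists of the internal vertices contain $L(p_0) \cup L(p_1)$, or $Q$ is a single edge.

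More concretely, the main work is in the case $|L(p_0)| = |L(p_1)| = 2$. Apply Theorem \ref{SumTo4For2PathColorEnds} directly: either $|\textnormal{End}(P,G)| \geq 2$, or there is an even path $Q \subseteq C$ from $p_0$ to $p_1$ all of whose vertices are adjacent to $q$. In the latter subcase I would analyze $Q$: if $Q$ is a single edge then $p_0 p_1 \in E(G)$, and since both have $2$-lists and both are adjacent to $q$, one checks $L(p_0) = L(p_1)$ must hold (else one can color greedily and get a universal color after all — here I'd argue that if $L(p_0) \neq L(p_1)$ pick $a \in L(p_0) \setminus L(p_1)$; then $a$ is universal), giving alternative 2). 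If $Q$ has length $\geq 2$, its internal vertices lie on $C \setminus P$ hence have $3$-lists; using that each internal vertex is adjacent to $q$ and to its two neighbors on $Q$, a short propagation argument (colour $p_0, p_1$ with any legal colours, then try to extend along $Q$ avoiding $q$'s colour) shows that if the internal lists did \emph{not} all contain $L(p_0) \cup L(p_1)$, one could find room to get a universal colour, contradiction; so alternative 2) holds. In the subcase $|\textnormal{End}(P,G)| \geq 2$, I claim a universal colour exists: with two distinct colourings $\phi_1, \phi_2 \in \textnormal{End}(P,G)$ of $\{p_0,p_1\}$, and $2$-lists at each endpoint, a pigeonhole on which coordinate they differ in, combined with the definition of $\textnormal{End}$, produces a colour of $L(p_0)$ or $L(p_1)$ that works against every admissible pair at $q$ — this is the delicate bookkeeping step.

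The step I expect to be the main obstacle is precisely this last bookkeeping: translating ``$|\textnormal{End}(P,G)| \geq 2$'' or ``the obstruction path $Q$ exists'' into the existence of a $(P,G)$-\emph{universal} colour in the sense of Definition \ref{GUniversalDefinition}, which quantifies over \emph{all} pairs $(b,c) \in (L(q)\setminus\{a\}) \times (L(p_1)\setminus\{b\})$, not just over colourings that extend a fixed colouring of $P$. The trick will be to observe that for fixed $a$, checking $(P,H)$-universality of $a$ is equivalent to checking that the restricted rainbow $(G, C, P, L')$ with $L'(p_0) = \{a\}$, $L'$ otherwise equal to $L$, has the property that \emph{every} $L'$-colouring of $V(P)$ extends — i.e. that $\textnormal{End}(P,G)$ computed for $L'$ is ``full'' on $\{p_0\} \times L(p_1)$ minus the diagonal. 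One then feeds $L'$ into Theorem \ref{SumTo4For2PathColorEnds} (the sum hypothesis is met since $|L'(p_0)| + |L'(p_1)| = 1 + |L(p_1)| \geq 1 + 2 = 3$ — wait, this is $3 < 4$, so I'd instead need $|L(p_1)| \geq 3$, meaning I should only restrict the \emph{larger} list and keep this manoeuvre for the $|L(p_0)|+|L(p_1)| \geq 5$ case, and handle the $2{+}2$ case purely through the $Q$-path dichotomy). Getting these case splits to line up so that every invocation of Theorem \ref{SumTo4For2PathColorEnds} has its hypothesis satisfied, and so that the two output alternatives of that theorem map cleanly onto the two alternatives of Proposition \ref{ForSplToEachWalk}, is the crux; everything else is routine finite-list casework.
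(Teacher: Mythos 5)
Your overall strategy — deduce the proposition from Theorem \ref{SumTo4For2PathColorEnds} by casework on list sizes and on the two outcomes of that theorem — is the natural one, and the paper itself presents this proposition as "straightforward to check" from that theorem. But there is a genuine gap in the step you flagged as the "delicate bookkeeping," and it is not just delicate: as written, the step is false. You claim that in the case $|L(p_0)| = |L(p_1)| = 2$, having $|\textnormal{End}(P,G)| \geq 2$ yields a universal colour via "a pigeonhole on which coordinate they differ in." That pigeonhole succeeds only when the two elements of $\textnormal{End}(P,G)$ share a coordinate: if $\phi_1, \phi_2$ agree on $p_0$, then $\textnormal{End}(P,G)$ contains a full "row" $\{a\} \times L(p_1)$, and $a$ is universal; symmetrically for $p_1$. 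But nothing in the stated theorem prevents the \emph{diagonal} configuration $\textnormal{End}(P,G) = \{(a_1, c_1), (a_2, c_2)\}$ with $a_1 \neq a_2$ and $c_1 \neq c_2$, in which case no colour of $L(p_0)$ or $L(p_1)$ is a candidate for universality, and the theorem's $|\textnormal{End}| \geq 2$ branch gives you no path $Q$ either. You would then conclude neither alternative of the proposition, so the argument does not close. To finish one must either show the diagonal configuration forces the structured path $Q$ after all, or invoke finer information about $\textnormal{End}(P,G)$ than the stated $|\textnormal{End}| \geq 2$ dichotomy provides (the underlying result in the cited source is stronger than the paraphrase given here).

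A second, lesser issue is that the propagation argument you sketch for the path case — showing that the internal $3$-lists of $Q$ must contain $L(p_0) \cup L(p_1)$ or else a universal colour appears — is left entirely at the level of intent, and the case $|L(p_0)| + |L(p_1)| \geq 5$ is similarly handled only by gesture (you yourself notice in the middle of that paragraph that the list-restriction manoeuvre leaves the sum hypothesis of Theorem \ref{SumTo4For2PathColorEnds} unsatisfied). None of these can be waved away as routine, because the quantifier structure of $(P,G)$-universality (a single $a$ must defeat \emph{every} admissible pair $(b,c)$) is strictly stronger than what one extracts from a single element, or even two elements, of $\textnormal{End}(P,G)$. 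You have correctly identified where the difficulty lives; you have not yet resolved it.
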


The result below about 3-paths is obtained from T4) of Theorem 4.5 of \cite{JNevinHolepunchI}. The statement below is weaker than the corresponding statement in \cite{JNevinHolepunchI}, but it is sufficient for our purposes.

\begin{theorem}\label{ObtFromT4Split}  Let $(G, C, P, L)$ be a rainbow, where $P:=p_0q_0q_1p_1$ and at least endpoint of $P$ has an $L$-list of size at least three. Then there is an $L$-coloring $\psi$ of $\{p_0, p_1\}$ such that
\begin{enumerate}[label=\arabic*)]
\itemsep-0.1em
\item All but at most two extensions of $\psi$ to $L$-colorings of $V(P)$ extend to $L$-color $G$; AND
\item Either $q_0, q_1$ have a common neighbor in $C\setminus P$, or all but at most one extension of $\psi$ to $L$-colorings of $V(P)$ extends to $L$-color $G$;
\end{enumerate}
  \end{theorem}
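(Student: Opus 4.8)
The plan is to reduce Theorem \ref{ObtFromT4Split} to the already-available results T4) of \cite{JNevinHolepunchI} together with Theorems \ref{SumTo4For2PathColorEnds} and \ref{ObtFromT4Split}'s two-path predecessor, and to handle the extra vertex $q_1$ by case analysis on $L(q_1)$. First I would set up notation: let $(G,C,P,L)$ be a rainbow with $P = p_0q_0q_1p_1$, and without loss of generality assume $|L(p_0)|\ge 3$. The idea is to consider, for each color $c\in L(p_1)$, the graph $G$ with $p_1$ precolored $c$; deleting $p_1$ and removing $c$ from the lists of its neighbors yields a rainbow $(G-p_1, C', p_0q_0q_1, L_c)$ on a shorter path, where $q_1$ now has a list of size at least $2$ (it lost at most one color) and $p_0$ still has a list of size $\ge 3$. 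Applying the 2-path machinery (Theorem \ref{SumTo4For2PathColorEnds} in the form that controls $\textnormal{End}$, or the sharper T4)-type statement) to this shorter rainbow gives, for each $c$, control over which $L$-colorings of $\{p_0,q_0\}$ propagate to color $G-p_1$; stitching these together across the (at most $|L(p_1)|$) choices of $c$ is where the bookkeeping lives.

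The main structural dichotomy I would extract is: either there is a choice of color on $p_0$ — call it the value $\psi(p_0)$ — which is ``universal'' in the strong sense that for all but a controlled number of completions on $q_0,q_1,p_1$ the coloring extends, or else the obstruction forces $q_0$ and $q_1$ to have a common neighbor in $C\setminus P$. Concretely, I would invoke Proposition \ref{ForSplToEachWalk} (the split-to-walk statement) at the ``inner'' end: after precoloring $p_0$ suitably, the pair $q_0,q_1$ sits on a $2$-path inside a rainbow, and the only way to fail to find a universal color is the degenerate configuration where $q_0,q_1$ are joined by a short path on $C\setminus P$ every vertex of which is adjacent to the common middle vertex — but in our setting the relevant ``middle'' is such that this collapses exactly to $q_0,q_1$ sharing a neighbor in $C\setminus P$. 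That yields conclusion 2): either $q_0,q_1$ have a common neighbor in $C\setminus P$, or we get the sharper ``all but at most one'' bound; and conclusion 1) (``all but at most two'') then holds unconditionally because even in the common-neighbor case at most two bad completions survive.

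The cleanest route is probably: (i) fix $\psi(p_0)$ to be a color that is $(P',G')$-universal for the reduced $3$-path $P' = p_0q_0q_1$ in each of the relevant reductions, using Theorem \ref{SumTo4For2PathColorEnds}/Proposition \ref{ForSplToEachWalk}; (ii) choose $\psi(p_1)$ by a counting argument — among the $|L(p_1)|\ge 1$ colors, pick one avoiding the at most one ``bad'' color coming from $q_1$'s forced behavior, unless $|L(p_1)|=1$, in which case a direct argument using that $p_0$ has a $3$-list handles it; (iii) verify that with these choices the number of non-extending completions of $V(P)$ is at most two in general, and at most one unless the exceptional $q_0$–$q_1$ common-neighbor configuration occurs. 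Throughout I would lean on the fact, from the statement of T4), that these bounds are already known for the full path when one endpoint has a $3$-list — so in practice this is a matter of correctly specializing the \cite{JNevinHolepunchI} statement and checking that the weaker bounds claimed here ($\le 2$, resp. $\le 1$) follow from the stronger ones there.

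The hard part will be the interaction between the two ends: the choice of color on $p_1$ (or the reduction $G\mapsto G-p_1$) changes the rainbow in which we must find a universal color on $p_0$, so one has to choose $\psi(p_0)$ and $\psi(p_1)$ compatibly rather than sequentially, and one has to make sure the ``at most two'' exceptional completions are not double-counted across the two sources of obstruction (one from the $q_1$-end, one from the $q_0$-end). I expect that a careful application of T4) of \cite{JNevinHolepunchI} to $P$ directly — rather than reducing to a $2$-path — sidesteps most of this, with the present statement being a strict weakening obtained by discarding the finer information in T4); if so, the proof is essentially ``this is T4), weakened,'' plus a one-line remark reconciling the common-neighbor condition here with the corresponding degenerate case in \cite{JNevinHolepunchI}.
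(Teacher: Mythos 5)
Your final paragraph lands on exactly what the paper does: the paper gives no proof of Theorem \ref{ObtFromT4Split}, stating only that it is ``obtained from T4) of Theorem 4.5 of \cite{JNevinHolepunchI}'' and is a weakening of the corresponding statement there. Your earlier exploration of a $2$-path reduction (precoloring and deleting $p_1$, then applying Theorem \ref{SumTo4For2PathColorEnds} and Proposition \ref{ForSplToEachWalk} to the residual rainbow) is, as you yourself correctly diagnose, complicated by the fact that $\psi(p_0)$ and $\psi(p_1)$ must be chosen compatibly across all reductions $G-p_1$, and you rightly conclude that a direct specialization of T4) sidesteps this entirely; that is precisely the route the paper takes.
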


To state the main results we prove in Section \ref{RainbowsandConsistPathSec}, we first define the following. 

\begin{defn}\label{uniquekDetLocalPVers} \emph{Let $G$ be an embedding on a surface $\Sigma$ and $L$ be a list-assignment for $V(G)$. Let $k\geq 2$ be an integer and $C$ be a uniquely $k$-determined contractible facial cycle of $G$. For any subpath $P$ of $C$,}
\begin{enumerate}[label=\emph{\arabic*)}]
\itemsep-0.1em
\item\emph{we associate to $P$ a vertex set $\textnormal{Sh}^k(P)$, where $v\in\textnormal{Sh}^k(P)$ if there is a proper generalized chord $Q$ of $C$ of length at most $k$, such that both endpoints of $Q$ lie in $P$, and $v\in V(G^{\textnormal{small}}_Q\setminus Q)$, and furthermore, $G^{\textnormal{small}}_Q\cap P$ has one connected component and $G^{\textnormal{\large}}_Q$ has two connected components.}
\item \emph{We say $P$ is \emph{$k$-consistent} if both of the following hold.}
\begin{enumerate}[label=\emph{\alph*)}]
\itemsep-0.1em
\item\emph{There is no chord of $C$ with one endpoint in $P$ and the other endpoint in $C\setminus P$}; AND
\item \emph{For any proper generalized chord $Q$ of $C$ of length at most $k$, if $Q$ has at both endpoints in $P$ and at least one endpoint in $\mathring{P}$, then $G^{\textnormal{small}}_Q\cap P$ has one connected component and $G^{\textnormal{large}}_Q\cap P$ has two connected components.}
\end{enumerate}
\item\emph{We let $\textnormal{Link}_L(P)$ denote the set of $L$-colorings $\phi$ of $V(P)\setminus\textnormal{Sh}^2(P)$ such that $\textnormal{Sh}^2(P)$ is $(L, \phi)$-inert in $G$. We drop the subscript if $L$ is clear from the context. Let $p,p'$ denote the endpoints of $P$. Given a $c\in L(p)$ and an $A\subseteq L(p')$, we say $c$ is \emph{$(P, p, A)$-linking} if, for any $d\in A$, there is an element of $\textnormal{Link}(P)$ using $c,d$ on $p,p'$ respectively.} 
\end{enumerate}
\end{defn}

We now note the following, which is straightforward to check. In particular, in the statement below, L1) is obtained by repeated applications of Theorem \ref{SumTo4For2PathColorEnds} and L2) is straightforward to check by induction.
 
\begin{theorem}\label{MainLinkingResultAlongFacialCycle}  Let $\Sigma, G, C, L, k$ be as in Definition \ref{uniquekDetLocalPVers}, where $k\geq 2$ and $C$ is induced. Let $P$ be a 2-consistent subpath of $C$ with endpoints $p_0, p_1$, where $V(C\setminus P)\neq\varnothing$ and each internal vertex of $P$ has an $L$-list of size at least three. For each $i=0,1$, let $A^i\subseteq L(p_i)$, where $A^0, A^1$ are nonempty. Then the following hold.
\begin{enumerate}[label=\arabic*)]
\itemsep-0.1em
\item [\mylabel{}{\textnormal{L1)}}] If $|A^0|+|A^1|\geq 4$, there is a $\phi\in\textnormal{Link}(P)$ with $\phi(p)\in A^0$ and $\phi(p)\in A^1$; AND
\item [\mylabel{}{\textnormal{L2)}}] If $|A^0|\geq 2$ and $|A^1|\geq 2$ and, for each $x\in D_1(C)$, the graph $G[N(x)\cap V(C)]$ is a path of length at most two, then either
\begin{enumerate}[label=\alph*)]
\itemsep-0.1em
\item   there is an $i\in\{0,1\}$ and a $c\in L(p_i)$ which is $(P, p_i, A^{1-i})$-linking; OR
\item Each vertex of $\mathring{P}$ has a list of size precisely three and there is a color of $A^0\cap A^1$ common to the lists of all the vertices of $P$. Furthermore, either $|A^0|=|A^1|=2$, or there are at least two colors of $A^0\cap A^1$ common to the lists of all the vertices of $P$. 
\end{enumerate}
\end{enumerate}
  \end{theorem}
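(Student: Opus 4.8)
The statement to prove is Theorem~\ref{MainLinkingResultAlongFacialCycle}, which asserts two "linking" facts, L1) and L2), for a $2$-consistent subpath $P = p_0 \cdots p_n$ of an induced, uniquely $k$-determined facial cycle $C$. The central observation is that $2$-consistency plus the condition that $C \setminus P \ne \varnothing$ forces the structure of $\textnormal{Sh}^2(P)$ to be extremely rigid: for any $2$-chord $Q$ with both endpoints in $P$ (and at least one in $\mathring P$), the partition $G = G^{\textnormal{small}}_Q \cup G^{\textnormal{large}}_Q$ has $G^{\textnormal{small}}_Q$ a disc meeting $P$ in a connected subpath, so the "shadow" $\textnormal{Sh}^2(P)$ is a union of discs hanging off consecutive triples of vertices of $P$. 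The plan is first to reduce both L1) and L2) to purely planar statements about a planar graph $G_P$, and then to prove the planar statements by induction along $P$ using Theorems~\ref{SumTo4For2PathColorEnds}, \ref{ObtFromT4Split}, and Proposition~\ref{ForSplToEachWalk} as the base-case/one-step machinery.

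\textbf{Step 1: Passing to a planar graph.} Let $G_P$ be the subgraph of $G$ consisting of $P$ together with all of $\textnormal{Sh}^2(P)$ and all $2$-chords of $C$ realizing it; because each relevant $G^{\textnormal{small}}_Q$ lies in a disc of $\Sigma \setminus (C \cup Q)$ and these discs are glued along subpaths of $P$, $G_P$ embeds in the plane with $P$ on its outer face, and in fact the outer walk of $G_P$ is a cycle $C_P$ containing $P$ as a subpath (the "return" part of $C_P$ being a second path whose internal vertices are in $\textnormal{Sh}^2(P)$, with $5$-lists). I would check that $(G_P, C_P, P, L)$ is a rainbow. The key compatibility claim is: $\phi \in \textnormal{Link}_L(P)$ if and only if $\phi$ (viewed as a coloring of $V(P) \setminus \textnormal{Sh}^2(P) = V(P)$, since $\textnormal{Sh}^2(P) \cap V(P) = \varnothing$) extends to an $L$-coloring of $V(P)$ from which $\textnormal{Sh}^2(P)$ can be safely colored, and by the inertness definition plus the disc structure this is equivalent to: every extension of $\phi$ to $V(P)$ extends to an $L$-coloring of $G_P$. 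Thus $\textnormal{Link}_L(P)$ is exactly $\textnormal{End}_L(P, G_P)$ when $|E(P)| = 2$, and for longer $P$ it is the analogous "all internal extensions extend" set. This reduces L1) to: in a rainbow with path $P$ of arbitrary length $\geq 1$ and $|A^0| + |A^1| \geq 4$, there is a coloring in $\textnormal{End}(P, G_P)$ with endpoint colors in $A^0, A^1$.

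\textbf{Step 2: Proving L1) by induction on $|E(P)|$.} The base case $|E(P)| = 1$ is trivial ($P$ is an edge, $p_0 p_1$, and $|A^0| + |A^1| \geq 4$ with $|A^i| \geq 1$ forces some $i$ with $|A^i| \geq 3$, so pick the other endpoint's color first and extend). The base case $|E(P)| = 2$ is precisely Theorem~\ref{SumTo4For2PathColorEnds}, which gives $\textnormal{End}(P, G_P) \ne \varnothing$ under $|L(p_0)| + |L(p_1)| \geq 4$; here I would restrict lists to $A^0, A^1$ first (they still sum to $\geq 4$) and apply it. For the inductive step with $|E(P)| \geq 3$, I would split $P$ at an internal vertex $q$ into $P' $ from $p_0$ to $q$ and $P''$ from $q$ to $p_1$, both shorter and still $2$-consistent subpaths of $C$; I'd color $q$ first (it has an $L$-list of size $\geq 3$), distributing the budget so that on one side the endpoint sets sum to $\geq 4$ and on the other side $q$'s remaining color count plus the other endpoint's count is $\geq 4$, then apply the inductive hypothesis to each side and glue. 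The arithmetic bookkeeping of the budget split, and handling the degenerate case in Theorem~\ref{SumTo4For2PathColorEnds} where $|\textnormal{End}(P,G_P)| = 1$ (the "even-length path $Q$ with all vertices adjacent to $q$" alternative), is the fiddly part; one tracks which endpoint colors are forced and verifies the glued coloring still lands in $A^0 \times A^1$.

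\textbf{Step 3: Proving L2).} Here $|A^0|, |A^1| \geq 2$ and the extra hypothesis is that every vertex of $D_1(C)$ sees $C$ in a path of length $\leq 2$ — this rules out the "broken-wheel-like" degeneracies, so the $(P,G_P)$-universal color machinery of Proposition~\ref{ForSplToEachWalk} and the $3$-path result Theorem~\ref{ObtFromT4Split} become available. Again I would induct on $|E(P)|$: the cases $|E(P)| \in \{1,2,3\}$ are handled directly by Proposition~\ref{ForSplToEachWalk} (for $|E(P)|=2$) and Theorem~\ref{ObtFromT4Split} (for $|E(P)|=3$), where alternative a) (a linking color at some endpoint) corresponds to the "$(P,G)$-universal color" conclusion and alternative b) (a common color in all lists of $P$, lists of size exactly three) corresponds to the exceptional path $Q$ in those theorems; one checks the "furthermore" clause about $|A^0| = |A^1| = 2$ versus two common colors by tracking list sizes. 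For the inductive step with $|E(P)| \geq 4$, split $P$ at an internal vertex again; if alternative a) holds on a sub-path one propagates the linking color outward, and if alternative b) holds on every piece, the common colors must agree across the splitting vertices (since each splitting vertex's list of size three must contain the common color of both adjacent pieces), yielding a global common color. The main obstacle I anticipate is this last gluing argument for b): one must show the exceptional structures on the two halves are compatible — i.e. that they share the splitting vertex's forced color and that the "all vertices adjacent to $q$" walks concatenate correctly — and must carefully rule out the case where one half is exceptional and the other has a universal color that happens not to propagate. This requires a careful case analysis keyed to which vertex of $C$ the chord-endpoints attach to, using $2$-consistency to prevent chords from crossing the split point. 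Throughout, I would keep the induction uniform by proving L1) and L2) for all subpaths simultaneously, since the inductive step of each may invoke the other on shorter pieces.
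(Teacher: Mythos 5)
The paper offers no detailed proof of Theorem~\ref{MainLinkingResultAlongFacialCycle} — it simply remarks that L1) follows by "repeated applications of Theorem~\ref{SumTo4For2PathColorEnds}" and that L2) is "straightforward to check by induction." Your high-level plan (induct on $|E(P)|$, split at an internal vertex, use Theorem~\ref{SumTo4For2PathColorEnds}, Proposition~\ref{ForSplToEachWalk}, and Theorem~\ref{ObtFromT4Split} as the one-step machinery) is in exactly this spirit. However, Step~1 of your proposal rests on a false structural claim that propagates into the rest of the argument.

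You assert that $\textnormal{Sh}^2(P) \cap V(P) = \varnothing$ and conclude that an element of $\textnormal{Link}_L(P)$ colors all of $V(P)$, allowing you to identify $\textnormal{Link}_L(P)$ with $\textnormal{End}_L(P,G_P)$. This is not so. By Definition~\ref{uniquekDetLocalPVers}, a vertex $v$ lies in $\textnormal{Sh}^2(P)$ whenever $v\in V(G^{\textnormal{small}}_Q\setminus Q)$ for a suitable $2$-chord $Q$ of $C$, and $G^{\textnormal{small}}_Q\setminus Q$ contains the interior of the arc $G^{\textnormal{small}}_Q\cap C$ — these are vertices of $P$. Concretely, if $Q=v_1v_2v_3$ is a $2$-chord whose small side meets $P$ in $v_1uv_3$, then $u\in\textnormal{Sh}^2(P)$ and $u\in V(\mathring P)$. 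So the domain of a $\textnormal{Link}(P)$-coloring is $V(P)\setminus\textnormal{Sh}^2(P)$, a proper subset of $V(P)$ in general, and the two sets $\textnormal{Link}(P)$ and $\textnormal{End}(P,G_P)$ are genuinely different objects: the former fixes only the unswept vertices of $P$ and asks that the swept vertices (together with anything inside the small sides) can be colored \emph{after} all of their other neighbors are fixed, while the latter fixes only the endpoints and quantifies over all colorings of the swept internal vertex. In the simplest case $P=p_0qp_1$ with $q$ swept by a lone $2$-chord and nothing inside the resulting $4$-cycle, $\textnormal{End}(P,G_P)$ is the set of \emph{all} colorings of $\{p_0,p_1\}$ (Thomassen), whereas $\textnormal{Link}(P)$ genuinely constrains $\phi(p_0),\phi(p_1)$ against $L(q)$ — so your identification fails already here.

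The second consequence is that your inductive split "at an internal vertex $q$" is not well defined unless $q\notin\textnormal{Sh}^2(P)$: a swept vertex is not in the domain of any $\textnormal{Link}$-coloring, so you cannot "color $q$ first and recurse." The paper's own usage is careful about precisely this distinction — see the notion of a "hinge vertex" in the proof of Theorem~\ref{ClosedRingReductTrip} and the gluing statement in Claim~\ref{ConsSubRMapTwoto}, which only splits $\textnormal{Link}$-colorings at vertices that no $2$-chord (resp.\ $4$-chord of $C$) sweeps over. Your strategy can be repaired by inducting on the sequence of unswept "breakpoints" of $P$ rather than on arbitrary internal vertices, and by treating each maximal $2$-chord span between consecutive breakpoints via a single application of Theorem~\ref{SumTo4For2PathColorEnds} to its small side (which supplies the inertness rather than the $\textnormal{End}$-extendibility), but as written the reduction in Step~1 and the choice of splitting vertex in Steps~2–3 contain a gap.
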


In the setting above, an arbitrary precoloring of the endpoints of $P$ does not necessarily extend to an element of $\textnormal{Link}(P)$, but, under some conditions, if we strengthen the $2$-consistency condition to $3$-consistency, we can guarantee that something analogous holds if we are allowed to color one more vertex. We actually need something slightly stronger, which is the following somewhat technical result in Theorem \ref{LinkPlusOneMoreVertx}. 

\begin{defn} \emph{Let $\Sigma, G, C, L, k$ be as in Definition \ref{uniquekDetLocalPVers}, where $k\geq 2$. A \emph{$P$-peak} is a vertex $v\in D_1(C)$ such that $|N(v)\cap V(P)|\geq 2$, where $v\not\in\textnormal{Sh}^2(P)$. We say $v$ is an \emph{internal} $P$-peak if it is not adjacent to any endpoint of $P$.} \end{defn}

Informally, the $P$-peaks are the midpoints of the ``maximal" 2-chords of $C$ with both endpoints in $P$. The result below is proven in Section 5 of \cite{JNevinThesisManyFacePartIPaperRevised}. 

\begin{theorem}\label{LinkPlusOneMoreVertx} Let $\Sigma, G, C, L, k$ be as in Definition \ref{uniquekDetLocalPVers}, where $k\geq 3$, and $G$ is short-inseparable. Suppose further that, for each $v\in B_2(C)$, every facial subgraph of $G$ containing $v$, except possibly $C$, is a triangle, and either $v\in V(C)$ or $|L(v)|\geq 5$. Let $P$ be a $3$-consistent subpath of $C$ with endpoints $p_0, p_1$, where each internal vertex of $P$ has a list of size at least three. Let $\phi$ be an $L$-coloring of $\{p_0, p_1\}$. Then, for all but at most three internal $P$-peaks $w$, there is a vertex-set $T$ with $V(P)\subseteq T\subseteq V(P+w)\cup\textnormal{Sh}^3(P)$, where $\phi$ extends to a partial $L$-coloring $\tau$ of $T$ such that $T$ is $(L, \tau)$-inert in $G$ and every vertex of $D_1(\textnormal{dom}(\tau))\setminus T$ has an $L_{\tau}$-list of size at least three. 
\end{theorem}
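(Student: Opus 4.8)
The plan is to induct on $|V(G)|$, taking a minimal counterexample and using $3$-consistency together with the reducibility results on planar rainbows (Theorems \ref{SumTo4For2PathColorEnds}, \ref{ObtFromT4Split}, and Proposition \ref{ForSplToEachWalk}) to carve off the piece of $G$ near $P$. First I would fix notation: write $P = p_0 q_1 \cdots q_{n-1} p_1$, and observe that by the $3$-consistency of $P$ and the triangulation hypothesis near $C$, the subgraph $G^{\mathrm{small}}_Q$ for any proper generalized chord $Q$ of length $\leq 3$ with both endpoints in $P$ and at least one in $\mathring P$ sits on the ``$P$-side'' in a controlled way, and the vertices these small sides contribute are exactly $\mathrm{Sh}^3(P)$. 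The $P$-peaks are the apexes of the maximal $2$-chords of $C$ inside $P$; the content of the theorem is that coloring $\{p_0,p_1\}$ plus one well-chosen peak $w$ lets us also color $V(P) \cup \mathrm{Sh}^3(P)$ in an inert way while keeping $L_\tau$-lists of size $\geq 3$ on $D_1$ of the colored set.

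The core reduction is to pass to the planar piece $H$ of $G$ enclosed by $C$ together with the generalized chords hanging off $P$ — concretely, let $H$ be the subgraph of $G$ ``cut off'' by $P$ and the $P$-peaks, i.e. the union of $P$, all the maximal $2$-chords with both endpoints in $P$, and everything in the discs these chords bound inside $\mathrm{Sh}^3(P)$. Because $C$ is uniquely $k$-determined and the faces near $C$ are triangles, $H$ is a planar near-triangulation with a well-defined outer walk, and $P$ lies on its outer boundary. Then I would apply the rainbow toolbox to $H$ viewed as a planar graph with outer cycle the boundary walk containing $P$: Theorem \ref{ObtFromT4Split} handles $3$-paths inside $P$, Theorem \ref{SumTo4For2PathColorEnds} and Proposition \ref{ForSplToEachWalk} handle $2$-paths, and iterating along $P$ (peeling off one segment at a time between consecutive peaks) shows that for each internal peak $w$, either $\phi$ together with some color on $w$ extends to the required coloring of $T = V(P) \cup \mathrm{Sh}^3(P)$ (or a subset), or else $w$ is one of a bounded number of ``bad'' peaks where a universal color fails and the lists along the relevant segment are forced to be size-$3$ and to contain a common color. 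The ``at most three'' count comes from combining the ``at most two'' bad extensions in part 1) of Theorem \ref{ObtFromT4Split} with the single possible obstruction in part 2), summed over the two $3$-path ends of $P$ — this is exactly the bookkeeping done in Section 5 of \cite{JNevinThesisManyFacePartIPaperRevised}, so I would cite it for the precise constant.

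To make ``$T$ is $(L,\tau)$-inert in $G$'' rigorous: by definition of $\mathrm{Sh}^3(P)$ and the fact that every vertex of $\mathrm{Sh}^3(P)$ lies in a disc bounded by a generalized chord $Q$ of length $\leq 3$ with endpoints in $P$, any extension of $\tau$ whose domain contains $D_1(\mathrm{Sh}^3(P))$ restricts to a precoloring of the boundary cycle $C \cup Q$ of each such disc (a cycle of length $\leq 6$), and I would invoke Theorem \ref{BohmePaper5CycleCorList} to control exactly when such a precoloring fails to extend — the hypothesis that $D_1$-lists have size $\geq 3$, together with the choice of $\tau$, rules out the obstruction configurations (the single apex vertex with empty list, the edge/triangle with a common short list). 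This is where the induction hypothesis on $|V(G)|$ feeds back in: a failure would produce a smaller instance violating minimality. The verification that the $L_\tau$-lists of $D_1(\mathrm{dom}(\tau)) \setminus T$ stay $\geq 3$ is a matter of checking that each such vertex sees at most two colored neighbors, which follows from the triangulation condition and short-inseparability (no two chords of $C$ with a common $D_1$-midpoint configuration).

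The main obstacle I anticipate is the precise case analysis for the ``at most three bad peaks'' bound — in particular, ensuring the bad peaks from the left end of $P$, the right end of $P$, and any middle obstruction do not conspire to give four, and handling the degenerate situations where $P$ is short (length $2$ or $3$), where $p_0$ or $p_1$ coincides with a peak's neighbor, or where $\mathrm{Sh}^3(P)$ overlaps itself via chords sharing the small side. Since the statement explicitly says this is ``proven in Section 5 of \cite{JNevinThesisManyFacePartIPaperRevised}'', the cleanest route is to reduce to that reference after establishing that the present hypotheses (short-inseparability, $3$-consistency, triangulated $B_2(C)$, uniquely $k$-determined $C$) match its setup; the only genuine work here is translating between the ``chart/ring'' language and the ``uniquely $k$-determined facial cycle'' language, which the definitions in Section \ref{RainbowsandConsistPathSec} are designed to make transparent.
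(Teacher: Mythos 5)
The paper itself does not prove Theorem \ref{LinkPlusOneMoreVertx}: it is imported as a black box, the sentence immediately preceding the statement reading ``The result below is proven in Section 5 of \cite{JNevinThesisManyFacePartIPaperRevised}.'' There is therefore no internal proof in this paper to compare your attempt against, and your closing fallback --- check that the hypotheses match that reference and cite it --- is exactly the treatment the paper itself gives.

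As a standalone sketch, your outline has gaps that are precisely the substance the cited reference resolves, so it does not amount to an alternative route. The minimal-counterexample, induct-on-$|V(G)|$ framing is the wrong scaffolding: the conclusion asks for an explicit $T$ and $\tau$, so the argument has to be constructive along $P$, not a contradiction extracted from a smallest bad instance. The ``at most three'' bound does not follow from the accounting you give: Theorem \ref{ObtFromT4Split} grants up to two bad colorings per $3$-path, applied at each end of $P$, and Proposition \ref{ForSplToEachWalk} can contribute a further middle obstruction, which a priori overshoots three; the claim that these cannot all occur simultaneously is the nontrivial bookkeeping you defer to the reference rather than carry out. And invoking Theorem \ref{BohmePaper5CycleCorList} once on a cycle of length at most six does not establish $(L,\tau)$-inertness of $T$: $\textnormal{Sh}^3(P)$ is a union over many $2$- and $3$-chords whose small sides can nest, and inertness must be verified for every extension of $\tau$ whose domain meets $D_1$ of the whole set, which needs a recursive or ordered argument over the chords, not a single extension step. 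None of these are misconceptions so much as confirmation that the proof genuinely lives in the companion paper and is not a short reduction to the planar rainbow toolkit.
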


\section{Reductions Near the Rings: Open Rings}\label{RedForOpRings}

\begin{defn}\label{ReductionOpDefn} \emph{Let $\mathcal{T}=(\Sigma, G, \mathcal{C}, L, C_*)$ be a critical mosaic, let $C\in\mathcal{C}$ and let $\pi$ be the unique $L$-coloring of $V(\mathbf{P}_C)$. A \emph{$C$-reduction} is a triple $(S, \psi, C^r)$, where $S$ is a vertex set with $V(C)\subseteq S$, $\psi$ is an extension of $\pi$ to a partial $L$-coloring of of $S$, and $C^r$ is a cycle such that}
\begin{enumerate}[label=\emph{\arabic*)}]
\itemsep-0.1em
\item\emph{$S\subseteq B_2(C)\cup\textnormal{Sh}^4(C)$ and, if $C$ is closed, then $S$ satisfies the stronger condition $S\subseteq B_1(C)$}; AND 
\item\emph{$S$ is $(L, \psi)$-inert in $G$ and $G[V(C)\cup S]$ is connected}; AND
\item\emph{Every vertex of $D_1(S)$ has an $L_{\phi}$-list of size at least three. Furthermore, $G\setminus S$ is 2-connected and $C^r$ is the unique facial cycle of $G\setminus S$ whose vertex set is precisely $D_1(S)$.}
\end{enumerate}
\emph{We call $C^r$ a \emph{$C$-reduction cycle.} and $\psi$ a \emph{$C$-reduction coloring}}
\end{defn}

Over Sections \ref{RedForOpRings}-\ref{RedNearRingsSec}, we prove that, for each ring $C$ of a critical mosaic, there is a $C$-reduction. We first deal with open rings in Theorem \ref{MainCollarOpRingRedClF} below. This is somewhat technical due to the possible presence of 2-chords of $C$ with one endpoint in $C\setminus\mathbf{P}$ and the other endpoint in $\mathring{\mathbf{P}}$, where $\mathbf{P}$ is the precolored path of $C$. If there were no such 2-chords of $C$, then we would obtain Theorem \ref{MainCollarOpRingRedClF} immediately by applying Theorem \ref{LinkPlusOneMoreVertx} to the path $C\setminus\mathring{\mathbf{P}}$

\begin{theorem}\label{MainCollarOpRingRedClF} Let $\mathcal{T}=(\Sigma, G, \mathcal{C}, L, C_*)$ be a critical mosaic and $C\in\mathcal{C}$ be an open ring. Then there is a $C$-reduction. \end{theorem}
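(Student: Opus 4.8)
The plan is to reduce to an application of Theorem \ref{LinkPlusOneMoreVertx} to the path $C' := C\setminus\mathring{\mathbf{P}}_C$, while carefully handling the $2$-chords of $C$ with one endpoint in $\mathring{\mathbf{P}}_C$. First I would set up the basic structure: by Proposition \ref{LowGenus+OneFaceCase}, $C$ is an induced cycle, and by Theorem \ref{QBandMainResCondensed} we have $|E(\mathbf{P}_C)| = \frac{2N_{\textnormal{mo}}}{3}$ and there are no short paths between vertices of $\mathbf{P}_C$ other than along $\mathbf{P}_C$ itself. By Theorem \ref{NOver4GoodSideCorCritMos}, each ring of $\mathcal{T}$ is uniquely $N_{\textnormal{mo}}/3$-determined in $G$, so $\textnormal{Sh}^4(C)$ and the notions from Definition \ref{uniquekDetLocalPVers} are available. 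I would first argue that $C'$ is a $3$-consistent subpath of $C$: part a) of $3$-consistency (no chord of $C$ from $C'$ to $C\setminus C' = \mathring{\mathbf{P}}_C$) follows from M1), which forbids chords of $C$ with an endpoint in $\mathring{\mathbf{P}}_C$; part b) follows from B)ii) of Theorem \ref{NOver4GoodSideCorCritMos} together with Lemma \ref{QBandMainLemmaForOpRing} (a proper generalized chord of length $\le 3 \le N_{\textnormal{mo}}/3$ with an endpoint in $\mathring{C'}$ would, if it split $C'$ into two components on the ``large'' side, contradict the band/chord analysis). The hypotheses of Theorem \ref{LinkPlusOneMoreVertx} about triangulation of $B_2(C)$ and list sizes follow from Proposition \ref{BasicPropertiesCricMosProp} (every non-ring facial subgraph is a triangle, non-ring vertices have $5$-lists) and the definition of tessellation.

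Next I would handle the precoloring. Let $\pi$ be the unique $L$-coloring of $V(\mathbf{P}_C)$ and let $p_0, p_1$ be the endpoints of $C'$ — these are exactly the endpoints of $\mathbf{P}_C$, so $\pi$ assigns colors to $p_0, p_1$; set $\phi := \pi|_{\{p_0,p_1\}}$. Then I would apply Theorem \ref{LinkPlusOneMoreVertx} to $C'$, $\phi$: for all but at most three internal $C'$-peaks $w$, there is a vertex-set $T$ with $V(C') \subseteq T \subseteq V(C'+w)\cup\textnormal{Sh}^3(C')$ and a partial $L$-coloring $\tau$ extending $\phi$ such that $T$ is $(L,\tau)$-inert in $G$ and each vertex of $D_1(\textnormal{dom}(\tau))\setminus T$ has an $L_\tau$-list of size at least three. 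Now I would merge this with the precolored path: set $S := T \cup V(\mathbf{P}_C)$ and $\psi := \tau \cup \pi$. This is well-defined since $\tau$ and $\pi$ agree on $\{p_0,p_1\}$ and $\mathbf{P}_C$ is already $L$-colored by $\pi$. Because $T \supseteq V(C')$ and $V(\mathbf{P}_C)$ together cover $V(C)$, we get $V(C)\subseteq S$; and $S \subseteq B_2(C)\cup\textnormal{Sh}^4(C)$ since $\textnormal{Sh}^3(C')\subseteq\textnormal{Sh}^4(C)$ and $w\in D_1(C)\subseteq B_1(C)$. Connectedness of $G[V(C)\cup S] = G[S]$ follows since $C$ is a cycle contained in $S$ and everything attached is a chord or a peak hanging off $C$. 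The inertness of $S$ under $\psi$ follows from that of $T$ under $\tau$ because the vertices of $\mathbf{P}_C\setminus\{p_0,p_1\}$ are already colored and $D_1(S\setminus\textnormal{dom}(\psi)) = D_1(T\setminus\textnormal{dom}(\tau))$ up to the precolored vertices — here I would invoke the Observation on combining inert sets, or simply note that $S\setminus\textnormal{dom}(\psi) = T\setminus\textnormal{dom}(\tau)$ since $\mathbf{P}_C\subseteq\textnormal{dom}(\psi)$.

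The remaining work is to verify condition 3) of Definition \ref{ReductionOpDefn}: that $G\setminus S$ is $2$-connected and that there is a facial cycle $C^r$ of $G\setminus S$ whose vertex set is exactly $D_1(S)$, with every vertex of $D_1(S)$ having an $L_\psi$-list of size at least three. The list-size bound: vertices of $D_1(S)$ not adjacent to the at-most-one colored peak $w$ retain lists of size $\ge 3$ by the inertness conclusion of Theorem \ref{LinkPlusOneMoreVertx} (which gives $L_\tau$-lists $\ge 3$ on $D_1(\textnormal{dom}(\tau))\setminus T$), and since $\mathbf{P}_C\subseteq C$ is an induced path all of whose interior neighbors are in $B_1(C)$, coloring $\mathbf{P}_C$ does not further reduce lists beyond what $\tau$ already accounts for — I'd need to check this last point, using that chords of $C$ don't touch $\mathring{\mathbf{P}}_C$. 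That $D_1(S)$ induces a single facial cycle $C^r$ in $G\setminus S$ and that $G\setminus S$ is $2$-connected follows from the structure of $G$: $C$ together with its ``collar'' $S$ forms a disc region (using that $C$ is uniquely determined and that $\textnormal{Sh}^3$-vertices sit in small discs cut off by chords), so deleting $S$ leaves the complementary region, still a $2$-cell embedding, bounded by the cycle on $D_1(S)$.

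The main obstacle I anticipate is precisely the interaction between the $2$-chords (and longer short generalized chords) of $C$ having one endpoint in $\mathring{\mathbf{P}}_C$ and the other in $C'$, which is exactly the technical difficulty flagged in the excerpt's remark before the theorem. Such chords are \emph{not} excluded — they create small discs $G^{\textnormal{small}}_Q$ that may need to be absorbed into $S$ via $\textnormal{Sh}^4(C)$, and they interfere with the claim that $D_1(S)$ is a clean cycle. Handling this requires showing that the $2$-consistency/$3$-consistency machinery of Theorem \ref{LinkPlusOneMoreVertx}, applied to $C'$ rather than to $C$, already ``sees'' these chords correctly: a generalized chord of $C$ of length $\le 3$ with one endpoint in $\mathring{\mathbf{P}}_C$ and one in $C'$ is, restricted appropriately, still captured by $\textnormal{Sh}^3(C')$ or else would create a short $C$-band contradicting part 1) of Lemma \ref{QBandMainLemmaForOpRing}, since $|E(\mathbf{P}_C)| = \frac{2N_{\textnormal{mo}}}{3}$ makes the cycle $Q\cup(\text{subpath of }\mathbf{P}_C)$ short enough to apply that lemma. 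I would carry this out by a short case analysis on where the endpoints of any such chord lie and which side of the induced partition contains the other rings, appealing throughout to Theorem \ref{NOver4GoodSideCorCritMos} and Theorem \ref{QBandMainResCondensed}.
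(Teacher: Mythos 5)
Your plan is essentially the naive route that the paper itself flags as insufficient in the remark immediately preceding the theorem: ``If there were no such 2-chords of $C$, then we would obtain Theorem \ref{MainCollarOpRingRedClF} immediately by applying Theorem \ref{LinkPlusOneMoreVertx} to the path $C\setminus\mathring{\mathbf{P}}$.'' You correctly anticipate that those 2-chords (and 3-chords) with one endpoint in $\mathring{\mathbf{P}}_C$ and one in $C'$ are the obstacle, but your proposed resolution does not close the gap.

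Here is the concrete failure. Set $C'=C\setminus\mathring{\mathbf{P}}_C$ and apply Theorem \ref{LinkPlusOneMoreVertx} to $C'$ to get $T$ and $\tau$. The theorem's conclusion only controls $L_\tau$-lists of $D_1(\textnormal{dom}(\tau))\setminus T$; it knows nothing about the coloring $\pi$ of $\mathring{\mathbf{P}}_C$, because $\mathring{\mathbf{P}}_C\not\subseteq T$. In your construction, $\psi=\tau\cup\pi$ colors both, and a vertex $v\in D_1(S)\setminus S$ that is adjacent to colored vertices of both pieces loses colors from both. Such vertices do exist: since $C$ is only semi-shortcut-free, there can be a $v\in D_1(C)$ with $N(v)\cap V(C)=\{u_1,p_1,p_2\}$, where $p_1$ is an endpoint of $\mathbf{P}_C$, $p_2\in\mathring{\mathbf{P}}_C$, and $u_1\in C'$. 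Then $|L_\tau(v)|\geq 3$ and $|L_\pi(v)|\geq 3$ hold separately (by the theorem and by high predictability, respectively), but $L_\psi(v)=L(v)\setminus\{\pi(p_1),\pi(p_2),\tau(u_1)\}$ can have size $2$, violating condition 3) of Definition \ref{ReductionOpDefn}. Your two claimed escape hatches do not apply: $v\notin\textnormal{Sh}^3(C')$ because $\textnormal{Sh}^3(C')$ only collects vertices strictly inside a disc cut off by a generalized chord with \emph{both} endpoints in $C'$, whereas $v$ is adjacent to $p_2\notin V(C')$ and hence cannot lie strictly inside such a disc; and the 2-chord $p_2 v u_1$ is not a $C$-band (Definition \ref{QBandOpRingDefn} requires a generalized chord of $\mathbf{P}_C$, i.e.\ both endpoints in $\mathbf{P}_C$), so Lemma \ref{QBandMainLemmaForOpRing}.1) says nothing about it. The quantity ``$Q\cup(\text{subpath of }\mathbf{P}_C)$'' you invoke is not even a cycle when $Q$ has one endpoint outside $\mathbf{P}_C$.

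What the paper actually does to close this gap is the bulk of the proof: it first classifies the troublesome 3-chords $R=xyy'x'$ from $\mathbf{P}$ to $C\setminus\mathbf{P}$ (they can only cut off a small disc near $p_1$ or near $p_m$), proves via a lengthy case analysis (Claim \ref{RinMinusPlusWalk}, including constructing auxiliary mosaics) that each such $R$ admits an \emph{incision} — a partial coloring that absorbs $V(G^{\textnormal{small}}_R - y')$ inertly while leaving $y'$ with a 3-list — then takes maximal such chords $Q^-,Q^+$ and uses their incisions to cut along a new cycle $D$ that is disjoint from $\mathring{\mathbf{P}}_C$. Only then does it apply Theorem \ref{LinkPlusOneMoreVertx}, and it does so to \emph{two} separate 3-consistent subpaths $P_0, P_1$ of $D$ (not to $C'$), so that the two colored regions can no longer interact at a single vertex of $D_1(S)$. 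Your proposal is missing the incision construction entirely, and that is where essentially all of the difficulty of the theorem lives.
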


\begin{proof} We let $C=p_1\cdots p_mu_n\cdots u_1$, where $\mathbf{P}=p_1\cdots p_m$. By Theorem \ref{QBandMainResCondensed}, $n\geq  N_{\textnormal{mo}}/3-1$ and $m=\frac{2N_{\textnormal{mo}}}{3}-1$. Let $\pi$ be the unique $L$-coloring of $V(\mathbf{P})$. To construct a $C$-reduction, we need to consider 3-chords of $C$ which have one endpoint in $\mathbf{P}$ and the other endpoint in $C\setminus\mathbf{P}$. Note that, by Theorem \ref{NOver4GoodSideCorCritMos}, we immediately have the following.

\begin{claim} Let $R$ be a 3-chord of $C$ with one endpoint in $\mathbf{P}$ and the other endpoint in $C\setminus\mathbf{P}$. Then either $G^{\textnormal{small}}_R\cap\mathbf{P}$ is a subpath of $p_1p_2p_3$ with $p_1$ as an endpoint, or it is a subpath of $p_{m-2}p_{m-1}p_m$ with $p_m$ as an endpoint. \end{claim}

Now we let $\mathcal{R}^-$ be the set of 3-chords $R=xyy'x'$ of $C$ such that $p_1$ is an endpoint of $G^{\textnormal{small}}_R\cap\mathbf{P}$, where $x\in V(\mathbf{P})$ and $x'\in V(C\setminus\mathbf{P})$, and $y'$ has no neighbors in $\mathbf{P}$. We define the set $\mathcal{R}^+$ analogously, where $p_1$ is replaced by $p_m$.  Let $\pi$ be the unique $L$-coloring of $V(\mathbf{P})$.. Given an $R\in\mathcal{R}^-\cup\mathcal{R}^+$, where $R=xyy'x'$ for $x\in V(\mathbf{P})$ and $x'\in V(C\setminus\mathbf{P})$, we define an $R$-\emph{incision} to be an extension of $\pi$ to a partial $L$-coloring $\phi$ of $V(\mathbf{P})\cup V(G-y')$ such that $V(R-y)\subseteq\textnormal{dom}(\phi)$ and $|L_{\phi}(y')|\geq 3$, where $V(G^{\textnormal{small}}_R-y')$ is $(L, \phi)$-inert in $G$. 

Recall that, by Theorem \ref{QBandMainResCondensed}, for each $y\in D_1(C)$, if $y$ has a neighbor in $\mathbf{P}$, then $G[N(y)\cap\mathbf{P}]$ is a path of length at most one.  By Proposition \ref{LowGenus+OneFaceCase}, $C$ is induced, so it follows from our triangulation conditions that, for each $e\in E(\mathbf{P})$, there is a unique common neighbor in $D_1(C)$ to the endpoints of $e$ in $D_1(C)$.  

\begin{claim}\label{RinMinusPlusWalk} Let $R\in\mathcal{R}^-\cup\mathcal{R}^+$. Then there is an $R$-incision.\end{claim}

\begin{claimproof} Say for the sake of definiteness that $R\in\mathcal{R}^-$.  Suppose the claim does not hold. Let $v$ be the unique neighbor of $p_1, p_2$ in $D_1(C)$ and $v^*$ be the unique neighbor of $p_2, p_3$ in $C\setminus\mathbf{P}$. Note that $v\neq v^*$, and, by Theorem \ref{NOver4GoodSideCorCritMos}, since $p_3\in N(v^*)$, we have $N(v^*)\cap V(C)=\{p_2, p_3\}$. 

\vspace*{-8mm}
\begin{addmargin}[2em]{0em}
\begin{subclaim}\label{yNotAdjP1SubH} $y\not\in N(p_1)$ and $y'\not\in N(v)$. \end{subclaim}

\begin{claimproof} Suppose either $y\in N(p_1)$ or $y'\in N(v)$. Thus, $G$ contains the 3-chord $p_1zy'x'$ of $C$, where $z\in\{y, v\}$, and, letting $H=G^{\textnormal{small}}_{p_1yzy'x'}$, we get either $H=G^{\textnormal{small}}_R$ or $H=G^{\textnormal{small}}_R-p_2$. As $|L_{\pi}(y)|\geq 3$ and $y'$ has no neighbors in $\mathbf{P}$, Theorem \ref{ObtFromT4Split} applied to $H$ implies that $\pi$ extends to an $R$-incision, contradicting our assumption. \end{claimproof}\end{addmargin}

 By Subclaim \ref{yNotAdjP1SubH}, $v\in V(G^{\textnormal{small}}_{R}\setminus R)$.

\vspace*{-8mm}
\begin{addmargin}[2em]{0em}
\begin{subclaim}\label{yNotAdjU1FinCase} $v$ is not adjacent $u_1$. \end{subclaim}

\begin{claimproof} Suppose that $vu_1\in E(G)$. Consider the following cases.

\textbf{Case 1:} $v\in N(y)$

In this case, $\{p_2\}\subseteq N(y)\cap\mathbf{P}\subseteq\{p_2, p_3\}$, and $G$ contains the 4-chord $u_1vyy'x'$ of $C$. We have $u_1\neq x'$, or else our triangulation conditions imply that $v\in N(y')$, contradicting Subclaim \ref{yNotAdjP1SubH}. Thus, $u_1vyy'x'$ is a proper 4-chord of $C$, and, letting $H:=G^{\textnormal{small}}_{u_1vvyy'x'}$, we have $H=G^{\textnormal{small}}_R\setminus\{p_1, p_2, p_3\}$. Now, there exists a set $S\subseteq L_{\pi}(u_1)$ with $|S|=2$. Since $|L_{\pi}(v)|\geq 3$, there is an $r\in L_{\pi}(v)\setminus S$. Let $L'$ be a list-assignment for $V(H)$, where $L'(v)=\{r\}$ and $L'(u_1)=S\cup\{r\}$, and otherwise $L'=L_{\pi}$. We regard $H$ as a planar embedding with outer cycle $F:=u_1vyy'x'+u_1(C\setminus\mathbf{P})x'$, where this cycle contains the 3-path $P:=vyy'x'$. Since $v^*$ has no neighbors in $F\setminus\{v, y'\}$ and $|L'(v')|\geq 3$, it follows from Theorem \ref{ObtFromT4Split} that there is an $L'$-coloring $\psi$ of $\{v, y, x'\}$ such that any extension of $\psi$ to an $L$-coloring of $V(P)$ extends to $L'$-color all of $H$. Our choice of $L'$ implies that $\pi\cup\psi$ is a proper $L$-coloring of its domain, and $V(G^{\textnormal{small}}_R-y')$ is $(L, \pi\cup\psi)$-inert in $G$. Furthermore, since $v\not\in N(y')$, we have $|L_{\pi\cup\psi}(y')|\geq 3$, so $\pi\cup\psi$ is an $R$-incision, contradicting our assumption. 

\textbf{Case 2:} $v\not\in N(y)$ and $vv^*\not\in E(G)$

In this case, we construct a smaller mosaic with underlying graph $G^{\dagger}:=G-p_1$. Let $C^{\dagger}:=(C-p_1)+u_1vp_2$. Now, there is a set $S\subseteq L_{\pi}(u_1)$ with $|S|=2$, and, since $|L_{\pi}(v)|\geq 3$, there is a $c\in L_{\pi}(v)\setminus S$. Let $L^{\dagger}$ be a list-assignment for $V(G^{\dagger})$ where $L^{\dagger}(v)=\{c\}$ and $L^{\dagger}(u_1)=S\cup\{c\}$, and otherwise $L^{\dagger}=L$. Let $C^{\dagger}_*$ be an element of $(\mathcal{C}\setminus\{C\})\cup\{C^{\dagger}\}$, where $C^{\dagger}_*:=C^{\dagger}$ if $C=C_*$, and otherwise $C^{\dagger}_*:=C_*$. We claim now that $\mathcal{T}^{\dagger}=(\Sigma, G^{\dagger}, (\mathcal{C}\setminus\{C\})\cup\{C^{\dagger}\}, L^{\dagger}, C^{\dagger}_*)$ is a mosaic, where $C^{\dagger}$ is an open ring with precolored path $vp_2p_3\cdots p_m$. M0) and M2) are immediate, and, since $v\in V(G^{\textnormal{small}}_R\setminus R)$, we get that $\mathcal{T}^{\dagger}$ still satisfies all the distance conditions, edge-width conditions, and face-width conditions of M3)-M5). We just need to check M1). Let $\pi'$ be the unique $L^{\dagger}$-coloring of $vp_2\cdots p_m$. If M1) is volated, then there is a vertex of $D_1(C)$ adjacent to all three of $v, p_2, p_3$, so $vv^*\in E(G)$, contradicting the assumption of Case 2. Thus, $\mathcal{T}^{\dagger}$ is a mosaic. SInce $\mathcal{T}$ is critical, $G-p_1$ is $L^{\dagger}$-colorable, so $G$ is $L$-colorable, which is false. 

\textbf{Case 3:} $v\not\in N(y)$ and $vv^*\in E(G)$.

In this case, we have $v^*\in V(G^{\textnormal{small}}\setminus R)$ and $N(y)\cap V(\mathbf{P})=\{p_3\}$. Thus, by Theorem \ref{NOver4GoodSideCorCritMos}, $N(y)\cap V(C)=\{p_3\}$. Now we follow an identical argument to that of Case 2, except that we precolor $vv^*$, rather than just $v$, to construct a smaller mosaic on underlying graph $G\setminus\{p_1, p_2\}$, where the ring of this new mosaic obtained from $C$ has precolored path $vv^*p_3\cdots p_m$, and we again produce a contradiction. \end{claimproof}\end{addmargin}

We now have the following. 

\vspace*{-8mm}
\begin{addmargin}[2em]{0em}

\begin{subclaim}\label{y'NotNeighU1} $y'\not\in N(u_1)$. \end{subclaim}

\begin{claimproof} Suppose that $y'\in N(u_1)$. Let $F$ be a cycle, where $F:=p_1p_2yy'u_1$ if $y\in N(p_2)$, and otherwise $F:=p_1p_2p_3yy'u_1$. Note that $F$ is an induced cycle of length either five or six. Let $G=G'\cup G''$ be the natural $F$-partition of $G$, where $v\in V(G')$. Since $u_1, y', p_3\not\in N(v)$, our triangulation conditions imply that $N(v)\cap V(F)=\{p_1, p_2\}$, so it follows from Theorem \ref{BohmePaper5CycleCorList} that any $L$-coloring of $V(F)$ extends to $L$-color $G'$. This proves Subclaim \ref{y'NotNeighU1}. We define a graph $H$ as follows. Let $H:=y'u_1$ if $x'=u_1$, and otherwise $H:=G^{\textnormal{small}}_{u_1y'x'}$. Given an $L_{\pi}$-coloring $\psi$ of $\{u_1, x'\}$, we call $\psi$ \emph{sufficient} if any extension of $\psi$ to an $L$-coloring of $\{u_1, y', x'\}$ extends to $L_{\pi}$-color $H$. Note that $u_1, x'\not\in N(y)$. Since $F$ is induced and any $L$-coloring of $V(F)$ extends to $L$-color $G'$, our assumption that there is no $R$-incision implies the following
\begin{equation}\label{TagForEqSuff}\tag{Eq1}\textnormal{For any sufficient $\psi$ and any $c\in L_{\pi}(y)$, we have $L(y')\setminus\{c, \psi(u_1), \psi(x')\}|=2$} \end{equation}
Thus, $H$ is not an edge, i.e $u_1\neq x'$, and furthermore, for each $u\in\{u_1, x'\}$ and each $d\in L_{\pi}(u)$, there is at most one element of $\textnormal{End}(u_1y'x', G^{\textnormal{small}}_{u_1y'x'})$ using $d$ on $u$. Since $|L_{\pi}(u_1)|\geq 2$ and $|L_{\pi}(x')|\geq 3$,  Theorem \ref{SumTo4For2PathColorEnds} implies that $H$ is a broken wheel with principal vertex $y'$, where $|V(H)|$ is even, and it follows from Proposition \ref{ForSplToEachWalk} that there is an element of $\textnormal{End}(u_1y'x', G^{\textnormal{small}}_{u_1y'x'})$ using the same color on $u_1, x'$, contradicting (\ref{TagForEqSuff}).  \end{claimproof}\end{addmargin}

Let $w$ be the unique neighbor of $p_1, u_1$ in $D_1(C)$. By Subclaim \ref{yNotAdjU1FinCase}, $w\neq v$. Recall that, by Theorem \ref{NOver4GoodSideCorCritMos}, $N(w)\cap V(\mathbf{P})\subseteq\{p_1, p_2\}$. Since $G$ is short-inseparable and $w\neq v$, we have $N(w)\cap V(\mathbf{P})=\{p_1\}$. Now we construct a smaller counterexample using a similar trick to that of \cite{AllPlanar5ThomPap}. Our triangulation conditions imply that there is a unique path $P$ with endpoints $w, u_2$, where $V(\mathring{P})\subseteq D_1(C)$ and $P$ consists of all the vertices of $N(u_1)\setminus\{p_1\}$. The fact that $G$ is short-inseparable, and $y'\not\in N(u_1)$, immediately implies that no vertex of $P-u_2$ lies in $V(R)\cup\{v\}$. We now construct a smaller mosaic with underlying graph $K:=G-u_1$. Let $C^K:=(C-u_1)+p_1wPu_2$. Then $C_K$ is a facial cycle of $K$. There is a set $S\subseteq L_{\pi}(u_1)$ with $|S|=2$. Let $L^K$ be a list-assignment for $V(K)$, where $L^K(z)=L(z)\setminus S$ for each $z\in V(P-u_2)$, and otherwise $L^K=L$. Define an element $C^K_*$ of $(\mathcal{C}\setminus\{C\})\cup\{C_K\}$, where $C^K_*=C^K$, and otherwise $C_K^*:=C^K$.. Now, we claim that $\mathcal{T}^K=(\Sigma, K, (\mathcal{C}\setminus\{C\})\cup\{C^K\}, L^K, C^K_*)$ is a mosaic, where $C^K$ is an open ring with precolored path $\mathbf{P}$. It is immediate that M0) and M2) are still satisfied, and, since $V(P-u_2)\subseteq V(G^{\textnormal{small}}_R\setminus R)$, we get that $\mathcal{T}^K$ still satisfies all the distance conditions, face-width conditions, and edge-width conditions, of M3)-M5). We just need to check M1). There is no chord of $C^K$ with an endpoint in $\mathring{\mathbf{P}}$, and the rest of M1) is inherited from $\mathcal{T}$, so $\mathcal{T}^K$ is a mosaic. Since $|V(K)|=|V(G)|-1$, we get thet $G-u_1$ is $L^K$-colorable, so $G$ is $L$-colorable, which is false. This proves Claim \ref{RinMinusPlusWalk}. \end{claimproof}

Our triangulation conditions imply that each of $\mathcal{R}^-$ and $\mathcal{R}^+$ is nonempty. For any $R, R'\in\mathcal{R}^-$, we have either $G^{\textnormal{small}}_R\subseteq G^{\textnormal{small}}_{R'}$, so we let $Q^-$ be the unique element of $\mathcal{R}^-$ such that $G^{\textnormal{small}}_{Q^-}$ contains all the elements of $\mathcal{R}^-$. Now, there is a unique path $\mathbf{P}^1$ consisting of all the vertices of $G\setminus C$ with a neighbor in $\mathbf{P}$, and Theorem \ref{QBandMainResCondensed} implies that $\mathbf{P}^1$ is an induced path. Let $Q^-\setminus\mathbf{P}=w^-y^-u^-$ and $Q^+\setminus\mathbf{P}=w^+y^+u^+$, where $u^-, u^+\in V(C\setminus\mathbf{P})$. The following is immediate from the definition of $Q^-, Q^+$, together with our triangulation conditions. 

\begin{claim}\label{NoPathLenQQ'} $G$ contains no path of length most two with one endpoint in $w^-\mathbf{P}^1w^+$ and the other endpoint in $u^-(C\setminus\mathbf{P})u^+$, except for the paths $Q^-\setminus\mathbf{P}$ and $Q^+\setminus\mathbf{P}$. \end{claim}

By Claim \ref{RinMinusPlusWalk}, there is a $Q^-$-incision $\psi^-$ and a $Q^+$-incision $\psi^+$. Note that Theorem \ref{QBandMainResCondensed} implies that $d(Q^-, Q^+)\geq\frac{N_{\textnormal{mo}}}{3}-10$, so $\sigma=\psi^-\cup\pi\cup\psi^+$ is a proper $L$-coloring of its domain. Let $D$ be the cycle $w^-\mathbf{P}^1w^+y^+u^+(C\setminus\mathbf{P})u^-y^-$ and let $G'\cup G''$ be the unique $D$-partition of $G$, where $C\subseteq G'$. Note that $V(G'\setminus D)$ is $(L, \tau)$-inert in $G$, and $D$ is a chordless cycle. Furthermore, each of $y^-, y^+$ has an $L_{\sigma}$-list of size at least three. Now, let $P_0:=w^-\mathbf{P}^1w^+$ and $P_1:=u^-(C\setminus\mathbf{P})u^+$. Since $G$ is short-inseparable, it follows from Theorem \ref{NOver4GoodSideCorCritMos} that $D$ is uniquely 4-determined in $G''$ with respect to the list-assignment $L_{\pi}$, and furthermore, each of $P_0, P_1$ is a 3-consistent subpath of $D$. Since $d(Q^-, Q^+)\geq\frac{N_{\textnormal{mo}}}{3}-10$, it follows that, for each $k=0,1$, there are at least $\frac{1}{2}\left(\frac{N_{\textnormal{mo}}}{3}-10\right)$ $P_k$-peaks. Note that $\frac{1}{2}\left(\frac{N_{\textnormal{mo}}}{3}-10\right)>28$. Given a $P_k$-peak $v\in V(G''\setminus D)$, we say that $v$ is \emph{walled} if $v$ is an internal $P_k$-peak such that both $d(v, Q^-\cup Q^+)>2$ and $d(v, P_{1-k})>2$. Theorem \ref{NOver4GoodSideCorCritMos}, together with our bound on $d(Q^-, Q^+)$, implies that, for each $k=0,1$, there are at least four walled $P_k$-peaks, so, by applying Theorem \ref{LinkPlusOneMoreVertx} to each of $P_0, P_1$, regarded as subpaths of $D$, where each of $u^-, u^+, w^-, w^+$ is precolored by $\sigma$, we extend $\sigma$ to a $C$-reduction coloring. \end{proof}

\section{Boundary Analysis for Closed Rings}\label{BoundCloseRiSec}

We prove the closed-ring analogue of Theorem \ref{MainCollarOpRingRedClF} in Section \ref{RedNearRingsSec}. To prove that it holds, we first need to describe the structure of a critical mosaic near each closed ring. Section \ref{BoundCloseRiSec} consists of the proof of the following result.

\begin{lemma}\label{StructureofC1inClosedRingLemma} Let $\mathcal{T}=(\Sigma, G, \mathcal{C}, L, C_*)$ be a critical mosaic, $C\in\mathcal{C}$ be a closed ring, and $\pi$ be the unique $L$-coloring of $V(C)$. Suppose there is a $y\in D_1(C)$ with $|L_{\pi}(y)|=2$. Let $U:=\{x\in D_1(C): |N(x)\cap V(C)|=1\}$. Then there is unique facial cycle $C^1$ of $G\setminus C$ with $V(C^1)=D_1(C)$ satisfying the following:
\begin{enumerate}[label=\arabic*)]
\itemsep-0.1em
\item $U\neq\varnothing$ and $C^1$ is an induced cycle which is uniquely 4-determined in $G\setminus C$ with respect to $L_{\pi}$; AND
\item Let $Q$ be a 4-chord of $C$, where $V(Q)\cap D_2(C)$ consists of a lone vertex $z$. Letting $P:=G^{\textnormal{small}}_Q\cap C^1$, we have
\begin{enumerate}[label=\alph*)]
\itemsep-0.1em
\item $V(\mathring{P})\cap U=\varnothing$. Furthermore, either $|E(P)|\leq 2$ or $|U\cap V(C^1\setminus\mathring{P})|>1$; AND
\item If $y\in V(\mathring{P})$, then $y\in N(z)$ and $|E(P)|\leq 4$. 
\end{enumerate}
\end{enumerate}
 \end{lemma}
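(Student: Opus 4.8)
The plan is to analyze the structure of $G$ in the ``first layer'' $D_1(C)$ around a closed ring $C$, using the fact that $C$ is a chordless induced cycle (which we get from M2) plus Proposition \ref{BasicPropertiesCricMosProp}, since $C$ is closed and $G$ is short-inseparable with no induced 4-cycles). First I would argue that $G \setminus C$ is 2-connected: $G$ is 2-connected by Proposition \ref{BasicPropertiesCricMosProp}(3), and since $C$ is a chordless facial cycle of a closed 2-cell embedding, deleting $C$ leaves a 2-connected embedding whose outer walk is a cycle — this cycle is $C^1$, with $V(C^1) = D_1(C)$. (Here the short-inseparability and the absence of separating $3$- and $4$-cycles is what rules out the degenerate ``cut-vertex in $G\setminus C$'' cases; any such vertex would, together with $C$, produce a short separating cycle or violate 2-connectedness.) That $C^1$ is uniquely $4$-determined in $G \setminus C$ with respect to $L_\pi$ follows from Theorem \ref{NOver4GoodSideCorCritMos} applied to $C \in \mathcal{C}$, since $N_{\textnormal{mo}}/3 \geq 4$ and a generalized chord of $C^1$ of length $\leq 4$ in $G \setminus C$ corresponds to a generalized chord of $C$ of bounded length in $G$ whose small side contains no element of $\mathcal{C}$; the genus/disc side condition transfers. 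That $U \neq \varnothing$ comes from the hypothesis that some $y \in D_1(C)$ has $|L_\pi(y)| = 2$: if $y$ had two or more neighbors on $C$, those neighbors would be precolored and distinct (as $C$ is chordless), so $|L_\pi(y)| \leq |L(y)| - 2 \leq 3$; but with exactly two neighbors on the chordless $C$, short-inseparability forces those neighbors to be consecutive on $C$, still leaving $|L_\pi(y)| \geq |L(y)| - 2 = 3$ when $|L(y)| = 5$. So $|L_\pi(y)| = 2$ forces $y$ to see at least three vertices of $C$; a careful count using the chordless cycle structure and the triangulation condition on faces incident to $C$ then shows this happens only when $y$ has a single neighbor on $C$ is impossible — wait, I need to be more careful: a vertex with exactly one neighbor on $C$ has $|L_\pi(y)| \geq 4$. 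Let me restate: $|L_\pi(y)| = 2$ with $|L(y)| = 5$ forces $y$ to have at least three neighbors on $C$, which by the triangulation condition and chordlessness of $C$ means $N(y) \cap V(C)$ is a path of length $\geq 2$ in $C$; the presence of such a ``deep'' vertex then forces, by a local face-counting argument at the two ends of this path, that there exist vertices of $D_1(C)$ with exactly one neighbor on $C$, i.e. $U \neq \varnothing$. (This is the part of the argument I'd write most carefully.)

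**Proving 2a) and 2b).** For part 2), let $Q$ be a $4$-chord of $C$ with $V(Q) \cap D_2(C) = \{z\}$, so $Q$ has the form $c\, a\, z\, b\, c'$ with $c, c' \in V(C)$, $a, b \in D_1(C)$, $z \in D_2(C)$. By the uniqueness part of Theorem \ref{NOver4GoodSideCorCritMos} applied to $C$ (or equivalently the $4$-consistency machinery of Definition \ref{uniquekDetLocalPVers} applied to $C^1 \subseteq G\setminus C$), $G^{\textnormal{small}}_Q$ is well-defined and its intersection $P$ with $C^1$ is a connected subpath of $C^1$, while $G^{\textnormal{large}}_Q \cap C^1$ has two components — this is exactly the $3$-consistency/$4$-consistency conclusion transported from $C$. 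For 2a): suppose $w \in V(\mathring{P}) \cap U$, so $w \in D_1(C)$ has a unique neighbor $c_w \in V(C)$ and lies strictly between the endpoints of $P$ on $C^1$. Then I'd consider the $2$-chord or $3$-chord of $C$ through $w$ and $c_w$ together with $Q$; the walk $c_w w \ldots$ inside $G^{\textnormal{small}}_Q$ together with a short arc of $C$ bounds a region, and combining it with the $4$-chord $Q$ produces a generalized chord of $C$ of length $\leq 4$ whose two sides split the elements of $\mathcal{C} \setminus \{C\}$ in a way forbidden by Theorem \ref{NOver4GoodSideCorCritMos}, unless the whole small side is a disc of bounded size — and then a direct count shows $|E(P)| \leq 2$ forces $w$ to be an endpoint, contradiction; if $|E(P)| \geq 3$ we get the dichotomy $|U \cap V(C^1 \setminus \mathring{P})| > 1$ by the same region-analysis at the two ends of $P$ (each end of $P$, being the endpoint of the $2$-chord realizing the boundary of $G^{\textnormal{small}}_Q$, is itself adjacent to exactly one vertex of $C$, because faces incident to $C$ are triangles and $C$ is chordless). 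For 2b): if $y \in V(\mathring{P})$ (where $y$ is our distinguished vertex with $|L_\pi(y)| = 2$), then by 2a), $y \notin U$, so $y$ has $\geq 2$ neighbors on $C$, hence (as above) $|N(y)\cap V(C)| \geq 3$ and $N(y)\cap V(C)$ is a subpath of $C$ of length $\geq 2$. I'd then show $y \in N(z)$: the vertices of $D_1(C)$ strictly between $y$ and the endpoints of $P$, together with $y$'s neighbors on $C$, fill out triangular faces forcing $y$ to be adjacent to the unique vertex $z$ of $V(Q) \cap D_2(C)$ — otherwise we'd find a $2$-chord of $C^1$ from $y$ to some $D_1(C)$-vertex on the boundary of $G^{\textnormal{small}}_Q$ that is not accounted for, contradicting the maximality/consistency of $Q$. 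Finally $|E(P)| \leq 4$: since $y \in N(z)$ and $N(y)\cap V(C)$ is a path of length $\geq 2$, the faces incident to $y$ inside $G^{\textnormal{small}}_Q$ are triangles $z y c_i$, and tracing around $z$ bounds $P$ to span at most the $\leq 4$ edges of $C$ covered by $Q$ plus the neighbors of $y$ — a short explicit count gives $|E(P)| \leq 4$.

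**Main obstacle.** The hard part will be the bookkeeping in 2a) and 2b): keeping track of which vertices of $D_1(C)$ lie on the boundary of $G^{\textnormal{small}}_Q$ versus strictly inside, and verifying that each ``boundary'' vertex of $D_1(C)$ on the small side has exactly one neighbor on $C$ (needed both for the $U$-statements and to control $|E(P)|$). This requires carefully using that every face of $G$ incident to a vertex of $B_2(C)$ other than $C$ itself is a triangle — which follows from $\mathcal{T}$ being chord-triangulated and the fact that $C$ is the only ring meeting $B_2(C)$ (a consequence of the distance conditions M3)--M4), since $10 N_{\textnormal{mo}}^2$ exceeds $2$) — together with short-inseparability to rule out degenerate configurations. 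I would isolate a preliminary claim: ``every vertex of $D_1(C)$ that is an endpoint of a $2$-chord of $C$ realizing part of the boundary of some $G^{\textnormal{small}}_Q$ has exactly one neighbor on $C$,'' prove it once from the triangulation-plus-chordless-$C$ local picture, and then invoke it repeatedly. The genus/non-separating subtleties are all absorbed into the single invocation of Theorem \ref{NOver4GoodSideCorCritMos}, so no new topology is needed beyond what is already black-boxed.
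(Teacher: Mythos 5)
Your proposal misses the key mechanism that drives the paper's proof: repeatedly constructing a \emph{smaller mosaic} and invoking criticality. You treat the lemma as mostly local combinatorics (face-counting at $B_2(C)$) plus a single black-box invocation of Theorem~\ref{NOver4GoodSideCorCritMos}, but several of the claims genuinely cannot be established locally.

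Concretely, three gaps:

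\emph{(1) $U\neq\varnothing$.} Your ``local face-counting at the two ends'' cannot work, because nothing local forbids the configuration in which every vertex of $C^1$ except $y$ sees exactly two consecutive vertices of $C$ and $y$ sees three. In that configuration $|E(C^1)|=|E(C)|-1<|E(C)|\le N_{\textnormal{mo}}$, and the contradiction comes from $C^1$ then being a short separating cycle strictly shorter than $C$, contradicting Corollary~\ref{NonRingSeparatingCor} (Claim~\ref{AvShortcutWrap12} in the paper). That corollary is itself a criticality/mosaic-construction argument, not local geometry.

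\emph{(2) $C^1$ is induced.} You assert this essentially for free (``this cycle is $C^1$''). But $C^1$ being a facial cycle of $G\setminus C$ with $V(C^1)=D_1(C)$ does not preclude chords of $C^1$ --- a chord of $C^1$ is a chord of a \emph{cycle}, not of a facial boundary, and is perfectly compatible with unique $4$-determination (the ``small side'' of a chord of $C^1$ may be empty). The paper needs Claims~\ref{Fork23CalQ}--\ref{NoChordC1}, which are among the most technical in the section: they attach auxiliary vertices to $G^{\textnormal{large}}_Q$ to build a new closed-ring mosaic, apply criticality, and then invoke Theorem~\ref{BohmePaper5CycleCorList} to pin down the structure of the small side.

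\emph{(3) $y\in N(z)$ in 2b) and $V(\mathring{P})\cap U=\varnothing$ in 2a).} Your argument claims the triangulation around $y$ ``forces $y$ to be adjacent to $z$.'' That is false in general: there can be many vertices of $D_2(C)$ strictly between $y$ and the endpoints of $Q$, and nothing local rules out $y\notin N(z)$. The actual proof (Claim~\ref{QContYNbrZ}) supposes $yz\notin E(G)$, deletes a carefully chosen sub-path of $C$ and re-routes it through $T\subseteq\{u,y\}$ to produce a strictly smaller mosaic (verifying M0)--M5), in particular M2)), and derives a contradiction from criticality. Similarly, Claim~\ref{LastClaimIn12Seq} proves $V(\mathring{P})\cap U=\varnothing$ via yet another auxiliary embedding $H$ obtained by collapsing the $U$-vertices inside $\mathring{P}$ and adding edges to $z$, checking that $(\Sigma,H,\mathcal{C},L,C_*)$ is a smaller mosaic. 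None of this is captured by ``region-analysis at the two ends of $P$.''

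In short, the proposal's ``main obstacle'' paragraph misidentifies the difficulty as bookkeeping; the real difficulty is that nearly every subclaim requires its own bespoke smaller-mosaic construction and a re-verification of M0)--M5). Without that, the claims in 1) and 2) are simply not provable by local face-counting.
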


\begin{proof} As $C$ is semi-shortcut-free, our triangulation conditions imply that there is a unique facial cycle $C^1$ of $G\setminus C$ with $V(C^1)=D_1(C)$, and for each $x\in V(C^1)$, the graph $G[N(x)\cap V(C)]$ is a path of length at most two. Since $G$ is short-inseparable and $C$ is uniquely $N_{\textnormal{mo}}/3$-determined in $G$ with respect to $L$, it follows that $C^1$ is uniquely 4-determined in $G\setminus C$ with respect to $L_{\pi}$. Now suppose there is a $y\in V(C^1)$ with $|L_{\pi}(y)|=2$. By M2), every vertex of $C^1-y$ has an $L_{\pi}$-list of size at least three.  

\begin{claim}\label{AvShortcutWrap12}  $U\neq\varnothing$ and, for any generalized chord $Q$ of $C$ with $|E(Q)|\leq 5$, we have $|E(Q)|\geq |E(G^{\textnormal{small}}_Q\cap C)|$. Furthermore, if there is a 4-chord $Q'$ of $C$, where $G^{\textnormal{small}}_{Q'}\cap C^1$ is a path $P'$ with $|E(P')|>2$, then $|U\cap V(C^1\setminus\mathring{P}')|\geq 2$. \end{claim}

\begin{claimproof} Since $G[N(y)\cap V(C)]$ is a path of length two, we have $|E(C^1)|\leq (|E(C)|-1)+|U|$. Now suppose any one of the three statements of Claim \ref{AvShortcutWrap12} does not hold. Thus, there is a separating cycle $D$ in $G$, where $V(D)\subseteq B_2(C)$, such that, letting $G=G_0\cup G_1$ be the natural $D$-partition of $G$, there is a $k\in\{0,1\}$ satisfying the following. 

\begin{itemize}
\itemsep-0.1em
\item $C\subseteq G_k$ and, in particular, if $C=C_*$ then $G_k=\textnormal{Ext}(D)$, and otherwise $G_k=\textnormal{Int}(D)$; AND
\item The unique component of $\Sigma\setminus D$ containing $G_k$ is an open disc. Furthermore, $|V(D)|<|V(C)|$ and every element of $\mathcal{C}\setminus\{C\}$ is contained in $G_{1-k}$. 
\end{itemize}

In particular, if $U=\varnothing$, we take $D=C^1$. By  Proposition \ref{LowGenus+OneFaceCase}, $\mathcal{C}\neq\{C_*\}$, so $|\mathcal{C}|>1$. Thus, $D$ separates $C$ from an element of $\mathcal{C}\setminus\{C\}$. By Corollary \ref{NonRingSeparatingCor}, we get $\textnormal{Rk}(C)-\left(N_{\textnormal{mo}}+\frac{1}{2}\right)\cdot |E(D)|<2$. Since $|E(D)|\leq |E(C)|-1$, we obtain $N_{\textnormal{mo}}\cdot |E(C)|<2+\left(N_{\textnormal{mo}}+\frac{1}{2}\right)\cdot\left(|E(C)|-1\right)$, which is false, since $|E(C)|\leq N_{\textnormal{mo}}$ and $N_{\textnormal{mo}}\geq 200$. \end{claimproof}

We now show over Claims \ref{Fork23CalQ}-\ref{NoChordC1} that $C^1$ is an induced cycle. Note that, given  a 3-chord $Q$ of $C$, where $Q\setminus C$ is a chord of $C^1$, we have $|E(G^{\textnormal{small}}_Q\cap C)|>1$, as $G$ is short-inseparable. Thus, by Claim \ref{AvShortcutWrap12}, $2\leq |E(G^{\textnormal{small}}_Q\cap C)|\leq 3$. For each $k\in\{2,3\}$, let $\mathcal{Q}_{3, k}$ be the set of $3$-chords $Q$ of $C$ such that $Q\setminus C$ is a chord of $C^1$ and $|E(G^{\textnormal{small}}_Q\cap C)|=k$.

\begin{claim}\label{Fork23CalQ} For each $k=2,3$, if $\mathcal{Q}_{3, k}\neq\varnothing$, then there is a $Q\in\mathcal{Q}_{3, k}$ with the property that, for any 2-path $R\subseteq G^{\textnormal{large}}_Q$ with both endpoints in $Q$, and otherwise disjoint to $Q$, then the endpoints of $R$ either both lie in $V(Q\cap C)$ or are consecutive in $Q$. Furthermore, $Q$ is a proper 3-chord of $C$ and an induced path in $G^{\textnormal{large}}_Q$. \end{claim}

\begin{claimproof} We choose a $Q\in\mathcal{Q}_{3,k}$ maximizing $|V(G^{\textnormal{small}}_Q)|$ over the elements of $\mathcal{Q}_{3,k}$. Let $Q=x_0y_0y_1x_1$. Since $y_0y_1$ is a chord of $C^1$ and $G$ is short-inseparable, it is immediate that $x_0\neq x_1$ and $Q$ is an induced path in $G$. If there is a 2-path $R\subseteq G^{\textnormal{small}}_Q$ violating the claim, then, since $G$ is short-inseparable, there is a $Q'\in\mathcal{Q}_{3,k}$ with the same endpoints as $Q$, where $G^{\textnormal{small}}_Q\subsetneq G^{\textnormal{small}}_{Q'}$, contradicting our choice of $Q$. \end{claimproof}

\begin{claim}\label{Q3ChordMidEdge} Let $Q$ be a 3-chord of $C$ whose middle edge is a chord of $C^1$. Then $|E(G^{\textnormal{small}}_Q\cap C)|=3$. \end{claim}

\begin{claimproof} Suppose not. Thus. $\mathcal{Q}_{3, 2}\neq\varnothing$. Let $Q\in\mathcal{Q}_{3, 2}$ be as in Claim \ref{Fork23CalQ}. Let $Q=x_0y_0y_1x_1$.
 
\vspace*{-8mm}
\begin{addmargin}[2em]{0em}
\begin{subclaim}\label{attachLoneVertObtH} Let $H$ be an embedding on $\Sigma$ obtained from $G^{\textnormal{large}}_Q$ by adding a new vertex $u$ adjacent to all four vertices of $Q$. Let $L'$ be a list-assignment for $V(H)$ where $L'(u)=\{c\}$ for some color $c\not\in\{\pi(x_0), \pi(x_1))\}$, and otherwise $L'=L$, and furthermore, either $c\not\in L(y_0)\cup L(y_1)$ or $y\in V(G^{\textnormal{small}}_Q\setminus Q)$. Then $H$ is $L'$-colorable. \end{subclaim}

\begin{claimproof} We obtain $H$ by adding $u$ to the unique component of $\Sigma\setminus C$ not containing any vertices of $G$. Let $D:=(G^{\textnormal{large}}_Q\cap C)+x_0ux_1$. Let $D_*$ be an element of $\mathcal{C}\setminus\{C\})\cup\{D\}$, where $D_*=D$ if $C=C_*$, and otherwise $D_*=C_*$. Let $\mathcal{T}':=(\Sigma, H, (\mathcal{C}\setminus\{C\})\cup\{D\}, L', D_*)$. Since $y_0y_1$ is a chord of $C^1$, we have $|V(H)|<|V(G)|$, so it suffices to show that $\mathcal{T}'$ is a mosaic (where $D$ is a closed ring) and then the criticality of $\mathcal{T}$ implies that $H$ is $L'$-colorable. As $|E(G^{\textnormal{small}}_Q\cap C)|=2$, it follows from our choice of $Q$ that $\mathcal{T}'$ is still short-inseparable, and it is also chord-triangulated, so $\mathcal{T}'$ is indeed a tessellation. It is immediate that $\mathcal{T}'$ satisfies M0)-M1). In particular, $|E(D)|=|E(C)|$, which also implies that $\mathcal{T}'$ still satisfies M3)-M4) and $\textnormal{ew}(H)\geq\textnormal{ew}(G)$. Fact \ref{HighEwTriangleFwF1Cycle} applied to the cycle $(G^{\textnormal{large}}_Q\cap C)+Q$ implies that $\textnormal{fw}(H)$ also satisfies the bound in M5). There is at least one $i\in\{0,1\}$ such that $|N(y_i)\cap V(C)|=1$, or else we contradict Claim \ref{AvShortcutWrap12}, so our choice of color for $u$ implies that $D$ is $(L', V(D))$-predictable. It is also immediate that $D$ is semi-shortcut-free, so $\mathcal{T}'$ is a mosaic and we are done. \end{claimproof}\end{addmargin}

Let $F$ be the 5-cycle $Q+(G^{\textnormal{small}}_Q\cap C)$. Since $y_0y_1$ is a chord of $C^1$, we have $V(G^{\textnormal{small}}_Q)\neq V(F)$. Note that $F$ is induced in $G^{\textnormal{small}}_Q$ and $G^{\textnormal{large}}_Q\cup C$ is induced in $G$. Subclaim \ref{attachLoneVertObtH} implies that $\pi$ extends to an $L$-coloring $\phi$ of $G^{\textnormal{large}}\cup C$. Since $G$ is not $L$-colorable, it follows from Theorem \ref{BohmePaper5CycleCorList} applied to $G^{\textnormal{small}}_Q$ that $G\setminus F$ consists of a lone vertex $z$ adjacent to all five vertices of $F$, where $L_{\pi}(z)\subseteq\{\phi(y_0), \phi(y_1)\}$. Thus, $z=y$. But then, choosing any $c\in L_{\pi}(y)$, Subclaim \ref{attachLoneVertObtH} implies that there is an $L$-coloring of $G^{\textnormal{small}}_Q+y$ using $c$ on $y$, so $G$ is $L$-colorable, which is false. \end{claimproof}

\begin{claim}\label{NoChordC1} There are no chords of $C^1$. \end{claim}

\begin{claimproof} Suppose toward a contradiction that there is a chord of $C^1$. By Claim \ref{Q3ChordMidEdge}, $\mathcal{Q}_{3, 3}\neq\varnothing$, so let $Q\in\mathcal{Q}_{3,3}$ be as in Claim \ref{Fork23CalQ}. Let $Q:=x_0y_0y_1x_1$. Since $\mathcal{Q}_{2,3}=\varnothing$, Claim \ref{AvShortcutWrap12} implies that $N(y_i)\cap V(C)=\{x_i\}$ for each $i=0,1$. 

\vspace*{-8mm}
\begin{addmargin}[2em]{0em}
\begin{subclaim}\label{SecoAttAuxGraph} Let $i\in\{0,1\}$ and let $H$ be an embedding on $\Sigma$ obtained from $G^{\textnormal{large}}_Q$ by adding two new vertices $u, u^*$, where $uu^*\in E(H)$, and furthermore, $u$ is adjacent to $x_i, y_i, y_{1-i}$ and $u^*$ is adjacent to $y_{1-i}, x_{1-i}$. Let $L'$ be a list-assignment for $V(H)$ where $L'(u)=\{c\}$ and $L'(u^*)=\{d\}$ for some colors $c\neq d$ with $c\neq\pi(x_i)$ and $d\neq\pi(x_{1-i})$, and otherwise $L'=L$. Suppose further that either $|L(y_{1-i})\setminus\{c, d, \pi(x_{1-i})\}|\geq 3$ or $y\in V(G^{\textnormal{small}}_Q\setminus Q)$. Then $H$ is $L'$-colorable. \end{subclaim}

\begin{claimproof} Let $D:=(G^{\textnormal{large}}_Q\cap C)+x^iuu^*y^i$. Let $D_*$ be an element of $(\mathcal{C}\setminus\{C\})\cup\{D\}$, where $D_*=D$ if $C=C_*$, and otherwise $D_*=C_*$. Let $\mathcal{T}':=(\Sigma, H, (\mathcal{C}\setminus\{C\})\cup\{D\}, L', D_*)$. Since $y_0y_1$ is a chord of $C^1$ and $|E(G^{\textnormal{small}}_Q\cap C)|=3$, we have $|V(H)|<|V(G)|$, so, as in Subclaim \ref{attachLoneVertObtH}, it suffices to show that $\mathcal{T}'$ is a mosaic (where $D$ is a closed ring).  Since $|E(G^{\textnormal{small}}_Q\cap C)|=3$, it follows from our choice of $Q$ that $\mathcal{T}'$ is still short-inseparable, and it is also chord-triangulated, so $\mathcal{T}'$ is indeed a tessellation. It is immediate that $\mathcal{T}'$ satisfies M0)-M1). In particular, $|E(D)|=|E(C)|$, which also implies that $\mathcal{T}'$ still satisfies M3)-M4) and $\textnormal{ew}(H)\geq\textnormal{ew}(G)$. Fact \ref{HighEwTriangleFwF1Cycle} applied to the cycle $(G^{\textnormal{large}}_Q\cap C)+Q$ implies that $\textnormal{fw}(H)$ also satisfies the bound in M5). Since $|N(y_0)\cap V(C)|=|N(y_1)\cap V(C)|=1$, our choice of color for $u$ implies that $D$ is $(L', V(D))$-predictable. It is also immediate that $D$ is semi-shortcut-free, so $\mathcal{T}'$ is a mosaic and we are done. \end{claimproof}\end{addmargin}

Let $F$ be the 6-cycle $Q+(G^{\textnormal{small}}_Q\cap C)$. Note that $F$ is an induced cycle of $G^{\textnormal{small}}_Q$ and $G^{\textnormal{large}}_Q\cup C$ is induced in $G$, and Subclaim \ref{SecoAttAuxGraph} implies that $\pi$ extends to an $L$-coloring of $V(G^{\textnormal{small}}_Q\cup C)$. Since $G$ is not $L$-colorable, it follows from Theorem \ref{BohmePaper5CycleCorList} applied to $G^{\textnormal{small}}_Q$ that $1\leq |V(G^{\textnormal{small}}_Q)\setminus F|\leq 3$ and $V(G^{\textnormal{small}}_Q\setminus F)\subseteq V(C^1)$. If $|V(G^{\textnormal{small}}_Q\setminus F)|=1$, then our triangulation conditions imply that $G^{\textnormal{small}}_Q$ is a wheel, which is false, as $C$ is semi-shortcut-free in $G$. Thus, $2\leq |V(G^{\textnormal{small}}_Q\setminus F)|\leq 3$. Furthermore, there is a $y'\in V(G^{\textnormal{small}}_Q\setminus F)$ such that $G[N(y')\cap V(C\cap F)]$ is a path of length at least two, so $G[N(y')\cap V(C\cap F)]$ is path of length at precisely two, and Theorem \ref{BohmePaper5CycleCorList} gives us $|L_{\pi}(y')|=2$. Thus, $y'=y$. Now, Theorem \ref{BohmePaper5CycleCorList} also implies also implies that there exists an edge $uu^*\in E(G^{\textnormal{small}}_Q\setminus F)$ and an $i\in\{0,1\}$ such that $N(u)\cap V(Q)=\{x_i, y_i, y_{1-i}\}$ and $N(u^*)\cap V(Q)=\{y_{1-i}, x_{1-i}\}$. Note that $G^{\textnormal{small}}_Q\setminus F$ is $L_{\pi}$-colorable, so Subclaim \ref{SecoAttAuxGraph} implies that $\pi$ extends to an $L$-coloring of $G$, which is false.  \end{claimproof}

From Claims \ref{AvShortcutWrap12} and \ref{NoChordC1}, we get that $U=\varnothing$ and $C^1$ is induced, so 1) of Lemma \ref{StructureofC1inClosedRingLemma} holds. Now we show 2). 

\begin{claim}\label{QContYNbrZ} Let $Q$ be a 4-chord of $C$, where $V(Q)\cap D_2(C)$ consists of a lone vertex $z$. Let $P=G^{\textnormal{small}}_Q\cap C^1$, and suppose that $y\in V(\mathring{P})$. Then $yz\in E(G)$. \end{claim}

\begin{claimproof} Suppose $yz\not\in E(G)$. let $u, u^*$ be the neighbors of $y$ on $C^1$. If each of $u, u^*$ is adjacent to $z$, then, since $G$ has no induced 4-cycles and there are no chords of $C^1$, we get $y\in N(z)$, contradicting our assumption. Thus, suppose without loss of generality that $u\not\in N(z)$. We now define a path $T\subseteq P$ as follows: If $u\in U$, then $T=y$. Otherwise, $T=uy$. If $u\not\in U$, then $G[N(u)\cap V(C)]$ has length precisely one, or else we contradict Claim \ref{AvShortcutWrap12}. Let $P_T=\bigcup_{x\in V(T)}G[N(x)\cap V(C)]$. Note that $P_T$ is a path of length $|E(T)|+2$. Now we produce a contradiction by constructing a mosaic with underlying graph $G':=G\setminus\mathring{P}_T$. Let $C'$ be the unique facial cycle of $G'$ with $V(C')=V(T)\cup V(C\setminus\mathring{P}_T)$, i.e $C'$ is obtained from $C$ by replacing $\mathring{P}_T$ with $T$. Note that $|E(C')|=|E(C)|$ and $|V(G')|<|V(G)|$, as $P_T$ has length at least two. Let $\phi$ be an $L_{\pi}$-coloring of $T$ and let $L'$ be a list-assignment for $V(G')$, where $L'(v)=\{\phi(v)\}$ for each $v\in V(T)$, and otherwise $L'=L$. Let $C_*'$ be an element of $(\mathcal{C}\setminus\{C\})\cup\{C'\}$, where $C_*'=C'$ if $C=C_*$ and otherwise $C_*'=C_*$. We claim now that $\mathcal{T}':=(\Sigma, G', (\mathcal{C}\setminus\{C\})\cup\{C'\}, L', C_*')$ is a mosaic, where $C'$ is a closed ring. Since $|E(C')|=|E(C)|$ and no vertex of $T$ is adjacent to $z$, it follows that $\mathcal{T}'$ satisfies M3)-M4), and M0)-M1) are immediate as well. Furthermore, $\textnormal{ew}(G')\geq\textnormal{ew}(G)$, so it follows from Fact \ref{HighEwTriangleFwF1Cycle} that $\textnormal{fw}(G')$ also satisfies the bound in M5). We just need to check M2). 

Note that, for each endpoint $v$ of $C'\setminus T$, $N(v)$ intersects $V(P_T)$ on precisely one endpoint of $P_T$, as $C^1$ is induced. Furthermore, $G[N(v)\cap V(C)]$ is a path of length at most one, or else we contradict Claim \ref{AvShortcutWrap12}. Thus, $C'$ is semi-shortcut-free in $G'$.  Let $\pi'$ be the unique $L'$-coloring of $V(C')$. Now, at least one endpoint of $C'\setminus T$ lies in $U$. This is immediate from the definition of $T$ if $T=y$, and it follows from Claim \ref{AvShortcutWrap12} if $T=uy$.  Since $C$ is $(L, V(C))$-predictable, it follows that all but at most one vertex of $V(C')$ has an $L'_{\pi'}$-list of size at least three, so $\mathcal{T}'$ satisfies M2). Thus, $\mathcal{T}'$ is a mosaic. Since $\mathcal{T}$ is critical, $G'$ is $L'$-colorable, so $G$ is $L$-colorable, a contradiction.  \end{claimproof}

\begin{claim}\label{LastClaimIn12Seq} Let $Q$ be a 4-chord of $C$, where $V(Q)\cap D_2(C)$ consists of a lone vertex $z$. Let $P=G^{\textnormal{small}}_Q\cap C^1$. Then
\begin{enumerate}[label=\roman*)]
\itemsep-0.1em
\item $V(\mathring{P}\cap U)=\varnothing$. 
\item Either $y\not\in V(\mathring{P})$ or $|E(P)|\leq 4$.
\end{enumerate} \end{claim}

\begin{claimproof} It suffices to show that i) holds. If i) holds, then Claim \ref{AvShortcutWrap12} implies that ii) holds as well. Let $\mathcal{Q}$ be the set of 4-chords $Q$ of $C^1$ such that $|V(Q)\cap D_2(C)|=1$, where at least one internal vertex of $G^{\textnormal{small}}_Q\cap C^1$ lies in $U$, and $y$ is not an internal vertex of $G^{\textnormal{small}}_Q\cap C^1$. Note that, if there is a 4-chord of $C$ violating i), then Claim \ref{QContYNbrZ} implies that $\mathcal{Q}\neq\varnothing$, so it suffices to show that $\mathcal{Q}=\varnothing$. Suppose toward a contradiction that $\mathcal{Q}\neq\varnothing$. Note that, since $G$ is short-inseparable, each $Q\in\mathcal{Q}$ is a proper 4-chord of $C$. Choose a $Q\in\mathcal{Q}$ maximizing $|V(G^{\textnormal{small}}_Q)|$ over the elements of $\mathcal{Q}$. Let $P$ be the path $G^{\textnormal{small}}_Q\cap C^1$ and let $Q:=x^0y^0zy^1x^0$. Since $Q\in\mathcal{Q}$ and $G$ has no induced 4-cycles, we have $|E(P)|\geq 3$. Note that, since $Q$ is a proper 4-chord of $C$, our triangulation conditions imply that we have $V(\mathring{P})\not\subseteq U$. Now, let $H$ be an embedding on $\Sigma$ obtained from $G$ by doing the following:
\begin{itemize}
\itemsep-0.1em
\item First, we delete all the vertices of $G^{\textnormal{small}}_Q\setminus V(C\cup R)$
\item Next, we delete all the vertices of $U\cap V(\mathring{P})$ and replace them with edges, so that we obtain an induced path $P_H$ from $P$, where each internal vertex of $P_H$ is adjacent to a subpath of $C$ of length at least one.
\item Finally, we add an edge from $z$ to each vertex of $P_H$ that is not already adjacent to $z$. 
\end{itemize}

Note that $|V(H)|<|V(G)|$. 

\vspace*{-8mm}
\begin{addmargin}[2em]{0em}
\begin{subclaim}\label{HAuxGraphLCol} $H$ is $L$-colorable. \end{subclaim}

\begin{claimproof} We first show that $H$ is short-inseparable. Note that any chord of $Q$, if it exists, is an edge of $C$. Suppose $H$ is not short-inseparable. For each $i=0,1$, let $u^i$ be the unique vertex of $V(\mathring{P})\setminus U$ which is adjacent to $y^i$ on $P_H$. Since $H$ is not short-inseparable, there is an $i\in\{0,1\}$ such that $u^i\in N(x^i)$ and $z, x^i$ have a common neighbor $u^i_*\in V(G^{\textnormal{large}}_Q)\setminus V(Q)$. In particular, $|N(y^i)\cap V(C)|=1$, so $y^i\neq y$, and, letting $Q'=x^iu^i_*zy^{1-i}x^{1-i}$, our triangulation conditions imply that $V(G^{\textnormal{small}}_{Q'})=V(G^{\textnormal{small}}_Q)\cup\{u^i_*\}$. Since $y^i\neq y$, we violate the maximality of $|V(G^{\textnormal{small}}_Q)|$. Thus, $H$ is short-inseparable, and $(\Sigma, H, \mathcal{C}, L, C_*)$ is a tessellation. It is immediate that $H$ satisfies all of M0)-M5), so $(\Sigma, H, \mathcal{C}, L, C_*)$ is a mosaic. Since $\mathcal{T}$ is critical, $H$ is $L$-colorable. \end{claimproof}\end{addmargin}

Now, it follows from Subclaim \ref{HAuxGraphLCol} that $V(G^{\textnormal{small}}_Q\cup C)$ admits an $L$-coloring $\phi$. Let $K:=G^{\textnormal{small}}_Q\setminus C$. We regard a$K$ as a planar embedding with outer cycle $y^0zy^1+P$. Each chord of the outer cycle of $K$ is incident to $z$, and the $L_{\pi}$-coloring $(\phi(y^0), \phi(z), \phi(y^1))$ of $y^0zy^1$ does not extend to $L_{\pi}$-color $K$, so every vertex of $\mathring{P}$ has an $L_{\pi}$-list of size precisely three, contradicting our assumption that $U\cap V(\mathring{P})\neq\varnothing$.   \end{claimproof}

Combining Claims \ref{QContYNbrZ} and\ref{LastClaimIn12Seq}, we complete the proof of Lemma \ref{StructureofC1inClosedRingLemma}. \end{proof}

\section{Reductions Near the Rings: Closed Rings}\label{RedNearRingsSec}

Section \ref{RedNearRingsSec} consists of the proof of the following result.

\begin{theorem}\label{ClosedRingReductTrip} Let $\mathcal{T}=(\Sigma, G, \mathcal{C}, L, C_*)$ be a critical mosaic. For each closed $C\in\mathcal{C}$, there is a $C$-reduction. \end{theorem}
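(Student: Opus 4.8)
The plan is to mirror the structure of the open-ring reduction (Theorem \ref{MainCollarOpRingRedClF}) but using the boundary structure for closed rings established in Lemma \ref{StructureofC1inClosedRingLemma}. Fix a critical mosaic $\mathcal{T}=(\Sigma,G,\mathcal{C},L,C_*)$ and a closed ring $C$ with unique $L$-coloring $\pi$ of $V(C)$. I would first dispose of the easy case: if every vertex of $D_1(C)$ has an $L_\pi$-list of size at least three, then $C^1:=G[D_1(C)]$ is an induced facial cycle of $G\setminus C$ (using that $C$ is semi-shortcut-free and $\mathcal{T}$ is chord-triangulated, as in the proof of Lemma \ref{StructureofC1inClosedRingLemma}), it is uniquely $4$-determined in $G\setminus C$ with respect to $L_\pi$ (short-inseparability plus $C$ being uniquely $N_{\textnormal{mo}}/3$-determined by Theorem \ref{NOver4GoodSideCorCritMos}), and then the triple $(V(C),\pi,C^1)$ is itself a $C$-reduction: $S=V(C)\subseteq B_1(C)$, $S$ is trivially $(L,\pi)$-inert, $G[V(C)]=C$ is connected, every vertex of $D_1(S)=D_1(C)$ has an $L_\pi$-list of size $\geq 3$, $G\setminus C$ is $2$-connected by Proposition \ref{BasicPropertiesCricMosProp} (with the genus/$|\mathcal C|$ bounds of Proposition \ref{LowGenus+OneFaceCase} to rule out degenerate cases), and $C^1$ is the required facial cycle.

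The substantive case is when there is a $y\in D_1(C)$ with $|L_\pi(y)|=2$. Then Lemma \ref{StructureofC1inClosedRingLemma} applies, giving the unique facial cycle $C^1$ with $V(C^1)=D_1(C)$, which is induced, uniquely $4$-determined in $G\setminus C$ w.r.t.\ $L_\pi$, with $U=\{x\in D_1(C):|N(x)\cap V(C)|=1\}$ nonempty, and with the control on $4$-chords $Q$ through a single $D_2(C)$-vertex (parts 2a, 2b). The idea is to build the reduction by coloring a small set around $y$ so that after deletion the unique bad vertex $y$ is removed and all remaining $D_1$-vertices regain lists of size $\geq 3$. Concretely I would take a short subpath $T\subseteq C^1$ containing $y$ (of length $0$, $1$, or at most a small constant, chosen so that at least one endpoint of $C^1\setminus T$ lies in $U$, exactly as in the proof of Claim \ref{QContYNbrZ}), set $P_T=\bigcup_{x\in V(T)}G[N(x)\cap V(C)]$, and consider $S=V(C)\cup V(\mathring P_T)\cup V(T)$ together with an extension $\psi$ of $\pi$ coloring $V(T)$ (and keeping $V(\mathring P_T)$ uncolored, noting these are interior $C$-vertices whose colors are forced — actually $\mathring P_T\subseteq V(C)$, so no extra coloring is needed beyond $\pi$ restricted appropriately; the only genuinely new coloring is on $T\subseteq D_1(C)$). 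Since $|L_\pi(y)|=2$ and all other $C^1$-vertices have lists of size $\geq 3$ by M2), one can $L_\pi$-color $T$; the resulting $C^r$ is the facial cycle of $G\setminus S$ on $D_1(S)=V(T)\cup V(C\setminus\mathring P_T)$'s $D_1$-neighbourhood, i.e.\ essentially $C^1$ with $y$ (and possibly an adjacent $U$-vertex) spliced out and $z$-type vertices now on the boundary.

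The key points to verify, in order: (i) $S\subseteq B_1(C)$ — this holds because $T\subseteq D_1(C)$ and $\mathring P_T\subseteq V(C)$, so actually $S\subseteq B_1(C)$, matching the stronger requirement of Definition \ref{ReductionOpDefn}(1) for closed rings; (ii) $S$ is $(L,\psi)$-inert in $G$ — since the only uncolored vertices of $S$ lie in $V(C)$ (forced by $\pi$) and $V(T)$ is colored, the inert condition reduces to checking that any coloring of $D_1(S)$ extends over $S$, which follows because $G[S\setminus V(C)]$ is just the path $T$ and its neighbours; (iii) every vertex of $D_1(S)$ has an $L_\psi$-list of size $\geq 3$ — here is where parts 2a and 2b of Lemma \ref{StructureofC1inClosedRingLemma} do the work, controlling which $C^1$-vertices (and $D_2(C)$-vertices reachable by short chords) can lose colors, together with M2) ensuring that removing $y$ from the picture cures the deficiency; (iv) $G\setminus S$ is $2$-connected with $C^r$ the unique facial cycle on $D_1(S)$ — this is a local triangulation argument using that $C^1$ is induced and $\mathcal{T}$ is chord-triangulated, analogous to the extraction of the reduction cycle at the end of the proof of Theorem \ref{MainCollarOpRingRedClF}. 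The main obstacle I anticipate is (iii): bounding exactly which vertices lose which colors when $V(T)$ is colored, particularly handling the $4$-chords $Q$ of $C$ through a single $D_2(C)$-vertex $z$ with $y\in V(\mathring{G^{\textnormal{small}}_Q\cap C^1})$, where $|E(G^{\textnormal{small}}_Q\cap C^1)|\leq 4$ and $yz\in E(G)$ by Lemma \ref{StructureofC1inClosedRingLemma}(2b); one must argue that after deleting $S$ these short "pockets" collapse correctly and no vertex of $D_1(S)$ inherits a list below size three, which may require a short case analysis on $|E(T)|\in\{0,1\}$ and on whether the neighbours of $y$ on $C^1$ lie in $U$, plus possibly constructing an auxiliary smaller mosaic (as in Claims \ref{QContYNbrZ}–\ref{LastClaimIn12Seq}) to rule out the remaining obstructions.
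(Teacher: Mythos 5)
Your proposal diverges substantially from the paper's argument, and the divergence is not cosmetic: the place you flag as "the main obstacle" is where your plan genuinely breaks down.

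You propose taking $S=V(C)\cup V(T)$ for a \emph{short} path $T\subseteq C^1$ around $y$. The resulting $D_1(S)$ then contains almost all of $C^1\setminus T$, and in particular the two endpoints of $C^1\setminus T$. If such an endpoint $u$ is not in $U$, then $|L_\pi(u)|=3$, and after coloring $T$ the color on $u$'s unique $T$-neighbor is deleted, leaving $|L_\psi(u)|\geq 2$ but possibly $=2$, which violates condition 3) of Definition \ref{ReductionOpDefn}. Lemma \ref{StructureofC1inClosedRingLemma}(2a)--(2b) controls where $U$-vertices and $y$ sit relative to the short $4$-chords, but it does not guarantee that both $C^1$-neighbors of $T$ lie in $U$, nor does your suggestion to make $T$ a bit longer help: every internal $\textnormal{Mid}$-type vertex $w$ of $T$ (i.e.\ the midpoint of a $2$-path $u_1wu_2\subseteq T$ whose endpoints have a common $D_2(C)$-neighbor $z$) threatens to force $z$ to lose up to three colors, dropping $|L_\psi(z)|$ to $2$ whenever $\psi$ uses three distinct colors on $u_1,w,u_2$. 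Choosing $\psi$ to avoid this is precisely a nontrivial list-coloring problem, and your sketch does not contain an argument that it is always solvable.

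The paper resolves exactly this by coloring at the opposite scale: $S=B_1(C)$, with $\psi=\pi\cup\phi$ where $\phi$ is a so-called perfect coloring of $\{y\}\cup(V(C^1)\setminus\textnormal{Mid}(C^1))$ that deliberately leaves the $\textnormal{Mid}(C^1)\setminus\{y\}$ vertices uncolored and proves them $(L_\pi,\phi)$-inert. The $D_2(C)$-vertices that could lose three colors from a $2$-path are safe precisely because the midpoint of that $2$-path is \emph{not} in $\textnormal{dom}(\phi)$; and the existence of $\phi$ is established not by a local case analysis around $y$ but via the linking machinery (Theorem \ref{MainLinkingResultAlongFacialCycle}, Claim \ref{ConsSubRMapTwoto}) applied globally around $C^1$, with Claim \ref{EachXSplitWalkLen3} identifying $\textnormal{Mid}(C^1)$ with the non-hinge vertices and Claims \ref{C1WrappedBy2Paths}--\ref{AvoidEachAWrap} deriving a contradiction from the nonexistence of a perfect coloring. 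Your proposal contains no analogue of the inertness of $\textnormal{Mid}(C^1)\setminus\{y\}$ and no mechanism to certify the list bound on $D_1(S)$; the fallback of "constructing an auxiliary smaller mosaic" is how Lemma \ref{StructureofC1inClosedRingLemma} itself is proved, but it is not what the paper does inside the proof of Theorem \ref{ClosedRingReductTrip}. The gap is therefore a missing idea (leave the $\textnormal{Mid}$ vertices uncolored and use linking colorings over all of $C^1$), not just a missing detail.
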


\begin{proof} Let $C\in\mathcal{C}$ be a closed ring and let $\pi$ be the unique $L$-coloring of $V(C)$. If every vertex of $B_1(C)$ has an $L_{\pi}$-list of size at least three, we are done, so, by M2), we suppose that there exists a $y\in V(C^1)$ such that $|L_{\pi}(y)|=2$, where each vertex of $C^1-y$ has an $L_{\pi}$-list of size at least three. Let $C^1, U$ be as in Lemma \ref{StructureofC1inClosedRingLemma}, so $C^1$ is induced. In the remainder of the proof of Theorem \ref{ClosedRingReductTrip}, given a subpath $P$ of $C^1$, whenever we write $\textnormal{Link}(P)$, we are suppressing the subscript $L_{\pi}$. This is well-defined as $C^1$ is uniquely 2-determined with respect to $L_{\pi}$. Given a vertex $v\in V(C^1)$, we say that $v$ is a \emph{hinge vertex} if there is no 4-chord $Q$ of $C$ with $D_2(C)\cap V(Q)|=1$, where $v$ is an internal vertex of the path $G^{\textnormal{small}}_Q\cap C^1$. By Lemma \ref{StructureofC1inClosedRingLemma}, each element of $U$ is a hinge vertex. 

\begin{claim}\label{ConsSubRMapTwoto} Let $R$ be a 2-consistent subpath of $C^1$, where $y\not\in V(R-p)$, and let $p, p'$ be the endpoints of $R$. Then,
\begin{enumerate}[label=\roman*)]
\itemsep-0.1em
\item for any hinge vertex $u\in V(R-p)$ and any $\phi\in\textnormal{Link}(pRu)$ and $\phi'\in\textnormal{Link}(uRp')$ with $\phi(u)=\phi'(u)$, the union $\phi\cup\phi'$ lies in $\textnormal{Link}(R)$; AND 
\item Let $c\in L_{\pi}(p)$. If $V(R-p)\cap U=\varnothing$, then there exist two elements of $\textnormal{Link}(R)$ using $c$ on $p$ and different colors on $p'$. If $|V(R-p)\cap U|>1$, then there exist three elements of $\textnormal{Link}(R)$ using $c$ on $p$ and different colors on $p'$.
\end{enumerate}  \end{claim}

\begin{claimproof} i) is immediate from the fac that $u$ is a hinge vertex. Since each vertex of $U$ has an $L_{\pi}$-list of size at least four, we get each of ii) from successive applications of L1) of Theorem \ref{MainLinkingResultAlongFacialCycle}, taking appropriate unions and using i). \end{claimproof}

\begin{claim}\label{EachXSplitWalkLen3} For each $x\in D_2(C)$, the graph $G[N(x)\cap V(C^1)]$ is a path of length at most two. \end{claim}

\begin{claimproof} Note that if there is a 2-path $v_1v_2v_3\subseteq C$, where $x$ is adjacent to both of $v_1, v_3$, then $x$ is also adjacent to $v_2$, as $C^1$ is induced and $G$ has no induced 4-cycles. Now suppose the claim does not hold. Thus, there is a 4-chord $Q$ of $C$, where $V(Q)\cap D_2(C)$ consists of a lone vertex $z$, such that the path $P:=G^{\textnormal{small}}_Q\cap C^1$ has length at least three, and both endpoints of $P$ are hinge vertices. By Lemma \ref{StructureofC1inClosedRingLemma}, $V(\mathring{P})\cap U=\varnothing$, and $|U\cap V(C^1\setminus\mathring{P})|\geq 2$. Let $x^0, x^1$ denote the endpoints of $P$ and let $H=G^{\textnormal{small}}_Q\setminus C$. We regard $H$ as a planar embedding with outer cycle $v^0zx^1+(G^{\textnormal{small}}_Q\cap C^1)$. In the proof of Claim \ref{EachXSplitWalkLen3}, whenever we write $\textnormal{End}(x^0zx^1, H)$, we are supressing the subscript $L_{\pi}$.

\vspace*{-8mm}
\begin{addmargin}[2em]{0em}
\begin{subclaim}\label{GluLinkToEnd} Given $\phi\in\textnormal{End}(x^0zx^1, H)$ and $\phi'\in\textnormal{Link}(C\setminus\mathring{P})$, either $\phi(x^0)\neq\phi'(x^0)$ or $\phi(x^1)\neq\phi'(x^1)$.  \end{subclaim}

\begin{claimproof} If $\phi(x^0)=\phi'(x^0)$ and $\phi(x^1)=\phi'(x^1)$, then there is a $C$-reduction, where, in Definition \ref{ReductionOpDefn}, we check $S=V(C\cup C^1)$ and $\psi=\pi\cup\phi\cup\phi'$, contradicting our assumption that Theorem \ref{ClosedRingReductTrip} does not hold. \end{claimproof}\end{addmargin}

\vspace*{-8mm}
\begin{addmargin}[2em]{0em}
\begin{subclaim}\label{yMidC1} $y$ is not a hinge vertex. \end{subclaim}

\begin{claimproof} Suppose $y$ is a hinge vertex. In particular, $y\not\in V(\mathring{P})$. Fix a $c\in L_{\pi}(y)$. We first show that $y$ is not an endpoint of $P$. Suppose $y$ is an endpoint of $P$, say $y=x^0$ for the sake of definiteness. Since $|U\cap V(C\setminus\mathring{P})|>1$, it follows from ii) of Claim \ref{ConsSubRMapTwoto} that there are three elements of $\textnormal{Link}(C\setminus\mathring{P})$ using $c$ on $x^0$ and different colors on $x^1$. By Theorem \ref{SumTo4For2PathColorEnds}, there is a $\phi\in\textnormal{Link}(C\setminus\mathring{P})$ and a $\phi'\in\textnormal{End}(x^0zx^1, H)$ with $\phi(x^0)=\phi'(x^0)=c$ and $\phi(x^1)=\phi'(x^1)$, contradicting Subclaim \ref{GluLinkToEnd}. Thus, $y$ is not an endpoint of $P$. Let $Q^0, Q^1$ be the subpaths of $C\setminus\mathring{P}$ such that $Q^0\cup Q^1=C\setminus\mathring{P}$ and $Q^0\cap Q^1=y$. We have $|U\cap V(Q^0-y)|+|U\cap V(Q^1-y)|>1$. It follows from Theorem \ref{SumTo4For2PathColorEnds}, together with ii) of Claim \ref{ConsSubRMapTwoto}, that there exist a $\sigma^0\in\textnormal{Link}(Q^0)$ and a $\sigma^1\in\textnormal{Link}(Q^1)$ such that $\sigma^0(y)=\sigma^1(y)=c$ and the restriction of $\sigma^0\cup\sigma^1$ to $\{x^0, x^1\}$ lies in $\textnormal{End}(x^0zx^1, H)$. Since $y\not\in\textnormal{Mid}(C^1)$, we have $\sigma^0\cup\sigma^1\in\textnormal{Link}(C\setminus\mathring{P})$, so we contradict Subclaim \ref{GluLinkToEnd}. \end{claimproof}\end{addmargin}

By Subclaim \ref{yMidC1}, there is a 4-chord $Q'$ of $C$, where $V(Q')\cap D_2(C)$ consists of a lone vertex $z'$, such that $G^{\textnormal{large}}_Q\cap C^1$ is a path $P'$ with $y\in V(\mathring{P}')$. By Lemma \ref{StructureofC1inClosedRingLemma}, $yz'\in E(G)$. We may suppose that either $|E(P')|=2$ or $P'=P$, since, if $|E(P')|>3$, then we can replace $P$ with $P'$ if necessary.

\vspace*{-8mm}
\begin{addmargin}[2em]{0em}
\begin{subclaim}\label{ArgShowsPP'} $P=P'$. In particular, $yz\in E(G)$. \end{subclaim}

\begin{claimproof} Suppose not. Thus, $P'$ has length two and $y$ is its midpoint. Let $Q^0, Q^1$ denote the components of $(C\setminus\mathring{P})-y$, where, for each $i=0,1$, $x^i$ is an endpoint of $Q^i$, and $u^i$ denotes the other endpoint of $Q^i$. Furthermore, let $A^i:=L_{\pi}(u^i)\setminus L_{\pi}(y)$ and let let $B^i$ be the set of colors $c\in L_{\pi}(x^i)$ such that there is a $\sigma\in\textnormal{Link}(Q^i)$ with $\sigma(u^i)\in A^i$ and $\sigma(x^i)=c$. Note that, if $u^i\in U$, then $|A^i|\geq 2$. In any case, it follows from L1) of Theorem \ref{MainLinkingResultAlongFacialCycle} and Claim \ref{ConsSubRMapTwoto} that, for each $i=0,1$, we have $|B^i|\geq 1$. Furthermore, if $|U\cap V(Q^i)|\geq 1$, then $|B^i|\geq 2$. Likewise, if $U\cap V(Q^i)|\geq 2$, then $|B^i|\geq 3$. Combining this with Theorem \ref{SumTo4For2PathColorEnds}, we get that there exist a $\sigma^0\in\textnormal{Link}(Q^0)$ and a $\sigma^1\in\textnormal{Link}(Q^1)$ such that $\sigma^i(u^i)\in A^i$ for each $i=0,1$, where the restriction of $\sigma^0\cup\sigma^1$ to $\{x^0, x^1\}$ lies in $\textnormal{End}(x^0zx^1, H)$. Our choice of colors for $u^0, u^1$ implies that $\sigma^0\cup\sigma^1\in\textnormal{Link}(C\setminus\mathring{P})$, so we contradict Subclaim \ref{GluLinkToEnd}. \end{claimproof}\end{addmargin}

Since $y\in V(\mathring{P})$, we have $3\leq |E(P)|\leq 4$ by Lemma \ref{StructureofC1inClosedRingLemma}. 

\vspace*{-8mm}
\begin{addmargin}[2em]{0em}
\begin{subclaim}\label{PPrec4} $|E(P)|=4$ and $y$ is the midpoint of $P$. \end{subclaim}

\begin{claimproof} Suppose not. Thus, for the sake of definiteness, we suppose $yx^0\in E(P)$. Let $A:=L_{\pi}(x^0)\setminus A$. For each $c\in A$, there is an element of $\textnormal{End}_{L_{\pi}}(x^0zx^1, H)$ using $c$ on $x^0$. This follows from Theorem \ref{SumTo4For2PathColorEnds}, by considering a list-assignment $L'$ for $H$ in which $L'(y)=L_{\pi}(y)\cup\{c\}$ and otherwise $L'=L_{\pi}$. Since $|U\cap V(C\setminus\mathring{P})|>1$ and $|A|\geq 1$, a similar argument to that of Subclaim \ref{ArgShowsPP'} shows that there is a $\phi\in\textnormal{Link}(C\setminus\mathring{P})$ with $\phi(x^0)\in A$, where the restriction of $\phi$ to $\{x^0, x^1\}$ lies in $\textnormal{End}_{L_{\pi}}(x^0zx^1, H)$, which contradicts Subclaim \ref{GluLinkToEnd}. \end{claimproof}\end{addmargin}

Applying Subclaim \ref{PPrec4}, we let $P=x^0y^0yy^1x^1$ for some vertices $y^0, y^1$. Since $zy\in E(G)$ and $C^1$ is induced, our triangulation conditions imply that  $H$ is a broken wheel with principal vertex $z$. 

\vspace*{-8mm}
\begin{addmargin}[2em]{0em}
\begin{subclaim}\label{SameListPrecX0X1} $x^0, y^0$ have the same $L_{\pi}$-list of size precisely three. Likewise for $y^1, x^1$. \end{subclaim}

\begin{claimproof} Suppose not. Thus, without loss of generality, we suppose there is a $c\in L_{\pi}(x^0)$ with $c\not\in L_{\pi}(y^0)$. By L1) of Theorem \ref{MainLinkingResultAlongFacialCycle}, there is a $\phi\in\textnormal{Link}(C\setminus\mathring{P})$ with $\phi(x^0)=c$. But the restriction of $\phi$ to $\{x^0, x^1\}$ lies in $\textnormal{End}_{L_{\pi}}(x^0zx^1, H)$, which contradicts Subclaim \ref{GluLinkToEnd}.  \end{claimproof}\end{addmargin}

Using Subclaim \ref{SameListPrecX0X1}, it is straightforward to check that $\textnormal{End}_{L_{\pi}}(x^0zx^1, H)\neq\varnothing$, so we fix a $\phi\in\textnormal{End}_{L_{\pi}}(x^0zx^1, H)$. Subclaim \ref{SameListPrecX0X1} also implies that $x^0, x^1\not\in U$, so $|U\cap (C\setminus P)|>2$. Thus, it follows from i) of Claim \ref{ConsSubRMapTwoto}, together with successive applications of L1) of Theorem \ref{MainLinkingResultAlongFacialCycle}, that any $L_{\pi}$-coloring of $\{x^0, x^1\}$ extends to an element of $\textnormal{Link}(C\setminus\mathring{P})$, contradicting Subclaim \ref{GluLinkToEnd}. This proves Claim \ref{EachXSplitWalkLen3}. \end{claimproof}

Let $\textnormal{Mid}(C^1)$ be the set of $v\in V(C^1)$ such that there is a (necessarily unique) $x\in D_2(C)$, where $G[N(x)\cap V(C^1)]$ is a path of length two with midpoint $v$. Note that $U\cap\textnormal{Mid}(C^1)=\varnothing$, and, by Claim \ref{EachXSplitWalkLen3}, $V(C^1)\setminus\textnormal{Mid}(C^1)$ is precisely the set of hinge vertices. By Lemma \ref{StructureofC1inClosedRingLemma}, $U\neq\varnothing$. We now fix a $u\in U$. We define a \emph{perfect coloring} to be an $L_{\pi}$-coloring $\phi$ of $\{y\}\cup (V(C^1)\setminus\textnormal{Mid}(C^1))$ such that $\textnormal{Mid}(C^1)\setminus\{y\}$ is $(L_{\pi}, \phi)$-inert in $G\setminus C$ and each vertex of $B_2(C)$ has an $L_{\pi\cup\phi}$-list of size at least three. To prove Theorem \ref{ClosedRingReductTrip}, it suffices to prove that there is a perfect coloring. Suppose there is no perfect coloring. We now let $C^1=v_1\cdots v_nv_1$. 

\begin{claim}\label{C1WrappedBy2Paths} Every other vertex of $C^1$ lies in $\textnormal{Mid}(P)$. In particular, $C^1$ has even length. \end{claim}

\begin{claimproof} Suppose not. Say for the sake of definiteness that $v_1, v_n\not\in\textnormal{Mid}(P)$. Consider the following cases.

\textbf{Case 1:} $y\in\textnormal{Mid}(P)$

In this case, $y=v_k$ for some $1<k<n$. Let $Q=v_1\cdots v_{k-1}$ and $Q'=v_{k+1}\cdots v_n$. Possibly one of $Q, Q'$ is a lone vertex. In any case, since $u$ lies in one of $Q, Q'$, it follows from L1) of Theorem \ref{MainLinkingResultAlongFacialCycle}, together with Claim \ref{ConsSubRMapTwoto}, that there exist $\phi\in\textnormal{Link}(Q)$ and $\phi'\in\textnormal{Link}(Q')$ with $\phi(v_1)\neq\phi'(v_n)$ and $\phi(v_{k-1}), \phi'(v_{k+1})\not\in A$, so $\phi\cup\phi'$ is a perfect coloring, contradicting our assumption.

\textbf{Case 2:} $y\not\in\textnormal{Mid}(P)$

In this case, since $|V(C^1)|>3$, there is at least one $1<k<n$ with $v_k\not\in\textnormal{Mid}(P)$,where $y\in\{v_1, v_k, n\}$.Let $R=v_1\cdots v_k$ and $R'=v_k\cdots v_n$. Possibly $u=v_k$, but in any case, since $u$ lies in at least one of $R, R'$, it follows from Claim \ref{ConsSubRMapTwoto}, together with L1) of Theorem \ref{MainLinkingResultAlongFacialCycle}, that there exist $\phi\in\textnormal{Link}(R)$ and $\phi'\in\textnormal{Link}(R')$ with $\phi(v_k)=\phi'(v_k)$ and $\phi(v_1)\neq\phi'(v_n)$, so $\phi\cup\phi'$ is a perfect coloring, contradicting our assumption. \end{claimproof}

Applying Claim \ref{C1WrappedBy2Paths}, we say for the sake of definiteness that $\textnormal{Mid}(P)=\{v_2, v_4, \cdots, v_n\}$. 

\begin{claim} There is no color common to the $L_{\pi}$-lists of all the vertices of $C^1\setminus\textnormal{Mid}(P)$. Furthermore, $y\in\textnormal{Mid}(P)$.\end{claim}

\begin{claimproof} Suppose there is a color $a$ common to the $L_{\pi}$-lists of all the vertices of $C^1\setminus\textnormal{Mid}(P)$. Since $C^1$ is induced, there is an $L$-coloring $\phi$ of $\{v_1, v_3, \cdots, v_{n-1}\}$ obtained by coloring all of these vertices with $a$. Possibly $y\in\textnormal{Mid}(P)$, but, in that case, there is a color left over for $y$. In any case, $\phi$ extends to an $L$-coloring $\phi'$ of $\textnormal{dom}(\phi)\cup\{y\}$, and $\phi'$ is a perfect coloring, contradicting our assumption, so there is no such $a$. Now suppose that $y\not\in\textnormal{Mid}(P)$. Let $Q_0, Q_1$ be the two edge-disjoint subpaths of $C^1$ with endpoints $u, y$, where $Q_0\cup Q_1=C^1$. Since $u\not\in\textnormal{Mid}(P)$, each of $Q_0, Q_1$ has length at least two. Let $A:=L_{\pi}(y)$ and $B:=L_{\pi}(u)$.

\vspace*{-8mm}
\begin{addmargin}[2em]{0em}
\begin{subclaim}\label{NoColLink1}
Both of the following hold.
\begin{enumerate}[label=\roman*)]
\itemsep-0.1em
\item For any $\phi_0\in\textnormal{Link}(Q_0)$ and $\phi_1\in\textnormal{Link}(Q_1)$ either $\phi_0(y)\neq\phi_1(y)$ or $\phi_0(u)\neq\phi_1(u)$; AND
\item Let $i\in\{0,1\}$. Then, at most one color of $B$ is $(Q_i, u, A)$-linking. Furthermore,  for any $S\subseteq B$ with $|S|=3$, no color of $A$ is $(Q_i, y, S)$-linking.
\end{enumerate} \end{subclaim}

\begin{claimproof} If i) does not hold, then $\phi_0\cup\phi_1$ is a perfect coloring, contradicting our assumption. Now suppose there is a $T\subseteq L_{\pi}(u)$ with $|T|\geq 2$, where each color of $T$ is $(Q_i, u, A)$-linking. By L1) of Theorem \ref{MainLinkingResultAlongFacialCycle}, there is a $\phi\in\textnormal{Link}(Q_{1-i})$ with $\phi(u)\in T$, contradicting i). Now suppose there is an $a\in A$ which is $(Q_i, y, S)$-linking. By L1) of Theorem \ref{MainLinkingResultAlongFacialCycle}, there is a $\phi\in\textnormal{Link}(Q_{1-i})$ with $\phi(y)=a$ and $\phi(u)\in S$, contradicting i).  \end{claimproof}\end{addmargin}

Now, since $|L(u)|\geq 4$, it follows from ii) of Subclaim \ref{NoColLink1} that for each $i=0,1$, there is a subseteq $B_i$ of $L(u)$ with $|B_i|=3$, where no color of $B_i$ is $(Q_i, u, A)$-linking and no color of $A$ is $(Q_i, u, B_i)$-linking. Thus, by L2) of Theorem \ref{MainLinkingResultAlongFacialCycle}, it follows that, for each $i=0,1$, $L_{\pi}(y)$ is contained in the $L_{\pi}$-lists of all the vertices of $Q_i$. But then, there is a color common to the $L_{\pi}$-lists of all the vertices of $C^1$, which is false, as shown above. \end{claimproof}

Since $y\in\textnormal{Mid}(P)$, say $y=v_2$ for the sake of definiteness. Let $y^+$ be the unique vertex of $B_2(C)$ such that $N(y^+)\cap V(C^1)=\{v_1, v_2, v_3\}$.  Let $Q=C^1-y$. 

\begin{claim}\label{AvoidEachAWrap} $A\cap L_{\pi}(v_1)$ and $A\cap L_{\pi}(v_3)$ are disjoint singletons. \end{claim}

\begin{claimproof} We fix $c\in L_{\pi}(v_1)\setminus A$ and $c'\in L_{\pi}(v_3)\setminus A$. We first show that, for each $v\in\{v_1, v_3\}$, we have $A\cap L_{\pi}(v)\neq\varnothing$. Suppose not. Say $A\cap L_{\pi}(v_1)=\varnothing$ for the sake of definiteness. By L1) of Theorem \ref{MainLinkingResultAlongFacialCycle}, there is a $\phi\in\textnormal{Link}(Q)$ with $\phi(v_3)=c'$. Since $\phi(v_1)\not\in A$, $\phi$ is a perfect coloring, contradicting our assumption. To finish, it suffices to show that $A\cap L_{\pi}(v_1)\cap L_{\pi}(v_3)=\varnothing$. Suppose there is an $a\in A\cap L_{\pi}(v_1)\cap L_{\pi}(v_3)$.  Let $X=\{c, a\}$ and $X'=\{c, a\}$. By L2) of Theorem \ref{MainLinkingResultAlongFacialCycle}, there is either a color of $X$ which is $(Q, v_1, X')$-linking or a color of $X'$ which is $(Q, v_3, X)$-linking. Say for the sake of definteness that there is a $d\in X$ which is $(Q, v_1, X')$-linking. If $d=c$, then there is a $\phi\in\textnormal{Link}(Q)$ with $\phi(v_1)=c$ and $\phi(v_3)=c'$, and $\phi$ is a perfect coloring, which is false, so $d=a$. But then, there is a $\phi\in\textnormal{Link}(Q)$ with $\phi(v_1)=\phi(v_3)=a$. There is a color left over for $y$, and since $|L_{\pi\cup\phi}(y^+)|\geq 4$, the lone extension of $\phi$ to an $L_{\pi}$-coloring of $\textnormal{dom}(\phi)\cup\{y\}$ is a perfect coloring, contradicting our assumption. \end{claimproof}

By Claim \ref{AvoidEachAWrap}, $|L_{\pi}(v_1)\setminus A|\geq 2$ and $L_{\pi}(v_3)\setminus A|\geq 2$. By L1) of Theorem \ref{MainLinkingResultAlongFacialCycle}, there is a $\phi\in\textnormal{Link}(Q)$ with $\phi(v_1)\not\in A$ and $\phi(v_3)\not\in A$, so $\phi$ is a perfect coloring, contradicting our assumption. This proves Theorem \ref{ClosedRingReductTrip}.  \end{proof}

With Theorems \ref{MainCollarOpRingRedClF} and \ref{ClosedRingReductTrip} in hand, we define the following:

\begin{defn}\label{AssociateAuxTessToCalF} \emph{Let $\mathcal{T}=(\Sigma, G, \mathcal{C}, L, C_*)$ be a critical mosaic and $\mathcal{F}\subseteq\mathcal{C}$, where $\mathcal{F}\neq\varnothing$. For each $F\in\mathcal{F}$, let $(S_F, \psi_F, F^r)$ be an $F$-reduction. Let $\mathcal{F}^r:=\{F^r: F\in\mathcal{F}\}$. We define a chart $\mathcal{T}_{\mathcal{F}}^r:=(\Sigma, G', (\mathcal{C}\setminus\mathcal{F})\cup\mathcal{F}^r, L', C_*')$ as follows, where, in particular, $\mathcal{T}^r_{\mathcal{F}}$ has the same underlying surface as $\mathcal{T}$, and $G'$ is a subgraph of $G$.}
\begin{enumerate}[label=\emph{\arabic*)}]
\itemsep-0.1em
\item \emph{$C_*'=C_*$ if $C_*\not\in\mathcal{F}$, and otherwise $C_*'=C_*^r$. }
\item \emph{$G'=G\setminus\left(\bigcup_{F\in\mathcal{F}}S_F\right)$ and $L'=L_{\tau}$, where $\tau=\bigcup_{F\in\mathcal{F}}\psi_F$.}
\end{enumerate}
\end{defn}

Theorems \ref{MainCollarOpRingRedClF} and \ref{ClosedRingReductTrip}  imply that, in the setting above, for any nonempty $\mathcal{F}\subseteq\mathcal{C}$, the chart $\mathcal{T}_{\mathcal{F}}^r$ exists. If $\mathcal{F}$ is a singleton, i.e $\mathcal{F}=\{F\}$ for some $F\in\mathcal{C}$, then we denote the above chart by $\mathcal{T}^r_F$.  

\begin{prop}\label{NewReducedTessPropertiesMain} Let $\mathcal{T}, \mathcal{F}$, $\mathcal{F}^r$, and $\mathcal{T}^r_{\mathcal{F}}$ be as in Definition \ref{AssociateAuxTessToCalF}. Then
\begin{enumerate}[label=\arabic*)]
\itemsep-0.1em
\item $G'$ is not $L'$-colorable and $\mathcal{T}$ is a tessellation. In particular, each facial subgraph of $G'$ not lying in $(\mathcal{C}\setminus\mathcal{F})\cup\mathcal{F}^r$ is a triangle, and $G'$ is a closed 2-cell embedding; AND
\item For each $F^r\in\mathcal{F}^r$, each vertex of $V(F^r)$ has an $L'$-list of size at least three and $F^r$ is uniquely 4-determined in $G'$ (with respect to $L'$).
\end{enumerate}  \end{prop}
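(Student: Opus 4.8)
## Proof Plan for Proposition \ref{NewReducedTessPropertiesMain}

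The plan is to verify the three numbered properties in order, pulling almost everything from the definition of a $C$-reduction (Definition \ref{ReductionOpDefn}) together with the structural facts already established for critical mosaics. First I would handle the claim that $G'$ is not $L'$-colorable. The list-assignment $L'$ is $L_\tau$ where $\tau = \bigcup_{F\in\mathcal{F}}\psi_F$, and each $\psi_F$ extends the unique $L$-precoloring $\pi_F$ of $\mathbf{P}_F$ and is a partial $L$-coloring of $S_F$ with $S_F$ being $(L,\psi_F)$-inert in $G$. Since the sets $S_F\setminus\textnormal{dom}(\psi_F)$ are pairwise far apart (the rings are pairwise of distance $\geq\alpha$ and each $S_F\subseteq B_2(F)\cup\textnormal{Sh}^4(F)$, which stays close to $F$), the inertness observation recalled just before Section \ref{BlackBoxPIISec} gives that $\bigcup_F S_F$ is $(L,\tau)$-inert in $G$. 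Now if $G'=G\setminus\left(\bigcup_F S_F\right)$ were $L'$-colorable, that coloring would, by definition of inertness and since every vertex of $D_1(\bigcup_F S_F)$ retains an $L_\tau$-list of size $\geq 3$ (property 3 of each reduction) hence is colorable, extend to an $L$-coloring of $G$ — contradicting that $\mathcal{T}$ is critical. One point to be careful about: I need the domain of the extending coloring to contain $D_1(\bigcup_F S_F\setminus\textnormal{dom}(\tau))$; this is arranged because those distance-one vertices lie in $G'$ and get colored by any $L'$-coloring of $G'$.

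Next I would establish that $\mathcal{T}^r_{\mathcal{F}}$ is a tessellation. Chord-triangulation: every facial subgraph of $G$ not in $\mathcal{C}$ is a triangle (Proposition \ref{BasicPropertiesCricMosProp}, part 3). Deleting the sets $S_F$ changes the face structure only locally near each $F$; by property 3 of each $F$-reduction, $G\setminus S_F$ is 2-connected and $F^r$ is the unique facial cycle whose vertex set is $D_1(S_F)$. The faces of $G'$ are therefore: the unchanged triangular faces away from all the $F$'s, the unchanged rings in $\mathcal{C}\setminus\mathcal{F}$, and the new faces $F^r$; every remaining face is a triangle because the reduction only deletes a connected inert piece and exposes a new bounding cycle without creating new non-triangular internal faces (here one invokes that $G[V(F)\cup S_F]$ is connected and $S_F$ sits inside $B_2(F)\cup\textnormal{Sh}^4(F)$, so no new large face appears in the interior). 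Short-inseparability of $G'$: since $\textnormal{ew}$ is monotone under taking subgraphs, $\textnormal{ew}(G')\geq\textnormal{ew}(G)>4$; and any separating $3$- or $4$-cycle of $G'$ would be a separating cycle of $G$ as well, unless it bounds a disc containing only deleted vertices — but $G'$ still has all the other rings and, by M5), enough of the embedding intact, so no such short separating cycle exists. Hence $\mathcal{T}^r_{\mathcal{F}}$ is short-inseparable and chord-triangulated, i.e. a tessellation; and then by the same argument as in Proposition \ref{BasicPropertiesCricMosProp} part 3, $G'$ is a closed 2-cell embedding and its non-ring faces are triangles.

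For property 2, fix $F^r\in\mathcal{F}^r$. Every vertex of $V(F^r)=D_1(S_F)$ has an $L'$-list, i.e. $L_\tau$-list, of size at least three — this is exactly property 3 of the $F$-reduction (the colors removed are those of $\textnormal{dom}(\psi_F)$, and $\tau$ agrees with $\psi_F$ near $F$ since the other $\psi_{F'}$ are too far away to affect $D_1(S_F)$). It remains to show $F^r$ is uniquely $4$-determined in $G'$ with respect to $L'$. The key input is Theorem \ref{NOver4GoodSideCorCritMos}, which says every ring $C\in\mathcal{C}$ is uniquely $N_{\textnormal{mo}}/3$-determined in $G$ with respect to $L$; since $4\leq N_{\textnormal{mo}}/3$, in particular $F$ is uniquely $4$-determined. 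A generalized chord $Q$ of $F^r$ in $G'$ of length $\leq 4$, together with the connected inert collar $S_F$ glued back in, yields a generalized chord (or a near-chord with all vertices of distance $\leq 1$ from $F$, which is covered by the last sentence of Definition \ref{UniqueKLSpecifiedDefn} since $k=N_{\textnormal{mo}}/3\geq 3$) of $F$ in $G$; the partition of $G$ it induces restricts to the partition of $G'$ induced by $Q$, the disc side stays a disc, and the side containing a low-list vertex or a handle is preserved because all the other rings of $\mathcal{C}\setminus\mathcal{F}$ lie far away and are untouched. So the unique-$k$-determination of $F$ in $G$ transfers to unique-$4$-determination of $F^r$ in $G'$. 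The main obstacle I anticipate is precisely this transfer argument: one has to check carefully that a short generalized chord of the reduced cycle $F^r$ really does extend, through the inert collar $S_F$, to an object to which the hypothesis "$F$ is uniquely $4$-determined in $G$" applies — in particular that the collar $S_F\subseteq B_2(F)\cup\textnormal{Sh}^4(F)$ is thin enough that a length-$\leq 4$ chord of $F^r$ together with a path through $S_F$ back to $F$ still falls within the scope of the "distance $\leq 1$ from $C$" clause or is genuinely a short generalized chord, and that no spurious noncontractibility or low-list vertex is introduced on the disc side. Everything else is bookkeeping with the mosaic axioms and the monotonicity of edge-width.
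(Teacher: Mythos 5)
Your treatment of part 1 and part 2 matches what the paper has in mind (the paper simply calls these ``immediate''), and your use of the inertness observation for the non-$L'$-colorability is fine. The divergence — and the gap — is in the unique-$4$-determination of $F^r$. You attempt a direct transfer: lift a generalized chord $Q$ of $F^r$ of length at most $4$, through the collar $S_F$, to a short generalized chord of $F$ in $G$ (or to a near-chord within distance one of $F$, covered by the last clause of Definition \ref{UniqueKLSpecifiedDefn}), and then invoke Theorem \ref{NOver4GoodSideCorCritMos}. You flag the obstacle yourself, and it is real: for open rings $F$ the collar $S_F\subseteq B_2(F)\cup\textnormal{Sh}^4(F)$ is not contained in $B_1(F)$, so the ``distance at most one from $C$'' clause does not apply, and the $\textnormal{Sh}^4(F)$ regions can push the lifted object outside the scope of ``short generalized chord'' altogether. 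Your proposal names the problem but supplies no mechanism to close it, so as written the open-ring case fails.

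The paper takes a different route that sidesteps the lifting problem. It argues by contradiction: if $F^r$ were not uniquely $4$-determined in $G'$, then — using that $F$ is uniquely $N_{\textnormal{mo}}/3$-determined in $G$ — one extracts a short separating cycle $D$ of $G$ (length at most six, meeting $F^r$ in one vertex) whose two sides are both obstructed. For closed rings $S_F\subseteq B_1(F)$, so $d(D,F)\leq 1$ and Lemma \ref{ImpropGenChordClosedRing2Lem} (the strengthened improper-chord statement, which was deliberately proved for cycles with $d(D,C)\leq 1$ rather than only improper chords of $C$) rules $D$ out. For open rings $d(D,F)\leq 2$, and since $\textnormal{Rk}(F)=2N_{\textnormal{mo}}^2$ is large, Corollary \ref{NonRingSeparatingCor} rules $D$ out. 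In other words, the paper reduces to the already-proved separating-cycle obstruction results rather than trying to push the chord back through the collar; that is precisely what handles the open-ring case that your direct transfer cannot reach. If you want to salvage your outline, replace the ``lift $Q$ to a chord of $F$'' step with ``the failure of unique $4$-determination of $F^r$ produces a short separating cycle near $F$,'' and then close with Lemma \ref{ImpropGenChordClosedRing2Lem} and Corollary \ref{NonRingSeparatingCor} exactly as the paper does.
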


\begin{proof} We just need to check that each $F^r\in\mathcal{F}^r$ is uniquely 4-determined in $G'$, the rest is immediate. Let $F\in\mathcal{F}$ and $F^r$ be the corresponding $F$-reduction. Suppose $F^r$ is not uniquely 4-determined in $G'$. As $F$ is uniquely $N_{\textnormal{mo}}/3$-determined in $G$ (with respect to $L$) it follows that $G$ contains a separating cycle $D$ of length at most six, where $D\cap F^r$ consists of precisely one vertex and, letting $G_0\cup G_1$ be the natural $F$-partition of $G$, each $k\in\{0,1\}$ satisfies the following: Either $G_k$ contains an element of $\mathcal{C}\setminus\{F\}$ or there is a noncontractible closed curve in the open component of $\Sigma\setminus F$ whose closure is $G_k$. If $F$ is a closed ring, then, by Definition \ref{ReductionOpDefn}, we have $S_F\subseteq B_1(F)$, and we contradict Lemma \ref{ImpropGenChordClosedRing2Lem}. Thus, $F$ is an open ring and $d(D, F)\leq 2$. Since $\textnormal{Rk}(F)=2N_{\textnormal{mo}}^2$, we contradict Corollary \ref{NonRingSeparatingCor}.  \end{proof}

\section{The Face-Width of Critical Mosaics}\label{CritMosFaceWidthSec}

This section consists of the proof of the following result. 

\begin{lemma}\label{MainThmResForHighRepNearRings} Let $\mathcal{T}=(\Sigma, G, \mathcal{C}, L, C_*)$ be a critical mosaic. Then $\textnormal{fw}(G)> 4.21\beta\cdot 6^{g(\Sigma)-1}$. \end{lemma}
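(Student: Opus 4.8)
The statement to prove is a lower bound on the face-width of a critical mosaic, $\textnormal{fw}(G) > 4.21\beta\cdot 6^{g-1}$, which strengthens the hypothesis M5) that only guarantees $\textnormal{fw}(G) \geq 2.1\beta\cdot 6^{g-1}$. The natural approach is a minimal-counterexample / extremal argument combined with the ``single-face connection'' black box, Theorem \ref{SingleFaceConnRes}, applied through the reduction machinery of Section \ref{RedNearRingsSec}. First I would suppose for contradiction that $\textnormal{fw}(G) \leq 4.21\beta\cdot 6^{g-1}$. Since $G$ is a $2$-cell embedding, there is a noncontractible closed curve meeting $G$ in at most $4.21\beta\cdot 6^{g-1}$ points, and since $\textnormal{ew}(G) \geq 2.1\beta\cdot 6^g$ the embedding is far from planar, so $g \geq 1$. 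Using the alternative characterization of face-width for $2$-cell embeddings, there is a noncontractible cycle $K$ in $G$ contained in the union of a small number of facial subgraphs; but all but $|\mathcal{C}|$ of the facial subgraphs are triangles (Proposition \ref{BasicPropertiesCricMosProp}(3)), so by Proposition \ref{HighEw+Triangles=HighFw}(2) either $K$ avoids all rings entirely, or it meets a ring $C$ and then must use a genuinely large number of facial walks, forcing $K$ to be long. Weighing this against $\textnormal{ew}(G)\geq 2.1\beta\cdot 6^{g}$, I would extract a noncontractible cycle that passes through some ring $F \in \mathcal{C}$ and has controlled length — roughly, a noncontractible cycle through $F$ of length at most something like $4.21\beta\cdot 6^{g-1} + O(N_{\textnormal{mo}})$, which is still well below the edge-width bound.

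The next step is to convert this into a short noncontractible path $P$ with both endpoints in (the reduction cycle $F^r$ of) $F$, so that $F^r \cup P$ contains a noncontractible closed curve, as required by the hypotheses of Theorem \ref{SingleFaceConnRes}. I would first apply the reduction operation of Section \ref{RedNearRingsSec} (Theorems \ref{MainCollarOpRingRedClF} and \ref{ClosedRingReductTrip}, packaged in Definition \ref{AssociateAuxTessToCalF} and Proposition \ref{NewReducedTessPropertiesMain}) to the single ring $F$, obtaining a tessellation $\mathcal{T}^r_F = (\Sigma, G', (\mathcal{C}\setminus\{F\})\cup\{F^r\}, L', C_*')$ where $G'$ is a subgraph of $G$, $G'$ is not $L'$-colorable, $F^r$ is uniquely $4$-determined in $G'$, and every vertex of $F^r$ has an $L'$-list of size at least three. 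Because $G' \subseteq G$, any noncontractible closed curve of $\Sigma$ contained in $G$ near $F$ can be tracked into $G'$ — one needs to check the short noncontractible cycle through $F$ can be chosen to survive the deletion of $S_F \subseteq B_2(F) \cup \textnormal{Sh}^4(F)$, or else rerouted at a cost of $O(N_{\textnormal{mo}}^2)$ extra edges, using that the deleted set is close to $F$ and $F^r$ is the facial cycle $D_1(S_F)$. From that noncontractible cycle I would cut out a subpath $P = v_0\cdots v_n$ with both endpoints on $F^r$, chosen to be a shortest path between its endpoints and with $V(P)\cap D_k(F^r) = \{v_k, v_{n-k}\}$ for $0\leq k\leq 3$ (shortcutting inside $F^r$ as needed, which only shortens things), such that $F^r \cup P$ still contains a noncontractible closed curve and $|E(P)| \leq d$ is \emph{false} — i.e., $n \geq d$ — because the face-width upper bound $4.21\beta\cdot 6^{g-1}$ is comfortably larger than the constant $d \geq 34$ from Theorem \ref{FaceConnectionMainResult}.

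With $P$ in hand, Theorem \ref{SingleFaceConnRes} applied to $\mathcal{T}^r_F$ with $\mathcal{C} = (\mathcal{C}\setminus\{F\})\cup\{F^r\}$, $F = F^r$, and the path $P$ yields a vertex set $A \subseteq V(G')$ and a partial $L'$-coloring $\phi$ of $A$ that is $(L', \phi)$-inert in $G'$, with $G'[A]$ connected, $V(F^r) \cup V(v_3Pv_{n-3}) \subseteq A \subseteq B_2(F^r \cup P) \cup \textnormal{Sh}^4(F^r)$, and every vertex of $D_1(A)$ having an $L'_\phi$-list of size at least three. Now I delete $A$ from $G'$: since $P$ is noncontractible and $A$ contains $v_3Pv_{n-3}$ together with all of $F^r$ and $G'[A]$ is connected, the graph $G' \setminus A$ embeds on a surface of strictly smaller genus (cutting along the connected subgraph $G'[A]$ which carries a noncontractible curve reduces $g$), and composing $\tau := \psi_F \cup \phi$ with the inertness of both $S_F$ and $A$ (Observation after the inert definition, noting $S_F$ and $A\setminus S_F$ are appropriately positioned) shows that if $G' \setminus A$ is $L'_\phi$-colorable then $G$ is $L$-colorable. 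The resulting structure is a tessellation of lower genus whose remaining rings (all but $F^r$) satisfy the distance conditions M3)--M4) with $6^g$ replaced by something smaller, and one checks M0)--M5) go through on the smaller surface — here M5) is the crux, and it is exactly why the induction hypothesis tracks both $\textnormal{fw}$ and $\textnormal{ew}$ and why we use Fact \ref{HighEwTriangleFwF1Cycle}/\ref{HighEwFwF2Chord}-style bounds: the deletion is confined to a bounded neighborhood of $F^r \cup P$, so face-width of the remainder stays above $2.1\beta\cdot 6^{(g-1)-1}$. This produces a mosaic smaller than $\mathcal{T}$ in genus (case b) of criticality), hence $L'_\phi$-colorable, contradicting that $G'$ — and hence $G$ — is not colorable.

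\textbf{Main obstacle.} The hardest part will be the bookkeeping in the middle step: guaranteeing that after the reduction at $F$ and the deletion of $A$ one still has a genuine mosaic on a surface of genus exactly $g-1$ (not just $\leq g-1$, and in particular verifying that $G'\setminus A$ remains $2$-cell, connected, short-inseparable, chord-triangulated, and meets the M5) face-width bound on the new surface) — this requires carefully controlling the size and location of $S_F \cup A$ relative to $F \cup P$, using $S_F \subseteq B_2(F)\cup\textnormal{Sh}^4(F)$, $A \subseteq B_2(F^r\cup P)\cup\textnormal{Sh}^4(F^r)$, and the fact (via Corollary \ref{NonRingSeparatingCor} and Theorem \ref{NOver4GoodSideCorCritMos}) that short separating cycles near $F$ cannot cut off other rings, together with Fact \ref{HighEwTriangleFwF1Cycle}. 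A secondary subtlety is ensuring the noncontractible path $P$ can be taken with the precise distance-layering property $V(P)\cap D_k(F^r) = \{v_k, v_{n-k}\}$ required by Theorem \ref{SingleFaceConnRes}; this is obtained by iteratively shortcutting a minimum-length noncontractible cycle through $F^r$ and invoking shortcut-freeness of the reduction cycle, but the details of why shortcutting preserves noncontractibility of $F^r \cup P$ need care.
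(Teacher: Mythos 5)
Your proposal follows the same overall strategy as the paper: reduce at a single ring $C$ using the $C$-reduction machinery to get $\mathcal{T}^r_C$ with underlying graph $G'$ and reduction cycle $C^r$, extract a short noncontractible generalized chord of $C^r$, feed it into Theorem~\ref{SingleFaceConnRes}, and then delete the resulting inert set $A$ to drop the genus. However, there is a genuine conceptual gap in how you describe the resulting structure, and it is precisely where the hard work lies.

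When you delete $A$ (which contains all of $C^r$ and the ``middle'' of $P$) and then pass to the cut surface $\Sigma_N$, the boundary of the deleted region does not disappear: it becomes \emph{two new facial subgraphs} $F_0, F_1$ of $G'\setminus A$ (one for each side of the two-sided noncontractible curve $N$ after cutting). Your proposal says the remaining rings of $\mathcal{T}_A$ are ``all but $F^r$,'' i.e.\ $\mathcal{C}\setminus\{F\}$, but in fact the new mosaic must have ring family $(\mathcal{C}\setminus\{C\})\cup\{F_0, F_1\}$, with $F_0$ (say) as the new outer ring. Consequently you must verify the distance conditions M3)--M4) for these two new rings, and the crux is the mutual distance $d(F_0, F_1)$. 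That bound is not free: the paper proves $d(F_0, F_1)\geq \beta\cdot 6^{g-2}+4N_{\textnormal{mo}}^2$ by a minimality argument about the short knot $P^r$ (a short path from $F_0$ to $F_1$ in $G'\setminus A$ would concatenate with roughly half of $P^r$ to give a strictly shorter knot, contradiction). Without naming and proving this, the new tuple is not demonstrably a mosaic, and the whole induction step fails. Your ``main obstacle'' paragraph gestures at a bookkeeping challenge, but it does not identify that the deletion creates \emph{new} open rings whose mutual separation is the key quantity to control, nor that the control comes from knot-minimality rather than from distance conditions in $\mathcal{T}$ itself.

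A secondary (fixable) imprecision: rather than ``tracking'' or ``rerouting'' the original noncontractible cycle from $G$ into $G'$ at cost $O(N_{\textnormal{mo}}^2)$, it is cleaner and less error-prone to define the short noncontractible generalized chord of $C^r$ directly in $G'$ --- introduce \emph{knots} of $C^r$ (generalized chords of $C^r$ all of whose nontrivial cycles in $C^r\cup P^r$ are noncontractible), use $\textnormal{fw}(G')\geq\textnormal{fw}(G)-6$ and $\textnormal{ew}(G')\geq\textnormal{ew}(G)$ to show a knot of length between roughly $2.1\beta\cdot 6^{g-1}$ and $4.21\beta\cdot 6^{g-1}$ exists, and take a shortest one. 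Then the distance-layering property $V(P^r)\cap D_k(C^r)=\{w_k,w_{\ell-k}\}$ follows from minimality automatically, with no separate ``iterative shortcutting'' needed.
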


\begin{proof} Let $g=g(\Sigma)$ and suppose toward a contradiction that $\textnormal{fw}(G)\leq 4.21\beta\cdot 6^{g-1}$. 

\begin{claim}\label{CTildeAndknotGenChord} There exists a $C\in\mathcal{C}$ and a generalized chord $P$ of $C$ such that $|E(P)|\leq 4.21\beta\cdot 6^{g-1}$, where each cycle in $C\cup P$, except for $C$, is noncontractible.\end{claim}

\begin{claimproof} We first prove the following intermediate result.

\vspace*{-8mm}
\begin{addmargin}[2em]{0em}
\begin{subclaim}\label{IfNonConThenTildeCDCycl} If there is a noncontractible cycle $D$ of $G$ contained in at most $4.21\beta\cdot 6^{g-1}$ facial subgraphs of $G$, then there is a $C\in\mathcal{C}$ such that $|E(D\cap C)|\geq 2$ and $D$ contains no vertices of any element of $\mathcal{C}\setminus\{C\}$. \end{subclaim}

\begin{claimproof} If $V(D)$ has no intersection with $\bigcup_{C\in\mathcal{C}}V(C)$, then it follows from 1) of Proposition \ref{HighEw+Triangles=HighFw} that $\textnormal{ew}(G)\leq 4.21\beta\cdot 6^{g-1}+2$, contradicting our edge-width conditions on $\mathcal{T}$.  Thus, there is a $C\in\mathcal{C}$, where $V(C\cap D)\neq\varnothing$. Suppose there is a $C'\in\mathcal{C}\setminus\{C\}$ with $V(C'\cap D)\neq\varnothing$. Possibly $C_*\in\{C, C'\}$, but, in any case, by our distance conditions on $\mathcal{T}$, we have $d(C, C')\geq 2.9\beta\cdot 6^{g-1}$, and thus, by 2) of Proposition \ref{HighEw+Triangles=HighFw}, we get $4.21\beta\cdot 6^{g-1}\geq 2(2.9\beta\cdot 6^{g-1}-1)$, which is false. Thus, $V(D)$ intersects with no elements of $\mathcal{C}\setminus\{C\}$. If $|E(D\cap C)|\leq 1$, then we contradict our edge-width conditions on $G$, so $|E(D\cap C)|\geq 2$. \end{claimproof}\end{addmargin}

Since $\textnormal{fw}(G)\leq 4.21\beta\cdot 6^{g-1}$ and $G$ is a 2-cell embedding, there is a noncontractible cycle $D\subseteq G$ contained in at most $4.21\beta\cdot 6^{g-1}$ facial walks of $G$. Let $C$ be as in Subclaim \ref{IfNonConThenTildeCDCycl}. Now, $|E(D\cap C)|\geq 2$ and every connected component of $D\setminus E(C)$ which is not an isolated vertex is a proper generalized chord of $C$. By 1) of Proposition \ref{HighEw+Triangles=HighFw}, each such generalized chord has length at most $4.21\beta\cdot 6^{g-1}$. As $C$ is a contractible cycle, there is a component $P$ of $D\setminus E(C)$ which is a generalized chord of $C$ satisfying Claim \ref{CTildeAndknotGenChord}. \end{claimproof}

Let $C\in\mathcal{C}$ be as in Claim \ref{CTildeAndknotGenChord}. Now we consider the tessellation $\mathcal{T}^r_{C}$ and let $C^r$ be a $C$-reduction cycle, where $\mathcal{T}^r_C=(\Sigma, G', \mathcal{C}', L', C_*')$ and $\mathcal{C}'=(\mathcal{C}\setminus\{C\})\cup\{C^r\}$. Note that $\textnormal{fw}(G')\geq\textnormal{fw}(G)-6\geq 2.1\beta\cdot 6^{g-1}$. 

\begin{defn}
\emph{A \emph{knot} is a generalized chord $P^r$ of $C^r$ such that each of the cycles of $C^r\cup P^r$ which is distinct from $C^r$ is noncontractible. We say that $P^r$ is a \emph{short knot} if it is of minimal length among all knots.}
\end{defn}

It is immediate from Claim \ref{CTildeAndknotGenChord} that there exists a knot, so we now fix a short knot $P^r$. 

\begin{claim}\label{MinLengKnottedShortPathCL} $P^r$ is a proper generalized chord of $C^r$ and a shortest path in $G'$ between its endpoints, where $2.1\beta\cdot 6^{g-1}-6\leq E(P^r)|\leq 4.21\beta\cdot 6^{g-1}$.
\end{claim}

\begin{claimproof} Since $V(C^r)\subseteq B_3(C)$, we have $2.1\beta\cdot 6^{g-1}-6\leq E(P^r)|$ by our face-width conditions on $G$, and it follows from our choice of $C$ that $E(P^r)|\leq 4.21\beta\cdot 6^{g-1}$. Now, if $P^r$ is not a shortest path between its endpoints, then the minimality of $P^r$ implies that $G'$ contains a noncontractible cycle of length at most $2|E(P^r)|\leq 2(4.21\beta\cdot 6^{g-1})$, which is false, since $\textnormal{ew}(G')\geq\textnormal{ew}(G)\geq 2.1\beta\cdot 6^g$.  \end{claimproof}

We use Theorem \ref{SingleFaceConnRes} to create a smaller mosaic from $\mathcal{T}^r_C$ whose underlying surface has lower genus than $\Sigma$. Given a noncontractible closed curve $N\subseteq\Sigma$, we associate to $N$ a (not necessarily connected) surface $\Sigma_N$,  where $\Sigma_N$ is obtained from $\Sigma\setminus N$ in the following way. For each connected component $\Sigma'$ of $\Sigma\setminus N$, we glue open discs to $\Sigma'$ along $\partial(\Sigma\setminus N)$ to obtain a surface with empty boundary. The following is immediate from the definition of a knot. 

\begin{claim}\label{RepUseAContainNonContract} Let $A$ be vertex-subset of $G'$ with $V(P^r)\setminus B_2(C^r)\subseteq A\subseteq B_2(C^r\cup P^r)\cup\textnormal{Sh}^4(C^r)$, where $C^r+A$ is connected. Then there is a noncontractible closed curve $N$ of $\Sigma$, where $G'\cap N\subseteq G'[A]$.  \end{claim}

Note that $\textnormal{Sh}^4(C^r)$ is indeed well-defined, since, by 2) of Proposition \ref{NewReducedTessPropertiesMain}, $C^r$ is uniquely 4-determined in $G'$ with respect to the list-assignment $L'$.

\begin{claim}\label{MosaicTopDistanceConMainCL} Let $A$ be a subset of $V(G')$ with $V(P^r)\setminus B_2(C^r)\subseteq A\subseteq B_2(C^r\cup P^r)\cup\textnormal{Sh}^4(C^r)$, where $C^r+A$ is connected. Let $N\subseteq\Sigma$ be a noncontractible closed curve with $G'\cap N\subseteq G'[A]$ and $G'_A$ be the embedding of $G'\setminus A$ on $\Sigma_N$ in the natural way. Then there is a pair $F_0, F_1$ of facial subgraphs of $G'_A$ such that all of 1)-5) below hold, where all distances below are in $G'\setminus A$. 
\begin{enumerate}[label=\arabic*)]
\itemsep-0.1em
\item $V(F_0\cup F_1)=V(C^r\setminus A)\cup D_1(A)$; AND
\item For each $i\in\{0,1\}$ and $D\in\mathcal{C}'\setminus\{C^r, C_*'\}$, we have $d(F_i, D)\geq 2\beta\cdot 6^{g-1}+4N_{\textnormal{mo}}^2$; AND
\item Either $C=C_*$ or, for each $i\in\{0,1\}$, we have $d(F_i, C_*')\geq 2.9\beta\cdot 6^{g-2}+4N_{\textnormal{mo}}^2$; AND
\item $d(F_0, F_1)\geq\beta\cdot 6^{g-1}+4N_{\textnormal{mo}}^2$; AND
\item $\textnormal{fw}(G'_A)\geq 2.1\beta\cdot 6^{g-2}$ and $\textnormal{ew}(G'_A)\geq 2.1\beta\cdot 6^{g-1}$. 
\end{enumerate}
 \end{claim}

\begin{claimproof} We first note the following.

\vspace*{-8mm}
\begin{addmargin}[2em]{0em}
\begin{subclaim}\label{DistFromCdagger+SToRest}
\textcolor{white}{aaaaaaaaaaaaaaaaaaaaaa}
\begin{enumerate}[label=\roman*)]
\itemsep-0.1em
\item For each $D\in\mathcal{C}'\setminus\{C^r, C_*\}$, we have $d(D, C^r+A)>2\beta\cdot 6^{g-1}+4N_{\textnormal{mo}}^2$; AND
\item Either $C=C_*$ or $d(C_*', C^r+A)>2.9\beta\cdot 6^{g-2}+4N_{\textnormal{mo}}^2$.
\end{enumerate}
 \end{subclaim}

\begin{claimproof} Let $D\in\mathcal{C}'\setminus\{C^r, C_*'\}$. Since $D\in\mathcal{C}$ and $V(C^r)\subseteq B_3(C)$, we have we have $d(D, C^r)\geq 2\beta\cdot 6^g$. By assumption, $|E(P^r)|\leq 4.21\cdot 6^{g-1}$, so each vertex of $B_2(P^r)$ is of distance at most $2.11\beta\cdot 6^{g-1}$ from $D$. Thus,
 $d(D, C^r+A)\geq d(D, C^r)-2.11\beta\cdot 6^{g-1}$, so we get $d(D, C^r+A)\geq (18-2.11)\beta\cdot 6^{g-1}>2\beta\cdot 6^{g-1}+4N_{\textnormal{mo}}^2$. This proves i). Now we prove ii). Suppose that $C\neq C_*$. Thus, $C_*'=C_*$. As above, we have $d(C_*', C^r+A)\geq d(C_*, C^r)-2.11\beta\cdot 6^{g-1}$, and, by our distance conditions on $\mathcal{T}$, we have $d(C_*, C^r)\geq 2.9\beta\cdot 6^{g-1}$, so we get $d(C_*, C^r+A)\geq 0.88\beta\cdot 6^{g-1}$. Since $6\cdot (0.88)>3$, we have $d(C_*, C^r+A)\geq 2.9\beta\cdot 6^{g-2}+4N_{\textnormal{mo}}^2$. \end{claimproof}\end{addmargin}

It follows from Subclaim \ref{DistFromCdagger+SToRest}, together with our triangulation conditions from Proposition \ref{NewReducedTessPropertiesMain}, that every facial subgraph of $G'$ containing a vertex in $B_1(A)$ is either $C^r$ or a triangle. Let $U$ be the unique open component of $\Sigma\setminus C^r$ with $\partial(U)=C^r$. Since $C^r+A$ is connected, there is a unique open component $U'$ of $\Sigma$ with $U'\subseteq U$ and a unique facial subgraph $F$ of $G'\setminus A$, such that $\partial(U')=F$, where $G'\setminus A$ is regarded as an embedding on $\Sigma$ and $V(F)=V(C^r\setminus A)\cup D_1(A)$. Since $V(A)\subseteq B_2(C^r\cup P)\cup\textnormal{Sh}^4(C^r)$ and $G\cap N\subseteq A$, there is a pair $F_0, F_1$ of distinct facial subgraphs of $G'_A$ in $\Sigma_N$, where $F_0\cup F_1=F$. In particular, $F_0, F_1$ satisfy 1). Subclaim \ref{DistFromCdagger+SToRest} also shows that $F_0, F_1$ satisfy 2)-3). We now check 4) and 5). Suppose toward a contradiction that $d(F_0, F_1)<\beta\cdot 6^{g-2}+4N_{\textnormal{mo}}^2$ in $G'\setminus A$. Let $Q$ be a shortest path in $G'\setminus A$ from $F_0$ to $F_1$.  Since $G'$ is a 2-cell embedding and every facial subgraph of $G'$ not lying in $\mathcal{C}'$ is a triangle, we have the following by our edge-width and face-width conditions on $G'$:

\vspace*{-8mm}
\begin{addmargin}[2em]{0em}
\begin{subclaim}\label{PrecEndQLieEndOutQ} At most one endpoint of $Q$ lies outside of $B_3(\mathring{P}^r)$. Furthermore, letting let $x,x'$ be the endpoints of $Q$, where $x\in B_3(\mathring{P}^r)$, we have $d(x', \mathring{P}^r)>3$.  \end{subclaim}\end{addmargin}

Since $x'\not\in B_3(\mathring{P}^r)$, $x'$ either lies $V(C^r\setminus A)$ or has a neighbor in $B_2(C^r)\cup\textnormal{Sh}^4(C^r)$. Since $d(x, \mathring{P})\leq 3$, it follows that there is a $(P^r, C^r)$-path $Q^*$ which is disjoint to $P^r$ except for its lone $P^r$-endpoint, where $|E(Q^*\setminus Q)|\leq 6$. Let $y$ be the unique $P^r$-endpoint of $Q^*$ and let $R, R'$ be the two subpaths of $P^r$ such that $R\cup R'=P^r$, where $R\cap R'=y$. Each of $R+Q^*$ and $R'+Q^*$ is a knot, and, since $P^r$ is a short knot, we have $|E(P^r)|+2|E(Q^*)|\geq 2|E(P^r)|$. Thus, we get $|E(Q^*)|\geq |E(P^r)|/2$. On the other hand, we have $|E(Q^*)|\leq |E(Q)|+6<\beta\cdot 6^{g-2}+4N_{\textnormal{mo}}^2+6$. By Claim \ref{MinLengKnottedShortPathCL}, we have $|E(P^r)|/2\geq \beta\cdot 6^{g-1}$. Putting these together, we have 
$\beta\cdot 6^{g-1}<\beta\cdot 6^{g-2}+4N_{\textnormal{mo}}^2+6$, which is false, so our assumption that $d(F, F')<\beta\cdot 6^{g-2}+4N_{\textnormal{mo}}^2$ is false. This proves 4). 

Now we prove 5). It is immediate from the definition of $\Sigma_N$ that $\textnormal{ew}(G'_A)\geq\textnormal{ew}(G')\geq\textnormal{ew}(G)\geq 2.1\beta\cdot 6^{g-1}$. Suppose toward a contradiction that $\textnormal{fw}(G')<2.1\beta\cdot 6^{g-2}$. As $G'$ is a 2-cell embedding, it follows from our construction of $\Sigma_N$ that there is a noncontractible cycle $D\subseteq G'_A$ which is contained in fewer than $2.1\beta\cdot 6^{g-2}$ facial subgraphs of $G'_A$. If $V(D)$ has no intersection with $V(F_0\cup F_1)$, then $D$ is also contained in at most $2.1\beta\cdot 6^{g-2}$ facial walks of $G'$, contradicting the fact that $\textnormal{fw}(G')\geq\textnormal{fw}(G)-6$, so there is an $i\in\{0,1\}$ with $V(D\cap F_i)\neq\varnothing$, say $i=0$ for the sake of definiteness. Each facial subgraph of $G_A'$, except for those of $\{F_0, F_1\}\cup (\mathcal{C}\setminus\{C\})$, is a triangle, so it follows from 2) of Proposition \ref{HighEw+Triangles=HighFw} applied to a connected component of $G_A'$  that $V(D)$ has no intersection with any element of $\mathcal{C}\setminus\{C\}$. If $|E(D\cap F_0)|\leq 1$, then there is a family of fewer tha $2.1\beta\cdot 6^{g-2}$ facial triangles of $G_A'$ whose union contains $D$, and then it follows from 1) of Proposition \ref{HighEw+Triangles=HighFw} that $\textnormal{ew}(G'_A)<2.1\beta\cdot 6^{g-2}+2$, which is false. Thus, $|E(D\cap F_0)|>1$. By our construction of $\Sigma_N$, we get that $F_i$ is a contractible cycle of $G_A'$, so $D\neq F_0$, and furthermore, each component of $D\setminus E(F_0)$ is either an isolated vertex or a proper generalized chord of $F_0$. Furthermore, there is at least one such chord $\tilde{Q}$ of $F_0$ such that each cycle of $F_0\cup\tilde{Q}$, except for $F_0$, is noncontractible in $G'_A$. 

By 1) of Proposition \ref{HighEw+Triangles=HighFw}, $|E(\tilde{Q})|\leq 2.1\beta\cdot 6^{g-2}$. Each endpoint of $\tilde{Q}$ lies in $V(C^r\setminus A)\cup D_1(A)$. At least one endpoint of $\tilde{Q}$ lies in $B_3(\mathring{P}^r)$, or else, there is a knot $Q$ such that $|E(Q\setminus\tilde{Q})|\leq 6$, which contradicts the minimality of $P^r$, as $|E(\tilde{Q})|\leq 2.1\beta\cdot 6^{g-2}$. Thus, we let $x,x'$ be the endpoints of $\tilde{Q}$, where $x\in B_3(\mathring{P}^r)$. Analogous to Subclaim \ref{PrecEndQLieEndOutQ}, it is straightfoward to check that $d(x', P)>3$. As $x'\not\in B_3(\mathring{P}^r)$ and $x'\in V(F_0)$, either $x'\in V(C^r\setminus A)$ or $x'$ has a neighbor in $\textnormal{Sh}^4(C^r)\cup B_2(C^r)$. In any case, $G'$ contains a $(P^r, C^r)$-path $Q^*$, where $|E(Q^*\setminus\tilde{Q})|\leq 6$, and $G'$ contains a knot of length at most $\frac{|E(P)|}{2}+|E(Q^*)|$. By the minimality of $P^r$, we have $|E(Q^*)|\geq\frac{|E(P^r)|}{2}$, so, by Claim \ref{MinLengKnottedShortPathCL}, $|E(\tilde{Q})|+6\geq \beta\cdot 6^{g-1}$, which is false. This proves Claim \ref{MosaicTopDistanceConMainCL}. \end{claimproof}

Letting $P^r=w_0\cdots w_{\ell}$, it also follows from the minimality of $P^r$, together with our face-width conditions on $G'$, that $P^r\cap D_k(C^r)=\{w_k, w_{\ell-k}\}$ for each $k=0,1,2,3$. By Theorem \ref{SingleFaceConnRes} applied to $\mathcal{T}^r_C$, there exists an $A\subseteq V(G')$, and a partial $L'$-coloring $\phi$ of $A$ such that 
\begin{enumerate}[label=\arabic*)]
\itemsep-0.1em
\item $A$ is $(L', \phi)$-inert in $G'$, and each vertex of $D_1(A)\setminus\{x,y\}$ has an $L'_{\phi}$-list of size at least three; AND
\item $G'[A]$ is connected and $V(C^r)\subseteq A$ and $V(P^r)\setminus B_2(C^r)\subseteq A\subseteq B_2(C^r\cup P^r)\cup\textnormal{Sh}^4(C^r)$.
\end{enumerate}

It follows that $C^r+A$ is connected. By Claim \ref{RepUseAContainNonContract}, there is a noncontractible closed curve $N\subseteq\Sigma$ such that $G'\cap N\subseteq G'[A]$. Let $G_A'$ be the embedding of $G'\setminus A$ on $\Sigma_N$ in the natural way and $F_0, F_1$ be as in Claim \ref{MosaicTopDistanceConMainCL}.  Let $\mathcal{T}_A:=(\Sigma_N, G'_A, (\mathcal{C}\setminus\{C\})\cup\{F_0, F_1\}, L'_{\phi}, F_0)$. Note that $G'_A$ is still short-inseparable, and every facial subgraph of $G'_A$ not lying in $\mathcal{C}\setminus\{C\})\cup\{F_0, F_1\}$ is a triangle, so $\mathcal{T}_A$ is a tessellation. By Proposition \ref{NewReducedTessPropertiesMain}, $G'$ is not $L'$-colorable, so it follows from the inertness condition on $A$ that $G'\setminus A$ is not $L'_{\phi}$-colorable. By minimality, $\mathcal{T}_A$ is not a mosaic. Since $g(\Sigma_N)<g$, it follows from Claim \ref{MosaicTopDistanceConMainCL} that $\mathcal{T}_A$ satisfies all of M3)-M5), and each of $F_0, F_1$ is an open ring of $\mathcal{T}_A$ with a precolored path of length at most one (actually an empty precolored path), so $\mathcal{T}_A$ satisfies M0)-M2) as well. Thus, $\mathcal{T}_A$ is indeed a mosaic, a contradiction. This proves Lemma \ref{MainThmResForHighRepNearRings}. \end{proof}

\section{Completing the Proof of Theorem \ref{AllMosaicsColIntermRes1-4}}\label{CompleteProofMosaic}

We now bring together the work of the previous sections to prove the main result.

\begin{thmn}[\ref{AllMosaicsColIntermRes1-4}] All mosaics are colorable. \end{thmn}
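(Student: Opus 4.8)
The argument is a minimal-counterexample argument using all the machinery developed over the preceding sections. Suppose toward a contradiction that there is a mosaic whose underlying graph is not $L$-colorable, and among all such mosaics choose one, say $\mathcal{T}=(\Sigma, G, \mathcal{C}, L, C_*)$, that is minimal in the sense of the criticality definition (first minimize $|V(G)|$, then $\sum_{v}|L(v)|$, then $g(\Sigma)$). Thus $\mathcal{T}$ is a critical mosaic, and we have at our disposal Proposition \ref{BasicPropertiesCricMosProp} (lists have exact sizes, $G$ is a $2$-connected closed $2$-cell embedding with all non-ring faces triangles, open rings have $|V(H)|\geq 5$), Proposition \ref{LowGenus+OneFaceCase} (each ring is an induced cycle and $|\mathcal{C}|+g(\Sigma)>1$), Theorem \ref{NOver4GoodSideCorCritMos} (every ring is uniquely $N_{\textnormal{mo}}/3$-determined, hence in particular uniquely $4$-determined), the reduction theorems \ref{MainCollarOpRingRedClF} and \ref{ClosedRingReductTrip} (each ring admits a reduction), and Lemma \ref{MainThmResForHighRepNearRings} together with Proposition \ref{NewReducedTessPropertiesMain} (the reduced chart $\mathcal{T}^r_{\mathcal{C}}$ has high enough face-width and edge-width, all rings now have $3$-lists and are uniquely $4$-determined).

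First I would apply the reduction operation simultaneously at \emph{every} ring. Let $\mathcal{F}=\mathcal{C}$ and form $\mathcal{T}^r_{\mathcal{C}}=(\Sigma, G', \mathcal{C}', L', C_*')$ as in Definition \ref{AssociateAuxTessToCalF}, where $\mathcal{C}'=\{C^r:C\in\mathcal{C}\}$. By Proposition \ref{NewReducedTessPropertiesMain}, $G'$ is not $L'$-colorable (the inertness of each $S_F$ guarantees that an $L'$-coloring of $G'$ would lift to an $L$-coloring of $G$, since the sets $S_F\setminus\operatorname{dom}(\psi_F)$ are pairwise far apart by the distance conditions M3)–M4) and the containment $S_F\subseteq B_2(F)\cup\textnormal{Sh}^4(F)$), each facial subgraph of $G'$ not in $\mathcal{C}'$ is a triangle, $G'$ is a closed $2$-cell embedding, and each $C^r\in\mathcal{C}'$ is uniquely $4$-determined in $G'$ with all vertices having $L'$-lists of size at least three. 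Moreover $\textnormal{fw}(G')\geq\textnormal{fw}(G)-\sum_{F}(\text{small loss near }F)$; the point of Lemma \ref{MainThmResForHighRepNearRings} is precisely that $\textnormal{fw}(G)>4.21\beta\cdot 6^{g-1}$, so after the reductions we still have $\textnormal{fw}(G')\geq 6$ and indeed $\textnormal{fw}(G')\geq d$ with $d$ the constant of Theorem \ref{FaceConnectionMainResult} (since $N_{\textnormal{mo}}\geq 200$ and $\beta=100N_{\textnormal{mo}}^2$ dwarf $34$), and $G'$ is short-inseparable because $G$ is and the reductions only delete vertices near the rings without creating short separating cycles (this needs a short check using the $B_1$/$B_2$ containment and uniqueness of $4$-determination, analogous to the last paragraph of the proof of Proposition \ref{NewReducedTessPropertiesMain}).

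Next I would feed $\mathcal{T}^r_{\mathcal{C}}$ into Theorem \ref{FaceConnectionMainResult} with $\mathcal{D}=\mathcal{C}'$ (every reduced ring is uniquely $4$-determined with $3$-lists), choosing $F=C_*^r$ and, for each $C^r\in\mathcal{C}'\setminus\{C_*^r\}$, a shortest $(C^r, C_*^r)$-path $P_{C^r}$; these can be taken pairwise of distance $\geq d$ apart because the rings are pairwise of distance at least $2\beta\cdot 6^g$ apart by M4) while $d$ is a small absolute constant. This produces a vertex set $A\subseteq V(G')$ and a partial $L'$-coloring $\phi$ of $A$ that is $(L',\phi)$-inert in $G'$, with $G'[A]$ connected, meeting every $C^r\in\mathcal{C}'$, every vertex of $D_1(A)$ having $L'_\phi$-list of size at least three, and $A$ confined to $\bigl(B_2(C_*^r)\cup\textnormal{Sh}^4(C_*^r)\bigr)\cup\bigcup_{C^r\neq C_*^r}\bigl(\textnormal{Sh}^4(C^r)\cup B_2(C^r\cup P_{C^r})\bigr)$. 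Deleting $A$ and passing to the list-assignment $L'_\phi$ then merges all the rings into a single new face: let $G'':=G'\setminus A$ and let $C^{\dagger}$ be the unique facial cycle of $G''$ whose vertex set is $D_1(A)$ (using that $G''$ is $2$-connected, which follows from the connectivity of $G'[A]$ and the fact that $A$ avoids a neighborhood of the other, far-away, structure). Then $\mathcal{T}''=(\Sigma, G'', \{C^{\dagger}\}, L'_\phi, C^{\dagger})$ is a chart with a \emph{single} ring, which is open with an \emph{empty} precolored path (so M0)–M2) are immediate, M3)–M4) are vacuous, and M1)'s chord condition on $\mathring{\mathbf P}$ is vacuous); it is short-inseparable and chord-triangulated because $G'$ was and the deletion is controlled, hence a tessellation; and M5) holds because $\textnormal{ew}(G'')\geq\textnormal{ew}(G')$ by monotonicity and, since every face of $G''$ other than $C^{\dagger}$ is a triangle, Fact \ref{HighEwTriangleFwF1Cycle} (applied with $D=C^{\dagger}$, which is contractible by the face-width bound, or more precisely the version of it that handles one non-triangular face) gives $\textnormal{fw}(G'')\geq 2.1\beta\cdot 6^{g-1}$. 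Therefore $\mathcal{T}''$ is a mosaic, and since $|V(G'')|<|V(G)|$ the minimality of $\mathcal{T}$ forces $G''$ to be $L'_\phi$-colorable. Finally, because $A$ is $(L',\phi)$-inert in $G'$, any $L'_\phi$-coloring of $G''$ extends to an $L'$-coloring of $G'$, which by the inertness of $\tau=\bigcup_F\psi_F$ (i.e.\ of each $S_F$) extends to an $L$-coloring of $G$, contradicting the criticality of $\mathcal{T}$. This contradiction proves the theorem.

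\textbf{Main obstacle.} The delicate point is not the logical skeleton but verifying that $\mathcal{T}''$ genuinely satisfies M5): deleting $A$ can drop the face-width, and $A$ is not small in absolute terms (the shortest $(C^r,C_*^r)$-paths have length comparable to the diameter). The reason this works is that $A$ stays within bounded neighborhoods of the rings and of the chosen connecting paths, so any noncontractible curve that $A$ could "cut open" would have to use one of those paths, and Lemma \ref{MainThmResForHighRepNearRings} was designed with exactly enough slack ($4.21\beta\cdot 6^{g-1}$ versus the $2.1\beta\cdot 6^{g-1}$ needed) to absorb the loss — but one must argue, as in Fact \ref{HighEwTriangleFwF1Cycle} and the genus-counting in the proof of Lemma \ref{MainThmResForHighRepNearRings}, that the genus does not drop (we are \emph{not} splitting along a noncontractible curve here, unlike in Lemma \ref{MainThmResForHighRepNearRings}'s proof), so $g(\Sigma)$ is unchanged and the face-width bound in M5) is the same as for $\mathcal{T}$. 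A secondary obstacle is confirming $2$-connectivity of $G''$ and that $C^{\dagger}$ is a single cycle rather than a disconnected facial walk; this uses the connectivity clause of Theorem \ref{FaceConnectionMainResult} part 2) together with the short-inseparability of $G'$, much as in the proof of part 3) of Proposition \ref{BasicPropertiesCricMosProp}.
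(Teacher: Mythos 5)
Your proposal takes a genuinely different route from the paper's — and the difference is where a gap appears.

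The paper does \emph{not} feed all of $\mathcal{C}$ into Theorem \ref{FaceConnectionMainResult}. It first sets $m:=\min\{d(C,C_*):C\in\mathcal{C}\setminus\{C_*\}\}$ and shows $m\leq 2.9\beta\cdot 6^{g-1}+6N_{\textnormal{mo}}^2$ (Claim \ref{AcCSplitAcc1}), then restricts attention to $\mathcal{F}:=\{C\in\mathcal{C}:d(C,C_*)\leq 5m/4\}$, reduces only the rings of $\mathcal{F}$, and applies Theorem \ref{FaceConnectionMainResult} with $\mathcal{D}=\mathcal{F}^r$. The remaining rings $\mathcal{C}\setminus\mathcal{F}$ are kept as rings in the new chart $\mathcal{T}_A$, and the paper verifies M3) for them (Claim \ref{CheckM3ForFinalTess}). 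You instead reduce \emph{every} ring, take $\mathcal{D}=\mathcal{C}'$, and aim for a single merged face, which lets you skip the M3) verification entirely.

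The gap is in your claim that the shortest $(C^r,C^r_*)$-paths can be chosen pairwise of distance $\geq d$ apart ``because the rings are pairwise of distance at least $2\beta\cdot 6^g$ apart by M4)''. That inference is invalid: two rings being far apart does not prevent their shortest paths to the common target $C^r_*$ from being close. Concretely, if $C_1\in\mathcal{C}$ is at distance roughly $m$ from $C_*$ and $C_2\in\mathcal{C}$ is much farther out in a similar direction (which M4) permits), the shortest $(C_2^r,C^r_*)$-path will pass arbitrarily close to $C_1^r$, and hence to $P_{C_1^r}$. The reason the paper's argument goes through is that it only uses paths of length at most $\frac{5m}{4}$; the proof of Claim \ref{ElementMathPDist} is precisely a triangle-inequality argument that converts ``$D_0\cup P_{D_0}$ and $D_1\cup P_{D_1}$ close'' into ``$D_0, D_1$ close'' using $|E(P_{D_i})|\leq\frac{5m}{4}$, then contradicts M4). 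With paths of unbounded length that conversion fails, and so does the hypothesis of Theorem \ref{FaceConnectionMainResult}. The same unbounded-path issue also undermines your M5) step: in the paper $A$ is confined to a region of bounded (roughly $\frac{5m}{4}$-scale) diameter near $C_*$, so the knot argument of Claim \ref{FWReduGrG'MinA} controls the face-width loss; your $A$ can stretch across the entire surface, and neither Fact \ref{HighEwTriangleFwF1Cycle} nor the knot argument is set up to handle that. The paper's restriction to $\mathcal{F}$ is not an optional convenience but the load-bearing choice that makes both the distance hypothesis of Theorem \ref{FaceConnectionMainResult} and the M5) verification go through.
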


\begin{proof} Suppose not. Thus, there is a critical mosaic $\mathcal{T}=(\Sigma, G, \mathcal{C}, L, C_*)$. Let $m:=\min\{d(C, C_*): C\in\mathcal{C}\setminus\{C_*\}\}$. We first check that $m$ is well-defined, i.e $\mathcal{C}\neq\{C_*\}$.

\begin{claim}\label{AcCSplitAcc1} $|\mathcal{C}|>1$. Furthermore, $m\leq 2.9\beta\cdot 6^{g-1}+6N_{\textnormal{mo}}^2$. \end{claim}

\begin{claimproof} Let $\mathcal{T}^r_{C_*}=(\Sigma', G', \mathcal{C}', L', C_*')$. By Proposition \ref{NewReducedTessPropertiesMain}, $\mathcal{T}^r_{C_*}$ is a tessellation and $G'$ is not $L'$-colorable.  Applying Lemma \ref{MainThmResForHighRepNearRings}, we get $\textnormal{fw}(G')\geq\textnormal{fw}(G)-6>2.1\beta\cdot 6^{g-1}$. As edge-width is monotone with respect to subgraphs, $\mathcal{T}^r_{C_*}$ satisfies M5). It is also immediate that $\mathcal{T}^r_{C_*}$ satisfies M0)-M2) and M4). In particular, note that $C_*'$ has an empty precolored path. By minimality, since $G'$ is not $L'$-colorable and $|V(G')|<V(G)|$, $\mathcal{T}^r_{C_*}$ is not a mosaic, so it violates M3). In particular, $|\mathcal{C}|>1$, and we indeed have the desired bound on $m$. \end{claimproof}

The natural idea now would be to consider a single ring $C$ of $\mathcal{C}\setminus\{C_*\}$ of distance $m$ from $C_*$ and then apply Theorem \ref{FaceConnectionMainResult} after applying the reduction operations to $C$ and $C_*$. However, this will not work because the resulting tessellation might not satisfy M3), since, if $C_*$ is open, then there are possibly many elements of $\mathcal{C}\setminus\{C_*\}$ which are close to $C_*$ but far from each other. We can get around this because Theorem \ref{FaceConnectionMainResult} allows us to connect many faces at once. Let $\mathcal{F}:=\{C\in\mathcal{C}: d(C, C_*)\leq 5m/4\}$ and $\mathcal{T}^r_{\mathcal{F}}=(\Sigma, G', \mathcal{C}', L', C_*')$, where $\mathcal{C}'=\mathcal{F}^r\cup (\mathcal{C}\setminus\mathcal{F})$ as a disjoint union. Note that $C_*'\in\mathcal{F}$. The pairwise-distance between any two elements of $\mathcal{F}^r$ is at most six less than the pairwise distance between the corresponding elements of $\mathcal{F}$, so we immediately have the following:

\begin{claim}\label{ElFRedPairDist} Any two elements of $\mathcal{F}^r\setminus\{C_*'\}$ are of distance at least $2\beta\cdot 6^{g}$ apart, and each element of $\mathcal{F}^r\setminus\{C_*\}$ is of distance at least $2.9\beta\cdot 6^{g-1}$ from $C_*'$. \end{claim}

Now let $\mathcal{P}=\{P_D: D\in\mathcal{F}^r\setminus\{C_*\}\}$ be a family of paths, where for each $D\in\mathcal{F}^r\setminus\{C_*'\}$, $P_D$ is a shortest $(C_*', D)$-path in $G'$. Note that each element of $\mathcal{P}$ has length at most $\frac{5m}{4}$. 

\begin{claim}\label{ElementMathPDist} For any distinct $D_0, D_1\in\mathcal{F}^r\setminus\{C_*'\}$, the graphs $D_0\cup P_{D_0}$ and $D_1\cup P_{D_1}$ are pairwise of distance at least $2.2\beta\cdot 6^{g-1}$ apart. \end{claim}

\begin{claimproof} Suppose there are $D_0, D_1$ violating Claim \ref{ElementMathPDist}. Since $P_{D_i}$ has an endpoint in $D_i$ for each $i=0,1$, we have
\vspace*{-2mm}
$$d(D_0, D_1)\leq |E(P_{D_0})|+|E(P_{D_1})|+d(D_0\cup P_{D_0}, D_1\cup P_{D_1})$$
\vspace*{-2mm}
By Claim \ref{AcCSplitAcc1}, $d(D_0, D_1)<\frac{5m}{2}+2.2\beta\cdot 6^{g-1}\leq 7.25\cdot 6^{g-1}+2.2\beta\cdot 6^{g-1}+15N_{\textnormal{mo}}^2$, contradicting Claim \ref{ElFRedPairDist}. \end{claimproof}

By Proposition \ref{NewReducedTessPropertiesMain}, for each $D\in\mathcal{F}^r$, every vertex of $F$ has an $L'$-list of size at least three and $D$ is uniquely $4$-determined in $G'$. By Theorem \ref{FaceConnectionMainResult}, there is an $A\subseteq V(G')$, and a partial $L'$-coloring $\phi$ of $A$ such that:
\begin{enumerate}[label=\arabic*)]
\item $A$ is $(L', \phi)$-inert in $G'$, and $G'[A]$ is connected and has nonempty intersection with each element of $\mathcal{F}^r$; AND
\item Each vertex of $D_1(A)$ has an $L'_{\phi}$-list of size at least three; AND
\item $A\subseteq\left(B_2(C_*')\cup\textnormal{Sh}^4(C_*')\right)\cup\bigcup_{D\in\mathcal{F}^r\setminus\{C_*'\}}\left(\textnormal{Sh}^4(D)\cup B_2(D\cup P_D)\right)$
\end{enumerate}

\begin{claim}\label{DistAEachCXMosF} For each $C\in\mathcal{C}'\setminus\mathcal{F}^r$, we have $d(A, V(C))>2.9\beta\cdot 6^{g-1}+8N_{\textnormal{mo}}^2$. \end{claim}

\begin{claimproof} Suppose there is a $C\in\mathcal{C}'\setminus\mathcal{F}^r$ violating Claim \ref{DistAEachCXMosF}. In particular, $C\in\mathcal{C}\setminus\mathcal{F}$, and one of the following holds:
\begin{enumerate}[label=\roman*)]
\itemsep-.01em
\item $d(C, C_*')\leq 2.9\beta\cdot 6^{g-1}+8N_{\textnormal{mo}}^2+2$; OR
\item For some $D\in\mathcal{F}^r\setminus\{C_*'\}$, we have $d(C, D\cup P_D)\leq 2.9\beta\cdot 6^{g-1}+8N_{\textnormal{mo}}^2+2$
\end{enumerate}

If i) holds, then,  since $V(C_*')\subseteq B_3(C_*)$, we have $d(C, C_*)\leq\frac{5m}{4}$, which is false, as $C\not\in\mathcal{F}$, so ii) holds. By definition, there is an $F\in\mathcal{F}$ with $V(D)\subseteq B_3(F)$, where $D$ is an $F$-reduction cycle. As $P_D$ has length at most $\frac{3m}{2}$, we get $d(C, F)\leq 2.9\beta\cdot 6^{g-1}+8N_{\textnormal{mo}}^2+\frac{3m}{2}+8$. Since $m\leq 2.9\beta\cdot 6^{g-1}$, this contradicts M4) applied to $\mathcal{T}$. \end{claimproof}

As $G'$ is not $L'$-colorable, it follows from the inerntness condition on $A$ that $G'\setminus A$ is not $L'_{\phi}$-colorable. Since every facial subgraph of $\mathcal{T}'$ not lying in $\mathcal{C}'$ is a triangle, and since $A$ has nonempty intersection with the vertex-set of each element of $\mathcal{F}^r$, it follows from Claim \ref{DistAEachCXMosF} that is a unique facial subgraph $X$ of $G'\setminus A$ such that \begin{equation}\tag{Eq2}\label{XDecomp}V(X)=D_1(A)\cup\bigcup_{D\in\mathcal{F}^r} V(D\setminus A)\end{equation}

Let $\mathcal{T}_A:=(\Sigma, G'\setminus A, (\mathcal{C}\setminus\mathcal{F})\cup\{X\}, L'_{\phi}, X)$. Since $G'$ is a 2-cell embedding, $G'\setminus A$ is also a 2-cell embedding.  We claim that $\mathcal{T}_A$ is a mosaic, where $X$ is an open ring with empty precolored path.

\begin{claim}\label{CheckM3ForFinalTess} For each $C\in\mathcal{C}\setminus\{F\}$, we have $d(X, C)\geq 2.9\beta\cdot 6^{g-1}+6N_{\textnormal{mo}}^2$ \end{claim}

\begin{claimproof} Suppose here is a $C\in\mathcal{C}\setminus\{F\}$ with $d(C, X)<2.9\beta\cdot 6^{g-1}+6N_{\textnormal{mo}}^2$. Thus, by Claim \ref{DistAEachCXMosF}, there is no shortest $(C,X)$-path in $G'\setminus A$ whose $X$-endpoint is in $D_1(A)$, so, by our decomposition in (\ref{XDecomp}), there is a $D\in\mathcal{F}^r$ such that $D\setminus A\neq\varnothing$ and $d(C, D\setminus A)<2.9\beta\cdot 6^{g-1}+6N_{\textnormal{mo}}^2$. Thus, by definition, there is an $F\in\mathcal{C}$ such that $D$ is an $F$-reduction cycle, where $F\neq C$ and $V(D)\subseteq B_3(F)$, and so $d(C, F)<2.9\beta\cdot 6^{g-1}+6N_{\textnormal{mo}}^2+3$. Since $\mathcal{T}$ satisfies M3), this implies that $F=C_*$ and thus, by definition of $\mathcal{F}$, we have $C\in\mathcal{F}$, which is false. \end{claimproof}

\begin{claim}\label{BoundOnFwofG'FinalStep} $\textnormal{fw}(G')>4.21\beta\cdot 6^{g-1}-6$ \end{claim}

\begin{claimproof} Suppose not. As $G'$ is a 2-cell embedding, there is a noncontractible cycle $N$ of $G'$ which is contained in at most $4.21\beta\cdot 6^{g-1}-6$ facial subgraphs of $G'$. By Claim \ref{ElFRedPairDist}, together with our distance conditions on the inner rings of $\mathcal{T}$, we get that the elements of $\mathcal{C}'$ are pairwise of distance at most $2.9\beta\cdot 6^{g-1}$ apart, so, by 2) of Proposition \ref{HighEw+Triangles=HighFw} applied to the 2-cell embedding $G'$, $N$ intersects with at most one element of $\mathcal{C}'$. Thus, there is a noncontractible cycle of $G$ which is contained in at most $4.21\beta\cdot 6^{g-1}$ facial subgraphs of $G$, contradicting Lemma \ref{MainThmResForHighRepNearRings}. \end{claimproof}

\begin{claim}\label{FWReduGrG'MinA} $\textnormal{ew}(G'\setminus A)\geq 2.1\beta\cdot 6^g$ and $\textnormal{fw}(G'\setminus A)\geq 2.1\beta\cdot 6^{g-1}$. \end{claim}

\begin{claimproof} We proceed analogously to Lemma \ref{MainThmResForHighRepNearRings}. We say that a generalized chord $R$ of $X$ is a \emph{knot} of $X$ if each of the cycles in $X\cup R$ whcih contain $R$ is noncontractible. We have $\textnormal{ew}(G'\setminus A)\geq\textnormal{ew}(G')\geq\textnormal{ew}(G)\geq 2.1\beta\cdot 6^g$, so we just need to check the face-width condition. Suppose that $\textnormal{fw}(G'\setminus A)<2.1\beta\cdot 6^{g-1}$. Since $G'\setminus A$ is a 2-cell embedding, there is a noncontractible cycle $N\subseteq G'$ contained in fewer than $2.1\beta\cdot 6^{g-1}$ facial walks of $G'\setminus A$.  Recall that, by Lemma \ref{MainThmResForHighRepNearRings}, $\textnormal{fw}(G')\geq\textnormal{fw}(G)-6\geq 4.21\beta\cdot 6^{g-1}$, so $V(N)$ has nonempty intersection with $V(X)$, as $X$ is the only facial subgraph of $G'\setminus A$ which is not also a facial subgraph of $G'$. Now, it follows from Claim \ref{CheckM3ForFinalTess}, together with 2) of Proposition \ref{HighEw+Triangles=HighFw}, that $V(N)$ has no intersection with any of the elements of $\mathcal{C}\setminus\mathcal{F}$, so there is a knot $R$ with $|E(R)|\leq 2.1\beta\cdot 6^{g-1}$. We also suppose for convenience that $R$ is of minimal length among all the knots of $X$. Note that, by our decomposition of $A$, any neighbors in $A$ of the endpoints in $R$ lie in $B_2(C_*')\cup\bigcup_{D\in\mathcal{F}^r\setminus\{C_*'\}} B_2(D\cup P_D)$, since the minimality of $|E(R)|$ implies that $R$ contains no vertices of $\bigcup_{D\in\mathcal{F}^r}\textnormal{Sh}^4(D)\setminus B_2(D)$. 

Let $x, x'$ be the endpoints of $R$. By our decomposition (\ref{XDecomp}), together with Claim \ref{ElFRedPairDist}, we get that there is a lone $D^{\dagger}\in\mathcal{F}^r\setminus\{C_*\}$ such that the endpoints of $R$ lie in $D_1(A)\cup V(D^{\dagger}\setminus A)$. It follows from Claim \ref{ElementMathPDist}, together with our decomposition of $A$, that the endpoints of $R$ lie in $B_3(C_*'\cup D^{\dagger}\cup P_{D^{\dagger}})$. Since $d(C_*', D^{\dagger})\geq $, it follows that $R$ has at least one endpoint in $B_3(P_{D^{\dagger}})$. Note that $R$ has precisely one endpoint in $B_3(P_{D^{\dagger}})$, or else, since $|E(P_{D^{\dagger}})|\leq\frac{5m}{4}$, it follows that $G'$ contains a noncontractible cycle of length at most $\frac{5m}{4}+2.1\beta\cdot 6^{g-1}+6$, and then our bound on $m$ contradicts our edge-width conditions. Thus, we suppose that $x\in B_3(P_{D^{\dagger}})$ and $x'\in B_3(C_*'\cup D^{\dagger})$. Since $P_{D^{\dagger}}$ is a shortest $(C_*', D^{\dagger})$-path in $G'$, it follows that there is a noncontractible cycle in $G'$ which is contained in at most $2|E(R)|+10$ facial subgraphs of $G'$, so $\textnormal{fw}(G')\leq 2|E(R)|+10\leq 4.2\beta\cdot 6^{g-1}+10$, contradicting Claim \ref{BoundOnFwofG'FinalStep}. \end{claimproof}

It follows from Claim \ref{FWReduGrG'MinA} that $\mathcal{T}_A$ satisfies M5). Likewise, M4) is inherited from $\mathcal{T}$. Since $X$ has an empty precolored path, $\mathcal{T}_A$ satisfies all of M0)-M2). By Claim \ref{CheckM3ForFinalTess}, $\mathcal{T}_A$ satisfies M3) as well, so $\mathcal{T}_A$ is indeed a mosaic. By minimality, $G'\setminus A$ is $L'_{\phi}$-colorable, which is false, as indicated above. This completes the proof of Theorem \ref{AllMosaicsColIntermRes1-4}. \end{proof}

\end{document}